\newtheorem{theorem}{Theorem}[section]
\newtheorem{proposition}[theorem]{Proposition}
\newtheorem{assumption}{Assumption}[section]
\newtheorem{lemma}[theorem]{Lemma}
\newtheorem{corollary}[theorem]{Corollary}
\newtheorem{question}[theorem]{Question}
\theoremstyle{definition}
\newtheorem{definition}[theorem]{Definition}
\newtheorem{remark}[theorem]{Remark}
\newtheorem{example}[theorem]{Example}
\newtheorem{Thm}{Theorem}
\newtheorem{problem}{Problem}[section]
\numberwithin{equation}{section}
\numberwithin{figure}{section}
\numberwithin{table}{section}
\tikzset{
    on each segment/.style={
        decorate,
        decoration={
            show path construction,
            moveto code={},
            lineto code={
                \path [#1]
                (\tikzinputsegmentfirst) -- (\tikzinputsegmentlast);
            },
            curveto code={
                \path [#1] (\tikzinputsegmentfirst)
                .. controls
                (\tikzinputsegmentsupporta) and (\tikzinputsegmentsupportb)
                ..
                (\tikzinputsegmentlast);
            },
            closepath code={
                \path [#1]
                (\tikzinputsegmentfirst) -- (\tikzinputsegmentlast);
            },
        },
    },
}
\tikzset{
    partial ellipse/.style args={#1:#2:#3}{
        insert path={+ (#1:#3) arc (#1:#2:#3)}
    }
}
\renewcommand{\tocsection}[3]{%
 \indentlabel{\@ifnotempty{#2}{\bfseries\ignorespaces#1 #2\quad}}\bfseries#3}
\renewcommand{\tocsubsection}[3]{%
  \indentlabel{\@ifnotempty{#2}{\ignorespaces#1 #2\quad}}#3}
\newcommand\@dotsep{4.5}
\def\@tocline#1#2#3#4#5#6#7{\relax
  \ifnum #1>\c@tocdepth 
  \else
    \par \addpenalty\@secpenalty\addvspace{#2}%
    \begingroup \hyphenpenalty\@M
    \@ifempty{#4}{%
      \@tempdima\csname r@tocindent\number#1\endcsname\relax
    }{%
      \@tempdima#4\relax
    }%
    \parindent\z@ \leftskip#3\relax \advance\leftskip\@tempdima\relax
    \rightskip\@pnumwidth plus1em \parfillskip-\@pnumwidth
    #5\leavevmode\hskip-\@tempdima{#6}\nobreak
    \leaders\hbox{$\m@th\mkern \@dotsep mu\hbox{.}\mkern \@dotsep mu$}\hfill
    \nobreak
    \hbox to\@pnumwidth{\@tocpagenum{\ifnum#1=1\bfseries\fi#7}}\par
    \nobreak
    \endgroup
  \fi}
\renewcommand\csname r@tocindent0\endcsname{0pt}
\def\l@subsection{\@tocline{2}{0pt}{2.5pc}{5pc}{}}
\newcommand{\lesl}{\leqslant}
\newcommand{\gesl}{\geqslant}
\newcommand{\fq}{\mathfrak{q}}
\newcommand{\bX}{\overline{X^{n+1}}}
\newcommand{\bx}{\bm{x}}
\newcommand{\tp}{\tilde{p}}
\newcommand{\tx}{\tilde{x}}
\newcommand{\ty}{\tilde{y}}
\newcommand{\dH}{\mathbb{H}}
\newcommand{\dN}{\mathbb{N}}
\newcommand{\dR}{\mathbb{R}}
\newcommand{\dT}{\mathbb{T}}
\newcommand{\dZ}{\mathbb{Z}}
\newcommand{\chg}{\check{g}}
\newcommand{\cN}{\mathcal{N}}
\newcommand{\bo}{\bm{0}}
\newcommand{\Mob}{\text{M\"ob}}
\newcommand{\Dw}{\bm{d}_{\mu}}
\newcommand{\sL}{\mathscr{L}^{(1)}}
\newcommand{\ND}{\mathcal{D}^{\perp}}
\newcommand{\NS}{\mathcal{S}^{\perp}}
\newcommand{\isom}{\mathfrak{i}\mathfrak{s}\mathfrak{o}\mathfrak{m}}
\newcommand{\fo}{\mathfrak{o}}
\newcommand{\fs}{\mathfrak{s}}
\newcommand{\ft}{\mathfrak{t}}
\newcommand{\wg}{\widehat{g}}
\newcommand{\bg}{\bar{g}}
\newcommand{\tg}{\tilde{g}}
\newcommand{\tir}{\tilde{r}}
\newcommand{\wh}{\widehat{h}}
\newcommand{\p}{\partial}
\newcommand{\cS}{\mathcal{S}}
\newcommand{\sF}{\mathscr{F}}
\newcommand{\sQ}{\mathscr{Q}}
\newcommand{\sT}{\mathscr{T}}
\newcommand{\sX}{\mathscr{X}}
\newcommand{\wsQ}{\widetilde{\mathscr{Q}}}
\newcommand{\wsT}{\widetilde{\mathscr{T}}}
\newcommand{\wx}{\widehat{\xi}}
\newcommand{\ul}{\underline{\lambda}}
\newcommand{\ur}{\hat{\rho}}
\newcommand{\ucX}{\widetilde{X^{n+1}}}
\newcommand{\nexp}{\Exp_{\ell_0}^{\perp}}
\newcommand{\nExp}{\Exp_{\ell_{\Gamma}}^{\perp}}
\newcommand{\nExpX}{\Exp_{\sigma_{\Gamma}}^{\perp}}
\newcommand{\ellG}{\ell_{\Gamma}}
\newcommand{\sG}{\sigma_{\Gamma}}
\newcommand{\br}{\bm{r}}
\newcommand{\LB}{\langle}
\newcommand{\RB}{\rangle}
\DeclareMathOperator{\diam}{diam}
\DeclareMathOperator{\dvol}{dvol}
\DeclareMathOperator{\Diff}{Diff}
\DeclareMathOperator{\Exp}{Exp}
\DeclareMathOperator{\Id}{Id}
\DeclareMathOperator{\Injrad}{Injrad}
\DeclareMathOperator{\Isom}{Isom}
\DeclareMathOperator{\Ker}{Ker}
\DeclareMathOperator{\length}{length}
\DeclareMathOperator{\Min}{Min}
\DeclareMathOperator{\pt}{pt}
\DeclareMathOperator{\rank}{rank}
\DeclareMathOperator{\Ric}{Ric}
\DeclareMathOperator{\Rm}{Rm}
\DeclareMathOperator{\SO}{SO}
\DeclareMathOperator{\Tr}{Tr}
\DeclareMathOperator{\Vol}{Vol}
\DeclareMathOperator{\MM}{M\text{\"o}b}
\begin{document}

\title{On the Poincar\'e-Einstein manifolds with cylindrical conformal infinity}

\author{Sun-Yung Alice Chang} 
\address{Department of Mathematics, Princeton University, Princeton, NJ, 08544}
\email{syachang@math.princeton.edu}

\author{Paul Yang}
\address{Department of Mathematics, Princeton University, Princeton, NJ, 08544}
\email{yang@math.princeton.edu}

\author{Ruobing Zhang}
\address{Department of Mathematics, University of California, San Diego,  CA, 92093}
\email{ruz071@ucsd.edu}

   \thanks{The first named author is partially supported by Simons Foundation Travel Support for Mathematicians
AWD-1006781; The third named author is partially supported by NSF Grants DMS-2304818 and DMS-2550348.}
 
\begin{abstract}
	In this paper, we prove several rigidity and quantitative rigidity results for asymptotically hyperbolic Poincar\'e-Einstein manifolds whose conformal infinities are diffeomorphic to a cylinder $S^1 \times S^{n - 1}$. It is a basic fact that the Riemannian product $S^1 \times S^{n - 1}$ can bound, in addition to a complete hyperbolic metric on $S^1 \times D^n$, other Poincar\'e-Einstein metrics such as the AdS-Schwarzschild metrics on $D^2 \times S^{n - 1}$. 
	
	The main result shows that any Poincar\'e-Einstein filling of $S^1 \times S^{n - 1}$ must be hyperbolic if it is non-positively curved. As corollaries, the Poincar\'e-Einstein filling of $S^1 \times S^{n - 1}$ is unique when the length of circle factor is sufficiently large or the $L^2$-energy of the Weyl curvature is sufficiently small relative to the Yamabe constant of the conformal infinity. To prove the Weyl pinching rigidity, we established a new $\epsilon$-regularity for the Weyl curvature of a general class of Poincar\'e-Einstein manifolds with conformal infinity of positive Yamabe type, which includes non-compact and volume-collapsed families of Poincar\'e-Einstein spaces in all dimensions. 
	
\end{abstract}

\date{\today}

\maketitle

\tableofcontents

\setcounter{tocdepth}{1}

\section{Introduction}

\label{s:introduction}
 
 \subsection{Background and main results}
 
The geometry of asymptotically hyperbolic Poincar\'e-Einstein manifolds is a fundamental theme in the study of Einstein spaces through the conformal geometric approach. 
 It is also intimately related to the AdS/CFT correspondence in mathematical physics; we refer to the reader to \cite{Biquard-ADS/CFT} for a more detailed discussion on this topic. 
 
Roughly speaking, an asymptotically hyperbolic Poincar\'e-Einstein manifold is a complete Einstein manifold with negative scalar curvature which is asymptotically modeled on a locally hyperbolic space and  can be conformally deformed to a compact space with a smooth boundary called the conformal infinity (see Definition \ref{d:PE-manifold} for the precise notion).  
 The geometric side was initiated by Fefferman and Graham in \cite{FG-conformal-invariants}  and the aim was to study new conformal invariants by establishing a correspondence between the Riemannian geometry of the Einstein manifold and the conformal structure of the boundary. In the context of self-dual Poincar\'e-Einstein metrics, one can find existence and uniqueness results in \cite{LeBrun-SD, Pedersen, Hitchin}, et al.

Fundamental problems in this field include the existence, uniqueness, and classifications of Poincar\'e-Einstein filling  for a given conformal manifold.

 \begin{problem}
 	{\it Given a conformal manifold $(M^n, [\wg])$, does there exist a manifold $X^{n+1}$ and a complete Poincar\'e-Einstein metric $g$ on $X^{n+1}$ such that $\mathscr{C}(X^{n+1}, g) = (M^n,[\wg])$? Under what assumptions can one classify all the Poincar\'e-Einstein fillings $(X^{n+1}, g)$  for a given conformal infinity?}
 \end{problem}

 This problem is a fundamentally difficult one. Until now, both the existence and uniqueness of Poincar\'e-Einstein fillings are widely open in general.  Indeed, until very recently only the perturbation existence results via the implicit function theorem are available in the literature \cite {Graham-Lee}, \cite{Lee-fredholm}, et al.  In the recent work of Chang-Ge \cite{Chang-Ge-existence}, the existence results are extended to classes of metrics that have some {\it definite} deviation from the standard model metrics, where the conformal infinity is also allowed to have topology other than the sphere.

  On the other hand, there are some non-existence results. In the recent works of  \cite{Gursky-Han} and \cite{Gursky-Han-Stolz}, the authors applied the index theory of Dirac operators and discovered new geometric obstructions to the existence of Poincar\'e-Einstein metrics for a large class of conformal manifolds with positive Yamabe constant. This also suggests the existence problem requires a delicate formulation.

 At the moment, the uniqueness of Poincar\'e-Einstein fillings is only known when the conformal infinity is the sphere with the standard metric (see \cite{Min-Oo, Leung, Andersson-Dahl, Wang-mass, CH, Qing, Shi-Tian, BMQ, Biquard-continuation, DJ, HQS, LQS}) or one that is sufficiently close to
the conformal class of the standard metric of the sphere (see  \cite{CG, CGQ, CGJQ}). These later works are established as consequences of some compactness results of compactified Poincar\'e-Einstein metrics with a given compact family of metrics in the conformal infinity.

In the aforementioned developments, most of the work relies on a basic scenario in which the round sphere uniquely 
bounds a Poincar\'e-Einstein manifold that is isometric to the hyperbolic space $(\dH^{n + 1}, h)$, where $\sec_h \equiv - 1$. Note that, in this case,  
the conformal infinity $(S^n, [g_c])$ attains the maximum of the Yamabe invariant among all closed conformal manifolds. By the deep comparison result (see Theorem \ref{t:volume-comparison}), this maximality yields the desired hyperbolic rigidity.  
 However, when the underlying manifold of the conformal infinity is not a sphere, the rigidity problem must be 
 approached  in fundamentally different ways.

There are numerous examples of conformal manifolds $(M^n, [\wg])$ that admit multiple non-isometric Poincar\'e-Einstein fillings. This occurs either when $M^n$ is not diffeomorphic to $S^n$, or $[\wg]$ is a conformal class on $S^n$ that deviates  significantly from the standard conformal class of the round metric. For instance, see \cite{Hawking-Page, Pedersen, Hitchin, Anderson-cusp}, as well as the Thurston theory of Dehn surgery and the now well-known ``opening cusps" phenomenon; see \cite{Gromov-hyperbolic, Thurston}. The first example of non-unique Poincar\'e-Einstein filling was discovered by Hawking-Page \cite{Hawking-Page} in their study of the $4$-dimensional AdS-Schwarzschild space, where the conformal infinity is $S^1 \times S^2$, equipped with the standard product metric. See Section \ref{ss:AdS-S} for a more detailed discussion on this topic.


The main goal of this paper is to establish some rigidity and quantitative rigidity results for the Poincar\'e-Einstein fillings of $S^1 \times S^{n - 1}$ equipped with the product metric. 

To state the results more precisely, we first introduce some notations. On the product manifold $S^1 \times S^{n - 1}$, if one normalizes the standard product metrics such that $S^{n - 1}$ is the unit sphere with curvature identically equal to $1$, then one has a one-parameter family of such metrics 
\begin{align}
	\wg_{\lambda} = \lambda^2 \tau^2 + g_c, \quad \lambda > 0, \label{e:product-metric-on-cylinder}
\end{align}
where $(S^1,\tau^2)$ is the unit circle and $(S^{n - 1},g_c)$ is the unit $(n-1)$-sphere 
with sectional curvature equal to $1$. 
It is well known that for any given $\lambda > 0$,  the product manifold $S^1 \times D^n$
 admits a unique hyperbolic metric $g_{\lambda}$ with constant sectional curvature $-1$, whose conformal infinity is $(S^1 \times S^{n - 1}, [\wg_{\lambda}])$.

The first main result of the paper is below.

\begin{Thm}\label{t:uniqueness-nonpositively-curved}
Given $n \gesl 3$ and $\lambda > 0$, let $(X^{n + 1}, g)$ be a complete Poincar\'e-Einstein manifold with conformal infinity $(S^1 \times S^{n - 1}, [\wg_{\lambda}])$ such that $\Ric_g = - n g$.
If $g$ has non-positive sectional curvature, then $(X^{n + 1}, g)$ must be isometric to the hyperbolic manifold $(S^1 \times D^n, g_{\lambda})$.
\end{Thm}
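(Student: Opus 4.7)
The plan is to reduce the theorem to the uniqueness of $\dH^{n+1}$ as a Poincar\'e--Einstein filling of the round sphere by passing to the universal cover. First I would nail down the topology: completeness and $\sec_g\le 0$ together with Cartan--Hadamard imply $X$ is aspherical, which rules out the AdS--Schwarzschild-type filling $D^2\times S^{n-1}$ (which is not aspherical) and leaves $X\cong S^1\times D^n$ with $\pi_1(X)=\ZZ$. Passing to the universal cover $(\tilde X,\tilde g)\cong\RR^{n+1}$ yields a simply connected complete Einstein Hadamard manifold with $\Ric_{\tilde g}=-n\tilde g$ and $\sec_{\tilde g}\le 0$. The generator $T$ of the deck group cannot be elliptic (the action is free) nor parabolic (the conformal infinity $\tilde g_\lambda$ is compact of positive Yamabe type, not a cusp), so it is axial, translating along a unique geodesic line $\tilde\gamma\subset\tilde X$ by some length $\ell$. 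The conformal infinity of $\tilde X$ is the pullback $(\RR\times S^{n-1},\,\lambda^2 d\theta^2+g_c)$, which is conformally equivalent to the round sphere $(S^n,[g_c])$ with the two endpoints $p_\pm$ of $\tilde\gamma$ removed.

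The crux of the argument is to upgrade $\tilde X$ to a genuine Poincar\'e--Einstein filling of the \emph{full} round sphere by showing that $p_\pm$ are removable boundary singularities. Using the Einstein equation together with $\sec_{\tilde g}\le 0$, one would control the decay of $\tilde g$ toward the hyperbolic model in a tube around the two ends of $\tilde\gamma$, choose defining functions based at $p_\pm$ in an upper half-space--type gauge, and verify smoothness of the compactified metric via elliptic regularity for Einstein metrics. Once this is in place, the round-sphere rigidity contained in Theorem~\ref{t:volume-comparison} forces $\tilde X\cong\dH^{n+1}$. An axial isometry of $\dH^{n+1}$ is determined up to conjugation by its translation length, which must equal $2\pi\lambda$ to match the circle factor of $\tilde g_\lambda$; quotienting by $\langle T\rangle$ therefore recovers precisely $(S^1\times D^n,g_\lambda)$.

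The principal obstacle is the removability of $p_\pm$: a priori $\tilde g$ is only controlled asymptotically along the cylindrical part of $\partial_\infty\tilde X$, so one must leverage both the non-positive curvature and the Einstein equation to pin down the precise asymptotic model near the axis ends. Without this step, Theorem~\ref{t:volume-comparison} cannot be invoked on the cover; with it, the rest of the argument reduces to identifying the axial $\ZZ$-action and descending.
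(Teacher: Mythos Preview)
Your strategy is genuinely different from the paper's, and the gap you flag yourself is fatal as stated. The paper never tries to compactify the universal cover to a filling of the round sphere; instead it works downstairs. It extends the Killing fields of $(S^1\times S^{n-1},\wg_\lambda)$ into $(X^{n+1},g)$ using Theorem~\ref{t:Killing-field-extension}, proves they are $x$-independent in the geodesic defining function gauge, and shows (via the core geodesic $\sigma_\Gamma$ and the Flat Torus Theorem machinery you also invoke) that the normal distance $r$ to $\sigma_\Gamma$ coincides with $-\log x$ up to a constant. This forces the metric into the cohomogeneity-one ansatz $g=dr^2+F(r)^2 g_{S^{n-1}}+G(r)^2\Theta^2$, and the Einstein equation reduces to an ODE system with initial data $F(0)=0$, $F'(0)=1$ whose unique analytic solution is $F=\sinh$, $G=\cosh$ up to scale. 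The non-positive curvature hypothesis is used only to extend the boundary Killing fields and to run the Hadamard/core-geodesic argument; there is no appeal to round-sphere rigidity at any point.

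Your route, by contrast, hinges entirely on the removability of $p_\pm$, and you have not supplied a mechanism. The difficulty is real: the universal cover $(\tilde X,\tilde g)$ has noncompact conformal infinity $\dR\times S^{n-1}$, and the definition of Poincar\'e--Einstein in the paper (Definition~\ref{d:PE-manifold}) requires a compact boundary, so Theorem~\ref{t:volume-comparison} and the sphere rigidity results it feeds into simply do not apply to $\tilde X$ until you \emph{prove} smooth extension across $p_\pm$. Asymptotic hyperbolicity is only known transversally to the axis; along the axis ends you would need to produce a defining function and show the compactified metric is $C^2$ there, which is essentially a unique-continuation statement as hard as the theorem itself. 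Appealing to ``elliptic regularity for Einstein metrics'' is not enough: that regularity bootstraps once you already know the compactified metric is, say, $C^2$ across the point, but it does not manufacture the extension. A secondary gap: an axial isometry of $\dH^{n+1}$ for $n\ge 3$ is not determined by its translation length alone---it may carry a rotational part in $\SO(n-1)$---so even after you know $\tilde X=\dH^{n+1}$ you must argue separately that the deck generator has trivial rotation, using that the boundary metric is the untwisted product $\wg_\lambda$.
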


\begin{remark}
The assumption of the non-positive sectional curvature in the theorem cannot be removed. For example, by the classical Cartan-Hadamard theorem, the $(n + 1)$-dimensional AdS-Schwarzschild metrics defined on the simply connected manifold $D^2 \times S^{n - 1}$ cannot be non-positively curved everywhere. But their conformal infinities are the product metrics on $S^1 \times S^{n - 1}$. See Section \ref{ss:AdS-S} for more details.
\end{remark}

The next result exhibits the rigidity of Poincar\'e-Einstein filling for the conformal manifold $(S^1 \times S^{n - 1}, [\wg_{\lambda}])$ whose $S^1$-factor is a {\it sufficiently large}.

\begin{Thm}
\label{t:large-circle}
 Given $n\gesl 3$, there exists a positive number $\lambda_0 = \lambda_0(n) \gesl 1$ such that the following holds.
Let $(X^{n + 1}, g)$ be a complete Poincar\'e-Einstein manifold whose conformal infinity is given by
$(S^1 \times S^{n-1}, [\wg_{\lambda}])$. If the normalized product metric $\wg_{\lambda}$ satisfies $\lambda \gesl \lambda_0$, then 
 $(X^{n + 1}, g)$ is isometric to the hyperbolic manifold $(S^1 \times D^n, g_{\lambda})$.
 
\end{Thm}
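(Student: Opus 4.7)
Proof Proposal:

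The plan is to argue by contradiction and reduce to Theorem \ref{t:uniqueness-nonpositively-curved}. Suppose the statement fails, so there is a sequence $\lambda_i \to \infty$ and non-hyperbolic Poincar\'e-Einstein fillings $(X_i^{n+1}, g_i)$ of $(S^1 \times S^{n-1}, [\wg_{\lambda_i}])$. Since $\Ric_{g_i} = -n g_i$ yields $R_{g_i} \equiv -n(n+1)$ and the Riemann tensor decomposes as a constant-curvature part plus the Weyl tensor, the sectional curvature on any orthonormal 2-frame $(e_1,e_2)$ satisfies $K_{g_i}(\sigma) = -1 + W_{g_i}(e_1,e_2,e_1,e_2)$. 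Consequently, a uniform pointwise bound $\sup_{X_i} |W_{g_i}|_{g_i} < 1$ would force $\Sec_{g_i} \lesl 0$, so Theorem \ref{t:uniqueness-nonpositively-curved} would give isometry with the hyperbolic filling, a contradiction. The task thus reduces to establishing $\sup_{X_i} |W_{g_i}|_{g_i} \to 0$ as $\lambda_i \to \infty$.

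To establish this Weyl decay I would use a blow-up analysis combined with the $\epsilon$-regularity for the Weyl tensor developed elsewhere in this paper. Choose $p_i \in X_i$ realizing (or nearly realizing) $\sup |W_{g_i}|$, and rescale the metric by $|W_{g_i}|(p_i)^{1/2}$ when that quantity blows up. Standard compactness for Einstein manifolds, supplemented by the $\epsilon$-regularity to control local Weyl energy and injectivity radius, extracts a pointed Cheeger-Gromov subsequential limit $(X_\infty, g_\infty, p_\infty)$, a complete non-flat Einstein manifold with $|W_{g_\infty}|(p_\infty) \gesl 1$. According to whether $|W_{g_i}|(p_i)$ stays bounded or blows up, the limit is either a Poincar\'e-Einstein manifold with $\Ric_{g_\infty} = -n g_\infty$ whose conformal infinity is the conformal cylinder $\RR \times S^{n-1}$ (obtained by unwinding the growing $S^1$-factor, through the cyclic cover associated with the boundary $S^1 \subset \pi_1(X_i)$ when that class is nontrivial, or through a direct scaling when it is not), or else a complete non-flat Ricci-flat Einstein space.

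The main obstacle is to exclude both possible limits. The Ricci-flat blow-up should be ruled out by the $\epsilon$-regularity together with an a priori $L^2$-Weyl energy bound coming from the asymptotic hyperbolicity of $(X_i, g_i)$ and the fixed topology of the conformal infinity. The Poincar\'e-Einstein limit with conformal infinity $\RR \times S^{n-1}$ must be classified as $\dH^{n+1}$; here the crucial observation is that the competing AdS-Schwarzschild filling of $(S^1 \times S^{n-1}, \wg_{\lambda})$ only exists for $\lambda \lesl 2\pi/\sqrt{n(n-2)}$, obtained by optimizing the horizon radius in the explicit boundary-period formula, so AdS-Schwarzschild cannot survive as $\lambda_i \to \infty$. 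A rigidity argument—either lifting Theorem \ref{t:uniqueness-nonpositively-curved} to the appropriate cover and exploiting the preservation of non-positive sectional curvature under smooth limits, or invoking volume comparison for asymptotically hyperbolic Einstein manifolds—then forces the limit to be hyperbolic. Either contradiction with $|W_{g_\infty}|(p_\infty) \gesl 1$ completes the proof.
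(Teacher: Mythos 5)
Your reduction to Theorem \ref{t:uniqueness-nonpositively-curved} via a pointwise Weyl bound is sound in principle (for $\Ric_g=-ng$ one indeed has $\sec_g(\sigma)=-1+W_g(e_1,e_2,e_2,e_1)$, so small $\sup|W_g|$ forces $\sec_g\lesl 0$; this is exactly how the paper proves Theorem \ref{t:rigidity-for-small-weyl-energy}). But the step where you establish $\sup_{X_i}|W_{g_i}|\to 0$ is where the proof actually lives, and your blow-up argument leaves the hardest parts as assertions. Concretely: (i) the ``a priori $L^2$-Weyl energy bound coming from asymptotic hyperbolicity'' does not exist in general dimension — the Chern--Gau{\ss}--Bonnet identity of Theorem \ref{t:Chern-Gauss-Bonnet} is special to $n=3$, and even there it bounds the energy only in terms of the renormalized volume, which is not a priori controlled; (ii) classifying the non-collapsed blow-up limit ``Poincar\'e-Einstein with conformal infinity $\dR\times S^{n-1}$'' as $\dH^{n+1}$ is itself an unproved rigidity statement of the same difficulty as the theorem, and ruling out AdS--Schwarzschild by its period constraint does not rule out other limits; (iii) you never use the one piece of information that makes $\lambda\to\infty$ useful, namely its effect on the Yamabe constant, so the non-collapsing needed for your Cheeger--Gromov extraction is not justified.

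The paper's proof is a three-line reduction that bypasses all of this: by Schoen's computation, $\mathcal{Y}(S^1\times S^{n-1},[\wg_\lambda])/\mathcal{Y}(S^n,[g_c])\to 1$ as $\lambda\to\infty$; the Li--Qing--Shi curvature pinching theorem (Theorem \ref{t:curvature-pinching}) then gives $|\sec_g+1|\lesl\epsilon$, hence $\sec_g\lesl 0$, directly on any Poincar\'e-Einstein filling; and Theorem \ref{t:uniqueness-nonpositively-curved} concludes. The contradiction--compactness work you are attempting to redo is already packaged inside Theorem \ref{t:curvature-pinching}, and invoking it through the Yamabe constant rather than through the Weyl tensor is what makes the argument close. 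As written, your proposal has a genuine gap at the Weyl-decay step.
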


\begin{remark}
	When $\lambda > 0$ is small, the conformal manifold $(S^1 \times S^{n-1}, [\wg_{\lambda}])$ admits distinct Poincar\'e-Einstein fillings. For example, in addition to the hyperbolic manifold, it can also bound two non-isometric AdS-Schwarzschild metrics on $D^2 \times S^{n - 1}$. 
	See Section \ref{ss:AdS-S} for a more detailed discussion in the case of $n = 3$. 
	\end{remark}

  Theorems \ref{t:uniqueness-nonpositively-curved} and  \ref{t:large-circle} establish hyperbolic rigidity results, assuming that the boundary conformal structure on $S^1 \times S^{n - 1}$ is standard.
As a comparison, the next theorem formulates the uniqueness of Poincar\'e-Einstein filling,  assuming that the boundary conformal structure on $S^1 \times S^{n - 1}$ is {\it nearly standard}.
This result is essentially a combination of   
the hyperbolic rigidity stated in Theorem \ref{t:large-circle} and a compactness argument.

\begin{Thm}
	\label{t:uniqueness-large-circle-factor}
 	Given $n \gesl 3$, there exists a positive number $\lambda_0 = \lambda_0 (n) > 1$ such that for any $\lambda \gesl \lambda_0$ there exists some $\delta = \delta (\lambda, n) > 0$ that satisfies the following property.
  If a smooth metric $\wg$ on $S^1 \times S^{n - 1}$   satisfies 
  \begin{align}
  	\|\wg - \wg_{\lambda}\|_{C^{2,\alpha}(S^1 \times S^{n - 1})} <  \delta,
  \end{align}
  then the conformal manifold
   $(S^1 \times S^{n - 1}, [\wg])$ has a unique Poincar\'e-Einstein filling $(X^{n + 1}, g)$. In particular, $X^{n + 1}$ is diffeomorphic to $S^1\times D^n$.
\end{Thm}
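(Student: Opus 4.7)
The plan is a standard contradiction-plus-compactness argument combining Theorem~\ref{t:large-circle} with a compactness result for compactified Poincar\'e-Einstein fillings whose boundary conformal classes vary in a $C^{2,\alpha}$-compact family.

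For existence, the starting point is the hyperbolic filling $(S^1\times D^n, g_\lambda)$ of $(S^1\times S^{n-1},[\wg_\lambda])$. Applying the Graham--Lee perturbation theory (or Lee's Fredholm theory) to the linearized Einstein operator at $g_\lambda$, which is invertible on suitable weighted H\"older spaces on $S^1\times D^n$, produces, for every smooth $\wg$ sufficiently $C^{2,\alpha}$-close to $\wg_\lambda$, a Poincar\'e-Einstein metric on $S^1\times D^n$ with conformal infinity $[\wg]$. This yields at least one filling, diffeomorphic to $S^1\times D^n$, whose compactification is $C^{2,\alpha}$-close to that of $g_\lambda$.

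For uniqueness, I would fix $\lambda\gesl\lambda_0(n)$ from Theorem~\ref{t:large-circle} and argue by contradiction. Suppose there exist $\wg_i\to\wg_\lambda$ in $C^{2,\alpha}(S^1\times S^{n-1})$ together with Poincar\'e-Einstein fillings $(X_i^{n+1}, g_i)$ not isometric to the perturbation filling above. A compactness theorem for compactified Poincar\'e-Einstein metrics with uniformly controlled boundary conformal data would extract a subsequential limit $(X_\infty^{n+1}, g_\infty)$, itself a complete Poincar\'e-Einstein manifold with conformal infinity $(S^1\times S^{n-1}, [\wg_\lambda])$. By Theorem~\ref{t:large-circle}, this limit must be isometric to the hyperbolic filling $(S^1\times D^n, g_\lambda)$. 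Hence for all large $i$ the $X_i$ are diffeomorphic to $S^1\times D^n$ and the compactified metrics $g_i$ become $C^{2,\alpha}$-close to the compactification of $g_\lambda$. Local invertibility of the linearized Einstein operator at $g_\lambda$ then identifies $g_i$ with the perturbation filling constructed in Step~1, which contradicts the non-isometry assumption. This simultaneously gives uniqueness and forces $X^{n+1}\cong S^1\times D^n$.

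The main obstacle is the compactness step, and it is precisely this step that forces the hypothesis $\lambda\gesl\lambda_0$. A priori one has no control on the topology of the $X_i$: the conformal class $[\wg_\lambda]$ also admits the AdS-Schwarzschild fillings on $D^2\times S^{n-1}$ discussed in Section~\ref{ss:AdS-S}, and for small $\lambda$ collapsing families appear, so without the smallness of $\delta$ and the largeness of $\lambda$ the compactification could develop curvature concentration or volume collapse. Ruling out such degeneration of $(X_i, g_i)$ is precisely the role of the new $\epsilon$-regularity for the Weyl curvature of Poincar\'e-Einstein manifolds with positive-Yamabe-type conformal infinity advertised in the abstract. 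Combined with standard Cheeger--Gromov compactness for compactified Einstein metrics of bounded geometry and boundary regularity estimates for the compactified Einstein equation, this produces $C^{2,\alpha}$-subconvergence of the compactifications, after which Theorem~\ref{t:large-circle} identifies the limit and closes the argument.
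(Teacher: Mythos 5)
Your proposal is correct and follows essentially the same route as the paper: existence via Lee's Fredholm/perturbation theory, then uniqueness by contradiction using compactness of the compactified fillings, identification of the limit with $(S^1\times D^n, g_\lambda)$ via Theorem~\ref{t:large-circle}, and a final gauge-fixed implicit-function-theorem step at $g_\lambda$. The only minor discrepancy is in attribution: in the large-$\lambda$ regime the pointwise curvature control (hence non-positive curvature, the topological identification $X_i\cong S^1\times D^n$, and the hypotheses of the compactness proposition) comes from the Yamabe-pinching curvature estimate of Li--Qing--Shi (Theorem~\ref{t:curvature-pinching}) rather than from the Weyl $\epsilon$-regularity, which the paper reserves for the small-Weyl-energy rigidity theorem.
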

\begin{remark}\label{re:uniqueness-large-circle}
The existence part of Theorem \ref{t:uniqueness-large-circle-factor} follows from an earlier result by J. Lee (\cite[theorem A]{Lee-fredholm}). Since Lee's result \cite[theorem A]{Lee-fredholm} was established via the implicit function theorem, the resulting Poincar\'e-Einstein  metric  is locally unique (in the moduli space of $S^1 \times D^n$). The main contribution of Theorem \ref{t:uniqueness-large-circle-factor} is to establish the global uniqueness of this metric.  
\end{remark}

The following theorem provides a hyperbolic rigidity under a {\it uniform pinching} condition on the  energy of the Weyl curvature. The main assertion of the theorem is that the pinching constant depends only on the dimension. A particularly new feature of this result is that this rigidity holds independent of any lower bound of the Yamabe constant, and hence is {\it independent of the non-collapse} of the conformal infinity.

\begin{Thm}\label{t:rigidity-for-small-weyl-energy} 
 For any $n \gesl 3$, there exists a dimensional constant $\delta = \delta (n) > 0$ such that the following property holds. Given any $\lambda > 0$, if a Poincar\'e-Einstein manifold $(X^{n+1}, g)$   
  has conformal infinity $(S^1 \times S^{n-1}, [\wg_{\lambda}])$ and satisfies \begin{align}
	\int_{X^{n+1}}|W_g|^{\frac{n + 1}{2}} \dvol_g   \lesl \delta \cdot \left(\mathcal{Y}(S^1 \times S^{n-1}, [\wg_{\lambda}])\right)^{\frac{n}{2}}, \label{e:small-Weyl-energy}
\end{align} 	
 then $(X^{n + 1}, g)$ must be isometric to the hyperbolic manifold $(S^1 \times D^n, g_{\lambda})$.

\end{Thm}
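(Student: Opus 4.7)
\medskip

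\textbf{Proof plan for Theorem~\ref{t:rigidity-for-small-weyl-energy}.} The plan is to reduce the Weyl pinching hypothesis to the sectional-curvature hypothesis of Theorem~\ref{t:uniqueness-nonpositively-curved} by means of a new $\epsilon$-regularity estimate for $|W_g|$ that is valid on Poincar\'e-Einstein manifolds whose conformal infinity has positive Yamabe constant, and crucially is insensitive to volume collapse.

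First, I would establish the following $\epsilon$-regularity: there is $\epsilon_0 = \epsilon_0(n) > 0$ such that for any Poincar\'e-Einstein $(X^{n+1},g)$ with $\Ric_g = -ng$ and conformal infinity $(M^n, [\wg])$ of positive Yamabe type, and for any $p \in X$ and any $r \in (0, r_0]$ (a natural scale associated with the compactified defining function), the bound
\begin{equation*}
  \int_{B_r(p)} |W_g|^{\frac{n+1}{2}} \dvol_g \;\lesl\; \epsilon_0 \cdot \mathcal{Y}(M,[\wg])^{\frac{n}{2}}
\end{equation*}
implies
\begin{equation*}
  \sup_{B_{r/2}(p)} |W_g| \;\lesl\; C(n)\, r^{-2}\Bigl( \mathcal{Y}(M,[\wg])^{-\frac{n}{2}} \int_{B_r(p)} |W_g|^{\frac{n+1}{2}} \dvol_g \Bigr)^{\frac{2}{n+1}}.
\end{equation*}
The derivation is a Moser iteration applied to the standard elliptic system $\Delta W_g = W_g \ast W_g$ satisfied by the Weyl tensor on Einstein manifolds, combined with the improved Kato inequality in the Einstein setting. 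The key point is that the Sobolev constant in the iteration is replaced by the Yamabe constant of the conformal infinity: on a conformal compactification, the Yamabe-type Sobolev inequality on $(M,[\wg])$ lifts to a uniform Sobolev inequality in the interior on appropriate scales, and this substitute works even when $(X,g)$ is volume-collapsed or non-compact.

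Next, I would apply this estimate globally. By hypothesis,
\begin{equation*}
  \int_{X^{n+1}} |W_g|^{\frac{n+1}{2}} \dvol_g \;\lesl\; \delta \cdot \mathcal{Y}(S^1 \times S^{n-1},[\wg_\lambda])^{\frac{n}{2}},
\end{equation*}
so for $\delta \lesl \epsilon_0$ the smallness hypothesis of the $\epsilon$-regularity holds on every ball of radius $r_0$ at every point. Choosing $\delta$ small enough in terms of $n$ and the definite scale $r_0$ (which is independent of $\lambda$ once one localizes the argument to the compactified collar), the pointwise estimate yields $|W_g|(x) \lesl \eta(n,\delta)$ throughout $X^{n+1}$, with $\eta \to 0$ as $\delta \to 0$. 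Because $g$ is Einstein with $\Ric_g = -n g$, the curvature decomposition
\begin{equation*}
  \Rm_g \;=\; W_g - \tfrac{1}{2}\, g \owedge g
\end{equation*}
(where $\owedge$ denotes the Kulkarni-Nomizu product, normalized so the pure hyperbolic model has $W = 0$) gives $|\Sec_g + 1| \lesl C(n)\, |W_g|$, hence $\Sec_g \lesl 0$ on all of $X^{n+1}$ once $\eta(n,\delta)$ is smaller than a dimensional constant. Theorem~\ref{t:uniqueness-nonpositively-curved} then forces $(X^{n+1},g)$ to be isometric to the hyperbolic filling $(S^1 \times D^n, g_\lambda)$.

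The hard part is the $\epsilon$-regularity step, because collapsing at infinity of the factor $S^1$ (when $\lambda$ is small) precludes using any scale-dependent Euclidean Sobolev inequality in the interior, and the Yamabe constant of a collapsing product $\wg_\lambda$ on $S^1 \times S^{n-1}$ does not degenerate uniformly in $\lambda$ only if one carefully normalizes. I expect the main technical work to lie in setting up a Moser iteration whose Sobolev constant is replaced by $\mathcal{Y}(M,[\wg])$ via the compactified picture, and in verifying that the resulting estimate is genuinely dimensional and independent of the global geometry of $(X^{n+1},g)$, so that the threshold $\delta$ in~\eqref{e:small-Weyl-energy} does not depend on $\lambda$.
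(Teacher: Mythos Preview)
Your overall architecture is exactly that of the paper: obtain a pointwise bound $\sup_{X^{n+1}}|W_g|\lesl \eta$ from the integral hypothesis, deduce $\sec_g\lesl 0$ from the curvature decomposition, and then invoke Theorem~\ref{t:uniqueness-nonpositively-curved}. The gap is in your mechanism for the $\epsilon$-regularity step.

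You propose Moser iteration on the Weyl system with the interior Sobolev constant replaced by $\mathcal{Y}(M^n,[\wg])$, asserting that the boundary Yamabe inequality ``lifts to a uniform Sobolev inequality in the interior on appropriate scales.'' This is precisely the step that fails, and the paper says so explicitly: when $\lambda\to 0$ the conformal infinity collapses, $\mathcal{Y}(S^1\times S^{n-1},[\wg_\lambda])\to 0$, and there is no known mechanism by which the boundary Yamabe constant controls an interior Sobolev constant on $(X^{n+1},g)$. The volume comparison of Li--Qing--Shi (Theorem~\ref{t:volume-comparison}) gives volume ratios, not a Sobolev inequality, so the Moser scheme has no uniform input. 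Your acknowledgment that this is ``the hard part'' is correct, but the proposal does not supply an idea for it, and the paper's route is genuinely different.

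What the paper actually does for Proposition~\ref{p:eps-regularity} is a two-stage argument that avoids any interior Sobolev inequality. First, the volume comparison (Theorem~\ref{t:volume-comparison}) converts the hypothesis \eqref{e:small-Weyl-energy} into smallness of the \emph{volume-normalized} energy $\mathcal{E}_{|\Rm_g|}(p,r)$ on every ball, and Anderson's $\epsilon$-regularity (Lemma~\ref{l:standard-eps-reg}), which is stated for this normalized quantity rather than via Sobolev, gives a uniform bound $|\Rm_g|\lesl s^{-2}$ on all of $X^{n+1}$. Second, with curvature bounded, Cheeger--Fukaya--Gromov gives a uniform injectivity-radius lower bound on local universal covers $\widetilde{B_{10r_0}(p)}$; on these \emph{non-collapsed} covers one has harmonic-coordinate bounded geometry, and a covering/fundamental-domain argument transfers the averaged Weyl integral up to the cover, where it can be upgraded to the pointwise bound $|W_{\tilde g}|\lesl \eta$. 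The pointwise bound then descends. So the substitute for your missing Sobolev inequality is: bounded curvature first, then pass to non-collapsing local covers.
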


\begin{remark}The smallness of the Weyl energy in \eqref{e:small-Weyl-energy} is necessary for the hyperbolic rigidity result above. In fact, one can find a family of AdS-Schwarzschild metrics on $D^2\times S^2$ that are Poincar\'e-Einstein fillings of the standard products on $S^1 \times S^2$ and have uniformly bounded Weyl energy. However, the Weyl energy has a lower bound away from zero. See Section \ref{ss:AdS-S} for details.
\end{remark}

\begin{remark}
In the results above, we have assumed that the conformal infinity $S^1\times S^{n - 1}$ has dimension $n \gesl 3$. In Section \ref{ss:parabolic}, we will discuss the non-uniqueness of the Poincar\'e-Einstein fillings in the special case $n = 2$. 
\end{remark}

\subsection{Organization of the paper}

In Section \ref{s:preliminaries}, we introduce some basic definitions and results which will be frequently used later in the proofs of the main theorems. 
  Section \ref{s:non-positively-curved-PE} is devoted to the proof of Theorem \ref{t:uniqueness-nonpositively-curved}.
In Section \ref{s:more-results}, we will prove Theorems \ref{t:large-circle}, \ref{t:uniqueness-large-circle-factor},  \ref{t:rigidity-for-small-weyl-energy}, which provide various geometric rigidity and quantitative rigidity results. 
  Section \ref{s:examples} exhibits several motivating examples. In
Section \ref{s:discussions}, we will propose some open questions and conjectures.

\subsection{Acknowledgements} 
The authors would like to thank Yuxin Ge, Gang Li, and Jie Qing for many helpful discussions. The first named author also would like to thank Yuxin Ge for the help in providing and clarifying several useful points from their earlier collaborations.

\section{Preliminaries}
\label{s:preliminaries}

In this section, we summarize all preliminary materials for the proofs in later sections.

\subsection{Basics in conformally compact Einstein manifolds}
\label{ss:basics-in-PE}

This subsection introduces the basic notation and results in the geometry of Poincar\'e-Einstein manifolds.

Let $(X^{n + 1}, g)$ be a complete Riemannian manifold that is diffeomorphic to the interior of a smooth compact manifold $\bX$ with smooth boundary $ M^n \equiv \p\bX$. A
{\it conformal compactification} of $(X^{n + 1}, g)$ is given by a smooth defining function $\rho \in C^{\infty}(\bX)$ for the boundary $M^n$, satisfying
\begin{align}
	\begin{cases}
			\rho > 0  & \text{in}\ X,
		\\
		    \rho = 0, \ |d\rho|_{\wg} > 0 & \text{on}\ M^n,
	\end{cases}
	\end{align}
where $\bg \equiv \rho^2 g$ extends to a compact and smooth Riemannian metric on $\bX$. The non-degeneracy condition on $M^n$ implies that a small tubular neighborhood $\{\rho < \epsilon\} \subset \bX$ of $M^n$ is diffeomorphic to $[0,\epsilon) \times M^n$. For a given defining function $\rho$, letting $\wg \equiv \rho^2 g|_{\rho = 0}$, the conformal 
manifold $(M^n, [\wg])$
is called the {\it conformal infinity}.
Moreover, 
the sectional curvature of $g$ satisfies the asymptotic behavior 
\begin{align}\sec_g = - |d\rho|_{\bar{g}}^2 + O(\rho^2)\quad \text{as}\ \rho \to 0.
\end{align} 
A complete manifold $(X^{n + 1}, g)$
is said to be {\it  conformally compact} if $X^{n+1}$ admits a conformal compactification.  

 Next, we introduce the notions of asymptotic hyperbolicity and asymptotically hyperbolic Poincar\'e-Einstein manifolds.
\begin{definition}[Asymptotic hyperbolicity] \label{d:AH} A conformally compact manifold $(X^{n+1}, g)$ with conformal infinity $(M^n, [\wg])$ is called   asymptotically hyperbolic if it admits a compactified metric $\bg = \rho^2 g$ such that $|d \rho|_{\wg} \equiv 1$  on $M^n$, where $\wg \equiv \bg|_{\rho = 0}$.
\end{definition}

 \begin{definition}
 	[Poincar\'e-Einstein manifold]
 	\label{d:PE-manifold}
 	A complete Einstein manifold $(X^{n + 1}, g)$ is said to be
 	a Poincar\'e-Einstein manifold if $\Ric_g = -n g$ and   $(X^{n + 1}, g)$  is asymptotically hyperbolic in the sense of Definition \ref{d:AH}.
 \end{definition}

When studying Poincar\'e-Einstein manifolds, it is  convenient to 
introduce a coordinate system using a geodesic defining function associated with the conformal infinity. To describe this, we recall a well-known result; see \cite[lemma 5.1]{Lee} or \cite[lemma 2.1]{Graham}.  
\begin{lemma}\label{l:geodesic-defining-function}
	Let $(X^{n + 1}, g)$ be a complete Poincar\'e-Einstein manifold with conformal infinity $(M^n, [\wg])$. Let $\rho$ be any smooth defining function of $M^n$. Then for any representative $h \in [\wg]$, there exists a unique defining function $x \in C^{\infty}(\overline{X^{n + 1}})$ of $M^n$, called the geodesic defining function associated with $h$, such that 
$|dx|_{\bg} \equiv 1$
in some tubular neighborhood of $M^n$, 
and $\bg|_{x= 0} = h$, where $\bg = x^2 g$.

\end{lemma}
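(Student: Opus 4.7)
The plan is to reduce the construction to solving a first-order non-characteristic PDE of eikonal type in a collar of $M^n$, then read off $x$ from its solution. Starting from any smooth defining function $\rho$, the hypothesis guarantees that $\bar g_0 := \rho^2 g$ extends to a smooth compactified metric on $\overline{X^{n+1}}$. I would make the ansatz $x = e^u \rho$ for an unknown $u \in C^{\infty}(\overline{X^{n+1}})$, so $\bg = x^2 g = e^{2u} \bar g_0$. Using the conformal rescaling rule $|df|^2_{e^{2\phi}g} = e^{-2\phi}|df|^2_g$, one computes
\[
|dx|^2_{\bg} \;=\; |d\rho + \rho\,du|^2_{\bar g_0},
\]
so the eikonal condition $|dx|_{\bg} \equiv 1$ in a tubular neighborhood of $M^n$ is equivalent to
\[
2\,\langle d\rho, du\rangle_{\bar g_0} + \rho\,|du|^2_{\bar g_0} \;=\; \frac{1 - |d\rho|^2_{\bar g_0}}{\rho}.
\]
The boundary condition $\bg|_{x=0} = h$ translates into $u|_{M^n} = w$, where $h = e^{2w}\bigl(\bar g_0|_{M^n}\bigr)$ defines a smooth $w \in C^{\infty}(M^n)$, well-defined because $h$ and $\bar g_0|_{M^n}$ lie in the common conformal class $[\wg]$.

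\textbf{Solvability and uniqueness.} The asymptotic hyperbolicity of $(X^{n+1}, g)$ yields $|d\rho|^2_{\bar g_0}|_{M^n} \equiv 1$, a quantity independent of the choice of defining function by a short conformal computation from Definition \ref{d:AH}. Consequently $1 - |d\rho|^2_{\bar g_0}$ vanishes on $M^n$ and the right-hand side of the displayed PDE extends to a smooth function across the boundary. Evaluating the equation at $M^n$ gives $2\,\langle d\rho, du\rangle_{\bar g_0}|_{M^n}$ equal to a smooth datum on $M^n$; since $d\rho|_{M^n}$ is transverse to $M^n$, this determines the normal derivative $\partial_\rho u|_{M^n}$ from the prescribed Cauchy data, showing the PDE is non-characteristic with respect to the initial surface $M^n$. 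The method of characteristics---integrating the Hamiltonian vector field of $|d\rho + \rho\,du|^2_{\bar g_0} - 1$ out of $M^n$ with Cauchy datum $u|_{M^n} = w$---then produces a unique smooth $u$ in a tubular neighborhood of $M^n$. Extending $u$ arbitrarily to a smooth function on $\overline{X^{n+1}}$, the function $x := e^u \rho$ is the desired geodesic defining function, and local uniqueness of the PDE solution gives the claimed uniqueness of $x$ in a collar.

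\textbf{Main obstacle.} The only delicate step is ensuring that the characteristic solution is $C^{\infty}$ up to and including the boundary $M^n$. I would handle this by introducing Fermi-type coordinates $(\rho, y^1, \dots, y^n)$ on $[0,\epsilon)\times M^n$, writing the associated Hamiltonian ODE system with coefficients that depend smoothly on $(\rho, y)$ (using the smoothness of $(1 - |d\rho|^2_{\bar g_0})/\rho$ established above), and then invoking smooth dependence of ODEs on initial data. The non-characteristic condition guarantees the characteristic flow is transverse to $M^n$ at the initial time, so the flow map is a diffeomorphism from $[0,\epsilon)\times M^n$ onto a collar, yielding the required smoothness of $u$, hence of $x$, up to $M^n$.
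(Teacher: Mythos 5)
Your proposal is correct and is essentially the standard argument from the source the paper cites for this lemma (Graham's lemma 2.1 / Lee's lemma 5.1) — the paper itself quotes the result without proof. The ansatz $x=e^{u}\rho$, the resulting non-characteristic first-order equation $2\langle d\rho, du\rangle_{\bar g_0}+\rho|du|^2_{\bar g_0}=(1-|d\rho|^2_{\bar g_0})/\rho$ with Cauchy datum $u|_{M^n}=w$, and the observation that asymptotic hyperbolicity makes the right-hand side smooth up to the boundary are exactly the standard route, and your handling of existence, uniqueness in a collar, and boundary regularity is sound.
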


In the lemma below, we will use another special defining function for the conformal infinity, introduced in the  work of Fefferman-Graham (\cite{FG}). We will use this special choice of defining function later in the proof of Theorem \ref{t:uniqueness-large-circle-factor}.
\begin{lemma}
[\cite{FG}] \label{l:FG-metric} Let $(X^{n + 1}, g)$ be a complete Poincar\'e-Einstein manifold with conformal infinity $(M^n, [\wg])$. Then, 
for any representative $\wg\in[\wg]$ on $M^n$, there exists a conformal compactification 
  $\bg = \varrho^2 g \equiv  e^{2w} g$ that satisfies \begin{align}
	-\Delta_g w = n \quad  \text{on}\ X^{n + 1},  
\end{align}
and $\bg|_{M^n} = \wg$. Near the boundary, the function $w$ has the asymptotic expansion in $x$, 
\begin{align}w = \log x +O(x^\epsilon),\quad \text{for some} \ \epsilon \in (0, 1),\end{align}    where $x$ is the geodesic defining function associated with $\wg$, 
 as in Lemma \ref{l:geodesic-defining-function}.
\end{lemma}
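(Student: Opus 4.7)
I plan to reduce the statement to the solvability of a linear elliptic equation on $(X^{n+1}, g)$. Using the geodesic defining function $x$ associated with the representative $\wg$ furnished by Lemma \ref{l:geodesic-defining-function}, I would seek $w$ in the form $w = \log x + v$, where $v$ is a correction required to vanish at $M^n$. Substituting this ansatz into $-\Delta_g w = n$ reduces the problem to solving
\begin{align}
	-\Delta_g v = n + \Delta_g(\log x) \quad \text{on}\ X^{n + 1},
\end{align}
together with the boundary behavior $v \to 0$ as $x \to 0$.

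The next step is to analyze the inhomogeneous term near the boundary using the normal form of $g$. In the collar $\{0 < x < \epsilon_0\} \simeq [0, \epsilon_0) \times M^n$ provided by the geodesic defining function, one has $\bg = x^2 g = dx^2 + h_x$ with $h_x|_{x = 0} = \wg$, so that $g^{xx} = x^2$ and $\dvol_g = x^{-(n+1)} \sqrt{\det h_x}\, dx\, d\mu_{M^n}$. A direct computation yields
\begin{align}
	\Delta_g(\log x) = -n + \frac{x}{2} \Tr_{h_x}(\p_x h_x).
\end{align}
The Einstein condition $\Ric_g = -n g$, combined with the Fefferman-Graham expansion of the boundary metric, forces $h_x = \wg + x^2 h_{(2)} + o(x^2)$, whence $\Tr_{h_x}(\p_x h_x) = O(x)$ and consequently $n + \Delta_g(\log x) = O(x^2)$ smoothly on the collar.

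Finally, I would invoke the analytic theory of the Laplacian on asymptotically hyperbolic manifolds to solve the resulting linear equation with the prescribed decay. The indicial roots of $-\Delta_g$ at $M^n$ are $0$ and $n$, and by the Fredholm and isomorphism theory of Graham-Lee, Lee, and Mazzeo-Melrose, $\Delta_g$ acts as an isomorphism $x^\delta H^{k+2}_g(X) \to x^\delta H^k_g(X)$ for $\delta$ in an appropriate subinterval of $(0, n)$; uniqueness of the decaying solution then follows from the maximum principle on AH manifolds. Since the right-hand side decays as $O(x^2)$, the solution $v$ inherits the decay $v = O(x^\epsilon)$ for some $\epsilon \in (0, 1)$, while smoothness of $v$ in the interior follows from standard elliptic regularity. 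The main difficulty is the precise asymptotic analysis at $M^n$: one must verify that the Fredholm theory selects the solution whose leading-order behavior is dictated by the inhomogeneous term -- so that $v$ vanishes on $M^n$ -- rather than by the indicial root $0$, which would correspond to a nontrivial boundary value and would spoil the prescription $\bg|_{M^n} = \wg$.
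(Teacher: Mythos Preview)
The paper does not supply its own proof of this lemma; it simply records the statement and attributes it to \cite{FG}, so there is no argument in the paper to compare against. Your outline is the standard route to this result and is correct: the ansatz $w=\log x+v$ together with the collar computation $\Delta_g(\log x)=-n+\tfrac{x}{2}\Tr_{h_x}(\p_x h_x)$ reduces the problem to $\Delta_g v=O(x^2)$, and the Mazzeo--Melrose/Lee Fredholm theory for $\Delta_g$ on asymptotically hyperbolic spaces (indicial roots $0$ and $n$) then produces a unique decaying solution $v\in x^{\delta}L^2$ for $\delta\in(0,n)$, which by elliptic regularity and the indicial analysis satisfies $v=O(x^{\epsilon})$. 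Your closing caveat about ruling out the indicial root $0$ is exactly the point: working in a weighted space with positive weight forces the decaying branch and excludes any nontrivial boundary value, so $\bg|_{M^n}=\wg$ automatically.
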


 The special conformal compactification in the lemma is often referred to the Fefferman-Graham compactification (or Fefferman-Graham metric) in the literature. 

\medskip

In the following, we recall the definition of renormalized volume,  an important conformal invariant of Poincar\'e-Einstein manifolds. Using the geodesic defining function as in Lemma \ref{l:geodesic-defining-function}, the renormalized volume is defined through the volume expansion of the exhaustion domains in $X^{n + 1}$,
\begin{align}
	\Omega_{\epsilon} \equiv \{q\in X^{n+1}: x(q) \gesl \epsilon\},
\end{align}
which are obtained by cutting off of the complete manifold $X^{n+1}$ by removing a tubular neighborhood of the conformal infinity.
\begin{definition}
[Renormalized volume] For sufficiently small $\epsilon$, the volume of $\Omega_{\epsilon}$ admits the following expansions:
\begin{enumerate}
	\item When $n$ is odd,
\begin{align}
	\Vol_g(\Omega_{\epsilon}) = c_0 \epsilon^{-n} + \sum\limits_{k\ \text{is odd}} c_k \cdot \epsilon^{-k} + \mathcal{V}(X^{n + 1}, g) + o(1)  \  \text{as}\ \epsilon\to 0.
\end{align}	
	\item When $n$ is even,

\begin{align}
	\Vol_g(\Omega_{\epsilon}) = c_0 \epsilon^{-n} + \sum\limits_{k\ \text{is even}} c_k \cdot \epsilon^{-k} + L\log(\epsilon^{-1}) + \mathcal{V}(X^{n + 1}, g) + o(1)\  \text{as}\ \epsilon\to 0.
\end{align}
\end{enumerate}
Here, the constant $\mathcal{V}(X^{n+1}, g)$ is called the renormalized volume.	
\end{definition}

It was established in \cite{Graham} that when $n$ is odd, the renormalized volume is a conformal invariant in the sense that it is independent of the choice of the representative in the conformal class $[\wg]$ on $M^n$.   When $n$ is even, it was shown in \cite{FG-conformal-invariants} that $L$ is a conformal invariant.
Furthermore, in the case of $n = 3$, a relation between the renormalized volume and the Chern-Gau{\ss}-Bonnet theorem was established by M. Anderson in \cite{Anderson-renormalization}. See also \cite{CQY-renormalized-volume} for a different proof. The following formula will be used in later sections.

\begin{theorem}[\cite{Anderson-renormalization}] \label{t:Chern-Gauss-Bonnet}
If $(X^4, g)$
is a complete Poincar\'e-Einstein manifold, then 
\begin{align}
	8 \pi^2 \chi(X^4) = \int_{X^4}|W_g|^2 \dvol_g + 6 \mathcal{V}(X^4, g).
\end{align}
\end{theorem}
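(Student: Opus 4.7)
The strategy is a renormalized Chern-Gauss-Bonnet calculation on an exhaustion by compact subdomains with boundary. Fix any representative $\wg$ on the conformal infinity, let $x$ be the associated geodesic defining function provided by Lemma~\ref{l:geodesic-defining-function}, and set $\Omega_\epsilon \equiv \{x \gesl \epsilon\}$. Each $\Omega_\epsilon$ is a smooth compact $4$-manifold with boundary that deformation retracts to $X^4$, so $\chi(\Omega_\epsilon) = \chi(X^4)$ for every sufficiently small $\epsilon > 0$.

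I would then apply the four-dimensional Chern-Gauss-Bonnet formula with boundary,
\begin{equation*}
8\pi^2\chi(\Omega_\epsilon) \;=\; \int_{\Omega_\epsilon}\mathcal{P}(g)\,\dvol_g \;+\; \int_{\partial\Omega_\epsilon}\mathcal{T}(g,\II_\epsilon)\,dA_g,
\end{equation*}
where $\mathcal{P}$ is the Pfaffian and $\mathcal{T}$ is the transgression polynomial in the intrinsic curvature of $\partial\Omega_\epsilon = \{x=\epsilon\}$ and its second fundamental form $\II_\epsilon$. The Einstein condition $\Ric_g = -3g$ makes the traceless Ricci vanish and fixes $R_g \equiv -12$; together with the standard algebraic identity for $\mathcal{P}$, these reduce the bulk integrand (under the curvature-norm convention of the statement) to $\mathcal{P}(g) = |W_g|^2 + 6$, so that
\begin{equation*}
8\pi^2\chi(X^4) \;=\; \int_{\Omega_\epsilon}|W_g|^2\,\dvol_g \;+\; 6\,\Vol_g(\Omega_\epsilon) \;+\; \int_{\partial\Omega_\epsilon}\mathcal{T}(g,\II_\epsilon)\,dA_g.
\end{equation*}

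Next I would expand each term as $\epsilon \to 0$. The bulk Weyl integral converges: by the conformal invariance of $|W|^2\,\dvol$ in dimension four, it equals $\int_{\Omega_\epsilon}|W_{\bg}|^2\,\dvol_{\bg}$ for the smooth compactification $\bg = x^2 g$, which tends to the finite limit $\int_{\bX}|W_{\bg}|^2\,\dvol_{\bg}$, identified with $\int_{X^4}|W_g|^2\,\dvol_g$. The volume expansion for $n=3$ gives
\begin{equation*}
6\,\Vol_g(\Omega_\epsilon) \;=\; 6c_0\,\epsilon^{-3} + 6c_1\,\epsilon^{-1} + 6\,\mathcal{V}(X^4, g) + o(1),
\end{equation*}
while substituting the Fefferman-Graham development $\bg = \wg + g^{(2)} x^2 + g^{(3)} x^3 + \cdots$ into $\II_\epsilon$, the intrinsic curvature of $\{x=\epsilon\}$, and the area element produces an asymptotic expansion of the transgression integral of the form
\begin{equation*}
\int_{\partial\Omega_\epsilon}\mathcal{T}(g,\II_\epsilon)\,dA_g \;=\; -6c_0\,\epsilon^{-3} - 6c_1\,\epsilon^{-1} + B_\infty + o(1).
\end{equation*}
The matching of the divergent coefficients with those of the bulk volume is automatic from the $\epsilon$-independence of the left-hand side.

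The main obstacle, and the real content of the argument, is to show that the finite residue $B_\infty$ vanishes. This is not a formal cancellation but a genuine dimension-four identity: it requires writing $\mathcal{T}$ out to order $\epsilon^0$ and simplifying the resulting boundary expression using the algebraic and differential constraints that $\Ric_g = -3g$ places on the FG coefficients $g^{(2)}$ and $g^{(3)}$ (in particular, $g^{(2)}$ is a multiple of the Schouten tensor of $\wg$, and $g^{(3)}$ is the transverse-traceless obstruction tensor). A convenient streamlining is to pass first to the Fefferman-Graham compactification of Lemma~\ref{l:FG-metric}, whose defining equation $-\Delta_g w = n$ kills several intermediate boundary jets and collapses the remaining algebra. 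Once $B_\infty = 0$ is verified, combining the three expansions and sending $\epsilon \to 0$ yields the desired identity $8\pi^2\chi(X^4) = \int_{X^4}|W_g|^2\,\dvol_g + 6\,\mathcal{V}(X^4, g)$.
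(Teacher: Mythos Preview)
The paper does not supply a proof of this theorem: it is quoted as a known result from \cite{Anderson-renormalization} (with \cite{CQY-renormalized-volume} cited as an alternative proof), so there is no in-paper argument to compare against. Your sketch is in fact a faithful outline of Anderson's original approach---apply Chern--Gau{\ss}--Bonnet with boundary on $\Omega_\epsilon$, use the Einstein condition to reduce the Pfaffian to $|W_g|^2 + 6$, expand both the bulk volume and the boundary transgression in $\epsilon$, and check that the finite boundary residue vanishes---and the algebraic reduction $\frac{1}{24}R_g^2 = 6$ when $\Ric_g = -3g$ is correct.

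The one place your write-up is thin is exactly where you flag it: the vanishing of $B_\infty$. In Anderson's paper this is done by explicit computation of the transgression form in the geodesic-defining-function gauge, using the FG expansion $g_x = \wg - x^2 P_{\wg} + x^3 g^{(3)} + \cdots$ and the constraints $\Tr_{\wg} g^{(3)} = 0$, $\delta_{\wg} g^{(3)} = 0$; the constant term of the boundary integral then reduces to an exact divergence on the closed boundary $M^3$ and integrates to zero. Your suggestion to pass to the Fefferman--Graham compactification of Lemma~\ref{l:FG-metric} is closer in spirit to the \cite{CQY-renormalized-volume} proof, which routes the identity through $Q$-curvature and the scattering operator rather than through a direct transgression computation. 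Either route works, but as written your proposal stops just short of a proof: you would still need to carry out the order-$\epsilon^0$ boundary expansion and exhibit the cancellation.
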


  We collect now several technical results that will be used in the setup and the proof of Theorem \ref{t:uniqueness-large-circle-factor} and Theorem \ref{t:rigidity-for-small-weyl-energy}.

In the context of Poincar\'e-Einstein manifolds, there is an important volume comparison result that relates the Riemannian geometry of a complete Einstein manifold $(X^{n + 1}, g)$ to the  conformal invariants of its conformal infinity $(M^n, [\wg])$.
The following theorem is originally claimed in \cite{DJ}, while a complete and rigorous proof
 was given in \cite[theorem 1.3]{LQS} 
\begin{theorem}
[Volume comparison] \label{t:volume-comparison}
Let $(X^{n + 1}, g)$ be a complete Poincar\'e-Einstein manifold with conformal infinity $(M^n, [\wg])$. If the Yamabe constant $\mathcal{Y}(M^n, [\wg])$ of the conformal infinity is positive, then for any $p\in X^{n + 1}$ and for any $r > 0$,
	\begin{align}
		\left(\frac{\mathcal{Y}(M^n, [\wg])}{\mathcal{Y}_0}\right)^{\frac{n}{2}} \lesl \frac{\Vol_g(B_r(p))}{\Vol_{-1}(B_r)} \lesl 1,
	\end{align}
where $\mathcal{Y}_0 \equiv \mathcal{Y}(S^n, [g_c])$ is the Yamabe constant for the  round metric on $S^n$ and $\Vol_{-1}(\cdot)$ is the volume function on the hyperbolic space $\mathbb{H}^{n + 1}$ of curvature $-1$.
 \end{theorem}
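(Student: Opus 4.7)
The plan is to handle the two inequalities separately, both passing through the classical Bishop--Gromov volume comparison. Since $\Ric_g = -n g$ equals the Ricci curvature of the hyperbolic space $(\mathbb{H}^{n+1}, h)$ with $\sec_h \equiv -1$, Bishop--Gromov yields that the ratio $\Vol_g(B_r(p))/\Vol_{-1}(B_r)$ is monotonically non-increasing in $r > 0$ for each fixed $p$, with limit $1$ as $r \to 0^+$. This delivers the upper bound immediately, and more importantly reduces the lower bound to verifying
\[
L := \lim_{r\to\infty} \frac{\Vol_g(B_r(p))}{\Vol_{-1}(B_r)} \geq \left(\frac{\mathcal{Y}(M^n,[\wg])}{\mathcal{Y}_0}\right)^{n/2}.
\]

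To identify $L$, I would fix a Yamabe minimizer $\wg_Y \in [\wg]$ normalized so that the constant scalar curvature equals $n(n-1)$. The defining identities for the Yamabe constants on $M^n$ and on $(S^n, g_c)$ then give $\Vol_{\wg_Y}(M^n)/\omega_n = (\mathcal{Y}(M^n,[\wg])/\mathcal{Y}_0)^{n/2}$, where $\omega_n = \Vol(S^n, g_c)$. Using the geodesic defining function $x$ associated with $\wg_Y$ from Lemma \ref{l:geodesic-defining-function}, on a collar one writes $g = x^{-2}(dx^2 + g_x)$, with $g_x = \wg_Y + O(x^2)$ where the $O(x^2)$ remainder comes from the Einstein equation. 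A direct integration in collar coordinates yields
\[
\Vol_g(\Omega_\epsilon) = \tfrac{1}{n}\Vol_{\wg_Y}(M^n)\,\epsilon^{-n} + O(\epsilon^{-n+2}) \quad \text{as}\ \epsilon \to 0^+,
\]
while in hyperbolic space $\Vol_{-1}(B_r) = \omega_n \int_0^r \sinh^n(s)\,ds \sim \omega_n (n\cdot 2^n)^{-1} e^{nr}$ as $r \to \infty$.

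The main obstacle, and the key technical step, is the asymptotic matching between the intrinsic geodesic balls $B_r(p)$ and the defining-function sublevel sets $\Omega_{\epsilon(r)}$ for large $r$. I would analyze unit-speed geodesics escaping to $M^n$ using the normal form above: the radial component of the geodesic equation forces $x \sim c(p)\, e^{-r}$ along any such geodesic, for a basepoint-dependent constant $c(p)$ that also encodes the angular data at infinity. A sandwich inclusion $\Omega_{c_1(p,r,\delta)} \subset B_r(p) \subset \Omega_{c_2(p,r,\delta)}$ with $c_1, c_2 \sim c(p)e^{-r}$ and relative gap $\delta$, combined with the two volume formulas above, then yields $L = \Vol_{\wg_Y}(M^n)/\omega_n$ after careful tracking of the geometric constants. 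The hardest part is establishing the sandwich uniformly in the angular direction along $M^n$: this requires sharp control of the error term $g_x - \wg_Y = O(x^2)$ and its cumulative effect on geodesic deviation through the entire collar, and it is here that the positivity of $\mathcal{Y}(M^n,[\wg])$ enters to guarantee that the limiting boundary measure does not degenerate.
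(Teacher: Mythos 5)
The paper does not actually prove Theorem \ref{t:volume-comparison}: it is quoted from the literature (originally claimed in [DJ], with a complete proof in [LQS, theorem 1.3]), so your proposal has to be measured against that cited argument. The first half of your sketch is fine and matches it: Bishop--Gromov with $\Ric_g=-ng$ gives the upper bound and the monotonicity that reduces the lower bound to the asymptotic ratio $L$ at $r=\infty$; the leading term of $\Vol_g(\Omega_\epsilon)$ and the asymptotics of $\Vol_{-1}(B_r)$ are also correct.

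The gap is in your identification of $L$. As you yourself half-admit, the quantity $x\,e^{d_g(p,\cdot)}$ converges not to a constant $c(p)$ but to a positive function $u_p$ on $M^n$ depending on the boundary point. A correct sandwich argument therefore gives
\begin{align*}
L \;=\; \frac{2^n}{\omega_n}\int_{M^n} u_p^{-n}\, dv_{\wg_Y} \;=\; \frac{2^n}{\omega_n}\,\Vol_{\hat g_p}(M^n),
\qquad \hat g_p \equiv u_p^{-2}\,\wg_Y \in [\wg],
\end{align*}
where $\hat g_p$ is the conformal representative induced by the compactification $e^{-2d_g(p,\cdot)}g$ --- not $\Vol_{\wg_Y}(M^n)/\omega_n$. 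Equality of these two quantities is false in general (it is essentially the rigidity case of the theorem), so the claim ``$L=\Vol_{\wg_Y}(M^n)/\omega_n$ after careful tracking of constants'' cannot be salvaged; sandwiching $B_r(p)$ between sublevel sets of $x$ with levels $\min_\theta u_p\cdot e^{-r}$ and $\max_\theta u_p\cdot e^{-r}$ does not pin $L$ down. The real content of the theorem is the inequality $\Vol_{\hat g_p}(M^n)\gesl 2^{-n}\omega_n\left(\mathcal{Y}(M^n,[\wg])/\mathcal{Y}_0\right)^{n/2}$, and nothing in your sketch produces it: your only use of the Yamabe constant is the qualitative remark that positivity prevents degeneration of the boundary measure, which gives no quantitative bound. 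The missing idea in the cited proof is to bound the scalar curvature of $\hat g_p$ from above by the model value, using the Riccati/Laplacian comparison for $r=d_g(p,\cdot)$ together with the Gauss equation on the geodesic spheres, and then to test the variational characterization of $\mathcal{Y}(M^n,[\wg])$ as an infimum over the conformal class on this particular representative:
\begin{align*}
\mathcal{Y}(M^n,[\wg]) \;\lesl\; \frac{\int_{M^n} R_{\hat g_p}\, dv_{\hat g_p}}{\Vol_{\hat g_p}(M^n)^{\frac{n-2}{n}}} \;\lesl\; C(n)\,\Vol_{\hat g_p}(M^n)^{\frac{2}{n}},
\end{align*}
which converts the curvature bound into the required volume lower bound. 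Without this step the argument does not close.
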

The volume comparison theorem above is a powerful tool in studying the metric geometry of $(X^{n+1}, g)$, which we will apply later to derive the regularity and uniform curvature estimates in this paper. Applying this volume estimate and a contradiction compactness argument, Li-Qing-Shi proved in \cite[theorem 1.6]{LQS} the following uniform curvature estimate.
\begin{theorem}\label{t:curvature-pinching}
Given $n\gesl 3$ and $\epsilon > 0$, there exists 	
a positive number $\delta = \delta(n , \epsilon) > 0$ such that the following property holds. 
Let $(X^{n+1}, g)$ be a Poincar\'e-Einstein manifold with conformal infinity $(M^n, [\wg])$. 
If 
\begin{align}\frac{\mathcal{Y}(M^n, [\wg])}{\mathcal{Y}(S^n, [g_c])} \gesl 1 - \delta,\end{align}
then $\sup\limits_{X^{n+1}}|\sec_g + 1| \lesl \epsilon$.
\end{theorem}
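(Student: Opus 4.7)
The plan is a contradiction argument that combines the volume comparison of Theorem \ref{t:volume-comparison} with Cheeger-Colding almost-rigidity and standard regularity for Einstein metrics. Suppose the conclusion fails. Then there exist $\epsilon_0 > 0$, a sequence of Poincar\'e-Einstein manifolds $(X_i^{n+1}, g_i)$ with conformal infinities $(M_i^n, [\wg_i])$ satisfying
\begin{align}
\mathcal{Y}(M_i^n, [\wg_i])/\mathcal{Y}(S^n, [g_c]) \to 1,
\end{align}
together with points $p_i \in X_i^{n+1}$ for which $|\sec_{g_i}(p_i) + 1| \gesl \epsilon_0$. Since asymptotic hyperbolicity in the sense of Definition \ref{d:AH} forces $\sec_{g_i} + 1 \to 0$ uniformly as one approaches the conformal boundary, one may assume the $p_i$ remain in a region where the geometry is not yet modeled on the hyperbolic end.

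Next, I would extract a pointed limit. Applying Theorem \ref{t:volume-comparison} at $p_i$ with any fixed radius $r > 0$ gives
\begin{align}
\Bigl(\mathcal{Y}(M_i^n, [\wg_i])/\mathcal{Y}_0\Bigr)^{n/2} \lesl \frac{\Vol_{g_i}(B_r(p_i))}{\Vol_{-1}(B_r)} \lesl 1,
\end{align}
so the left endpoint tends to $1$. Combined with the uniform lower Ricci bound $\Ric_{g_i} = -n g_i$, this is exactly the non-collapse plus almost-rigidity hypothesis of Cheeger-Colding: a subsequence of $(X_i^{n+1}, g_i, p_i)$ converges in the pointed Gromov-Hausdorff topology on every fixed ball, and the almost-rigidity of equality in Bishop-Gromov under $\Ric \gesl -n$ identifies the limit of every $r$-ball as a ball of the same radius in hyperbolic space $\dH^{n+1}$ of curvature $-1$.

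The final step is to upgrade this to smooth convergence. Non-collapse together with the two-sided Ricci bound supplied by the Einstein equation produces a uniform positive lower bound on the harmonic radius at $p_i$ via Anderson's $\epsilon$-regularity theorem. In harmonic coordinates the equation $\Ric_{g_i} = -n g_i$ is a quasi-linear elliptic system for $g_i$, so Schauder bootstrap yields uniform $C^{k,\alpha}$ bounds for every $k$ on a fixed ball around $p_i$. Passing to a further subsequence therefore gives smooth pointed Cheeger-Gromov convergence $(X_i^{n+1}, g_i, p_i) \to (\dH^{n+1}, g_{\mathrm{hyp}}, p_\infty)$, and in particular $\sec_{g_i}(p_i) \to -1$, contradicting $|\sec_{g_i}(p_i) + 1| \gesl \epsilon_0$.

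The main obstacle I anticipate is controlling the behavior near the conformal infinity: one has to rule out that the violating points $p_i$ escape to scales where the interior harmonic-coordinate estimates break down but asymptotic hyperbolicity is not yet quantitatively effective. This is handled by the two-sided volume comparison, which simultaneously prevents interior collapse and uncontrolled volume growth. A secondary technical point is the precise form of Bishop-Gromov rigidity in the setting $\Ric \gesl -n$, where equality on a ball forces isometry to a hyperbolic ball; the quantitative almost-rigidity version of this statement, combined with elliptic regularity for the Einstein equation, delivers the smooth convergence needed to contradict the curvature defect at $p_i$.
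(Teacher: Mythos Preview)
Your proposal is correct and matches the approach the paper cites: the paper does not prove this result itself but attributes it to Li--Qing--Shi \cite[theorem 1.6]{LQS}, describing it only as a ``contradiction compactness argument'' based on the volume comparison of Theorem \ref{t:volume-comparison}, which is exactly what you have outlined. One minor point: your remark about asymptotic hyperbolicity keeping the $p_i$ in a bounded region is unnecessary (and not obviously uniform in $i$), since the two-sided volume comparison of Theorem \ref{t:volume-comparison} holds at every point and every radius, already supplying both non-collapse and the Cheeger--Colding almost-rigidity input wherever $p_i$ happens to be.
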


 In the statement of Theorem \ref{t:uniqueness-large-circle-factor}, which we will later verify in this paper, if a conformal structure $[\wg]$ on $S^1 \times S^{n - 1}$ is sufficiently close to the standard conformal class $[\wg_{\lambda}]$ and the circle factor is sufficiently large, then there exists a unique Poincar\'e-Einstein metric on $S^1\times D^n$.   
   Here, we first state the result for the existence part of such metrics, which is a direct consequence of the following perturbation existence result of Lee; see\cite[theorem A]{Lee-fredholm} for a more general statement of such result. 

\begin{theorem}\label{t:Lee-IFT}
Let $(X^{n + 1}, h)$
 be a complete Poincar\'e-Einstein manifold with conformal infinity $(M^n, [\wh])$ that satisfies $\sec_h \lesl 0$ on $X^{n + 1}$. For any metric $\wh\in[\wh]$, there exists some small number $\epsilon > 0$  such that if a Riemannian metric $\wg$ on $M^n$ satisfies
 \begin{align}
 	\|\wg - \wh\|_{C^{2,\alpha}(M^n)} < \epsilon, 
 \end{align}
 then there exists a complete Poincar\'e-Einstein metric $g$ on $X^{n + 1}$ whose conformal infinity is  
 $(M^n, [\wg])$.
 \end{theorem}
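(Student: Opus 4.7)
The plan is to cast this as a deformation problem for the Einstein equation and apply the implicit function theorem in the weighted Hölder spaces developed by Graham-Lee and extended by Lee. Since the Einstein equation $\Ric_g + n g = 0$ is diffeomorphism invariant, the first step is to break the gauge by the DeTurck device: replace the Einstein equation by
\[
Q(g) \equiv \Ric_g + n g - \delta_g^{*}\bigl(B_h(g)\bigr) = 0,
\]
where $B_h$ is the Bianchi operator of the background $h$. The operator $Q$ is quasilinear elliptic, and any of its zeros satisfying $B_h(g) = 0$ is a genuine Einstein metric. For each smooth metric $\wg$ on $M^n$ close to $\wh$ in $C^{2,\alpha}$, I would pick a reference asymptotically hyperbolic extension $g_0(\wg)$ with conformal infinity $[\wg]$ and $g_0(\wh) = h$, and search for the solution in the form $g = g_0(\wg) + \eta$ with $\eta$ in a weighted Hölder space $x^{s} C^{k,\alpha}(X^{n+1}; S^2 T^{*}X)$ for a suitable positive weight $s$, where $x$ is a geodesic defining function of $M^n$ as in Lemma \ref{l:geodesic-defining-function}.

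Linearizing $Q$ at the background $(h,\wh)$ produces an operator of the form
\[
L(\eta) = \tfrac{1}{2}\nabla^{*}\nabla \eta + \mathcal{K}_h(\eta),
\]
where $\mathcal{K}_h$ is a zeroth-order curvature term built from $\Ric_h$ and the Riemann tensor of $h$; equivalently, $L$ is half the Lichnerowicz Laplacian adapted to the Einstein background. By the Graham-Lee-Mazzeo-Melrose theory of uniformly degenerate elliptic operators, $L$ is Fredholm between the chosen weighted Hölder spaces provided the weight $s$ avoids the discrete indicial set at infinity, which is determined by the asymptotic hyperbolic model. This supplies the analytic framework in which to invoke the implicit function theorem, conditional on $L$ being an isomorphism.

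The crucial step is to show that $L$ has trivial kernel, and this is where the hypothesis $\sec_h \lesl 0$ enters. For $\eta$ in the kernel with sufficient decay at infinity, integrating by parts and applying the Weitzenböck formula on an Einstein background yields
\[
0 = \int_{X^{n+1}} \bigl(|\nabla \eta|^{2} + \langle \mathcal{R}_h(\eta),\eta\rangle\bigr)\dvol_h,
\]
where $\mathcal{R}_h$ is the curvature endomorphism on symmetric $2$-tensors induced by the Riemann tensor of $h$. Since $h$ is Einstein with $\Ric_h = -n h$ and $\sec_h \lesl 0$, a direct algebraic computation shows that $\langle \mathcal{R}_h(\eta),\eta\rangle \gesl 0$ pointwise, forcing $\eta \equiv 0$. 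Combined with formal self-adjointness and the Fredholm property, this makes $L$ an isomorphism between the relevant weighted function spaces.

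At this point the implicit function theorem supplies, for $\|\wg - \wh\|_{C^{2,\alpha}(M^n)} < \epsilon$, a unique $\eta(\wg)$ near $0$ solving $Q(g_0(\wg) + \eta) = 0$. To remove the DeTurck gauge, set $V \equiv B_h(g)$; applying the contracted second Bianchi identity to the equation $Q(g) = 0$, one finds that $V$ satisfies a homogeneous elliptic system with decay at infinity, so a unique continuation or maximum-principle argument gives $V \equiv 0$. Therefore $g$ is genuinely Einstein with $\Ric_g = -n g$ and has conformal infinity $[\wg]$. The main obstacle throughout is reconciling the Fredholm theory, which requires $\eta$ to lie in a weighted space with a specific positive weight avoiding the indicial roots, with the Bochner argument, which demands enough integrability at infinity to justify the integration by parts and depends sharply on the non-positive curvature hypothesis to ensure the pointwise quadratic form is non-negative.
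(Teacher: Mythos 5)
The paper does not actually prove this statement: it is quoted verbatim as Lee's Theorem A in \cite{Lee-fredholm}, so there is no internal proof to compare against. Your proposal is an attempt to reprove Lee's theorem, and its overall architecture — Bianchi/DeTurck gauge, approximate extension of the boundary data, Fredholm theory for the uniformly degenerate linearization in weighted H\"older spaces, kernel vanishing, implicit function theorem, and removal of the gauge via the contracted Bianchi identity — is exactly the right one and matches Lee's.

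However, there is a genuine gap at the crucial step, the triviality of the kernel. On an Einstein background with $\Ric_h=-nh$ the gauged linearization on trace--free symmetric $2$--tensors is $\tfrac12(\Delta_L+2n)=\tfrac12\left(\nabla^*\nabla-2\mathring{R}\right)$, where $\mathring{R}\eta_{ij}=R_{ikjl}\eta^{kl}$, so the zeroth--order term in your Weitzenb\"ock identity is $-2\langle\mathring{R}\eta,\eta\rangle$. Your claim that $\sec_h\leq 0$ makes this pointwise nonnegative is false: already on the model $\mathbb{H}^{n+1}$ one has $\mathring{R}\eta=\eta$ for trace--free $\eta$, so the term equals $-2|\eta|^2<0$, and the identity $0=\int\left(|\nabla\eta|^2-2|\eta|^2\right)$ does not force $\eta\equiv0$. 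Diagonalizing $\eta$ with eigenvalues $\lambda_i$, one has $\langle\mathring{R}\eta,\eta\rangle=\sum_{i\neq k}K_{ik}\lambda_i\lambda_k$ with $\sum_i\lambda_i=0$, and nonpositivity of the sectional curvatures $K_{ik}$ gives no sign for this sum; the best algebraic bound using the Einstein condition $\sum_{k\neq i}K_{ik}=-n$ is $\langle\mathring{R}\eta,\eta\rangle\leq n|\eta|^2$, which is far from what the Bochner argument needs. Lee's actual vanishing theorem under $\sec_h\leq 0$ is not an integration by parts at all: one applies a maximum--principle argument to the largest (and smallest) eigenvalue of the putative kernel tensor, using that $\sec_h\leq 0$ together with $\sum_{k\neq i}K_{ik}=-n$ yields a differential inequality for that eigenvalue function in the barrier sense, and that $L^2$ kernel elements of the Fredholm operator decay at infinity so the maximum principle applies. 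Without replacing your pointwise positivity claim by this (or an equivalent spectral argument), the proof does not close.
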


Theorem \ref{t:Lee-IFT} was established using the implicit function theorem with respect to some Banach norm (a weighted H\"older norm), the solution $g$ is unique in a sufficiently small  neighborhood of the ``base point" $h$ in this Banach space. As mentioned in Remark \ref{re:uniqueness-large-circle}, we will later establish the  global uniqueness of the metric in Theorem \ref{t:uniqueness-large-circle-factor}.

\subsection{The model hyperbolic metrics}
\label{ss:model-space}
In this subsection, we will discuss the model hyperbolic metrics on $S^1 \times D^n$ whose conformal infinity is given by the standard product metrics 
on $S^1 \times S^{n - 1}$.

For any $\lambda>0$, let $\wg_{\lambda}\equiv \lambda^2\tau^2  + g_c$ be the normalized product metric on $S^1\times S^{n-1}$, where $(S^1,\tau^2)$ is the unit circle with the standard metric and $(S^{n-1}, g_c)$ is the round sphere with constant curvature $+1$. 
Then for each $\lambda > 0$, one can compute explicitly that the metric 
\begin{align}
g_{\lambda} = dr^2 + \sinh^2(r) g_c + \cosh^2(r) (\lambda^2 \tau^2), \quad 0\lesl r< +\infty,\end{align}
on $S^1 \times D^n$
has constant curvature $-1$, where $r$ is the distance to the circle $\sigma_{\dZ}\subset S^1 \times D^n$ and the metric of $\sigma_{\dZ}$ is $\lambda^2\tau^2$. 
It follows that $\lambda$ equals the length of the circle  $\sigma_{\dZ}$, 
which also satisfies
$\lambda = 2 
\cdot \Injrad_{g_{\lambda}}(S^1 \times D^n)$.

One can also re-write $g_{\lambda}$ as in the geodesic defining function associated to   $(S^1 \times S^{n - 1}, \wg_{\lambda})$:
\begin{align}
	g_{\lambda} = x^{-2}\left(dx^2  + \left(1 - \frac{x^2}{4}\right)^2 g_c +  \left(1 + \frac{x^2}{4}\right)^2 (\lambda^2 \tau^2)\right),
\end{align}
where $0\lesl x \lesl 2$.
There is another coordinate representation 
of $g_{\lambda}$
\begin{align}
	g_{\lambda} = \frac{ds^2}{1 + s^2} + s^2 g_c +  (1 + s^2) (\lambda^2 \tau^2).  
\end{align}
 This representation is related to the example in Section \ref{ss:AdS-S}.

The above is a basic model in hyperbolic geometry. Let $\Gamma \subset \Isom(\dH^{n + 1})$
be a discrete subgroup, namely a {\it Kleinian group}. Notice that the isometry group
$\Isom(\dH^{n + 1})$
shares the same identity component as the M\"obius group $\MM(S^n)$, which is the group of conformal transformations on the round sphere $S^n$. This isomorphism also provides a correspondence between the metric Riemannian structure of $\dH^{n + 1}$ and the conformal structure of $S^n$.

 If the discrete group $\Gamma \subset \Isom(\dH^{n + 1})$ acts freely on $\dH^{n + 1}$, then the quotient $\dH^{n + 1}/\Gamma$ becomes a hyperbolic manifold. 
Moreover, it follows from a basic result in Riemannian geometry that $\Gamma$ is torsion free. Thus, for any $\tp \in \dH^{n+1}$, the orbit $\Gamma \cdot \tp$ has limit points
on $S^n$. The set of all such limit points 
are called the {\it limit set} of $\Gamma$, denoted by $\Lambda(\Gamma)$. It is well known that if a locally conformally flat manifold $(M^n ,g)$
is compact and has nonnegative scalar curvature, then it must be a Kleinian manifold such that its fundamental group is isomorphic to a Kleinian group in $\Mob(S^n)$; see \cite{SY}.

A Kleinian group is called {\it elementary}
if $\#\Lambda(\Gamma) < \infty$. 
It is well known that $\#\Lambda(\Gamma) < \infty$
if and only if $\#\Lambda(\Gamma) \in \{0 , 1 , 2\}$. Correspondingly, $\Gamma$ is isomorphic to $\{e\}$, $\dZ^n$, and $\dZ$, respectively. Moreover, the quotients $\Omega(\Gamma)/\Gamma$ from the spherical domain $\Omega(\Gamma)\equiv S^n\setminus \Lambda(\Gamma)$ are $S^n$, $\dT^n$, and $S^1 \times S^{n - 1}$, respectively. 
Notice that the hyperbolic metric $g_{\lambda}$ on $S^1 \times D^n$ corresponds to the elementary Kleinian group $\Gamma \cong \dZ$, which acts on $S^n$ as dilations between the two fixed points --- the north and south poles.  The next case is when $\Gamma \cong \dZ^n$.  
In this case, the group $\Gamma$ is a rank-$n$ Bieberbach group in $\MM(S^n)$, which acts on $S^n\setminus \{N\}\approx \dR^n$ as translations.  Given a flat metric $\wg_0$ on 
$\dT^n$, one can obtain a hyperbolic metric 
\begin{align}
	g = dt^2 + e^{2t} \wg_0, \quad t\in(-\infty, + \infty),
\end{align}
on $\dR \times \dT^n$. Apparently, 
$(\dR \times \dT^n, g)$
has two ends:
the one is of infinite volume, 
and the other is a finite volume cusp end. 
See Section \ref{ss:parabolic} for more discussions about this case.

\subsection{Metric geometry of non-positively curved spaces}
\label{ss:metric-geometry-nonpositive-curvature}

Recall that, a key assumption in Theorem \ref{t:uniqueness-nonpositively-curved} is that the Poincar\'e-Einstein manifold $(X^{n + 1}, g)$
is non-positively curved. In this subsection, 
we will establish some basic tools in the metric geometry of non-positively-curved spaces, which will be frequently used later in Section \ref{s:non-positively-curved-PE}. 

\medskip

We start with some basic notions.
Let $(Z, d)$ be a metric space, and  let $\Isom(Z)$ be the isometry group of $Z$.

\begin{itemize}
\item  A geodesic line in $Z$ is an isometric embedding $\ell: (-\infty,+\infty)\to Z$. In other words,  
\begin{align}
	d(\ell(t) , \ell(s)) = |t - s|\quad \text{for all} \ t,s \in \dR.
\end{align}

\item Given an isometry $\gamma \in \Isom(Z)$, the displacement function of $\gamma$ is the function $d_{\gamma}: Z \to [0,\infty)$ defined by 
 \begin{align}
d_{\gamma}(p) \equiv d(p, \gamma \cdot p), \quad p\in Z.
 \end{align}

\item  Given an isometry $\gamma \in \Isom(Z)$, the translation length of $\gamma$ is defined by 
\begin{align}
	|\gamma| \equiv \inf\left\{d_{\gamma}(x)| x \in Z\right\}.
\end{align}

\item For any element $\gamma \in \Isom(Z)$, the minimum set $\Min(\gamma)\subset Z$ of $\gamma$ is defined by 
 \begin{align}\Min(\gamma) \equiv \{x \in Z| d_{\gamma}(x) = |\gamma|\}.\end{align}

\item An isometry $\gamma\in \Isom(Z)$ is called semi-simple if $\Min(\gamma)\subset Z$ is non-empty. A subgroup $\Gamma \subset \Isom(Z)$ is called semi-simple if all its elements are semi-simple. 
\item Any isometry $\gamma\in \Isom(Z)$ can be classified as follows: 
\begin{enumerate}
	\item {\it elliptic} if $\gamma$ has a fixed point in $Z$;
	\item {\it hyperbolic (or axial)} if the displacement function $d_{\gamma}$ achieves a {\it positive} minimum in $Z$;
	\item {\it parabolic} if the displacement function $d_{\gamma}$ does not achieve a minimum in $Z$, in other words, $\Min(\gamma)$ is empty. 
\end{enumerate}

\item Let $\Gamma \subset \Isom(Z)$ be a subgroup of isometries. Then we denote by 
\begin{align}
	\Min(\Gamma) \equiv \bigcap\limits_{\gamma \in \Gamma} \Min(\gamma)
\end{align}
the minimum set of $\Gamma$.
\end{itemize}
The lemma below is a standard but useful result concerning the invariance property of the minimum set. 

\begin{lemma} \label{l:invariance-of-minimum-set} Let $(X,d)$ be a length metric space and let $\Gamma$ and $N$ be two subgroups of $\Isom(X)$. 
If any two elements $\gamma\in \Gamma$ and $h\in N$ commute,  
then $N$ keeps the minimum set $\Min(\Gamma)$ of $\Gamma$ invariant.
\end{lemma}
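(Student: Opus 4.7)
The plan is to reduce invariance of $\Min(\Gamma)$ under $N$ to invariance of each individual $\Min(\gamma)$ under $N$, and then to derive the latter from a direct computation using the commutation hypothesis together with the fact that $h \in N$ is an isometry.

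The first step is to show that for any fixed $\gamma \in \Gamma$ and any $h \in N$, the displacement function $d_\gamma$ is $h$-invariant, i.e.\ $d_\gamma(h \cdot p) = d_\gamma(p)$ for all $p \in X$. This follows from the identity
\begin{align}
d_\gamma(h \cdot p) = d(h \cdot p,\, \gamma h \cdot p) = d(h \cdot p,\, h \gamma \cdot p) = d(p,\, \gamma \cdot p) = d_\gamma(p),
\end{align}
where the middle equality uses the commutation $\gamma h = h \gamma$ and the third uses that $h$ is an isometry of $X$.

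As an immediate consequence, $|\gamma| = \inf_{p \in X} d_\gamma(p) = \inf_{p \in X} d_\gamma(h \cdot p)$, and $p \in \Min(\gamma)$ if and only if $d_\gamma(p) = |\gamma|$, if and only if $d_\gamma(h \cdot p) = |\gamma|$, if and only if $h \cdot p \in \Min(\gamma)$. Therefore $h \cdot \Min(\gamma) = \Min(\gamma)$ for every $h \in N$ and every $\gamma \in \Gamma$.

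Finally, since $\Min(\Gamma) = \bigcap_{\gamma \in \Gamma} \Min(\gamma)$ is the intersection of sets each preserved by every $h \in N$, the set $\Min(\Gamma)$ is itself preserved by $N$. There is no real obstacle here; the argument is essentially formal once the commutation is used to move $h$ past $\gamma$ and the isometry property is invoked. The only mild subtlety is that the statement does not require $\Min(\gamma)$ to be non-empty, in which case everything remains vacuously true, and the length-space hypothesis on $X$ is in fact not used in this particular lemma.
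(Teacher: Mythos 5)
Your proof is correct, and it is the standard argument: the paper states this lemma without proof (calling it "a standard but useful result"), and your computation $d_\gamma(h\cdot p)=d(h\cdot p, h\gamma\cdot p)=d_\gamma(p)$, followed by passing to the infimum and intersecting over $\gamma\in\Gamma$, is exactly the intended justification. Your side remarks — that the argument is vacuously fine when $\Min(\gamma)=\emptyset$ and that the length-space hypothesis is not actually needed — are also accurate.
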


Now we present simple examples that illustrate the above notations.  
\begin{example}
Given $\lambda > 0$, the hyperbolic metric $g_{\lambda}$ defined on $X^{n+1} = S^1 \times D^n$ has constant curvature $-1$. Let $\Gamma \equiv \pi_1(X^{n+1}) \cong \dZ$ be the fundamental group of $X^{n + 1}$, which acts freely and isometrically on the universal covering space $(\ucX, \tg_{\lambda})$. One can easily check that the transformation group $\Gamma$ is semi-simple. In fact,  for any $\gamma\in\Gamma$, 
\begin{align}
	\Min(\gamma) = \{\tx\in\ucX: r(\tx) = 0\},
\end{align}
which is a geodesic line in $\ucX$. Therefore, $\Min(\Gamma) \equiv \bigcap\limits_{\gamma\in\Gamma} \Min(\gamma)  = \Min(\gamma)$ for any $\gamma\in\Gamma$.

In this example, every element $\gamma \in \Gamma \subset \Mob(S^n)$ corresponds to a dilation on the cylinder $\dR^n \setminus \{\bo\}\overset{\text{diffeo}}{\approx}\dR \times S^{n - 1}$: 
\begin{align}
	\gamma: \dR^n \setminus \{\bo\} \to \dR^n \setminus \{\bo\},\quad \bx \mapsto \lambda \cdot \bx, \quad \bx\in \dR^n \setminus \{\bo\},
\end{align}
where $\lambda > 0$ is a fixed number.
\end{example}

\begin{example} As mentioned in the last subsection, when $\Gamma\subset \Mob(S^n)$ is a group of translations on $\dR^n$ that is isomorphic to $\dZ^n$, one can find a hyperbolic metric 
on $\dR \times \dT^n$,
\begin{align}
	g = dt^2 + e^{2t} \wg_0, \quad t\in(-\infty, + \infty),
\end{align}
where $\wg_0$ is the quotient metric on 
$\dR^n/\Gamma \approx \dT^n$. Obviously, $|\gamma| = 0$
for any element 
$\gamma\in \Gamma$.
In fact, 
$d_{\dH^{n + 1}}(\gamma\cdot\tx, \tx) \to 0$ as $t(\tx) \to -\infty$. Meantime, 
$\Min(\gamma) = \emptyset$, and thus every element is parabolic. 
\end{example}

The following lemma is a standard result, which characterizes the minimal set of a hyperbolic isometry on a simply-connected space with non-positive curvature. We refer readers to \cite[Part II.6.1]{BH}.
\begin{lemma}\label{l:minimum-set-of-hyperbolic-element}
 Let $(X ,g)$ be a complete simply-connected Riemannian manifold with non-positive sectional curvature. If $\gamma\in\Isom(X^{n + 1})$ is a hyperbolic isometry, then $\Min(\gamma)$ is given by a set of parallel geodesic lines. That is, $\Min(\gamma)$ is isometric to a metric product $Y \times \dR$. Moreover,   $\gamma$ acts on $\Min(\gamma)$ by $\dR$-translation,
 \begin{align}
 	\gamma(y,t) = \gamma(y, t + t_{\gamma}), \quad y\in Y, \ t\in \dR,
 \end{align}
 where $t_{\gamma} \in \dR$ is a fixed number that satisfies $|t_{\gamma}| = |\gamma|$.
\end{lemma}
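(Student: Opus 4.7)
The plan is to combine the convexity of the displacement function on a Hadamard manifold with the standard flat strip theorem. I would first record the following elementary facts for $(X,g)$ simply connected with $\sec_g \leqslant 0$: (a) for any two geodesics $c_1, c_2 \colon \dR \to X$ parametrized by arclength, the function $t \mapsto d(c_1(t), c_2(t))$ is convex; (b) consequently, for any isometry $\gamma$ of $X$, the displacement function $d_\gamma$ is a convex function on $X$, and its level sets (in particular $\Min(\gamma)$) are closed convex subsets, hence totally geodesic submanifolds.

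Next I would show that each point of $\Min(\gamma)$ lies on an axis. Fix $p \in \Min(\gamma)$ and let $c$ be the unique geodesic segment from $p$ to $\gamma p$ of length $|\gamma|$. Since $\gamma$ is an isometry, $\gamma \cdot c$ is a geodesic segment from $\gamma p$ to $\gamma^2 p$ of the same length, and I claim the concatenation $c \cup (\gamma \cdot c)$ is a single geodesic. Otherwise the broken geodesic could be shortened at the corner; using the first variation formula and the fact that $\gamma p$ already achieves the minimum of $d_\gamma$, one gets a contradiction with $d_\gamma(\gamma p) = |\gamma|$. Iterating in both directions, I obtain a complete geodesic line $\ell_p \colon \dR \to X$ containing $p$ on which $\gamma$ acts by translation by $|\gamma|$. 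By convexity, the entire line $\ell_p$ lies in $\Min(\gamma)$.

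The core step is the flat strip theorem: any two such axes $\ell_p, \ell_q \subset \Min(\gamma)$ are parallel, and together span a flat strip isometric to $[0,a] \times \dR \subset \dR^2$. Here the convexity of $t \mapsto d(\ell_p(t), \ell_q(t))$ together with the $\gamma$-invariance (which forces this function to be periodic in $t$, hence constant) gives equidistance; one then verifies via the equality case in the CAT(0) inequality that the quadrilateral bounded by the two axes and connecting geodesics is a Euclidean rectangle, which iterates to a full flat strip. Defining $Y$ to be the set of axes of $\gamma$ (equivalently, any geodesic slice transverse to the $\dR$-direction within $\Min(\gamma)$) and writing $\Min(\gamma) = \bigcup_{\ell} \ell$, the pairwise flat strip structure assembles into a global isometry $\Min(\gamma) \cong Y \times \dR$ in which $\gamma$ acts as $(y,t) \mapsto (y, t + t_\gamma)$ with $|t_\gamma| = |\gamma|$.

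The main obstacle I anticipate is the rigorous global assembly of the local flat strips into a metric product decomposition, i.e.\ verifying that the natural projection $\Min(\gamma) \to Y$ is a Riemannian submersion with flat totally geodesic fibers and vanishing curvature in the $\dR$-direction so that de Rham's theorem applies. On a simply connected Hadamard manifold this is handled by the classical parallel geodesic theorem (see \cite[Part II.2.13--2.14]{BH}); since the lemma is stated as a known fact and references are already being used in the surrounding text, I would simply invoke that theorem for the final identification, having supplied the axis construction and convexity reductions above.
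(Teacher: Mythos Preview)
The paper does not prove this lemma; it states it as a standard result and refers the reader to \cite[Part II.6.1]{BH}. Your sketch reconstructs exactly the Bridson--Haefliger argument (axis through each minimum point via convexity of $d_\gamma$, parallelism of axes via periodicity plus convexity, then the product decomposition for the union of parallel lines), so there is nothing to compare beyond noting that you have supplied the details the paper outsources.

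Two small points worth tightening. First, the assertion that the sublevel sets of a convex function are ``totally geodesic submanifolds'' is slightly imprecise: $\Min(\gamma)$ is a closed convex subset, but a priori it need not be a smooth submanifold (and indeed in the final statement $Y$ is only asserted to be a metric factor, not a smooth one). This does not affect the argument. Second, your axis-construction step (``otherwise the broken geodesic could be shortened at the corner'') is correct but the cleanest way to justify it is to take $m$ the midpoint of $[p,\gamma p]$, note $d(m,\gamma m)\geqslant |\gamma|$ by minimality and $d(m,\gamma m)\leqslant d(m,\gamma p)+d(\gamma p,\gamma m)=|\gamma|$ by the triangle inequality, whence equality forces $\gamma p$ to lie on the geodesic $[m,\gamma m]$. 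The global assembly you flag as the main obstacle is precisely \cite[Part II, Theorem 2.14]{BH}, and since the paper itself is content to cite Bridson--Haefliger for the whole lemma, invoking that theorem here is entirely in keeping with the surrounding exposition.
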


We conclude this section by introducing two fundamental results in the spaces with non-positive curvature: {\it Flat Strip Theorem} (theorem \ref{t:flat-strip}) and {\it Flat Torus Theorem} (theorem \ref{t:flat-torus}); see \cite[Part II]{BH} for more details. 

\begin{theorem}
[Flat Strip Theorem] \label{t:flat-strip} Let $(X ,g)$ be a complete simply-connected Riemannian manifold with $\sec_g \lesl  0$. Let $\sigma_1,\sigma_2:\dR \to X$ be geodesic lines. If there exists a constant $C > 0$ such that   
\begin{align}
	d_g(\sigma_1(t), \sigma_2(t)) \lesl C , \quad \forall \ t\in\dR,
\end{align}
then $\sigma_1$ and $\sigma_2$ bound a flat strip, i.e., a convex region that is isometric to $[0,D]\times \dR \subset \dR^2$ for some $D > 0$.
\end{theorem}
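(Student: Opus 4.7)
The plan is to exploit the classical convexity-rigidity paradigm on Hadamard manifolds. Since $(X,g)$ is complete, simply connected, and $\sec_g\leq 0$, it is a Hadamard manifold, and a standard consequence of nonpositive curvature is that for any two unit-speed geodesics $\alpha,\beta\colon\mathbb{R}\to X$ the function $t\mapsto d(\alpha(t),\beta(t))$ is convex. The bounded-distance hypothesis already precludes the anti-aligned case (otherwise the distance would grow linearly in $t$), so $f(t):=d(\sigma_1(t),\sigma_2(t))$ is convex on $\mathbb{R}$ and bounded above by $C$. Any convex function on $\mathbb{R}$ that is bounded above must be constant, so $f\equiv D$ for some $D\geq 0$; the case $D=0$ is trivial, so assume $D>0$.

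Next I would construct the ruled map. By the Hadamard property, any two points of $X$ are joined by a unique minimizing geodesic; let $c_t\colon[0,D]\to X$ be the unit-speed geodesic from $\sigma_1(t)$ to $\sigma_2(t)$, and define the smooth map
\[
F\colon\mathbb{R}\times[0,D]\to X,\qquad F(t,s):=c_t(s).
\]
Since $L(c_t)\equiv D$ is constant, both $\tfrac{d}{dt}L(c_t)$ and $\tfrac{d^2}{dt^2}L(c_t)$ vanish. Because $\sigma_1$ and $\sigma_2$ are themselves geodesics, the endpoint-acceleration terms in the second variation drop out, leaving
\[
0=\int_0^D\Bigl[\,|(J^\perp)'|^2-\langle R(J^\perp,c_t')c_t',J^\perp\rangle\,\Bigr]\,ds,
\]
where $J(s):=\partial_tF(t,s)$ is the Jacobi field along $c_t$ generated by the variation and $J^\perp$ is its normal component. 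The hypothesis $\sec_g\leq 0$ makes both integrands nonnegative, forcing each to vanish identically: along every $c_t$, the normal Jacobi field $J^\perp$ is parallel, and the sectional curvature of the 2-plane $\mathrm{span}(c_t',J^\perp)$ is zero.

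To finish, I would convert this pointwise rigidity into the global isometric identification. The first variation identity at $s=0,D$ (namely $\langle c_t'(0),\sigma_1'(t)\rangle=\langle c_t'(D),\sigma_2'(t)\rangle$), combined with parallelism of $J^\perp$ and the fact that $\langle J',c_t'\rangle$ is constant in $s$ for a Jacobi field, implies that the tangential component $\langle J,c_t'\rangle$ is in fact constant along $c_t$; consequently $|J(s)|\equiv 1$, so for each fixed $s_0\in[0,D]$ the curve $t\mapsto F(t,s_0)$ is a unit-speed geodesic parallel to $\sigma_1$. The pullback $F^*g$ then has constant coefficients on $\mathbb{R}\times[0,D]$ and is flat, so after an affine reparametrization (if the $(s,t)$-coordinates are not already orthogonal) the image is isometric to a Euclidean strip $[0,D']\times\mathbb{R}$ bounded by $\sigma_1$ and $\sigma_2$. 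The step I expect to require the most care is precisely this final identification: verifying injectivity of $F$, using uniqueness of minimizing geodesics in the Hadamard manifold together with the already-established local flatness of the image, and handling the possible skew parametrization so that the convex image is recognized as a standard flat strip.
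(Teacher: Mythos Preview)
The paper does not supply its own proof of the Flat Strip Theorem; it is quoted as a classical background result with a reference to \cite[Part II]{BH} (Bridson--Haefliger). So there is no ``paper's proof'' to compare against beyond that citation.

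Your sketch is the standard Riemannian argument and is essentially correct. A couple of small comments. First, the convexity of $t\mapsto d(\sigma_1(t),\sigma_2(t))$ holds for \emph{any} pair of geodesics in a Hadamard manifold, so the remark about excluding the ``anti-aligned'' case is unnecessary (though harmless). Second, you are right that the last step is where the care is needed: from the second-variation rigidity you get that $J$ is parallel along each $c_t$ and $|J|\equiv 1$, so $F^*g$ has $g_{ss}=g_{tt}=1$; the cross term $\langle c_t',J\rangle$ is constant in $s$ by your first-variation observation, but its constancy in $t$ is not automatic. The clean way to finish is either (i) note that a flat metric on a strip with geodesic boundary and constant width is isometric to a Euclidean strip after an affine change of the $t$-coordinate (your ``skew parametrization'' remark), or (ii) replace the transversal geodesics $c_t$ by the \emph{perpendicular} geodesics from $\sigma_1(t)$ to the line $\sigma_2$, which forces the cross term to vanish from the start. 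Injectivity of $F$ then follows from uniqueness of geodesics in the Hadamard manifold together with the fact that the perpendicular foot-point map onto a convex set is single-valued.
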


\begin{theorem}[Flat Torus Theorem]   \label{t:flat-torus}  Let $(X ,g)$ be a complete simply-connected Riemannian manifold with $\sec_g \lesl  0$. If $\Gamma \lesl \Isom(X)$ is a free abelian group of rank $k$ acting properly by semi-simple isometries on $X$, then the following holds. 
\begin{enumerate}
\item $\Min(\Gamma) = \bigcap\limits_{\gamma \in \Gamma} \Min(\gamma) \neq \emptyset$
and splits as a Riemannian product $Y \times \dR^k$.
\item Each element $\gamma\in\Gamma$ preserves the isometric splitting $\Min(\Gamma) \equiv Y \times \dR^k$. Moreover, every $\gamma\in\Gamma$ acts on $Y$ as the identity and acts on $\dR^k$ by translation. 
\item For any $y\in Y$, the $\Gamma$-quotient of the Euclidean slice $\{y\}\times \dR^k$ is isometric to a flat torus $\dT^k$. 
\end{enumerate}
\end{theorem}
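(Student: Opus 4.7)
The plan is to proceed by induction on the rank $k$. The base case $k=1$ is immediate from Lemma \ref{l:minimum-set-of-hyperbolic-element}, once one observes that every non-trivial element of $\Gamma$ must be hyperbolic: since $\Gamma$ acts properly, all point stabilizers are finite, so elements of infinite order cannot be elliptic; semi-simplicity then excludes parabolic behavior. Hence $\Min(\gamma_1)$ is a non-empty convex subset of $X$ that splits as $Y \times \dR$, and the whole group $\langle \gamma_1\rangle$ acts by $\dR$-translation there.

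For the inductive step, I would write $\Gamma = \Gamma' \oplus \langle \gamma_k \rangle$ with $\Gamma'$ of rank $k-1$. By the induction hypothesis, $\Min(\Gamma')$ splits isometrically as $Y' \times \dR^{k-1}$, with $\Gamma'$ acting trivially on $Y'$ and as a lattice of translations on $\dR^{k-1}$. Since $\gamma_k$ commutes with every element of $\Gamma'$, Lemma \ref{l:invariance-of-minimum-set} forces $\gamma_k$ to preserve $\Min(\Gamma')$. The main work is to show that this induced isometry of $Y' \times \dR^{k-1}$ respects the product decomposition, taking the form $(\phi,\tau)$ for some $\phi \in \Isom(Y')$ and some translation $\tau$ of $\dR^{k-1}$. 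This uses that an isometry of a metric product of a non-positively curved space with a Euclidean factor which centralizes a rank $(k-1)$ lattice of pure translations must itself split as a product isometry; the crucial input is that the $\dR^{k-1}$ factor is the unique maximal Euclidean de Rham factor, and it is detected by the commutation relations.

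Once the product structure is established, $\phi$ is again a hyperbolic (hence semi-simple non-elliptic, non-parabolic) isometry of $Y'$ by properness, and Lemma \ref{l:minimum-set-of-hyperbolic-element} yields a further isometric splitting $\Min(\phi) = Y \times \dR$ on which $\phi$ acts by $\dR$-translation. Pulling this back, $\Min(\Gamma) = \Min(\Gamma') \cap \Min(\gamma_k) = Y \times \dR \times \dR^{k-1} = Y \times \dR^k$, and by construction every element of $\Gamma$ acts trivially on $Y$ and as a single translation on $\dR^k$. To finish part (3), the $\Gamma$-translations on $\dR^k$ form an abstract subgroup isomorphic to $\dZ^k$; its discreteness as a subgroup of the Euclidean translations follows directly from properness of the $\Gamma$-action on $X$, and a rank $k$ discrete subgroup of $\dR^k$ is a cocompact lattice, so the quotient $(\{y\} \times \dR^k)/\Gamma$ is a flat $k$-torus.

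The main obstacle is the splitting step for $\gamma_k$, since an a priori isometry of $Y' \times \dR^{k-1}$ need not be a product isometry; one really must exploit commutativity with the full rank-$(k-1)$ translation group together with the rigidity of Euclidean de Rham factors in non-positively curved products. An alternative route that avoids this issue would be to apply the Flat Strip Theorem (Theorem \ref{t:flat-strip}) iteratively: pairwise commuting hyperbolic isometries have parallel axes bounding flat strips, and one can inductively enlarge these flat strips to a totally geodesic flat $\dR^k \subset \Min(\Gamma)$ on which $\Gamma$ acts by translations, which would then recover the same splitting.
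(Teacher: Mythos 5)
The paper does not prove this statement at all: Theorem \ref{t:flat-torus} is quoted as a classical result with a citation to Bridson--Haefliger, so there is no internal proof to compare against. Your induction on the rank $k$ is in fact the standard argument from that reference, and the overall architecture (base case from Lemma \ref{l:minimum-set-of-hyperbolic-element}, inductive step via $\Gamma=\Gamma'\oplus\langle\gamma_k\rangle$ and invariance of $\Min(\Gamma')$ from Lemma \ref{l:invariance-of-minimum-set}) is sound. Your identification of every nontrivial element as hyperbolic (torsion-freeness plus properness rules out elliptic, semi-simplicity rules out parabolic) and your discreteness argument for part (3) are both correct.

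The one step whose justification as written would not survive scrutiny is the splitting of $\gamma_k$ on $Y'\times\dR^{k-1}$. You appeal to the $\dR^{k-1}$ factor being ``the unique maximal Euclidean de Rham factor,'' but it need not be: $Y'$ can itself contain Euclidean factors (e.g.\ when $X$ is flat), so the product decomposition $\Min(\Gamma')=Y'\times\dR^{k-1}$ is not intrinsically determined by de Rham rigidity and an arbitrary isometry of $Y'\times\dR^{k-1}$ genuinely need not split. What saves the argument is that the $\dR^{k-1}$ directions are characterized dynamically, not metrically: the lines $\{y'\}\times\dR^{k-1}$ are exactly the flats swept out by the axes of elements of $\Gamma'$, and an isometry commuting with a hyperbolic element permutes its axes and preserves the orientation determined by conjugation (this is the content of the ``commuting isometries preserve $\Min(\gamma)=Y\times\dR$ as a product'' clause of the structure theorem, applied to each generator of $\Gamma'$ in turn). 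Two further small points you elide: one must check that $\Min(\gamma_k)\cap\Min(\Gamma')\neq\emptyset$ and that the minimum of $d_{\gamma_k}$ over $\Min(\Gamma')$ equals its global minimum --- the standard fix is that the nearest-point projection onto the convex set $\Min(\Gamma')$ commutes with $\gamma_k$ and does not increase displacement; and one must rule out that the $Y'$-component $\phi$ of $\gamma_k$ is elliptic, which again uses properness (an elliptic $\phi$ would force $\Gamma$ to act by $k$ translations on a $(k-1)$-dimensional Euclidean slice, producing either a non-discrete orbit or a nontrivial element with a fixed point). With these repairs your argument is complete; your proposed alternative via iterated flat strips also works but requires the same care in showing the resulting $k$-flat is $\Gamma$-invariant.
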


\section{Poincar\'e-Einstein Manifolds with Non-Positive Curvature}
\label{s:non-positively-curved-PE}

This section is devoted to proving our first main result, Theorem \ref{t:uniqueness-nonpositively-curved}. 
Here we state this result in a more general form.

\begin{theorem}\label{t:product-top-geo-rigidity}
Let $(X^{n+1}, g)$ be a complete Poincar\'e-Einstein manifold with $\Ric_g \equiv - n\cdot g$  and $\sec_g \lesl 0$. Let $(M^n, [\wg])$
denote its conformal infinity with   $\mathcal{Y}(M^n, [\wg]) \gesl 0$. 
 \begin{enumerate}
	\item If, in addition, $\pi_1(M^n) \cong \dZ$, then $X^n$ is diffeomorphic to  $S^1 \times D^n$ and $M^n$ is diffeomorphic to $S^1 \times S^{n - 1}$.
	
	\item If, furthermore, $(M^n, [\wg])$ is conformal to the standard product $(S^1 \times S^{n - 1}, \wg_{\lambda})$, then $(X^{n+1}, g)$ is isometric to the hyperbolic manifold $(S^1 \times D^n, h_{\lambda})$ with $\sec_{h_{\lambda}} \equiv - 1$.

\end{enumerate}   
\end{theorem}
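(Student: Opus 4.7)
The plan is to handle parts (1) and (2) in sequence: part (1) extracts the topology $X^{n+1} \cong S^1 \times D^n$ by combining Cartan--Hadamard with the Flat Torus Theorem (Theorem~\ref{t:flat-torus}), and part (2) upgrades this to metric rigidity using Fermi coordinates around the axial closed geodesic together with an Einstein--Riccati analysis.

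For part (1), Cartan--Hadamard gives $\tilde X \cong \mathbb{R}^{n+1}$ and makes $\Gamma := \pi_1(X^{n+1})$ torsion-free acting freely by isometries. First I would identify $\Gamma \cong \pi_1(M^n) \cong \mathbb{Z}$: the geodesic defining function (Lemma~\ref{l:geodesic-defining-function}) provides a collar neighborhood of $M^n$ in $\overline{X^{n+1}}$, so any loop in $X^{n+1}$ may be pushed into $M^n$, yielding a surjection $\pi_1(M^n) \to \pi_1(X^{n+1})$, while injectivity follows from asphericity of $\tilde X$ together with a Mayer--Vietoris argument on $\overline{X^{n+1}}$. Next I would rule out that the generator $\gamma \in \Gamma$ is parabolic: a parabolic $\mathbb{Z}$-action would produce a cuspidal end of $X^{n+1}$ along which $\Vol_g(B_1(p))$ collapses to zero, contradicting the non-collapse bound from Theorem~\ref{t:volume-comparison} (the Yamabe constant of the standard product is strictly positive). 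Hence $\gamma$ is axial, and Theorem~\ref{t:flat-torus} yields $\Min(\Gamma) = Y \times \mathbb{R}$ with axis $\ell$. Since $\tilde X$ is CAT(0) and $\ell$ is a complete geodesic, the normal exponential $N\ell \to \tilde X$ has no focal points and is a diffeomorphism; passing to the $\Gamma$-quotient gives $X^{n+1} \cong S^1 \times D^n$ and $M^n \cong S^1 \times S^{n-1}$.

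For part (2), assuming $[\widetilde g] = [\widetilde g_\lambda]$, I would first sharpen the splitting to $Y = \{\mathrm{pt}\}$. A nontrivial $Y$ would produce a totally geodesic flat strip $\mathbb{R}^2 \subset \Min(\gamma)$ whose boundary at infinity (read off via the Fefferman--Graham compactification, Lemma~\ref{l:FG-metric}) descends to a parallel family of closed curves on $M^n$ that is incompatible with the two-point limit set of the $\mathbb{Z}$-dilation action on $S^n$ presented in Section~\ref{ss:model-space}. With the axis unique, Fermi coordinates around $\sigma := \ell/\Gamma$ give $g = dr^2 + g_r$ on $X^{n+1} \setminus \sigma \cong (0,\infty) \times S^1 \times S^{n-1}$, with initial data fixed by the length $|\gamma|$ of $\sigma$ at $r=0$ and asymptotic data $\widetilde g_\lambda$ at $r \to \infty$. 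The Einstein equation $\Ric_g = -ng$ together with the Bochner identity $|\mathrm{Hess}(r)|^2 + \partial_r \Delta r = n$ reduces to a matrix Riccati equation for the shape operator $A_r := \tfrac{1}{2} g_r^{-1} \partial_r g_r$. The unique admissible solution, under the constraint $A_r \geq 0$ (a consequence of $\sec_g \leq 0$) and the prescribed boundary data, is $A_r = \mathrm{diag}(\tanh(r), \coth(r), \ldots, \coth(r))$, yielding $g_r = \sinh^2(r) g_c + \cosh^2(r)\lambda^2 \tau^2$ and therefore $g = h_\lambda$.

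The main obstacle is this final ODE-rigidity step. The Cauchy--Schwarz bound $|\mathrm{Hess}(r)|^2 \geq (\Delta r)^2/n$ used to close the scalar Bochner identity is not tight on the hyperbolic model, since the Hessian eigenvalues $\tanh(r)$ and $\coth(r)$ differ. To force the sharp form, I would exploit the preserved $TS^1 \oplus TS^{n-1}$ splitting at infinity (dictated by $\widetilde g_\lambda$) and propagate it inward via parallel transport along $\partial_r$, which decouples the matrix Riccati into two scalar Riccati ODEs -- one on the $S^1$-block and one on the $S^{n-1}$-block -- each admitting a unique solution matching the hyperbolic model under non-positive curvature and the prescribed initial and asymptotic data. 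The non-positivity of curvature is essential here for ruling out oscillating or overshooting branches of the Riccati equation, and for ensuring that the parallel-transported splitting remains integrable at every level $r > 0$.
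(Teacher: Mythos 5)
Your outline for part (1) matches the paper's route (Cartan--Hadamard, identification of $\pi_1(X^{n+1})$ with $\pi_1(M^n)\cong\dZ$, exclusion of parabolics, Flat Torus Theorem, normal exponential map), but two steps as written would fail. First, the parabolic exclusion: you argue that a parabolic generator forces $\Vol_g(B_1(p))\to 0$ along a cuspidal end, contradicting Theorem \ref{t:volume-comparison}. This only works if the translation length of the parabolic element is zero; in a manifold that is merely non-positively curved (not strictly negatively curved), a parabolic isometry can have positive translation length (e.g.\ a parabolic on $\dH^2$ crossed with a translation on $\dR$), and then no short geodesic loops, hence no volume collapse, is produced. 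Moreover, Theorem \ref{t:volume-comparison} requires $\mathcal{Y}(M^n,[\wg])>0$, whereas part (1) assumes only $\mathcal{Y}\gesl 0$ for a general $M^n$ with $\pi_1\cong\dZ$; your appeal to "the Yamabe constant of the standard product" imports the hypothesis of part (2). The paper's argument avoids both issues: the minimizing sequence for the displacement function must escape into the collar $g=s^{-2}(ds^2+\bg_s)$, where closed geodesic loops of bounded length cannot exist. Second, your argument that $Y=\{\mathrm{pt}\}$ is broken: the Flat Strip Theorem produces a strip $[0,D]\times\dR$ of \emph{finite width}, not a totally geodesic $\dR^2$, and such a strip is bounded in the $Y$-direction, so it has no "parallel family of closed curves" on the conformal infinity to contradict the two-point limit set. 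The paper instead extends the flat strip to a complete surface by analyticity of Einstein metrics and unique continuation, and contradicts $\sec_g<0$ near infinity.

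The more serious gap is in part (2), in the passage from the Einstein equation to decoupled scalar Riccati ODEs. Without additional input, $g=dr^2+g_r$ with $g_r$ an arbitrary $r$-family of metrics on $S^1\times S^{n-1}$, and $\Ric_g=-ng$ is a PDE system, not an ODE; the assertion that the $TS^1\oplus TS^{n-1}$ splitting "at infinity" can be propagated inward by parallel transport along $\p_r$, remains integrable, and diagonalizes the shape operator is precisely the content that must be proved, and parallel transport alone does not yield it. The paper's mechanism is entirely different: it extends the boundary Killing algebra $\mathfrak{s}\mathfrak{o}(n)\oplus\dR$ into $X^{n+1}$ (Theorem \ref{t:Killing-field-extension}), shows the extensions are independent of the geodesic defining function (Lemma \ref{l:Killing-field-x-independence}), proves the global relation $x=\varpi_0 e^{-r}$ (Proposition \ref{p:r-x-relation}) so that the level sets of $r$ are homogeneous and, by Obata's classification, round cylinders $F^2(r)g_{S^{n-1}}+G^2(r)\tau^2$; only then does the Einstein equation become an ODE system. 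Finally, your uniqueness claim for the Riccati system is posed as a two-point boundary value problem (data at $r=0$ and $r=\infty$) and is asserted rather than proved; the paper instead uses smoothness at the core geodesic to fix the full jet ($F(0)=0$, $F'(0)=1$, vanishing even derivatives) and determines all Taylor coefficients by an inductive computation, giving $F=\sinh$ by analyticity. As it stands, your part (2) assumes the warped-product ansatz rather than establishing it.
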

Item (1) of the theorem provides a topological rigidity result that holds in a general setting, while item (2) establishes hyperbolic rigidity in the case where the conformal infinity is the standard round cylinder.
Throughout this section, we  assume that $(X^{n+1}, g)$ is a complete Poincar\'e-Einstein manifold with $\Ric_g \equiv - n\cdot g$  and $\sec_g \lesl 0$, and that its conformal infinity $(M^n, [\wg])$ satisfies $\mathcal{Y}(M^n, [\wg]) \gesl 0$.
\vskip .1in

In Sections \ref{ss:deck-transformation} - \ref{ss:topological-splitting}, we will  establish item (1) of Theorem \ref{t:product-top-geo-rigidity}, that is, we will prove the topological rigidity of $X^{n+1}$ under the following additional  assumption: 
\begin{assumption}\label{a:pi-1}
The fundamental group of $M^n$ is infinite cyclic. 
\end{assumption}

\noindent The main idea is to adapt tools from the metric geometry of non-positively curved spaces, introduced in Section \ref{ss:metric-geometry-nonpositive-curvature} to the Poincar\'e-Einstein setting.

\vskip .1in

In Sections \ref{ss:extension-of-Killing-fields} - \ref{ss:dimension-reduction}, we will establish item (2) of Theorem \ref{t:product-top-geo-rigidity}, that is, we will prove the isometric rigidity result stated in Theorem \ref{t:product-top-geo-rigidity}(2), under the additional assumption.
\begin{assumption}\label{a:product-conformal-infinity}
The conformal class $[\wg]$ contains a standard product metric $\wg_{\lambda}$ on $S^1 \times S^{n - 1}$.  	
\end{assumption}

 Our overall strategy to establish the hyperbolic rigidity result is to construct a canonical gauge for $(X^{n + 1}, g)$, with respect which we can generate sufficiently large symmetries on $X^{n+1}$. By accurately characterizing the symmetries, we ultimately are led to a dimension reduction argument. This approach also relies fundamentally on the metric-geometric properties of $(X^{n+1}, g)$ as a non-positively curved space, as provided in Sections \ref{ss:deck-transformation} - \ref{ss:topological-splitting}. 

\vskip .1in
\subsection{Deck transformations on the universal cover}
\label{ss:deck-transformation}
In this subsection, we investigate the topological structure of   Poincar\'e-Einstein manifolds in the setting of item (1) of Theorem \ref{t:product-top-geo-rigidity}. Specifically,  
we consider a {\it non-positively curved} Poincar\'e-Einstein manifold $(X^{n + 1}, g)$ that satisfies Assumption \ref{a:pi-1}. 

The main technical result in this subsection, Proposition \ref{p:minimum-set},  reveals 
how the elements in the deck transformation group $\Gamma\equiv \pi_1(\ucX)$ act as isometries, 
  on the universal cover $\ucX$. 
The main assertion is that the minimum set $\Min(\Gamma)$  splits isometrically as flats that are invariant by $\Gamma$. This splitting structure serves as the foundation for establishing the global splitting of $\ucX$, as proved in Theorem \ref{t:normal-exp}.

Let  $i: M^n \to \overline{X^{n+1}}$ be the natural inclusion, which induces 
	a homomorphism 
	\begin{align}
		i_*: \pi_1(M^n) \longrightarrow  \pi_1(\overline{X^{n + 1}}). \label{e:induced-homomorphism}
	\end{align}
 The following lemma relates $\pi_1(M^n)$ to $\pi_1(X^{n + 1})$, under the assumption that $(X^{n+1}, g)$ is non-positively curved. 
\begin{lemma}\label{l:cyclic-fundamental-group}
Let $(X^{n + 1}, g)$ be a complete Poincar\'e-Einstein manifold with $\sec_g \lesl 0$. If Assumption \ref{a:pi-1} holds, then the induced map $i_*$ is an isomorphism.  \end{lemma}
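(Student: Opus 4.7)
The setup begins with the observation that $X^{n+1}=\mathrm{int}(\overline{X^{n+1}})$, so $\pi_1(X^{n+1})\cong\pi_1(\overline{X^{n+1}})$, and by the Cartan--Hadamard theorem (using $\sec_g\leq 0$ and completeness) the universal cover $\ucX$ is diffeomorphic to $\mathbb{R}^{n+1}$. Hence $\Gamma\equiv\pi_1(X^{n+1})$ acts freely and properly discontinuously by isometries on the Hadamard manifold $\ucX$; in particular $\Gamma$ is torsion-free, and every nontrivial element is either hyperbolic or parabolic in the classification of Section \ref{ss:metric-geometry-nonpositive-curvature}. Using a geodesic defining function $x$ from Lemma \ref{l:geodesic-defining-function}, $\overline{X^{n+1}}$ deformation retracts onto the compact region $\Omega_\varepsilon=\{x\geq\varepsilon\}$, with $\partial\Omega_\varepsilon$ diffeomorphic to $M^n$. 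Consequently $i_*$ is identified with the boundary inclusion $\pi_1(\partial\Omega_\varepsilon)\to\pi_1(\Omega_\varepsilon)$, and the task splits into proving surjectivity and injectivity.

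For surjectivity, the key point is that $X^{n+1}$ is one-ended, since $M^n$ is connected. Given $\gamma\in\Gamma$, I would realize its conjugacy class by a loop near $M^n$ as follows. The displacement function $d_\gamma$ is convex on $\ucX$ by $\sec_g\leq 0$; when $\gamma$ is hyperbolic its minimum set splits as $Y\times\mathbb{R}$ with $\gamma$ acting by $\mathbb{R}$-translation (Lemma \ref{l:minimum-set-of-hyperbolic-element}), and when $\gamma$ is parabolic one has a sequence $\tilde{p}_k$ escaping to a fixed boundary point with $d_\gamma(\tilde{p}_k)\to|\gamma|=0$. In either case, one produces a sequence of geodesic segments from $\tilde{p}_k$ to $\gamma\cdot\tilde{p}_k$ that project to loops in $X^{n+1}$ in the free homotopy class of $\gamma$, and that eventually lie in the collar $(0,\varepsilon)\times M^n$. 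This shows $\mathrm{Image}(i_*)=\Gamma$.

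For injectivity, I would argue by contradiction. Suppose $\ker(i_*)=d\mathbb{Z}\subset\mathbb{Z}=\pi_1(M^n)$ with $d\geq 1$, and fix $x_0\in(0,\varepsilon)$ and $N\equiv\{x=x_0\}\subset X^{n+1}$, so $N\cong M^n$. Covering-space theory gives that each connected component of the preimage $\pi^{-1}(N)$ in $\ucX$ is diffeomorphic to the cover of $M^n$ corresponding to the subgroup $\ker(i_*)=d\mathbb{Z}$, which is a $d$-fold cyclic cover of $M^n$ (and equals $M^n$ when $d=1$), hence \emph{compact}. Thus $\ucX\cong\mathbb{R}^{n+1}$ contains a $\Gamma$-equivariant disjoint family of compact embedded hypersurfaces, and by Jordan--Brouwer each separates $\ucX$ into a bounded inside and an unbounded outside. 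The resulting collection of bounded regions is nested or disjoint (lattice-ordered by inclusion) and is permuted by $\Gamma$. However, every nontrivial $\gamma\in\Gamma$ is hyperbolic or parabolic, hence moves points in $\ucX$ along directions escaping every bounded set, which is incompatible with $\gamma$ preserving a locally finite family of bounded regions. This forces $\ker(i_*)=0$.

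The main obstacle is the injectivity step: it is precisely what rules out AdS--Schwarzschild--type topologies in which the $\pi_1(M^n)$-generator bounds a disk in $X^{n+1}$, and the non-positive curvature is essential, as Jordan--Brouwer alone provides the bounded regions but does not constrain the action of $\Gamma$. Making the contradiction precise requires combining the topological separation in $\ucX$ with the fine metric-geometric classification of isometries on Hadamard manifolds recalled in Section \ref{ss:metric-geometry-nonpositive-curvature}, the same tools that underlie the minimum-set analysis in the subsequent proposition.
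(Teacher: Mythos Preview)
Your overall framework is reasonable, but both halves have genuine gaps.

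\textbf{Surjectivity.} The displacement-function argument does not do what you want. If $\gamma$ is hyperbolic, then for points $\tilde p_k$ far from the axis the convexity of $d_\gamma$ forces the geodesic segment from $\tilde p_k$ to $\gamma\cdot\tilde p_k$ to pass close to the axis; its projection therefore dips into the interior of $X^{n+1}$ and need not lie in the collar $(0,\varepsilon)\times M^n$. So you have not produced a representative of the free homotopy class of $\gamma$ inside the collar. The paper does not argue surjectivity metrically at all: it simply invokes the Witten--Yau / Cai--Galloway theorem, which gives surjectivity of $i_*$ for any Poincar\'e--Einstein manifold whose conformal infinity has nonnegative Yamabe constant (this hypothesis is part of the standing assumptions of the section).

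\textbf{Injectivity.} Here is the more serious gap. Once you know $i_*$ is surjective and $\Gamma$ is torsion-free, $\Gamma\cong\mathbb{Z}/\ker(i_*)$ forces $\ker(i_*)\in\{0,\mathbb{Z}\}$: any $d\gesl 2$ is already excluded by torsion-freeness. So the \emph{only} case you must rule out is $\ker(i_*)=\mathbb{Z}$, i.e.\ $\Gamma=\{e\}$. But in that case your Jordan--Brouwer argument is empty: there is no nontrivial $\gamma\in\Gamma$ whose dynamics could conflict with the bounded regions, and you obtain no contradiction. The paper closes this case by a purely topological computation: if $\Gamma$ is trivial then $\overline{X^{n+1}}$ is contractible, and Lefschetz duality together with the long exact sequence of the pair $(\overline{X^{n+1}},M^n)$ shows $H^k(M^n)=0$ for $1\lesl k\lesl n-1$, so $M^n$ is a homology sphere. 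This contradicts $\pi_1(M^n)\cong\mathbb{Z}$, since then $H_1(M^n)\cong\mathbb{Z}$.
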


\begin{proof}
Under the curvature assumption $\sec_g \lesl  0$, it follows from the Cartan-Hadamard theorem that the universal cover  $\widetilde{X^{n + 1}}$ 	
	is diffeomorphic to $\dR^{n+1}$.
	Therefore, $X^{n +1}$ is diffeomorphic to $\dR^{n + 1}/\Gamma$, where $\Gamma \equiv  \pi_1(X^{n + 1})$.
Since $(X^{n + 1}, g)$ is a Poincar\'e-Einstein manifold, it is a well-known result that the natural inclusion $i: M^n \to \overline{X^{n+1}}$ induces 
	a surjective homomorphism 
	\begin{align}
		i_*: \pi_1(M^n) \longrightarrow  \pi_1(\overline{X^{n + 1}});
	\end{align}
	for example see \cite{Witten-Yau} or \cite[theorem 1]{Cai-Galloway}.	So it follows that $\pi_1(\overline{X^{n + 1}})$ is isomorphic to $\pi_1(M^n)/\Ker(i_*)$. Since $(X^{n + 1}, g)$ is non-positively curved, by Cartan-Hadamard theorem, $\pi_1(\overline{X^{n + 1}})$ ($\cong\Gamma$) must be torsion-free. Therefore, $\Gamma$
 is either trivial or isomorphic to $\dZ$. 
 Correspondingly, $\Ker(i_*)=\pi_1(M^n)$ or $\{e\}$.
 
 However, in the former case, 	
	$\overline{X^{n+1}}$ is homotopy equivalent to $\dR^{n + 1}$. By elementary topological computations, $M^n$ must be a homology sphere.
	In fact, first we have 
	\begin{align*}
H^{n + 1}(\overline{X^{n + 1}}, M^n) \cong \dZ \quad \text{and} \quad 
H^k(\overline{X^{n + 1}}, M^n) = 0 \ \text{for any}\  k \neq n + 1.
 	\end{align*}
	From the long exact sequence
\begin{align*}
	\ldots \to H^{k - 1}(\overline{X^{n + 1}})  \to H^{k - 1}(M^n) \to H^k(\overline{X^{n + 1}}, M^n) \to H^k(\overline{X^{n + 1}}) \to H^k(M^n)  \ldots 
\end{align*}
we have that 
$H^{k - 1}(M^n) \cong H^k(\overline{X^{n + 1}}, M^n)$ for any $k \gesl 2$.
Next, by Lefschetz duality, it holds that $H^k(\overline{X^{n + 1}}, M^n) \cong H_{n - k}(\overline{X^{n + 1}})$. In particular, $H^1(\overline{X^{n + 1}}, M^n) = 0$ 
Since the restriction
$H^0(\overline{X^{n + 1}}) \to H^0(M^n)$ is an isomorphism, we have $H^0(\overline{X^{n + 1}}, M^n) = 0$.
Therefore,
\begin{align}
	H^k(M^n) \cong 
	\begin{cases}
		\dZ, & k = 0, n
		\\
		0, & 1\lesl  k \lesl  n - 1.
	\end{cases}
\end{align}
 This contradicts to the assumption that $\pi_1(M^n)\cong \dZ$, which completes the proof.
\end{proof}

For the remainder of this section, we denote $\Gamma\equiv \pi_1(X^{n + 1})$.
 Our next goal is to construct a $\Gamma$-invariant geodesic line on the universal cover $\widetilde{X^{n + 1}}$ using the key property that $\Gamma$ is isomorphic to $\dZ$ and generated by a hyperbolic element.  

\begin{proposition}
 \label{p:minimum-set}
Let $(X^{n + 1}, g)$ be a complete Poincar\'e-Einstein manifold with $\sec_g \lesl 0$. If Assumption \ref{a:pi-1} holds,   
 then
the minimum set
$\Min(\Gamma)\subset \ucX$ is isometric to a metric product $Y \times \dR$,  and satisfies the following properties:
\begin{enumerate}
\item $Y$ is a connected, compact, and convex set;
\item Every element $\gamma\in \Gamma$ preserves the isometric splitting of $\Min(\Gamma)$. Moreover,	each $\gamma\in\Gamma$ acts as the identity on $Y$ and as a translation on $\dR$.
\end{enumerate}
 \end{proposition}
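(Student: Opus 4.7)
My plan is to reduce the proposition to proving that a generator $\gamma_0$ of $\Gamma\cong\dZ$ (provided by Lemma \ref{l:cyclic-fundamental-group}) is axial rather than parabolic. Once axial-ness is established, the Flat Torus Theorem \ref{t:flat-torus} with $k=1$ (equivalently Lemma \ref{l:minimum-set-of-hyperbolic-element}) yields the isometric splitting $\Min(\Gamma)=\Min(\gamma_0)=Y\times\dR$, with $\Gamma$ acting trivially on $Y$ and as translation of length $|\gamma_0|$ on $\dR$. Convexity of $Y$ is inherited from convexity of $\Min(\gamma_0)$ as the minimum set of the convex function $d_{\gamma_0}$ on the non-positively curved simply-connected space $\ucX$, and connectedness follows from convexity. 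What remains is to verify that $\gamma_0$ is axial and that $Y$ is compact.

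To show $\Min(\gamma_0)\neq\emptyset$, I would argue that length-minimizing sequences of loops in the $\gamma_0$-homotopy class are confined to a compact region of $X^{n+1}$ and then extract a closed geodesic as their limit. In a collar of $M^n$, using the geodesic defining function $x$ associated to some representative $\wg\in[\wg]$, one has $g=x^{-2}(dx^2+h_x)$ with $h_x|_{x=0}=\wg$. Since Lemma \ref{l:cyclic-fundamental-group} gives an isomorphism $i_*:\pi_1(M^n)\to\pi_1(\bX)$, a loop $\sigma$ representing $\gamma_0$ projects under the collar retraction to a loop in $M^n$ homotopic to a generator of $\pi_1(M^n)$, and a direct warped-product computation yields $L_g(\sigma)\gesl (1-o(1))\,L_0\,\epsilon^{-1}$ whenever $\sigma\subset\{x\lesl\epsilon\}$, where $L_0>0$ is the infimum $\wg$-length of such a generator. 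Combined with the elementary bound $x_{\min}\gesl x_{\max}\cdot e^{-L_g(\sigma)/2}$ obtained by integrating $dx/x$ along a subarc, every loop in the $\gamma_0$-class of $g$-length at most $2|\gamma_0|$ is contained in a fixed compact $\Omega_{\epsilon_1}\subset X^{n+1}$. A standard Arzel\`a-Ascoli argument then produces a closed geodesic in the $\gamma_0$-class of length exactly $|\gamma_0|$, whose lift to $\ucX$ is an axis of $\gamma_0$.

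Compactness of $Y$ relies on the same near-infinity mechanism. Since $\Gamma$ acts only on the $\dR$-factor of $\Min(\Gamma)=Y\times\dR$, the quotient $\Min(\Gamma)/\Gamma$ is canonically identified with $Y\times S^1_{|\gamma_0|}$, and the inclusion descends to an isometric embedding $Y\times S^1\hookrightarrow X^{n+1}$ in which each slice $\{y\}\times S^1$ is a closed geodesic of length $|\gamma_0|$ in the $\gamma_0$-class. Applying the loop-length estimate above to these closed geodesics (rather than to a minimizing sequence) shows that the entire image of $Y\times S^1$ lies in some compact $\Omega_{\epsilon_2}\subset X^{n+1}$; since this image is closed in $X^{n+1}$ and $Y$ is complete as a closed subset of $\ucX$, both $Y\times S^1$ and $Y$ are therefore compact. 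The main technical obstacle I anticipate is the loop-length estimate itself: although the leading-order warped-product computation is direct, making it uniform requires care with the smooth dependence of $h_x$ on $x$, with the collar identification induced by the geodesic defining function, and with the observation via $i_*$ that the homotopy class of $\sigma$ is controlled by its projection onto $M^n$. Everything else then follows from the non-positive-curvature metric geometry summarized in Section \ref{ss:metric-geometry-nonpositive-curvature}.
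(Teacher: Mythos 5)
Your proposal is correct and shares the paper's overall skeleton (show the generator of $\Gamma\cong\dZ$ is axial, invoke the Flat Torus Theorem to split $\Min(\Gamma)=Y\times\dR$, then prove compactness of $Y$), but both key sub-steps are executed by genuinely different mechanisms. For axiality, the paper argues by contradiction from a putative parabolic element: it translates an almost-minimizing sequence for $d_{\gamma^k}$ back into a fixed fundamental domain, observes that the associated geodesic loops of bounded length must escape into the collar $X_{\delta_j}$, and appeals to an injectivity-radius estimate there; you instead run a direct variational argument, confining a length-minimizing sequence of loops in the free homotopy class to a compact $\Omega_\epsilon$ via the explicit warped-product bound $L_g(\sigma)\gtrsim L_0\,\epsilon^{-1}$ together with the depth estimate $x_{\min}\gesl x_{\max}e^{-L_g(\sigma)/2}$, and extract a closed geodesic whose lift is an axis. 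The two arguments rest on the same geometric fact (noncontractible short loops cannot exist near the conformal infinity), but yours is more quantitative and self-contained, at the cost of the Arzel\`a--Ascoli extraction and the bookkeeping you correctly flag (uniformity of $h_x=\wg+O(x^2)$, and using the isomorphism $i_*$ to pin down the class of the collar projection). For compactness of $Y$, the routes diverge more substantially: the paper argues that a noncompact $Y$ would force, via the Flat Strip Theorem, an unbounded flat region $\pi(Y)\times S^1$ in $X^{n+1}$, contradicting $\sec_g\to-1$ at infinity; you instead observe that every slice $\{y\}\times S^1$ of $\Min(\Gamma)/\Gamma$ is a closed geodesic of length $|\gamma_0|$ in the same free homotopy class, so the confinement estimate already proved forces all of $\pi(Y\times\dR)$ into a single compact $\Omega_{\epsilon_2}$. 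Your compactness argument is arguably cleaner, since it reuses the quantitative loop estimate and avoids the paper's slightly informal identification of $Y\times\dR$ with a flat region; the paper's version, in exchange, needs no length estimate beyond what was used for axiality. I see no gap in your proposal.
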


\begin{proof}
Let $\gamma\in \Isom(\ucX)$ be the generator of the deck transformation group $\Gamma\cong \dZ$. Our first assertion is that every element $\gamma^k \in \Gamma\subset \Isom(\ucX)$ is hyperbolic. Since $\Gamma$ acts freely on $\ucX$, there is no elliptic element in $\Gamma$. Thus to show the element is hyperbolic,  it suffices to show that no element $\gamma^k\in \Gamma$ is parabolic. 

To see so, we will prove by contradiction.   
Suppose that $\Gamma$ has a parabolic element $\gamma^k$ for some $k\in\dZ$. By definition, $\gamma^k$ satisfies the following properties:
\begin{enumerate}[(a)]
\item for any $\tx \in \ucX$, it holds that
$d_{\gamma^k}(\tx) > |\gamma^k|$; 
\item there exists  a sequence of a points $\{\tx_j\}_{j=1}^{\infty}$ in some fundamental domain $\Omega \subset  \ucX$ of the deck transformation group $\Gamma$ such that  
for some reference point $\tx_0\in \Omega$,  
\begin{align}
	d_{\tg}(\tx_j, \tx_0) \to \infty \quad \text{and} \quad d_{\gamma^k}(\tx_j) \to |\gamma^k| \quad \text{as}\ j \to \infty.
\end{align}
\end{enumerate}

Property (a) is obvious since no element in $\Gamma$ is elliptic. To show property (b), we take a sequence of points 
$\{\ty_j\}_{j=1}^{\infty}\subset \ucX$ such that for some $\tx_0\in \ucX$,
\begin{align}
	d_{\tg}(\ty_j, \tx_0) \to \infty \quad \text{and} \quad d_{\gamma^k}(\ty_j) \to |\gamma^k| \quad \text{as}\ j \to \infty.
\end{align}
Next,  notice that since
since $\Gamma=\LB\gamma\RB$ is cyclic, we have

\begin{align}d_{\gamma^k}(\alpha\cdot \tx) = d_{\gamma^k}(\tx)\end{align}
 for any $\alpha \in \Gamma$ and $\tx \in \ucX$.
 Let $\Omega \subset \ucX$ be the fundamental domain that contains $\tx_0$. Then for any $j$, there exists a compact set $\Omega \subset \ucX$ such that for every $j$, there exists some $\alpha_j\in \Gamma$ that satisfies 
 \begin{align}
\tx_j = \alpha_j\cdot \ty_j\in \Omega \quad \text{and} \quad 	d_{\gamma^k}(\tx_j) = d_{\gamma^k}(\ty_j).
 \end{align} 
Since $\gamma^k$ is not elliptic, the translation length $|\gamma^k|$ cannot be achieved in $\ucX$. It follows that $d_{\tg}(\tx_j, \tx_0) \to \infty$.

 Denote by $\pi: \ucX \to X^{n+1}$ the universal covering map. 
 For each $j$, let $\tilde{\sigma}_j$ be the unique minimizing geodesic segment connecting $\tx_j$ and $\gamma^k\cdot \tx_j$. Immediately, we have 
 that \begin{align}
 	\length(\tilde{\sigma}_j) = d_{\gamma}(\tx_j) \quad \text{and} \quad \lim\limits_{j\to \infty} \length(\tilde{\sigma}_j) = |\gamma^k|.
 \end{align}
On the other hand, we will now use the conformal compactification to show that $\length(\tilde{\sigma}_j) \to \infty$ as $j\to \infty$, which will give the desired contradiction.
 
To do so, we observe that 
   $x_j \equiv \pi(\tx_j)$ tends to the conformal infinity of $X^{n + 1}$ as $j \to \infty$. In fact, letting $x_0 \equiv \pi(\tx_0)$, since $\{\tx_j\}_{j = 1}^{\infty}\subset \Omega$, we have that
   \begin{align}
   	d_g (x_j, x_0) = d_{\tg}(\tx_j, \tx_0) \to \infty.
   \end{align}
 Next, for any $\delta > 0$, we consider a tubular neighborhood of the conformal infinity,
\begin{align}
	X_{\delta} \equiv \{p\in X^{n + 1}: s(p) \lesl  \delta \},
\end{align}
 where $s$ is the geodesic defining function of $(M^n,\wg)$ given by Lemma \ref{l:geodesic-defining-function}.
We denote by $\sigma_j$ the geodesic loop at $x_j=\pi(\tx_j)$ that descends from the geodesic segment $\tilde{\sigma}_j$.
   The previous argument implies that one can find a sequence $\delta_j \to 0$ such that $\sigma_j\subset X_{\delta_j}$ for any sufficiently large $j$. Notice that in terms of the geodesic defining function, the Poincar\'e-Einstein metric $g$ can be written as 
 \begin{align}
 	g = s^{-2} (ds^2 + \bg_s),
 \end{align} 
where $\bg_s = g_0 + O(s^2)$ as $s\to 0$. Thus, for any sufficiently large $j$, the injectivity radius of $X_{\delta_j}$ is much larger than $|\gamma^k|$ so that there exists no geodesic loops with bounded length in $X_{\delta_j}$. This gives the desired contradiction. 
 
 The above argument proved that every element in $\Gamma$
is hyperbolic, which implies that $\Gamma$ acts on $\ucX$ by semi-simple isometries. By assumption,  $X^{n + 1}$ is simply connected and $\sec_g \lesl 0$, we can now apply the Flat Torus Theorem (Theorem \ref{t:flat-torus}) 
to show that the minimum set $\Min(\Gamma)$ of $\Gamma$ is non-empty and isometrically splits as a product $Y \times \dR$ since $\rank(\Gamma) = 1$. 
Moreover, any element $\gamma\in\Gamma$ preserves the isometric splitting $Y \times \dR$ so that $\gamma$ acts as the identity on $Y$ and as a translation  with translation length equal to $|\gamma|$.

Since any deck transformation $\gamma\in\Gamma$
 acts on $Y \subset \ucX$ as the identity, we have $\pi(Y)\subset X^{n+1}$ is isometric to $Y $. This immediately implies that $\pi(Y)$ is compact in $X^{n+1}$. Otherwise, applying the Flat Strip Theorem (Theorem \ref{t:flat-strip}), $Y \times \dR$ is a convex flat region in $\ucX$. Then $\pi(Y \times \dR) = \pi(Y) \times S^1$ is an unbounded flat region in $X^{n+1}$, which contradicts the asymptotic hyperbolicity of $X^{n+1}$. Therefore, $\pi(Y)$ is compact, so is $Y$,
which completes the proof.
\end{proof}

\begin{remark} \label{r:line}
In the special case when the sectional curvature of $(X^{n + 1}, g)$ is {\it strictly negative}, one can quickly show that $\Min(\Gamma)$ is a single geodesic line. In fact, if $Y$ in the splitting $\Min(\Gamma) = Y\times \dR$ is not a single point, then the Flat Strip Theorem (theorem \ref{t:flat-strip}) implies that $Y \times \dR$ contains a flat strip, which contradicts the assumption that $\sec_g < 0$.
 In the general case of $\sec_g \lesl 0$, we will still establish that the minimum set $\Min(\Gamma)$ is a single geodesic line; see the argument in Proposition \ref{p:core-geodesic}. \end{remark}

 \medskip

\subsection{Core geodesic and topological splitting}
\label{ss:topological-splitting}
In Section \ref{ss:deck-transformation}, we discussed the behavior of the deck transformation group $\Gamma$ of the universal cover $\ucX$, and in particular established a structural result for the minimum set $\Min(\Gamma)$.

The goal of this subsection is to prove the topological rigidity stated in Theorem \ref{t:product-top-geo-rigidity}(1). Specifically, under Assumption \ref{a:pi-1}, the Poincar\'e-Einstein manifold $X^{n+1}$ is diffeomorphic to the topological product $S^1 \times D^n$, and its conformal infinity $M^n$ is diffeomorphic to $S^1 \times S^{n - 1}$. In the course of the proof, we will also show that the minimum set $\Min(\Gamma)$ in the universal cover $\ucX$ consists of a single geodesic line, referred to as the core geodesic. This geodesic line gives rise to a canonical global normal coordinate system.

 Recall that we have already established in Proposition \ref{p:minimum-set} that the minimum set $\Min(\Gamma)$ is isometric to a metric product $Y \times \dR$, where $Y$ is a compact, convex subset of $\ucX$. 
Now let us consider an arbitrary geodesic line  $\ell_0:\dR \to \ucX$ in the minimum set $\Min(\Gamma)$.
By Proposition \ref{p:minimum-set} we know that $\Gamma$ acts on $\ell_0$ by translations, that is, for any $\gamma\in \Gamma$, \begin{align}
	\gamma \cdot \ell_0(t) = \ell_0(t + |\gamma|),  \quad \forall t\in \dR, \label{e:gamma-translation}
\end{align}
where $|\gamma| \equiv \inf\{d_{\gamma}(\tilde{x}): \tilde{x} \in \ucX\}$ is the translation length of $\gamma$.
In what follows, we will  use the normal exponential map of $\ell_0$ on $\ucX$ to show that 
$Y = \{\pt\}$ and the minimum set $\Min(\Gamma)$
is reduced to a single minimal geodesic line. This geodesic line will be denoted as $\ell_{\Gamma}$ and called the {\it core geodesic}.
Then we 
 obtain a natural slicing structure that is $\Gamma$-equivariant and centered at the core geodesic $\ell_{\Gamma}$.

We denote by $\Pi^{\perp}: \mathcal{N}(\ell_0) \to \ell_0$ the normal bundle of the line $\ell_0$ in $\ucX$ such that for any $p =   \ell_0(t)$ and for any vector $v \in  (\Pi^{\perp})^{-1}(p) \subset  T_p\ucX$, we have   \begin{align}\langle v, \ell_0'(t) \rangle_{\tg} = 0.	
   \end{align}
 The normal exponential map of the geodesic line $\ell_0$, \begin{align}
 \nexp: \mathcal{N}(\ell_0) \to \ucX	
 \end{align}
is defined as follows: for any $\bm{w} = (v,p) \in  \mathcal{N}(\ell_0)$, we set
\begin{align}\nexp(\bm{w}) \equiv \sigma_{p,v}(1) \in \ucX,\end{align}
where $\sigma_{p,v}$ is the unique unit-speed normal geodesic with $\sigma_{p,v}(0) = p$ and $\sigma_{p,v}'(0) = v$.
For any $p \in \ell_0$, we denote by 
\begin{align}\mathcal{N}(p) \equiv \nexp\left((\Pi^{\perp})^{-1}(p)\right)  \subset \ucX \end{align} the image of the normal fiber $(\Pi^{\perp})^{-1}(p) \subset \mathcal{N}(\ell_0)$ under the normal exponential map.

Let $\ell_0$ be a geodesic line in the minimum set $\Min(\Gamma)$ of $\Gamma$. By Theorem \ref{t:flat-strip}, 
the image of $\ell_0$ under $\pi: \ucX \to X^{n+1}$ is a closed geodesic in $X^{n+1}$, which we denote by $\sigma_0$.

Using the normal exponential map of the geodesic line $\ell_0$, one can construct geodesic normal coordinates.
Let $\tir(\cdot) \equiv d_{\ell_0}(\cdot): \ucX \to [0,\infty)$ be the distance to $\ell_0$. Since $\ucX$ is simply connected and non-positively curved, the function $\tir$ is smooth away from the line $\ell_0$ and $|\nabla_{\tg} \tir| \equiv 1$ on $\ucX \setminus \ell_0$.
Since we have already established that the normal exponential map is $\Gamma$-equivariant, and the normal distance function $\tir$ is $\Gamma$-invariant. Therefore, $r \equiv d_{\sigma_0}(\cdot)$ the distance to the close geodesic $\sigma_0$ is smooth everywhere on $X^{n+1}\setminus \sigma_0$.

The proposition below reduces the minimum set 
$\Min(\Gamma)$ to a single geodesic line.
\begin{proposition}
[Core geodesic]	\label{p:core-geodesic}
In Proposition \ref{p:minimum-set}, the set $Y$ is a single point. Consequently,  $\Min(\Gamma)$ is a minimal geodesic line in $\ucX$, which is called the core geodesic of $\ucX$, and denoted by $\ell_{\Gamma}$. 
\end{proposition}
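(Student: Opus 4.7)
The plan is to argue by contradiction. Suppose $Y$ has positive dimension, i.e., $\dim Y \ge 1$.

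By Proposition \ref{p:minimum-set}, together with the Riemannian version of the Flat Torus Theorem (Theorem \ref{t:flat-torus}), the minimum set $\Min(\Gamma)$ is a closed totally geodesic submanifold of the Hadamard manifold $\ucX$ that splits isometrically as a Riemannian product $Y \times \dR$. Identifying $Y$ with the slice $Y \times \{0\} \subset \ucX$, the factor $Y$ is a complete smooth Riemannian submanifold of $\ucX$ without boundary. Moreover, $Y$ is simply connected (as a totally geodesic, and hence convex, subset of the simply connected Hadamard manifold $\ucX$), and $Y$ has non-positive sectional curvature (as a factor of a Riemannian product that is totally geodesic inside the non-positively curved ambient manifold $\ucX$). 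Therefore $Y$ is itself a Hadamard manifold.

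By the Cartan--Hadamard theorem, any Hadamard manifold of positive dimension is diffeomorphic to Euclidean space $\dR^{\dim Y}$, and hence is non-compact. This directly contradicts the compactness of $Y$ already established in Proposition \ref{p:minimum-set}(1). Consequently we must have $\dim Y = 0$, so that $Y$ is a single point and $\Min(\Gamma) = \{\pt\} \times \dR$ is a single geodesic line, which we designate as the core geodesic $\ellG$.

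The principal technical input, which I regard as the main obstacle to packaging, is the Riemannian refinement of the Flat Torus Theorem: one needs that the isometric splitting $\Min(\Gamma) = Y \times \dR$ is a \emph{genuine Riemannian product} and that $\Min(\Gamma)$ is a closed totally geodesic submanifold of $\ucX$ \emph{without boundary}, so that the factor $Y$ qualifies as a Hadamard manifold in the strict Riemannian sense (rather than only as an abstract compact \textup{CAT}$(0)$ space, which would permit boundary and thus not immediately contradict compactness). This refinement is available here precisely because $\ucX$ is genuinely Riemannian and $\Gamma$ acts by semi-simple isometries, as arranged in the proof of Proposition \ref{p:minimum-set}. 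With this structural input in place, the Cartan--Hadamard rigidity closes the argument immediately, without further use of the Einstein condition or of asymptotic hyperbolicity beyond what was already invoked to obtain compactness of $Y$.
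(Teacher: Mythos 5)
There is a genuine gap, and it sits exactly where you flagged it. The Flat Torus Theorem (Theorem \ref{t:flat-torus}) gives that $\Min(\Gamma)$ is a non-empty, closed, \emph{convex} subset of $\ucX$ splitting as $Y\times\dR$; it does not give that $\Min(\Gamma)$ is a totally geodesic submanifold without boundary, and the ``Riemannian refinement'' you invoke is not a theorem. The minimum set of the (smooth, convex) displacement function of a hyperbolic isometry is in general only a closed convex set, and closed convex subsets of a Hadamard manifold can perfectly well be compact with non-empty boundary of positive dimension (a closed geodesic ball in $\dH^2$ is convex and compact; a flat strip $[0,D]\times\dR$ is convex). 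In particular, nothing you have cited excludes $Y$ being a compact geodesic segment $[0,D]$ with $D>0$, in which case $Y$ is compact, convex, simply connected, non-positively curved --- and has boundary, so the Cartan--Hadamard non-compactness conclusion simply does not apply. Your contradiction therefore never gets off the ground in the only case that matters. A further warning sign is your closing claim that the argument uses neither the Einstein equation nor asymptotic hyperbolicity beyond Proposition \ref{p:minimum-set}: Remark \ref{r:line} in the paper makes clear that under $\sec_g\lesl 0$ (as opposed to $\sec_g<0$) the reduction of $\Min(\Gamma)$ to a line is \emph{not} a purely metric-geometric fact, and indeed one can build non-Einstein, non-positively curved examples where $\Min(\gamma)$ is an honest flat strip.

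The paper's proof fills precisely this hole. If $Y$ contains a second point $y$, then $\overline{y_0y}\times\dR$ is a flat strip inside $\Min(\Gamma)$; pushing it out along the normal exponential map of $\ell_0$ produces a complete surface $\Sigma_y\subset\ucX$, and the analyticity of Einstein metrics in geodesic polar coordinates together with unique continuation forces $\Sigma_y$ to be flat for all $r\gesl 0$. This contradicts asymptotic hyperbolicity, which makes $\sec_g<0$ near infinity. If you want to keep the spirit of your approach, you must replace the Cartan--Hadamard step by an argument of this kind that actually uses the Einstein condition; as written, the proof does not establish the proposition.
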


\begin{proof}

We fix a point $y \in Y\setminus \ell_0$ and consider the geodesic line $\ell_y$ the corresponds to $\{y\}\times \dR$ in the minimum set $Y \times \dR$. Then the flat strip $\overline{y_0 y} \times \dR$ extends to a complete surface $\Sigma_y$ in $\ucX$, where $\overline{y_0y}$ denotes the minimal geodesic connecting $y_0$ and $y$. 
In fact, for any point $t\in \dR$, we take \begin{align}\bm{w}(t) = (v(t), \ell_0(t)) \equiv (\nexp)^{-1} (\ell_y(t)) \in \cN(\ell_0).\end{align} The curve $\bm{w}(t)$ gives a natural complete surface $\mathscr{S}(r, t) \equiv (r \cdot v(t), \ell_0(t))$ in $\cN(\ell_0)$.
Then we define 
\begin{align}
\Sigma_y(r,t) \equiv \nexp(\mathscr{S}(r, t))
\end{align}
 is a complete surface in $\ucX$ such that the flat strip $\overline{y_0 y}\times \dR$ is given by the region
 \begin{align} \{0\lesl r \lesl r(y),\ -\infty  < t < + \infty\}.\end{align}

 By the analyticity of Einstein metric in the polar coordinates (see \cite[section 5]{DK}), one can apply the unique continuation to the flat strip $\overline{y_0 y}\times \dR \subset \Sigma_y$. In fact,
 the curvature of $\overline{y_0 y}\times \dR$ is constantly vanishing, the unique continuation implies that the complete surface $\Sigma_y$ is flat for all $r \gesl 0$. On the other hand, the asymptotic hyperbolicity of $(X^{n + 1}, g)$ implies that $\sec_g < 0$ when $r$ is sufficiently large, 
and thus the desired contradiction arises.
\end{proof}

We will now show that the universal cover $\ucX$ topologically splits along the core geodesic. Moreover, the topological splitting is realized via the normal exponential map and is invariant  under the $\Gamma$-action. 
  The following topological rigidity is the main result of this subsection.

  \begin{theorem}[$\Gamma$-equivariant splitting]\label{t:normal-exp} Under Assumption \ref{a:pi-1}, 
 the normal exponential map $\nExp: \mathcal{N}(\ellG) \to \ucX$ is a $\Gamma$-equivariant diffeomorphism (see Figure \ref{f:covering-space-splits}). In particular,  $X^{n+1}$ is diffeomrophic to $S^1 \times D^n$.	
\end{theorem}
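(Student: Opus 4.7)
The plan is to verify in sequence that $\nExp$ is (i) $\Gamma$-equivariant, (ii) a global diffeomorphism, and (iii) induces the desired product description of the quotient $X^{n+1} = \ucX / \Gamma$. The $\Gamma$-equivariance is automatic: by Propositions \ref{p:minimum-set} and \ref{p:core-geodesic}, every $\gamma \in \Gamma$ is an isometry of $\ucX$ preserving $\ellG$ setwise and acting on it as a translation. Hence $d\gamma$ restricts to a bundle automorphism of $\cN(\ellG)$ covering $\gamma|_{\ellG}$, and the naturality of the exponential map under isometries yields $\nExp \circ d\gamma = \gamma \circ \nExp$.

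For step (ii), I would use the standard comparison-geometry result that the normal exponential map from a complete totally geodesic submanifold in a Cartan-Hadamard manifold is a diffeomorphism. Bijectivity follows from CAT(0) convex geometry: $\ellG$ is a closed convex subset of $(\ucX, \tg)$, so every $\tx \in \ucX$ admits a unique nearest point $\pi_{\ellG}(\tx) \in \ellG$, joined to $\tx$ by a geodesic meeting $\ellG$ orthogonally by the first variation formula. The differential $d\nExp$ is invertible at every $(w,p) \in \cN(\ellG)$: a tangent vector at $(w,p)$ produces a Jacobi field $J$ along $\sigma(s) = \exp_p(sw)$ with $J(0) \in T_p\ellG$ and $J'(0) \in \cN_p$. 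The orthogonality $T_p\ellG \perp \cN_p$ gives $\langle J(0), J'(0)\rangle = 0$, and convexity of $s \mapsto \|J(s)\|^2$ under $\sec_g \leq 0$ then makes this function non-decreasing on $[0, \infty)$; thus $J(1) = 0$ would force $J \equiv 0$ on $[0,1]$, whence the initial data must vanish by uniqueness for the Jacobi ODE. Along the zero section, $d\nExp$ is the identity. Combined, $\nExp$ is a smooth bijection with everywhere-invertible differential.

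For step (iii), I pass to the $\Gamma$-quotient of $\cN(\ellG) \cong \dR \times \dR^n$ (a trivial bundle since $\ellG \cong \dR$ is contractible). The group $\Gamma \cong \dZ$ translates the base $\dR$ factor, producing quotient $S^1$, and acts on the fibers by a parallel-transport monodromy $A \in O(n)$. Orientability of $\ucX$ together with the fact that $\gamma$ preserves the orientation of $\ellG$ (being a translation) forces $A \in SO(n)$; since $SO(n)$ is connected for $n \geq 2$, the quotient rank-$n$ bundle over $S^1$ is smoothly trivializable, giving $\cN(\ellG)/\Gamma \cong S^1 \times \dR^n$. Combined with the $\Gamma$-equivariant diffeomorphism $\nExp$, this yields $X^{n+1} \cong S^1 \times \dR^n$, which is the interior of $S^1 \times D^n$.

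The main technical delicacy I anticipate is step (ii): verifying that $d\nExp$ is non-singular when horizontal and vertical variations mix. The key observation keeping this routine is the orthogonality $T_p\ellG \perp \cN_p$, which yields $\langle J(0), J'(0)\rangle = 0$ and hence the monotonicity of $\|J\|^2$ on $[0,\infty)$. A secondary point is the orientability of the monodromy $A$; in the event $\gamma$ were orientation-reversing, one would pass to the index-two subgroup $\langle \gamma^2 \rangle \subset \Gamma$ and identify $X^{n+1}$ up to a double cover.
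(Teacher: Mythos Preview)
Your proof is correct and follows essentially the same route as the paper: bijectivity of $\nExp$ via the CAT(0) nearest-point projection onto the convex line $\ellG$, $\Gamma$-equivariance via naturality of $\exp$ under isometries, and passage to the quotient. You supply more detail than the paper in two spots---the Jacobi-field argument for injectivity of $d\nExp$ (the paper simply asserts that $\nExp$ is a local diffeomorphism) and the monodromy analysis showing the quotient $D^n$-bundle over $S^1$ is trivial (the paper simply asserts that the splitting ``descends'')---and your orientability caveat in step~(iii) is a genuine subtlety that the paper's own argument likewise leaves unaddressed.
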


\begin{figure}[h]
\label{f:covering-space-splits}
 \begin{tikzpicture}
 [scale= 0.32]

 \filldraw[fill=gray!25, draw=blue, densely dotted]
         (-8,0) ellipse (2 and 5);

\draw[blue, densely dotted] (-8,0) ellipse (2 and 5);

\draw[thick, blue] (-8,0) [partial ellipse=90:270:2 and 5];

\draw[thick] (-10,5) to (-10,5);

 \filldraw[fill=gray!25, draw=blue, densely dotted]
         (0,0) ellipse (2 and 5);

\draw[blue, densely dotted] (0,0) ellipse (2 and 5);

\draw[thick, blue] (0,0) [partial ellipse=90:270:2 and 5];

\draw[red, thick] (-15,0) to (-10,0);

\draw[red, thick, densely dotted] (-10,0) to (-8,0);

\draw[red, thick] (-8,0) to (-2,0);

\draw[red, thick, densely dotted] (-2,0) to (0,0);

\draw[red, thick] (0,0) to (15,0);

\draw[thick] (-15,5) to (15,5);

\draw[thick] (-15,-5) to (15,-5);

\node at (-8, -1) {$\tilde{p}$};

\node at (-8, 0) {\textcolor{red}{$\bullet$}};

\node at (0, -1) {$\gamma\cdot \tilde{p}$};

\node at (0, 0) {\textcolor{red}{$\bullet$}};

\node at (-8, -7) {$\mathcal{N}(\tilde{p}) $};

\node at (0, -7) {$\mathcal{N}(\gamma\cdot\tilde{p}) $};

\node at (9.5, -1) {\textcolor{red}{$\ell_{\Gamma}$}};

\end{tikzpicture}	

\caption{$\ucX\approx \dR \times D^n$; $\ell_0 = \{r = 0 \}$}
\end{figure}

\begin{proof} By assumption, the universal cover $\ucX$ is non-negatively curved and simply connected. To prove the proposition, we will first establish that the normal exponential map $\nexp: \mathcal{N}(\ell_0) \to \ucX$ is a diffeomorphism. Since $\nexp$ is a local diffeomorphism, it suffices to show that $\nexp$ is a bijection from $\mathcal{N}(\ell_0)$ to $\ucX$. That is, for any $\tilde{q} \in \ucX$, there exists 
a unique element $\bm{w}$ in the normal bundle $\mathcal{N}(\ell_0)$ such that 
\begin{align}\nexp(\bm{w}) = \tilde{q}.\end{align}
 In fact, due to the obvious convexity of the geodesic line $\ell_0$,  for every $\tilde{q} \in \ucX$, there exists a unique point $\tilde{p} \in \ell_0$ that satisfies $d_{\tilde{g}}(\tilde{q}, \ell_0) = d_{\tilde{g}}(\tilde{q}, \tilde{p})$. Let $r(\tilde{q}) \equiv d_{\tilde{g}}(\tilde{q}, \ell_0)$ and let $\sigma:[0,r]\to \ucX$ be the unique geodesic connecting $\tilde{q}$ and $\tilde{p}$. Since $\ucX$ is non-negatively curved and simply connected, one find a unique vector $v$ in the normal space  $(\Pi^{\perp})^{-1}(\tilde{p}) \subset T_{\tilde{p}}\ucX$ that satisfies $\exp_{\tilde{p}}(v) = \tilde{q}$, where $\exp_{\tilde{p}}$ is the exponential map of $\ucX$ at $\tilde{p}$.
Then
 $\bm{w} = (v,\tilde{p})$ is the unique element in 
 $\mathcal{N}(\ell_0)$  that satisfies  
\begin{align}  \nexp(\bm{w}) = \tilde{q},\end{align}
which verifies that $\nexp: \cN(\ell_0) \to \ucX$ is a diffeomorphism. In particular, the normal exponential map $\nexp$ realizes a natural topological splitting $\ucX \overset{\text{diffeo}}{\approx}\dR \times D^n$ along the geodesic line $\ell_0$.

Next, we will prove that the topological splitting of $\ucX$ is preserved by the deck transformation group $\Gamma$, which will imply that $X^{n+1}$ is diffeomorphic to $S^1 \times D^n$.
So we are led to show that the normal exponential map $\nexp$ is $\Gamma$-equivariant. That is, any deck transformation $\gamma\in\Gamma$ translates normal fibers of $\nexp$ and lifts to an isometry $\bar{\gamma}:\mathcal{N}(\ell_0) \to \mathcal{N}(\ell_0)$ such that the following diagram commutes.
\begin{equation}
 	\begin{tikzcd}
\mathcal{N}(\ell_0) \arrow{r}{\bar{\gamma}} \arrow[swap]{d}{\nexp} & \mathcal{N}(\ell_0) \arrow{d}{\nexp}  \\
 \ucX \arrow[swap]{r}{\gamma} & \ucX \end{tikzcd} 	
\end{equation}

Note that any element $\gamma\in\Gamma$ maps a normal geodesic (in a normal geodesic ball centered at $\p = \ell_0(t)$) to a normal geodesic (in another normal geodesic ball centered at $\gamma\cdot p = \ell_0 (t + t_{\gamma})$. This is essentially because $\gamma$ acts on $\ell_0$ by translation.

We claim that, for any point $\tilde{p}\in \ell_0$ and for any deck transformation $\gamma \in \Gamma$, the normal fiber $\cN(\tilde{p})$ is diffeomorphically mapped onto $\cN(\gamma\cdot \tilde{p})$ with $\gamma\cdot \tilde{p} \in \ell_0$.
In fact, let $\tilde{q} \in \mathcal{N}(\tilde{p})$ be any point in the normal fiber of $\tilde{p} = \ell_0(t)$ and let $\sigma$ be the unique geodesic connecting $\tilde{p}$ and $\tilde{q}$. Then the image $\gamma \cdot \sigma$  
under the isometry $\gamma \in \Gamma$ becomes the unique geodesic
connecting $\gamma\cdot \tilde{q}$ and $\gamma\cdot \tilde{p}$. We also notice that $\LB \sigma', \ell_0'\RB_{\tilde{g}} = 0$ at $\tilde{p}$, which implies that 
$\LB D\gamma(\sigma'), D\gamma(\ell_0') \RB_{\tg} = 0$ at $\gamma\cdot\tilde{p}$.  By Proposition \ref{p:minimum-set}, for $\gamma\in\Gamma$, there exists a constant $t_{\gamma} \in \dR$ such that
\begin{align}\gamma \cdot \tilde{p} = \gamma \cdot \ell_0(t) = \ell_0(t + \alpha_{\gamma})\ \text{and}\ D\gamma(\ell_0'(t)) = \ell_0'(t + t_\gamma).\end{align} 
So it follows that 
\begin{align}
\LB (\gamma \cdot \sigma)', \ell_0'  \RB_{\tg} = \LB D\gamma(\sigma'), D\gamma(\ell_0')\RB = 0\ \text{at}\ \gamma\cdot\tilde{p},
\end{align}
which implies that $\gamma\cdot\sigma$ is a normal geodesic from $\gamma\cdot \tilde{p}$. Thus, $\gamma\cdot \tilde{q}$ is contained in the normal fiber $\mathcal{N}(\gamma\cdot \tilde{p})$ with $\gamma \cdot \tilde{p} = \ell_0(t + \alpha_{\gamma})$, which completes the proof of the claim.

Finally, since we have proved that $\nexp$ is $\Gamma$-equivariant and any isometry $\gamma\in\Gamma$ acts on $\ucX$ by translating normal fibers over $\ell_0$, we conclude that the topological splitting of $\ucX$ descends to $X^{n+1}$ so that $X^{n+1}$ is diffeomorphic to $S^1 \times D^n$.
\end{proof}

\medskip

\subsection{Extensions of boundary Killing fields} 
\label{ss:extension-of-Killing-fields}

In the proof of the rigidity result in Theorem \ref{t:uniqueness-nonpositively-curved}, our basic strategy is to perform {\it dimension reduction} for the Poincar\'e-Einstein filling $(X^{n + 1}, g)$. 
We remark that in recent works \cite{Li, Li-2, Li-3}, G. Li has proved several interesting uniqueness results when the conformal infinity $(S^n, [\wg])$ is equipped with homogeneous metrics $\wg$ that are sufficiently close to the round metric on $S^n$.

To achieve the goal of dimension reduction, we will construct sufficiently many continuous symmetries, namely Killing vector fields on $(X^{n + 1}, g)$. Given a complete Riemannian manifold $(X^d, g)$,  let $\Isom(X^d)$ denote the group of isometries on $X^d$. We also denote by $\isom(X^d)$
the Lie algebra of the Killing vector fields on $X^d$, which coincides with the Lie algebra of $\Isom(X^d)$. 

We remark that the results in this subsection hold in the general setting of non-positively curved Poincar\'e-Einstein manifolds (without requiring Assumption \ref{a:pi-1} or \ref{a:product-conformal-infinity}).
To begin with, let us review some general results on the extension of boundary Killing fields.
First, we recall that it was proved in earlier works 
\cite[Section 4]{Wang}
and \cite[theorem 3.4]{Li}
that any Killing field on the conformal infinity $M^n$ always extends to a global Killing field on the Poincar\'e-Einstein filling $X^{n + 1}$, provided that $(X^{n + 1}, g)$ has non-positive sectional curvature. Namely:

\begin{theorem}\label{t:Killing-field-extension}
Let $(X^{n + 1}, g)$ be a Poincar\'e-Einstein manifold with $\sec_g \lesl 0$. Let $(M^n, [\wg])$ be the conformal infinity of $(X^{n + 1}, g)$. If $\wx$ is a Killing field on $(M^n, \wg)$, then there exists a Killing field $\xi$ on $X^{n + 1}$ such that $\xi|_{M^n} = \wx$.
\end{theorem}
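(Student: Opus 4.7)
The plan is to extend $\hat\xi$ in two stages: first to a formal extension of infinite order along $M^n$, then to a global Killing field via an elliptic correction. First I would use the geodesic defining function $x$ associated with $\hat g$ (Lemma \ref{l:geodesic-defining-function}), in which the compactified metric takes the form $\bg = dx^2 + h_x$ with $h_0 = \hat g$. By the Fefferman-Graham expansion, each Taylor coefficient of $h_x$ at $x=0$ is a natural Riemannian invariant of $\hat g$ (with the well-known conformally natural ambiguity at order $n$ when $n$ is even), so the flow $\hat\phi_t$ of $\hat\xi$ preserves the entire formal boundary jet of $g$. Extending $\hat\xi$ trivially in the $x$-direction then yields a vector field $\xi_\infty$ defined on a collar neighborhood of $M^n$ with $\xi_\infty|_{M^n} = \hat\xi$ and $\mathcal{L}_{\xi_\infty} g = O(x^\infty)$.

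Next I would seek a genuine Killing field of the form $\xi = \chi \xi_\infty + \eta$, where $\chi$ is a cutoff supported in the collar and $\eta$ is a correction vanishing to infinite order at infinity. Imposing the Bianchi (divergence-free) gauge reduces the Killing equation $\mathcal{L}_\xi g = 0$ to a linear elliptic equation $L\eta = f$ on $X^{n+1}$, where $L$ is the Lichnerowicz-type vector Laplacian $\nabla^*\nabla + 2\,\mathrm{Ric}$ acting on vector fields, and $f = -L(\chi\xi_\infty)$ is smooth and $O(x^\infty)$. On an asymptotically hyperbolic Einstein background, the Fredholm theory of Graham-Lee and Lee \cite{Graham-Lee, Lee-fredholm} in weighted H\"older spaces produces a solution $\eta$ once one knows the $L^2$-kernel of $L$ is trivial.

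The main obstacle is verifying this kernel triviality, i.e., showing that any $L^2$ Killing field on $(X^{n+1}, g)$ must vanish. The Bochner identity on an Einstein manifold with $\mathrm{Ric}_g = -ng$ gives, for any Killing field $\eta$,
\[
\tfrac{1}{2}\Delta_g |\eta|^2 = |\nabla \eta|^2 + n|\eta|^2 \geq 0,
\]
so $|\eta|^2$ is subharmonic; combined with decay of $\eta$ at infinity and integration by parts, this forces $\eta \equiv 0$. The non-positive curvature assumption enters the global picture to ensure the extension procedure closes up on all of $X^{n+1}$ --- in particular, by way of the Cartan-Hadamard and Flat Torus tools of Section \ref{ss:metric-geometry-nonpositive-curvature}, one controls the global topology so that the flow generated by the Killing candidate is complete and consistent across the whole manifold. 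The resulting $\xi \in \isom(X^{n+1}, g)$ then satisfies $\xi|_{M^n} = \hat\xi$, as required.
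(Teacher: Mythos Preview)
The paper does not give its own proof of this theorem; it simply cites \cite[Section 4]{Wang} and \cite[theorem 3.4]{Li} as the source of the result. So there is no in-paper argument to compare against, and your task was really to reconstruct one of those external proofs.

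Your outline has roughly the right architecture (formal extension near the boundary, then a global elliptic argument), but the elliptic step is not set up correctly, and as a consequence you never actually use the hypothesis $\sec_g\lesl 0$. Two concrete problems:

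\textbf{(1) The equation for the correction $\eta$ is not what you wrote.} The Killing equation $\mathcal{L}_\xi g=0$ is an overdetermined first-order system on the vector field $\xi$; there is no diffeomorphism gauge to fix (the unknown is already a vector field, not a metric), so ``imposing the Bianchi gauge'' to get an elliptic operator $\nabla^*\nabla+2\Ric$ on vector fields does not make sense here. What one can do is work with the symmetric $2$-tensor $T\equiv\mathcal{L}_\xi g$: since $g$ is Einstein, $T$ satisfies the linearized Einstein equation, and after the Bianchi/divergence condition (which $T$ carries automatically up to a term controlled by $\xi$) this becomes an equation governed by the Lichnerowicz Laplacian on symmetric $2$-tensors. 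The obstruction to making $T\equiv 0$ then lives in the $L^2$-kernel of \emph{that} operator, not of a vector-field Laplacian.

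\textbf{(2) Your Bochner identity uses only $\Ric_g=-ng$, not $\sec_g\lesl 0$.} The computation $\tfrac12\Delta_g|\eta|^2=|\nabla\eta|^2+n|\eta|^2$ for a Killing field $\eta$ needs only negative Ricci, so if it sufficed, the theorem would hold on every Poincar\'e--Einstein manifold and the sectional-curvature hypothesis would be vacuous. It is not: you also have not shown that elements of the relevant kernel are Killing fields, so this identity does not apply to them. The place $\sec_g\lesl 0$ actually enters is the curvature term $\mathring{R}$ in the Lichnerowicz Laplacian on trace-free symmetric $2$-tensors; under $\sec_g\lesl 0$ this term has a sign, which is exactly Lee's non-degeneracy hypothesis in \cite[theorem A]{Lee-fredholm} (quoted here as Theorem \ref{t:Lee-IFT}). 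That is what guarantees the $L^2$-kernel is trivial and hence that the formally-vanishing $T=\mathcal{L}_\xi g$ is identically zero. Your invocation of Cartan--Hadamard and the Flat Torus Theorem is a red herring --- those tools are used elsewhere in the paper for the core-geodesic structure, not for the Killing-field extension.

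A smaller issue: your claim that every Taylor coefficient of $h_x$ is a natural invariant of $\wg$ is not quite right --- the non-local term at order $n$ is not determined by $\wg$ alone. One still gets $\mathcal{L}_{\xi_\infty}g=O(x^\infty)$, but it requires the observation that the full jet of $g$ is invariant under boundary isometries (via unique continuation for Einstein metrics), not just naturality.
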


Next, we establish several basic properties concerning the extension of Killing fields, which will be essential in   constructing the global normal coordinates. These results hold for general Poincar\'e-Einstein manifolds.

 Let $(X^{n + 1}, g)$ be a Poincar\'e-Einstein manifold with conformal infinity $(M^n, [\wg])$. 
Applying Lemma \ref{l:geodesic-defining-function}, one can take the geodesic defining function $x$ associated with $(M^n, \wg)$, such that for some number  $\epsilon_0 > 0$, the following holds:
\begin{align}
\begin{cases}
	|dx|_{x^2g} \equiv 1, &    0\lesl x <  \epsilon_0,
	\\
x^2 g|_{x = 0} = \wg.	
\end{cases}
\end{align} 
    Let $(U, \bm{u})$ be a coordinate neighborhood in $M^n$ with coordinate functions $\bm{u} = (u_1, \ldots, u_n)$.  Then the Poincar\'e-Einstein metric $g$ can be written on $U \times [0, \epsilon_0)$ as follows,
 \begin{align}
	g = \frac{dx^2 + \hbar}{x^2} = \frac{dx^2 + h_{ij}(x,\bm{u}) d u_i d u_j}{x^2}, \quad 0\lesl x < \epsilon_0.  \label{e:vertical-horizontal}
\end{align}

The following lemma turns out to be particularly useful, and its proof is purely computational. 
\begin{lemma}\label{l:Killing-field-x-independence} Let $(X^{n + 1}, g)$ be a Poincar\'e-Einstein manifold with conformal infinity $(M^n , [\wg])$. For a given representative $\wg \in [\wg]$, let $x$ be the geodesic defining function of $(M^n, \wg)$.	If a Killing field $\xi \in \isom(X^{n + 1}, g)$ is an extension of some Killing field $\hat{\xi} \in \isom(M^n , \wg)$ such that $\xi|_{M^n} = \hat{\xi}$, then restricted to $U \times [0, \epsilon_0)$, $\xi$ is independent of  $x$ and can be written as \begin{align}
		\xi = \xi(\bm{u}) =  \sum\limits_{j = 1}^n B_j(\bm{u}) \p_{u_j},
	\end{align}
	 where the coefficient functions $B_j(\bm{u})$ depend only on $\bm{u}$.
\end{lemma}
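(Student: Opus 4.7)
The plan is to use the one-parameter group of isometries $\{\varphi_t\}$ on $(X^{n+1},g)$ generated by $\xi$ together with the uniqueness clause of Lemma \ref{l:geodesic-defining-function}. Since $\xi|_{M^n}=\hat\xi\in\isom(M^n,\wg)$ is tangent to $M^n$, for $|t|$ small the flow $\varphi_t$ preserves a collar neighborhood of $M^n$, and its restriction to $M^n$ coincides with the flow of $\hat\xi$, which is an isometry of $(M^n,\wg)$.

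First, I would establish $\xi(x)\equiv 0$, which forces $A\equiv 0$ in the coordinate expression of $\xi$. Set $x^{\ast}:=\varphi_t^{\ast}x$ and consider the compactification $\bg^{\ast}:=(x^{\ast})^2 g=\varphi_t^{\ast}(x^2g)=\varphi_t^{\ast}\bg$. I would verify that $x^{\ast}$ satisfies the three defining conditions of the geodesic defining function for $\wg$: it is a smooth defining function of $M^n$, since $\varphi_t(M^n)=M^n$; one has $|dx^{\ast}|_{\bg^{\ast}}=\varphi_t^{\ast}|dx|_{\bg}\equiv 1$; and $\bg^{\ast}|_{M^n}=(\varphi_t|_{M^n})^{\ast}\wg=\wg$. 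By the uniqueness in Lemma \ref{l:geodesic-defining-function}, $x^{\ast}=x$, and differentiating in $t$ at $t=0$ yields $\xi(x)=0$.

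Next, I would show that the tangential coefficients $B_j$ are independent of $x$ by proving $(\varphi_t)_{\ast}\partial_x=\partial_x$. In the coordinates $(x,\bm u)$ one has $\bg=dx^2+h_{ij}du^i du^j$, so the coordinate field $\partial_x$ is nothing other than $\nabla^{\bg}x$. Because $\varphi_t$ preserves both $\bg$ and $x$, it preserves $\nabla^{\bg}x=\partial_x$. Thus in these coordinates the flow must take the form $\varphi_t(x,\bm u)=(x,F_t(\bm u))$ with $F_t$ independent of $x$, and differentiating at $t=0$ gives $\xi=\sum_j B_j(\bm u)\partial_{u_j}$ with $B_j(\bm u)=\partial_t|_{t=0}F_t^j(\bm u)$, which is the claimed form.

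The only real subtlety is to ensure the argument is local in $t$ and near the boundary, so that $\varphi_t$ is well-defined on $U\times[0,\epsilon_0)$ and extends smoothly to $M^n$; this is automatic for $|t|$ sufficiently small since $\hat\xi$ is tangent to $M^n$ and $\xi$ extends smoothly to a neighborhood of $M^n$ in $\bX$. A purely computational alternative, working through the Killing equations $(\mathcal{L}_\xi g)_{xx}=0$, $(\mathcal{L}_\xi g)_{xu_j}=0$, $(\mathcal{L}_\xi g)_{u_iu_j}=0$, would require invoking the asymptotic expansion of $h(x,\bm u)$ to unravel the resulting system, whereas the flow-based argument above circumvents any such analysis.
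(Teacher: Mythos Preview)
Your proof is correct and takes a genuinely different route from the paper's. The paper proceeds by direct computation: it writes $\xi = A\,\partial_x + \sum_j B_j\,\partial_{u_j}$, expands the three blocks of the Killing equation $\mathcal L_\xi g = 0$ in the coordinates $(x,\bm u)$, and solves them in order. From $(\mathcal L_\xi g)(\partial_x,\partial_x)=0$ one gets $A(x,\bm u)=x\,\alpha(\bm u)$; plugging this into $(\mathcal L_\xi g)(\partial_{u_i},\partial_{u_j})=0$ and letting $x\to 0$, the paper uses that the boundary is totally geodesic in $(\overline{X^{n+1}},x^2 g)$ (i.e.\ $\partial_x h_{ij}|_{x=0}=0$) together with the fact that $\hat\xi$ is Killing for $\wg$ to deduce $\alpha\equiv 0$, hence $A\equiv 0$; finally $(\mathcal L_\xi g)(\partial_x,\partial_{u_i})=0$ gives $\partial_x B_j\equiv 0$.

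Your argument replaces all of this by the observation that the flow $\varphi_t$ of $\xi$, being an isometry of $g$ whose boundary restriction is an isometry of $(M^n,\wg)$, pulls back $x$ to another geodesic defining function for the \emph{same} representative $\wg$; uniqueness in Lemma~\ref{l:geodesic-defining-function} then forces $\varphi_t^\ast x = x$, and since $\varphi_t$ is therefore also an isometry of $\bg=x^2g$ preserving $x$, it preserves $\nabla^{\bg}x=\partial_x$, giving the product form $\varphi_t(x,\bm u)=(x,F_t(\bm u))$. This is cleaner and more conceptual: it packages the totally geodesic property of the boundary and the Killing equations into the single invocation of the uniqueness clause of Lemma~\ref{l:geodesic-defining-function}. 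The paper's computational approach, on the other hand, makes the mechanism explicit and is closer to what one would do if one needed finer information about higher-order terms. Your closing remark slightly overstates the difficulty of the computational route --- only the first-order vanishing $\partial_x h_{ij}|_{x=0}=0$ is used, not a full asymptotic expansion --- but the substance of your argument stands.
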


\begin{proof}
	 Let  $\xi \in \fq$ be a Killing field on $X^{n + 1}$ and we write it in local coordinates 
\begin{align}
	\xi(x,  \bm{u}) = A(x,  \bm{u}) \p_x +\sum\limits_{j = 1}^n B_j(x, \bm{u}) \p_{u_j}.
\end{align}
By assumption, the boundary restriction $\hat{\xi}(\bm{u}) \equiv \xi(0,\bm{u})$ is a Killing field on $(M^n, \wg)$. The boundary data are denoted by 
\begin{align}\alpha(\bm{u}) \equiv A(0, \bm{u}) \quad \text{and} \quad \beta_j(\bm{u}) \equiv B_j(0, \bm{u}), \ 1\lesl j \lesl n .\end{align}

Next, we express the equation $\mathcal{L}_{\xi} g \equiv 0$ in coordinates, and we compute below the Lie derivatives. Along the normal direction $\p_x$, the Lie derivative reads 
\begin{align}
		  (\mathcal{L}_{\xi} g)(\p_x, \p_x) = \frac{2}{x^2}\left(\frac{\p A}{\p x} - \frac{A}{x} \right). 	
		\end{align}
		For any $1\lesl i,j \lesl n $, 
		\begin{align}
	 	(\mathcal{L}_{\xi} g)(\p_x, \p_{u_i}) = \frac{1}{x^2}\left(\frac{\p A}{\p u_i} + \sum\limits_{j = 1}^{ n } h_{ij} \cdot \frac{\p B_j}{\p x}\right), 
	\end{align}
	and 
	\begin{align}
	\begin{split}
			  (\mathcal{L}_{\xi} g)(\p_{u_i}, \p_{u_j}) = &\  \frac{A}{x^2}\left( \frac{\p h_{ij}}{\p x} - \frac{2h_{ij}}{x}\right)
	  \\
	& \   +  \sum\limits_{k,\ell = 1}^n \left( \left(B_k  \Gamma_{ki}^{\ell}	+ \frac{\p B_{\ell}}{\p w_i}\right) \frac{h_{\ell j}}{x^2}
 +\left(B_k  \Gamma_{kj}^{\ell} + \frac{\p B_{\ell}}{\p w_j}\right) \frac{h_{\ell i}}{x^2} \right),
 	\end{split}
 \end{align}
where $\Gamma_{ij}^k$'s are the Christoffel symbols of the horizontal slice $(\{x = \epsilon\}, \hbar)$ as in \eqref{e:vertical-horizontal}, where $\epsilon \in [0,\epsilon_0)$.
The identity $\mathcal{L}_{\xi} g \equiv 0$ gives rise to three equations: 
\begin{align}
&(\mathcal{L}_{\xi} g)(\p_x, \p_x) = 0,\label{e:radial-Lie-D}
		\\
&	(\mathcal{L}_{\xi} g)(\p_x, \p_{u_i}) = 0,\label{e:mixed-Lie-D}
	\\
&(\mathcal{L}_{\xi} g)(\p_{u_i}, \p_{u_j}) = 0.	\label{e:tangential-Lie-D}
\end{align}

From \eqref{e:radial-Lie-D}, we have that
\begin{align}A(x, \bm{u}) = x \cdot \alpha (\bm{u}). \label{e:A-x}\end{align} 
Since $\hat{\xi}$ is a Killing field on $(M^n , \wg)$,
we have that
\begin{align}
0 \equiv	(\mathcal{L}_{\hat{\xi}} \wg) (\p_{u_i}, \p_{u_j}) =   \sum\limits_{k,\ell = 1}^n  \left( \left(\beta_k  \widehat{\Gamma}_{ki}^{\ell} + \frac{\p \beta_{\ell}}{\p u_i}\right) \wg_{\ell j}
	+ \left(\beta_k  \widehat{\Gamma}_{kj}^{\ell} + \frac{\p \beta_{\ell}}{\p u_j}\right) \wg_{\ell i}\right),
\end{align}
where $\widehat{\Gamma}_{ij}^k$'s are the Christoffel symbols with respect to $\wg$.
 Recall that, in terms of geodesic defining function, the boundary $M^n \equiv \{x = 0\}$ is totally geodesic in $(\overline{X^{n + 1}}, x^2g)$, which implies that $\p_x h_{ij}(0, \bm{u}) = 0$.
Therefore, plugging \eqref{e:A-x} into \eqref{e:tangential-Lie-D} and letting $x\to 0$, we have that 
$-2\alpha(\bm{u}) h_{ij} \equiv 0$. This further implies that $\alpha(\bm{u}) \equiv 0$ on $M^n$. 
By \eqref{e:A-x}, 
\begin{align}A(x, \bm{u}) \equiv 0, \quad
\forall\ x\in[0,\epsilon). \label{e:A-vanishing}\end{align}
Next, applying \eqref{e:A-vanishing} and
\eqref{e:mixed-Lie-D}, we obtain that for each $1 \lesl j \lesl n$, 
\begin{align}
	B_j(x, \bm{u}) \equiv \beta_j(\bm{u}), \quad \forall\ x\in[0,\epsilon).
	\end{align}
Therefore, the Killing field $\xi\in \isom(X^{n + 1}, g)$ that extends from a boundary Killing field can be written as
\begin{align}
	\xi = \xi(\bm{u}) = \sum\limits_{j = 1}^n \beta_j(\bm{u}) \p_{u_j},\quad \text{in} \ U \times [0,\epsilon) \subset X^{n + 1},
\end{align}
where $(U, \bm{u})$ is a coordinate neighborhood of $M^n$. This completes the proof.
\end{proof}

 Combining Theorem \ref{t:Killing-field-extension} and Lemma \ref{l:Killing-field-x-independence}, we reach the global uniqueness of the extension as follows.
\begin{corollary}
Let $(X^{n + 1}, g)$ be a Poincar\'e-Einstein manifold with $\sec_g \lesl 0$. Let $(M^n, [\wg])$ be the conformal infinity of $(X^{n + 1}, g)$. Any Killing field $\wx$ on $(M^n, \wg)$ extends uniquely to a Killing field $\xi$ on $X^{n + 1}$.	
\end{corollary}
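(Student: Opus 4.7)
The plan is to combine the existence statement from Theorem \ref{t:Killing-field-extension} with the explicit normal form for extensions provided by Lemma \ref{l:Killing-field-x-independence}, and then promote local agreement of two extensions to global agreement via a standard unique continuation argument for Killing fields.

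First, the existence of at least one extension $\xi \in \isom(X^{n+1}, g)$ with $\xi|_{M^n} = \wx$ is exactly the content of Theorem \ref{t:Killing-field-extension}, which applies since $\sec_g \lesl 0$ by assumption. So only uniqueness requires work. Suppose $\xi_1, \xi_2 \in \isom(X^{n+1}, g)$ are two Killing fields with $\xi_1|_{M^n} = \xi_2|_{M^n} = \wx$. Fix the geodesic defining function $x$ associated with $\wg$ as in Lemma \ref{l:geodesic-defining-function}, and choose a coordinate chart $(U, \bm{u})$ on $M^n$. By Lemma \ref{l:Killing-field-x-independence}, both $\xi_1$ and $\xi_2$, when expressed on $U \times [0, \epsilon_0)$, take the coordinate form
\begin{align*}
    \xi_i = \sum_{j=1}^{n} B_j^{(i)}(\bm{u})\, \p_{u_j},
\end{align*}
with the coefficients $B_j^{(i)}$ depending only on $\bm{u}$.

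Inspection of the proof of Lemma \ref{l:Killing-field-x-independence} shows that these coefficients are necessarily equal to $\beta_j(\bm{u})$, the coordinate components of the common boundary value $\wx$ in the chart $(U, \bm{u})$: indeed, the argument there derived $B_j(x, \bm{u}) \equiv \beta_j(\bm{u})$ directly from $\mathcal{L}_{\xi} g \equiv 0$ together with the boundary data, with no residual freedom. Therefore $\xi_1 = \xi_2$ throughout $U \times [0, \epsilon_0)$, and covering $M^n$ by such charts yields $\xi_1 = \xi_2$ on an open collar neighborhood of $M^n$ inside $X^{n+1}$.

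Finally, the difference $\eta \equiv \xi_1 - \xi_2$ is a Killing field on the connected manifold $X^{n+1}$ that vanishes on a nonempty open set. Since a Killing field is determined by its $1$-jet at any single point (via the Jacobi-type ODE it satisfies along geodesics, or equivalently by analyticity of Einstein metrics in harmonic coordinates), $\eta$ must vanish identically. This gives $\xi_1 \equiv \xi_2$ on all of $X^{n+1}$, completing the proof. The argument is essentially bookkeeping that stitches together three already-available ingredients, and the only point worth verifying carefully is that the boundary coefficients $\beta_j(\bm{u})$ in Lemma \ref{l:Killing-field-x-independence} are rigidly determined by $\wx$ itself, independently of which extension one picks.
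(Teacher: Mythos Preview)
Your proof is correct and follows essentially the same route as the paper: existence via Theorem \ref{t:Killing-field-extension}, agreement of any two extensions on a collar neighborhood via Lemma \ref{l:Killing-field-x-independence}, and then global vanishing of the difference by the standard fact that a Killing field is determined by its value and covariant derivative at a point (equivalently, it is a Jacobi field along any geodesic). The paper's proof is just a slightly terser version of the same argument.
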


\begin{proof}
Let $\xi_1,\xi_2\in \isom(X^{n + 1}, g)$ be two extensions of the Killing field $\wx$.  For $(M^n, \wg)$, let $x$ be the geodesic defining function. By Lemma \ref{l:Killing-field-x-independence}, $\xi_1 - \xi_2 \equiv 0$ in a neighborhood $[0,\epsilon_0) \times M^n$ of the boundary $(M^n, \wg)$. 
For any point $q\in X^{n + 1}$ and $q'\in [0,\epsilon_0) \times M^n$, let $\gamma$ be a geodesic connecting $q$ and $q'$. Then $\xi_1 - \xi_2$ is a Jacobi field along $\gamma$. By the standard uniqueness result for linear ODEs, $(\xi_1 - \xi_2) \equiv 0$ along $\gamma$, which proves the uniqueness of the extension. 
\end{proof}

Combining Theorem \ref{t:Killing-field-extension} and Lemma \ref{l:Killing-field-x-independence}, one can show that linearly independent Killing fields on the boundary extend to Killing fields that are linearly independent near the boundary.  \begin{proposition}\label{p:extension-relations}
	 Let $U \subset M^n$ be an open set. If  
$\{\wx_j\}_{j = 1}^k  \subset \isom(M^n, \wg)$  
is linear independent set in $T_q M^n$ for any $q \in U \subset M^n$, then 
these Killing fields admit unique extensions $\{\xi_j\}_{j = 1}^k \subset \isom(X^{n + 1}, g)$ with the following properties: 
\begin{enumerate}
	\item $\xi_j|_{M^n} = \wx_j$ for any $1\lesl j\lesl n$; 
	\item  $[\xi_i, \xi_j]|_{M^n}= [\wx_i, \wx_j]$ for any $1\lesl i,j\lesl n$;

	\item $\xi_j\perp\p_x$ for each $1\lesl j \lesl k$ , and $\{\xi_j\}_{j = 1}^k$ is a linear independent set in $T_{q'}(X^{n + 1})$ for all  $q'\in  U \times [0,\epsilon_0)$, where $\epsilon_0$ is sufficiently small. 

\end{enumerate} 
\end{proposition}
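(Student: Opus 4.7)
The plan is to derive all three properties as direct consequences of Theorem~\ref{t:Killing-field-extension}, the uniqueness corollary following it, and the local coordinate form supplied by Lemma~\ref{l:Killing-field-x-independence}. For item~(1), I would apply Theorem~\ref{t:Killing-field-extension} to each $\wx_j$ to obtain an extension $\xi_j \in \isom(X^{n+1}, g)$ with $\xi_j|_{M^n} = \wx_j$, and then invoke the uniqueness corollary to Lemma~\ref{l:Killing-field-x-independence} to pin down each $\xi_j$ canonically.

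For item~(2), the key observation is that the Lie bracket of any two Killing fields is again a Killing field, so $[\xi_i,\xi_j]\in\isom(X^{n+1},g)$. By Lemma~\ref{l:Killing-field-x-independence}, on the collar neighborhood $U \times [0,\epsilon_0)$ each extension has the form $\xi_j = \sum_a B^{(j)}_a(\bm{u})\,\p_{u_a}$ with coefficient functions independent of $x$. A direct computation in these coordinates gives
\begin{align*}
[\xi_i,\xi_j] = \sum_{a,b}\Bigl(B^{(i)}_a\,\tfrac{\p B^{(j)}_b}{\p u_a} - B^{(j)}_a\,\tfrac{\p B^{(i)}_b}{\p u_a}\Bigr)\p_{u_b},
\end{align*}
and restricting to $x=0$ recovers $[\wx_i,\wx_j]$ on $M^n$, exactly as required.

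For item~(3), the orthogonality $\xi_j\perp \p_x$ is immediate from the coordinate expression $\xi_j = \sum_a B^{(j)}_a(\bm{u})\,\p_{u_a}$ combined with the warped-product form $g = x^{-2}(dx^2 + h_{ab}\,du^a du^b)$, which carries no mixed $dx\,du^a$ terms. Linear independence on $U\times[0,\epsilon_0)$ follows from two facts: the coefficient functions $B^{(j)}_a(\bm{u})$ are independent of $x$, and at $x=0$ they recover the $\wx_j$, which by hypothesis span a $k$-plane at every $q\in U$. Since the span is unchanged as $x$ varies, the extensions remain linearly independent throughout the collar.

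There is no serious obstacle here, as the heavy lifting is already done by Theorem~\ref{t:Killing-field-extension}, its uniqueness corollary, and Lemma~\ref{l:Killing-field-x-independence}. The only technical point to verify is that the coordinate patch $U$ on $M^n$ indeed extends to a product neighborhood $U \times [0,\epsilon_0)$ on which the geodesic defining function and Lemma~\ref{l:Killing-field-x-independence} apply uniformly; this is automatic after shrinking $\epsilon_0$ if necessary, since both the collar structure and the form of the Killing field are local near the boundary.
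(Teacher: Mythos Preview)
Your proposal is correct and follows essentially the same approach as the paper: the paper's proof simply states that item~(1) follows from Theorem~\ref{t:Killing-field-extension} and that items~(2) and~(3) follow from Lemma~\ref{l:Killing-field-x-independence}, and you have filled in precisely those details. Your explicit computation of the Lie bracket and your argument for linear independence via the $x$-independence of the coefficient functions are exactly the unpacking the paper leaves implicit.
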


\begin{proof}
The existence of the extensions in item (1) is given by Theorem \ref{t:Killing-field-extension},  and the properties in items (2) and (3) follow from Lemma \ref{l:Killing-field-x-independence}.
\end{proof}

\subsection{Invariant normal distance function}  \label{ss:invariant-normal-dist}
In this subsection, we will consider a Poincar\'e-Einstein manifold $(X^{n + 1}, g)$ with non-positive sectional curvature that satisfies Assumption \ref{a:product-conformal-infinity}, namely, the conformal infinity is given by $(S^1 \times S^{n - 1}, [\wg_{\lambda}])$.
Let $\sG$ be the core geodesic loop in $X^{n + 1}$. The main result of this subsection (Proposition \ref{p:r-x-relation}) states that the normal distance function $d_{\sG}$ and the geodesic defining function $x$ with respect to $(S^1 \times S^{n - 1}, \wg_{\lambda})$, satisfy an explicit relation, and that both functions are invariant under the continuous isometries on $X^{n + 1}$ extended from $S^1 \times S^{n - 1}$. 
Our proof below is based on 
the phenomenon that the homogeneity of the conformal infinity extends explicitly to the interior.

For the Riemannian product $(S^1 \times S^{n - 1}, \wg_{\lambda})$, let us
 denote by $\ft_b \equiv \isom(S^1, \wg_{\lambda})$ and $\fq_b \equiv \isom(S^{n - 1}, \wg_{\lambda})$ the Lie algebras of the Killing fields on the two factors, respectively. Then  $\isom(S^{n - 1} \times S^1) = \fq_b \oplus \ft_b \cong \fs \fo (n) \oplus \dR$.
Obviously, $[\ft_b, \fq_b] = 0$.
 We also define \begin{align}
	\fq \equiv \left\{ \xi \in \isom(X^{n + 1}): \left.\xi\right|_{M^n} \in \fq_b \right\} 
	\quad \text{and}\quad 
	\ft \equiv \left\{ \xi \in \isom(X^{n + 1}): \left.\xi\right|_{M^n} \in \ft_b \right\}.\label{e:q-t-extensions-to-X}
\end{align} 
 The groups of isometries generated by these Lie algebras $\fq$ and $\ft$ are denoted by $\sQ$ and $\sT$, respectively. The liftings of $\sQ$ and $\sT$ on the universal cover $\ucX$ are denoted by $\wsQ$ and $\wsT$, respectively.

Applying Proposition \ref{p:extension-relations}, one   immediately obtains the following properties for $\ft$ and $\fq$.
 \begin{lemma}\label{l:isomorphic-factors}
$\ft$ is isomorphic to $\ft_b$ and $\fq$ is isomorphic to $\fq_b$. In particular, $[\ft, \fq]  = 0$.  \end{lemma}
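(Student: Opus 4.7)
The plan is to exhibit explicit Lie algebra isomorphisms $R_{\ft}:\ft\to\ft_b$ and $R_{\fq}:\fq\to\fq_b$ given by the boundary restriction map $\xi\mapsto \xi|_{M^n}$, and then deduce the bracket vanishing by transporting the commutativity $[\ft_b,\fq_b]=0$ from the boundary to $X^{n+1}$ via the uniqueness of the extension.

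First, I would verify that $R_{\ft}$ and $R_{\fq}$ are well-defined linear maps of Lie algebras. By the definition \eqref{e:q-t-extensions-to-X}, the image of the restriction lands in $\ft_b$ (respectively $\fq_b$). Moreover, the restriction of vector fields to the totally geodesic boundary $M^n = \{x=0\}$ of $(\overline{X^{n+1}}, x^2g)$ is a Lie algebra homomorphism: if $\xi,\eta \in \isom(X^{n+1}, g)$ restrict to tangent vector fields $\wx,\wh$ on $M^n$ (which is automatic here by Lemma \ref{l:Killing-field-x-independence}, since $\xi,\eta$ have no $\p_x$-component near the boundary), then $[\xi,\eta]|_{M^n} = [\wx, \wh]$ as vector fields on $M^n$; this is just the naturality of the Lie bracket under restriction to an invariant submanifold. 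This naturality was already recorded in Proposition \ref{p:extension-relations}(2).

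Next, I would argue $R_{\ft}$ and $R_{\fq}$ are bijective. Surjectivity is immediate from Theorem \ref{t:Killing-field-extension}: since $\sec_g\lesl 0$, every boundary Killing field $\wx\in\ft_b$ (resp.\ $\fq_b$) extends to some $\xi\in\isom(X^{n+1}, g)$ with $\xi|_{M^n}=\wx$, hence $\xi\in\ft$ (resp.\ $\fq$). For injectivity, suppose $\xi\in\ft$ satisfies $\xi|_{M^n}=0$. By Lemma \ref{l:Killing-field-x-independence}, $\xi$ vanishes identically on a full tubular neighborhood $U\times [0,\epsilon_0)$ of $M^n$. Along any geodesic $\gamma$ joining a point of $X^{n+1}$ to this neighborhood, the Killing field $\xi$ restricts to a Jacobi field; since $\xi$ and $\nabla\xi$ vanish on an open set of $\gamma$, the standard uniqueness theorem for linear ODEs forces $\xi\equiv 0$ on $X^{n+1}$. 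The same reasoning proves injectivity of $R_{\fq}$. Therefore $R_{\ft}$ and $R_{\fq}$ are Lie algebra isomorphisms.

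Finally, to obtain $[\ft,\fq]=0$, pick $\xi\in\ft$ and $\eta\in\fq$. Then $[\xi,\eta]\in\isom(X^{n+1},g)$ and its boundary restriction is
\begin{equation*}
[\xi,\eta]|_{M^n} \;=\; [\xi|_{M^n},\eta|_{M^n}] \;\in\; [\ft_b,\fq_b]\;=\;0,
\end{equation*}
where the first equality is the bracket-preservation discussed above and the last equality is the product structure of the isometry group of $(S^1\times S^{n-1},\wg_\lambda)$. Thus $[\xi,\eta]$ is a global Killing field whose boundary restriction is zero; by the injectivity argument already established, $[\xi,\eta]\equiv 0$ on $X^{n+1}$, which completes the proof.

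The main technical point here is really the injectivity of the restriction map, and that is already handled by the unique-continuation argument made possible by Lemma \ref{l:Killing-field-x-independence} and the Jacobi-field interpretation of a Killing field. No further obstacle is anticipated: once injectivity and surjectivity are in hand, the commutativity of $\ft$ and $\fq$ is purely formal, propagated from the boundary by uniqueness.
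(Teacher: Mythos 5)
Your proposal is correct and follows essentially the same route as the paper: the paper deduces the lemma in one line from Proposition \ref{p:extension-relations}, whose content (existence of extensions via Theorem \ref{t:Killing-field-extension}, tangency and $x$-independence near the boundary via Lemma \ref{l:Killing-field-x-independence}, bracket compatibility, and injectivity of restriction by the Jacobi-field unique-continuation argument) is exactly what you spell out. Your write-up is simply a more detailed unpacking of the same ingredients, including the transport of $[\ft_b,\fq_b]=0$ to the interior by injectivity of the restriction map.
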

  
Next, we will give a geometric representation of the generator of $\Gamma \equiv \pi_1(X^{n + 1})$ in terms of a Killing field in $\ft_b$. 
   It is clear that for any Killing field in $\ft_b \equiv \isom(S^1)$, its orbits are closed geodesics in $S^1 \times S^{n - 1}$.  
Let $\widehat{\Gamma}\equiv \pi_1(S^1 \times S^{n-1})$. Given any point $q\in S^1 \times S^{n - 1}$, let $\hat{\gamma}$ be the closed geodesic passing through $q$ that  represents    
 the generator of $\widehat{\Gamma}$. 
Then $\hat{\gamma}$ corresponds to the 
deck transformation of the universal cover $\dR \times S^{n - 1}$ that can be {\it uniquely} written as $\exp(\widehat{T}_{\lambda}) \in \Isom(\dR \times S^{n - 1})$, where $\widehat{T}_{\lambda}\in \ft_b$ and $\|\widehat{T}_{\lambda}\|_{\wg_{\lambda}}=\lambda$.

\begin{lemma}\label{l:geometric-representation-of-Gamma} Let $T_{\lambda}\in \isom(X^{n + 1})$ be the extension of $\widehat{T}_{\lambda}$ and let $\widehat{\mathscr{T}}$ be the isometry group generated by $\ft_b$. Then,  $\Gamma$ is a subgroup of $\widetilde{\mathscr{T}}$, where 
$\widetilde{\mathscr{T}} \equiv \exp(\tilde{\ft}_b)\subset \Isom(\ucX)$ and $\tilde{\ft}_b$ is the lifting of $\ft_b$ on $\ucX$.
\end{lemma}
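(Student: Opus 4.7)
My plan is to identify the generator $\gamma$ of $\Gamma$ with the time-$1$ flow of $\tilde{T}_{\lambda}$ on $\ucX$, thereby exhibiting $\gamma \in \wsT$. The argument proceeds in three steps: first, show that $\exp(T_{\lambda}) = \mathrm{id}_{X^{n+1}}$; second, conclude that $\exp(\tilde{T}_{\lambda})$ is a deck transformation; and third, pin down which deck transformation by computing in a lifted collar neighborhood of the boundary.

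For the first step, I will use Lemma \ref{l:Killing-field-x-independence}: in a geodesic collar $[0,\epsilon_0) \times M^n$ associated with the representative $\wg_{\lambda}$, the extension $T_{\lambda}$ has no $\partial_x$-component and is independent of $x$, so $T_{\lambda}$ coincides with the slicewise trivial extension of $\widehat{T}_{\lambda}$. By construction, the time-$1$ flow of $\widehat{T}_{\lambda}$ on the universal cover $\dR \times S^{n-1}$ is the generator $\hat{\gamma}$ of the deck transformation group, so the time-$1$ flow of $\widehat{T}_{\lambda}$ on the quotient $S^1 \times S^{n-1}$ is the identity. Consequently $\exp(T_{\lambda})$ is the identity on the entire collar, and the standard rigidity of Riemannian isometries on the connected manifold $X^{n+1}$ (two isometries agreeing on a nonempty open set must coincide everywhere) upgrades this to $\exp(T_{\lambda}) = \mathrm{id}_{X^{n+1}}$.

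Step two is then immediate: since $\exp(T_{\lambda})$ descends to the identity, the lift $\exp(\tilde{T}_{\lambda})$ commutes with $\pi: \ucX \to X^{n+1}$ and is therefore a deck transformation. For step three, I will work in the lifted collar $[0,\epsilon_0) \times \dR \times S^{n-1} \subset \ucX$, where $\tilde{T}_{\lambda}$ restricts to the lift of $\widehat{T}_{\lambda}$; its time-$1$ flow is precisely the deck transformation $\hat{\gamma}$ acting on the lifted boundary. Under the isomorphism $i_*: \pi_1(M^n) \xrightarrow{\sim} \pi_1(X^{n+1})$ from Lemma \ref{l:cyclic-fundamental-group}, this deck transformation is exactly the generator $\gamma$ of $\Gamma$ on the lifted collar. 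A second invocation of isometric rigidity, now on the connected manifold $\ucX$, yields $\exp(\tilde{T}_{\lambda}) = \gamma$, whence $\Gamma = \langle\gamma\rangle \subset \wsT$. The only real obstacle I anticipate is bookkeeping the three related normalizations, namely the field $\widehat{T}_{\lambda}$ on $M^n$, its lift to $\dR \times S^{n-1}$, and the action of $\gamma$ on $\ucX$ via $i_*$; working inside the geodesic collar, where Lemma \ref{l:Killing-field-x-independence} forces $T_{\lambda}$ to be purely horizontal and $x$-independent, is the simplification that makes all three normalizations simultaneously transparent.
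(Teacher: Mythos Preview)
Your proposal is correct and follows essentially the same route as the paper. Both arguments use Lemma \ref{l:Killing-field-x-independence} to see that $T_{\lambda}$ in the collar is the slicewise extension of $\widehat{T}_{\lambda}$, deduce that the time-$1$ flow $\exp(T_{\lambda})$ is the identity on $X^{n+1}$, lift to obtain a deck transformation $\exp(\tilde{T}_{\lambda})$ on $\ucX$, and identify it with the generator $\gamma$ via the isomorphism $i_*$ of Lemma \ref{l:cyclic-fundamental-group}. The only cosmetic difference is that the paper phrases the globalization step in terms of orbit closures (the $T_{\lambda}$-orbits are closed and homotopic to $\hat{\gamma}$ in the collar, hence everywhere), whereas you invoke isometric rigidity directly (two isometries agreeing on an open set coincide); these are the same principle applied to $\exp(T_{\lambda})$ versus $\mathrm{id}$.
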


\begin{proof} By our choice of $\widehat{T}_{\lambda}$, for any fixed $\hat{q}\in S^1\times S^{n - 1}$, the orbit of $\widehat{T}_{\lambda}$ at $\hat{q}$ is a closed geodesic $\hat{\gamma}$ that generates $\widehat{\Gamma}$, which also gives a deck transformation $\exp(\widehat{T}_{\lambda})$ on the universal cover.   
By Lemma \ref{l:Killing-field-x-independence}, there exists some $\epsilon_0 > 0$ such that, in a small collar neighborhood $M^n \times [0,\epsilon_0)$, the extended Killing field $T_{\lambda}$ is independent of the geodesic coordinate $x$. This implies that, for any point $q_{\epsilon} \in M^n \times [0,\epsilon)$ with $x(q_{\epsilon}) = \epsilon < \epsilon_0$, the orbit $\gamma_{q_{\epsilon}}$ of $T_{\lambda}$ at $q_{\epsilon}$ is closed and homotopic to $\hat{\gamma}$. 
Since $M^n \times [0,\epsilon]$ contains an open set of $X^{n + 1}$, for   any $q\in X^{n + 1}$, 
the Killing field $T_{\lambda}$ has a closed orbit $\gamma_q$ that is homotopic to $\hat{\gamma}$. 
By Lemma \ref{l:cyclic-fundamental-group}, there is a natural isomorphism from $\pi_1(S^1 \times S^{n - 1})$ onto $\Gamma$, which implies that 
any orbit $\gamma_q$ is non-contractible in $X^{n + 1}$.

Denote by $\pi:(\ucX, \tilde{q}) \to (X^{n + 1}, q)$ the universal covering map  and denote by the lifting of $\gamma_q$ in $\ucX$. Then the non-contractibility of $\gamma_q$ implies that $\tilde{\gamma}_{\tilde{q}}$ is diffeomorphic to $\dR$ and $\tilde{\gamma}_{\tilde{q}}$ is an orbit of the lifted Killing field $\widetilde{T}_{\lambda}$. 
Moreover, $\exp(\widetilde{T}_{\lambda})$ gives a deck transformation of $\ucX$, where $\exp$ is the exponential map from $\isom(\ucX)$ to $\Isom(\ucX)$.
Finally, $\exp(\widetilde{T}_{\lambda})\in\Isom(\ucX)$ is a generator of $\Gamma$ since $\Gamma$ is isomorphic to $\widehat{\Gamma}$ and $\exp(T_{\lambda})$ generates $\widehat{\Gamma}$.  
\end{proof}

The above geometric representation of $\Gamma$
implies the following very useful result for the core geodesic. 

\begin{lemma}\label{l:invariance-of-core-geodesic} The core geodesic line 
$\ellG$ in $\ucX$ is invariant under the actions of
$\widetilde{\sT}$ and $\widetilde{\sQ}$, respectively. Moreover, any element in
$\widetilde{\sT}$ acts on $\ellG$ as a translation, and any element in $\widetilde{\sQ}$ acts on $\ellG$ as the identity. 
 Consequently, the core geodesic loop $\sG$ in $X^{n + 1}$, as the $\Gamma$-quotient of $\ellG$, satisfies the same invariance property.
\end{lemma}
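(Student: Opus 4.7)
The plan is to derive the $\widetilde{\sT}$- and $\widetilde{\sQ}$-invariance of $\ellG$ from the commutation principle in Lemma \ref{l:invariance-of-minimum-set}, and then distinguish the two actions using the Lie-algebraic structure of $\ft$ and $\fq$. First I would observe that, by Lemma \ref{l:geometric-representation-of-Gamma}, $\Gamma \subset \widetilde{\sT}$; and since $\ft_b = \isom(S^1, \lambda^2 \tau^2)$ is one-dimensional, $\widetilde{\sT}$ is a connected abelian Lie group, so $\Gamma$ commutes with every element of $\widetilde{\sT}$. Combining this with Lemma \ref{l:isomorphic-factors}, which asserts $[\ft, \fq] = 0$ and hence that $\widetilde{\sT}$ and $\widetilde{\sQ}$ commute elementwise, I conclude that $\Gamma$ commutes with both $\widetilde{\sT}$ and $\widetilde{\sQ}$.

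Next, I would apply Lemma \ref{l:invariance-of-minimum-set} to each of these commuting pairs to deduce that both $\widetilde{\sT}$ and $\widetilde{\sQ}$ preserve $\Min(\Gamma) = \ellG$. Both groups are connected, being generated via the exponential map of their respective Lie algebras, so their restrictions to the geodesic line $\ellG \cong \dR$ land in the identity component of $\Isom(\dR)$, which consists solely of translations. This already proves the $\widetilde{\sT}$ part of the claim: every element acts on $\ellG$ as a translation.

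The main remaining obstacle is upgrading the translation action of $\widetilde{\sQ}$ on $\ellG$ to the trivial action. For this, the idea is to consider the Lie-algebra homomorphism $\Phi:\fq \to \dR$ that records the $\p_t$-coefficient of $\xi|_{\ellG}$ for $\xi\in\fq$, which is well-defined because $\xi|_{\ellG}$ is tangent to $\ellG$ (as $\widetilde{\sQ}$ preserves $\ellG$) and is itself a Killing field on $\ellG \cong \dR$, hence a constant multiple of $\p_t$. By Lemma \ref{l:isomorphic-factors}, $\fq \cong \fq_b \cong \fs\fo(n)$, which is semisimple as soon as $n \gesl 3$; hence $[\fq,\fq] = \fq$, and the homomorphism $\Phi$ into the abelian target $\dR$ must vanish identically. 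This is precisely where the dimensional hypothesis $n \gesl 3$ is used. Finally, descending along the universal covering projection $\pi: \ucX \to X^{n+1}$, which sends $\ellG$ to $\sG$ and carries $\widetilde{\sT}$, $\widetilde{\sQ}$ to $\sT$, $\sQ$ respectively, transports the invariance and triviality statements to the corresponding statements for $\sG$ in $X^{n+1}$.
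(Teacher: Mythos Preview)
Your proof is correct and follows the same overall architecture as the paper's: both arguments obtain the invariance of $\ellG$ under $\widetilde{\sT}$ and $\widetilde{\sQ}$ by applying Lemma \ref{l:invariance-of-minimum-set}, after first using Lemma \ref{l:geometric-representation-of-Gamma} (so that $\Gamma \subset \widetilde{\sT}$, hence commutes with the abelian group $\widetilde{\sT}$) and Lemma \ref{l:isomorphic-factors} (so that $[\ft,\fq]=0$, hence $\Gamma$ commutes with $\widetilde{\sQ}$ as well). The conclusion that $\widetilde{\sT}$ acts by translations is then immediate from connectedness, exactly as you say.

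The one substantive difference is in the final step, showing that $\widetilde{\sQ}$ acts \emph{trivially} on $\ellG$ rather than merely by translations. The paper argues geometrically: every $\widetilde{\sQ}$-orbit is closed (since $\sQ$, being generated by $\fq\cong\fs\fo(n)$, is compact), and a compact orbit inside the line $\ellG\cong\dR$ under a translation action must be a single point. You instead argue algebraically, via the Lie-algebra homomorphism $\Phi:\fq\to\dR$ recording the translation speed, which vanishes because $\fs\fo(n)$ is semisimple for $n\gesl 3$ and hence has no nontrivial homomorphism to an abelian Lie algebra. Both are valid; your route makes the role of the hypothesis $n\gesl 3$ fully explicit and sidesteps the (small) task of checking that the lifted group $\widetilde{\sQ}$ on $\ucX$ still has compact orbits, while the paper's route is more geometric and would apply verbatim to any compact symmetry group in place of $\sQ$.
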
 

\begin{proof}
Since $\widetilde{\mathscr{T}}$ is isomorphic to $\dR$, by Lemma \ref{l:geometric-representation-of-Gamma}, any element $\Gamma$ commutes with any element in $\widetilde{\mathscr{T}}$.
By Lemma \ref{l:invariance-of-minimum-set}, $\widetilde{\mathscr{T}}$ leaves $\ell_{\Gamma}$ invariant, and thus any element in $\widetilde{\mathscr{T}}$ acts on $\ellG$  as a translation. 
  
The local $x$-independence of Killing fields (Lemma \ref{l:Killing-field-x-independence}) implies that $[\ft, \fq] \equiv 0$, and thus all any element in $\widetilde{\mathscr{Q}}$ commutes with any element in $\widetilde{\mathscr{T}}$. 
Then $\ell_{\Gamma}$ is invariant under all elements in $\widetilde{\mathscr{Q}}$. Since any $\widetilde{\mathscr{Q}}$-orbit is closed,  it follows that any element $\widetilde{\mathscr{Q}}$ acts on $\ellG$ the identity.
\end{proof}

By Lemma \ref{l:geodesic-defining-function}, the geodesic defining function $x$ of $M^n$ exists only in a small tubular neighborhood of the conformal infinity such that $x$ is only well defined for $0\lesl x \lesl \epsilon$. In general, the size of $\epsilon$ is not uniformly controlled, which
prevents it to be an effective coordinate system. 

The following proposition proves that $x$ is well-defined globally on $\bX$ under the additional Assumption \ref{a:product-conformal-infinity}, which makes it an effective coordinate system for us to carry out the dimension reduction argument towards the proof of the rigidity.

\begin{proposition}
	\label{p:r-x-relation} Let $(X^{n + 1}, g)$ be a Poincar\'e-Einstein manifold with non-positive sectional curvature that satisfies Assumption \ref{a:product-conformal-infinity}. Let $r(\cdot) \equiv d_{\sigma_{\Gamma}}(\cdot)$ be the distance to the core geodesic $\sigma_{\Gamma}$. Then there exists a constant $\varpi_0 > 0$ such that $x = \varpi_0\cdot e^{-r}$ provides a smooth compactification $(\bX, x^2 g)$ with the following property:
\begin{align}\label{e:global-geo-x}
	\begin{cases}
		|dx|_{x^2 g} = 1 & \text{in} \ \overline{X^{n + 1}} \equiv \{0\lesl x\lesl \varpi_0\},
		\\
x^2 g|_{S^1 \times S^{n - 1}} = \wg_{\lambda} & \text{on} \ S^1 \times S^{n - 1} \equiv \{x = 0\},
	\end{cases}
\end{align}
  In particular, the level set $\{ x= \varpi_0\}$ corresponds to the core geodesic $\sigma_{\Gamma}$.
\end{proposition}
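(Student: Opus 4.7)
The plan is to compare the local geodesic defining function $\tilde{x}$ associated with the representative $\wg_\lambda$ (which exists on a collar $\{0 \lesl \tilde{x} < \epsilon_0\}$ of the boundary by Lemma \ref{l:geodesic-defining-function}) with the global normal distance function $r = d_g(\cdot, \sigma_\Gamma)$, and to show that $\tilde{x} = \varpi_0 e^{-r}$ on this collar for some $\varpi_0 > 0$. One then simply \emph{defines} $x := \varpi_0 e^{-r}$ on all of $X^{n+1}$; this formula extends $\tilde{x}$ globally and produces the desired coordinate.

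The first key observation is that both $\log \tilde{x}$ and $r$ are invariant under the extended isometry group $\sT \times \sQ$. Indeed, $r$ is invariant because $\sigma_\Gamma$ is preserved by these isometries (Lemma \ref{l:invariance-of-core-geodesic}); and $\tilde{x}$ is invariant by the uniqueness in Lemma \ref{l:geodesic-defining-function}, since the boundary representative $\wg_\lambda$ is fixed by the boundary isometries whose unique extensions generate $\sT \times \sQ$. The second observation is that $\sT \times \sQ$ acts transitively on every level set $\{r = r_0\}$ with $r_0 > 0$: on the boundary $\p X = S^1 \times S^{n-1}$ the product action of $\isom(S^1, \lambda^2\tau^2) \oplus \isom(S^{n-1}, g_c)$ is transitive, and in the interior the extended Killing fields of $\sT$ move points along the $\sigma_\Gamma$-direction while those of $\sQ$ sweep out the normal $(n-1)$-sphere, giving an $n$-dimensional $\sT \times \sQ$-orbit at any point with $r > 0$, matching the dimension of the level set. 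Invoking Theorem \ref{t:normal-exp} to get the global splitting $X^{n+1} \approx S^1 \times D^n$, each level set $\{r = r_0\}$ is diffeomorphic to the connected manifold $S^1 \times S^{n-1}$; by properness of isometric actions the orbit is closed, hence---being also open---equal to the whole level set.

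Combining these, $\log \tilde{x}$ and $r$ share the same level sets in the collar, so $\log \tilde{x} = \Phi(r)$ for some smooth $\Phi$. From $|d\tilde{x}|_{\tilde{x}^2 g} \equiv 1$ we get $|d\log \tilde{x}|_g \equiv 1$, and on $X^{n+1} \setminus \sigma_\Gamma$ we have $|dr|_g \equiv 1$, whence $\Phi'(r)^2 \equiv 1$. Since $\tilde{x} \to 0$ as $r \to \infty$, this forces $\Phi(r) = -r + \log \varpi_0$ for a unique $\varpi_0 > 0$, i.e., $\tilde{x} = \varpi_0 e^{-r}$ on the collar. Setting $x := \varpi_0 e^{-r}$ globally, $x$ agrees with $\tilde{x}$ near the boundary, so the compactified metric $x^2 g$ extends to a smooth metric on $\bX$ with $x^2 g|_{\{x=0\}} = \wg_\lambda$. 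A direct computation using $dx = -x\, dr$ yields $|dx|_{x^2 g} \equiv 1$ on $X^{n+1} \setminus \sigma_\Gamma$; finally, $x = \varpi_0$ is equivalent to $r = 0$, which recovers $\sigma_\Gamma$, and $x$ ranges over $(0, \varpi_0]$ as $r$ decreases from $+\infty$ to $0$.

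The main subtlety I expect is justifying the transitivity of $\sT \times \sQ$ on interior level sets of $r$. The Lie-algebraic dimension count combined with properness of the isometric action and the connectedness of the level sets (from Theorem \ref{t:normal-exp}) should close the argument, but one must verify that the infinitesimal $\sT \times \sQ$-orbits remain nondegenerate---of full dimension $n$---everywhere off $\sigma_\Gamma$, and not merely in a neighborhood of the boundary.
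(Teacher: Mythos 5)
Your proposal is correct and, at its core, runs parallel to the paper's argument: both proofs hinge on showing that near the boundary the level sets of the geodesic defining function and of $e^{-r}$ coincide, then pin down the exact relation from the fact that $\log \tilde{x}$ and $-r$ both have unit gradient with respect to $g$, and finally extend by the explicit formula $x=\varpi_0 e^{-r}$. The difference lies in how the ``same level sets'' step is executed. The paper works infinitesimally: using the flows generated by $\ft\oplus\fq$ and Lemma \ref{l:invariance-of-core-geodesic}, it shows that every extended Killing field is orthogonal to $\p_r$, so the level sets of $x$ --- whose tangent spaces these fields span by Lemma \ref{l:Killing-field-x-independence} and Proposition \ref{p:extension-relations}(3) --- are orthogonal to $\p_r$; it then packages the conclusion as uniqueness for a non-characteristic first-order PDE, after first invoking the gradient estimate of Li--Qing--Shi to see that $e^{-2r}g$ is at least a Lipschitz compactification. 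You instead work with the integrated group action: invariance of both functions under $\sT\times\sQ$ plus transitivity on level sets of $r$. Your route buys a cleaner identification of the boundary metric (you start from the geodesic defining function of $\wg_\lambda$ itself, so no homogeneity argument is needed there), at the cost of having to justify transitivity.

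On that last point, the subtlety you flag --- nondegeneracy of the $\sT\times\sQ$-orbits on \emph{interior} level sets of $r$ --- is real but unnecessary for your own argument. You only need the functional relation $\log\tilde{x}=\Phi(r)$ on the collar where $\tilde{x}$ is defined, since afterwards $x$ is \emph{defined} globally by the formula $\varpi_0 e^{-r}$. On the collar, Proposition \ref{p:extension-relations}(3) gives pointwise linear independence of the extended Killing fields; the orbit through any collar point is then open in the $n$-dimensional, connected level set of $r$, lies in the group-invariant collar, and is compact ($\sQ$ is compact and the $\sT$-orbits are closed circles by Lemma \ref{l:geometric-representation-of-Gamma}), hence closed, so it equals the whole level set. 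Alternatively, you can bypass transitivity entirely by arguing infinitesimally as the paper does: the Killing fields span the tangent space of each level set of $\tilde{x}$ and are tangent to each level set of $r$, so the two $n$-dimensional distributions agree, giving $d\tilde{x}\wedge dr=0$ on the collar; the unit-gradient identities and connectedness of the collar then yield $\tilde{x}=\varpi_0 e^{-r}$ there. Either repair closes the argument, so the gap is cosmetic rather than structural.
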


\begin{proof}
The relation between $r$ and $x$ is provided by  a standard uniqueness result for non-degenerate non-characteristic first-order PDE.

By Assumption \ref{a:product-conformal-infinity}, the conformal infinity $(M^n , [\wg])$ is given by the standard product $(S^1 \times S^{n - 1} , [\wg_{\lambda}])$.
We now consider the conformal metric $g^{\dag}\equiv \ur^2 g$, where 
\begin{align}
	\hat{\rho}\equiv e^{-r}.
\end{align}
We also write
\begin{align}
g^{\dag} = \frac{\hat{\rho}^2}{x^2} \cdot x^2 g = e^{2(\sX - r)} \cdot (x^2g),
\end{align}
where $\sX \equiv - \log x$. By \cite[lemma 4.1]{LQS}, there exists some constant $C_0 > 0$ such that 
 	$|\nabla_{\bg} (\sX - r)|_{\bg} \lesl C_0$ near the boundary. 
This uniform gradient estimate implies that $g^{\dag}$ gives a compactification of $(X^{n + 1}, g)$ that extends to a {\it  Lipschitz metric} on $\bX$. In particular, $g^{\dag}|_{M^n}$ is a Lipschitz metric on the boundary $M^n$. To apply the PDE uniqueness, we will further verify the following:

\medskip

\begin{enumerate}[(a)]
	\item $g^{\dag}$ is a smooth metric on $\bX$;
	\item there exists some $\varpi_0 > 0$ such that $g^{\dag}|_{M^n} = \varpi_0^{-2} \cdot \wg_{\lambda}$. 
\end{enumerate}  

\medskip 

\noindent 
If items (a) and (b) are verified, then we obtain the desired global normal coordinates.
To see so, we consider the boundary value problem
\begin{align}\label{e:non-characteristic}
\begin{cases}
	|d \Phi|_{\Phi^2 g} \equiv 1 \quad & \text{in}\ \bX,
\\
\Phi > 0  & \text{in}\ X^{n+1}, 
\\
\Phi = 0\ \text{and} \ \Phi^2g|_{M^n} = \varpi_0^{-2} \cdot \widehat{g}_{\lambda} 
  & \text{on} \ M^n,
 \end{cases}
\end{align} 
 By (a) and (b), the smooth compactification factor
$\ur$ becomes a solution of \eqref{e:non-characteristic}. On the other hand, $\varpi_0^{-1} \cdot x$ also solves  \eqref{e:non-characteristic}. Finally, by the uniqueness of non-degenerate non-characteristic first order PDE, we conclude that $x = \varpi_0\cdot e^{-r}$ for all $r \gesl 0$, or equivalently for all $0 \lesl  x \lesl  \varpi_0$.	

\medskip

We now proceed to verify items (a) and (b) modulo the proof of the following Claim. 

\medskip

\noindent {\bf Claim.} There exists a sufficiently small $\epsilon_0 > 0$ such that any level set 
$\Sigma(x  , \epsilon) \equiv \{x   = \epsilon\}$ of the geodesic defining function $x$ with $\epsilon < \epsilon_0$ is orthogonal to $\p_r$.
In particular, $\ur$ and $x$ share the same collection of level sets near the boundary $S^1 \times S^{n - 1}$.

\bigskip



To show item (a), notice that $|\p_r|_g = 1$ in $X^{n + 1}$ and $|\p_x|_{x^2 g} = 1$ in an neighborhood $\{0\lesl x < \epsilon_0\}$ of $M^n$.   By the statement in the Claim, we have that $\p_x$ is parallel to $\p_r$. So it follows that \begin{align}
	x \cdot \frac{d r}{d x} = -1, \quad \forall 
	\ 0\lesl x < \epsilon_0.
\end{align}
Then $\hat{\rho} = e^{-r} = C x$ for some constant $C > 0$, which immediately implies that $\hat{g}$ is smooth up to the boundary $M^n$.

Next, we will verify item (b). It follows from item (a) that 
the Lipschitz compactification $g^{\dag}$ turns out to be smooth  up to the boundary $M^n$.
 By the Claim, near the boundary, each level set $\Sigma(\hat{\rho}, \epsilon)$ of $\ur$ coincides with some level set 
 	$\Sigma(x, \epsilon')$
 	of the geodesic defining function $x$. In particular, $\Sigma(\hat{\rho}, \epsilon)$ is a Riemanninan homogeneous space with respect to both the Einstein $g$ and the {\it  rescaled metric} $g^{\dag}$.
 	Letting $\epsilon\to 0$, by the regularity in item (a), $g^{\dag}|_{\Sigma(\hat{\rho}, \epsilon)}$ converges to a smooth metric $g^{\dag}|_{M^n}$. Notice that 
 $g^{\dag}|_{M^n}$ is a smooth metric in the conformal class $[\wg]$ on $M^n$, and $g^{\dag}|_{M^n}$ is homogeneous with the same Killing fields as $\wg$. We can easily conclude that $g^{\dag}|_{M^n}  = \varpi_0^2 \cdot \wg$ for some constant $\varpi_0 > 0$.
Thus, we establish item (b).

\medskip

\noindent {\it Proof of the Claim.}
For any level set $\Sigma(x, \epsilon)$ of $x$ and for any point $q \in  \Sigma(x, \epsilon)$,
let us take Killing fields  $\{T, X_1, \ldots, X_{n - 1}\} \subset \ft\oplus \fq$ extended from $\ft_b\oplus \fq_b$, which gives a basis of the tangent space $T_q \Sigma(x, \epsilon)$. It suffices to show that $T\perp \p_r$ and $X_i \perp \p_r$ for each $1\lesl i\lesl n -1$.

First, for $T\in\ft$, by Lemma \ref{l:Killing-field-x-independence}, $T$ can be represented as the coordinate frame $\p_t$ in a small neighborhood $\{x < \epsilon_0\}$ of $S^1 \times S^{n - 1}$ for some $\epsilon_0 > 0$. Let $\gamma:[0,\ell]\to X^{n + 1}$ be the unique normal geodesic from $q = \gamma(\ell)$ to $\sG$ with $\gamma(0) = \sG(t_0)$ for some $t_0 \in [0,2\pi]$. Taking the flow $\{\varphi_t\}$ of $T$, by Lemma \ref{l:invariance-of-core-geodesic}, any $\varphi_t$ acts on $\sG$ as a translation. Then we have that
\begin{align}
g((\varphi_t\cdot \gamma)'(0), \sG'(t + t_0)) = g((\varphi_t)_*(\gamma'(0)), (\varphi_t)_*(\sG'(t_0))) = g(\gamma'(0), \sG'(t_0)) = 0,
\end{align}
which implies that the geodesic $\varphi_t \cdot \gamma$ is orthogonal to $\sG$ and thus it is a normal geodesic from $\varphi\cdot q$ to $\sG$. Since $\{\varphi_t\}$ are isometries, $L(\gamma) = L(\varphi_t \cdot \gamma)$, i.e., $r(q) = r(\varphi_t\cdot q)$.   
  Therefore, $\{\varphi_t\}$ is a one-parameter isometries on a level set of $r$, which in particular implies that $T\perp \p_r$.

In the next case, we consider $X_i\in \fq$ for $1\lesl i\lesl n - 1$. The arguments are similar to the previous case.
Let $q \in \Sigma(x, \epsilon)$ be any point and let $\gamma$ be the unique normal geodesic from $q$ to $p\in \sG$. 
Denote by $\{\psi_s\}$ the flow generated by $X_i$. By Lemma \ref{l:invariance-of-core-geodesic}, every $\psi_s$ acts on $\sG$ as the identity, which implies that $\psi_s \cdot \gamma$ is a normal geodesic with $L(\psi_s \cdot \gamma) = L(\gamma)$. Therefore, $r(\psi_s \cdot q) = r(q)$. Following the same argument as before, one can conclude that $X_i \perp \p_r$.  

By combining the two cases above, we establish the claim. 
\end{proof}
The following corollary follows immediately from 
Proposition \ref{p:r-x-relation}.

\begin{corollary}\label{c:invariance-of-r-x}
 Both $r$ and $x$ are invariant under any element in the isometry groups $\sQ$ and $\sT$.
 \end{corollary}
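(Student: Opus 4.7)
My plan is to derive this corollary as an immediate packaging of two results just established: the invariance of the core geodesic loop $\sigma_\Gamma$ under $\sQ$ and $\sT$ from Lemma \ref{l:invariance-of-core-geodesic}, and the explicit functional relation $x = \varpi_0 \cdot e^{-r}$ from Proposition \ref{p:r-x-relation}. The key observation is that $r$ is by definition a distance function to a set, and $x$ is a function of $r$ alone, so any isometry that preserves the set $\sigma_\Gamma$ automatically preserves both functions.

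First, I would establish the invariance of $r$. Fix an arbitrary $\phi \in \sQ \cup \sT$. By Lemma \ref{l:invariance-of-core-geodesic}, the core geodesic loop satisfies $\phi(\sigma_\Gamma) = \sigma_\Gamma$ as a set. Since $\phi$ is a global isometry of $(X^{n+1}, g)$ and $r(q) = d_g(q, \sigma_\Gamma)$, we can compute directly
\[
r(\phi(q)) \;=\; d_g\bigl(\phi(q),\, \sigma_\Gamma\bigr) \;=\; d_g\bigl(\phi(q),\, \phi(\sigma_\Gamma)\bigr) \;=\; d_g(q, \sigma_\Gamma) \;=\; r(q)
\]
for every $q \in X^{n+1}$. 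Hence $r \circ \phi = r$ for all $\phi \in \sQ \cup \sT$.

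The invariance of $x$ then follows by pure functoriality: Proposition \ref{p:r-x-relation} gives $x = \varpi_0 e^{-r}$ on $\overline{X^{n+1}}$, so $x$ is a smooth function of $r$. Composing with $\phi$ yields $x \circ \phi = \varpi_0 e^{-r \circ \phi} = \varpi_0 e^{-r} = x$. There is no genuine obstacle to overcome in this corollary, since all of the substantive content — the construction of the global geodesic defining function, the identification $x = \varpi_0 e^{-r}$, and the preservation of $\sigma_\Gamma$ by the extended isometries — has already been established. The role of the statement is merely to make explicit the symmetry-invariance of this canonical gauge, which will be used as a crucial bookkeeping tool in the dimension reduction argument carried out in the next subsection.
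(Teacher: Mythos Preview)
Your proof is correct, but it takes a different route from the paper's. The paper argues in the opposite order: it first uses Proposition~\ref{p:r-x-relation} to write $g = x^{-2}(dx^2 + h)$ globally, then invokes Lemma~\ref{l:Killing-field-x-independence} (the $x$-independence of extended Killing fields) to conclude that any isometry in $\sQ$ or $\sT$ preserves each level set of $x$, and finally deduces the invariance of $r$ from the relation $x = \varpi_0 e^{-r}$. You instead go through $r$ first, using Lemma~\ref{l:invariance-of-core-geodesic} and the elementary fact that an isometry fixing a set preserves the distance function to that set, and then transfer the invariance to $x$ via the same relation. Your argument is arguably more direct and uses less machinery; the paper's approach has the mild advantage of reusing the Killing-field lemma that drives much of the surrounding analysis, keeping the toolbox uniform. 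Either way the corollary is immediate once Proposition~\ref{p:r-x-relation} is in hand.
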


\begin{proof}
By Proposition \ref{p:r-x-relation}, the Einstein metric $g$
is a warped product   
	\begin{align}
		g = \frac{dx^2 + h}{x^2}, 
	\end{align}
	which holds globally on $X^{n + 1}$. Then applying Lemma \ref{l:Killing-field-x-independence}, any Killing field in $\fq$ and $\ft$ is independent of $x$ for any $0\lesl x\lesl \varpi_0$, and thus any isometry in $\sQ$ and $\sT$ preserves each level set of $x$. Since $x = \varpi_0 \cdot e^{-r}$ for any point in $X^{n + 1}$, it follows that $r$ is invariant under any isometry in $\sQ$ and $\sT$ as well.
\end{proof}

\subsection{Equivariant splitting structure}

With the invariance property of the normal distance function $d_{\sG}$, we are now ready to show that the topological splitting $X^{n + 1} \approx S^1 \times D^n$, as proved in Theorem \ref{t:normal-exp}, inherits the continuous symmetries from the boundary in a rather explicit manner.
 
Recall that, under Assumption \ref{a:pi-1}, we have already shown in Theorem \ref{t:normal-exp} that $X^{n+1}$ is diffeomorphic to $S^1 \times D^n$. Moreover, on the universal cover $\ucX$, the normal exponential map  $ \nExp: \mathcal{N}(\ellG) \to \ucX$ of the core geodesic line $\ellG$ provides a $\Gamma$-equivariant diffeomorphism between the trivial normal bundle $\mathcal{N}(\ellG)$ and the universal cover $\ucX$, and it descends to a diffeomorphism $ \nExpX: \mathcal{N}(\sG) \to X^{n + 1}$.

For each $r> 0$, let us denote by 
\begin{align}
\ND(r) \equiv \left\{x\in X^{n + 1}: d_{\sG} (x) < r\right\} = \nExpX \left(\left\{\bm{w} \in \mathcal{N}(\sG): |\bm{w}| < r\right\}\right),
\\ 
\NS(r) \equiv \left\{x\in X^{n + 1}: d_{\sG}(x) = r\right\} = \nExpX \left(\left\{\bm{w} \in \mathcal{N}(\ellG): |\bm{w}| = r\right\}\right),
 \end{align}
the sub-level set and the level set of the normal distance function $\br(\cdot)\equiv d_{\sG}(\cdot)$, respectively.  Since the normal exponential map is a global diffeomorphism, for each $r > 0$, $\ND(r)$ is diffeomorphic to $D^n \times \dR$
and $\NS(r)$ is diffeomorphic to $S^{n - 1} \times \dR$.  For any point $p \in \sG$, its {\it normal fiber} in $X^{n + 1}$ is defined as  
\begin{align}\mathcal{N}(p) \equiv \nExpX\left((\Pi^{\perp})^{-1}(p)\right)  \subset X^{n + 1},\end{align} which is the image of the normal fiber $(\Pi^{\perp})^{-1}(p) \subset \mathcal{N}(\sG)$ under the normal exponential map. At any point $p\in \sG$, {\it the normal geodesic disc} $\ND(r,p)$ and {\it the normal geodesic sphere} $\NS(r,p)$ are defined as 
\begin{align}
	\ND(r,p) \equiv \ND(p) \cap \cN(p)\quad \text{and} \quad \NS(r,p) \equiv \NS(r) \cap \cN(p), \end{align}
respectively. See Figure \ref{f:splitting-of-X} for descriptions of these terms.

\begin{figure}
[h] \label{f:splitting-of-X}
 \begin{tikzpicture}
 [scale= 0.34]
\draw[thick] (3,-5) ellipse (15 and 8);

\draw[thick](3,-2.3) [partial ellipse=-160:-20: 10 and 5];

\draw[thick](3,-6) [partial ellipse=18:162: 9.1 and 4];


 \filldraw[fill=gray!40, draw= , densely dotted]
         (0,-9.9) ellipse (1.5 and 2.85)
                      [postaction={on each segment={draw,-{stealth[red,bend]}}}];

 \filldraw[fill=blue!20, draw= blue, densely dotted]
         (0,-9.9) ellipse (1.2 and 2.3)
                      [postaction={on each segment={draw,-{stealth[red,bend]}}}];

                       \filldraw[fill=blue!20, draw= blue, densely dotted]
         (0,-9.9) ellipse (0.8 and 1.8)
                      [postaction={on each segment={draw,-{stealth[red,bend]}}}];

\draw[thick, blue, dashed] (0,-9.9) [partial ellipse=90:270:1.2 and 2.3]
           [postaction={on each segment={draw,-{stealth[red,bend]}}}];
           
\draw[thick, blue, dashed] (0,-9.9) [partial ellipse=90:270: 0.8 and 1.8]
           [postaction={on each segment={draw,-{stealth[red,bend]}}}];


 \draw[thick, dashed] (0,-9.9) [partial ellipse=90:270:1.5 and 2.85]
           [postaction={on each segment={draw,-{stealth[red,bend]}}}];

  \draw[blue, densely dotted] (0.7, -7.5) to (0.4, -8.5) to  (0,-9.8) 
             [postaction={on each segment={draw,-{stealth[black,bend]}}}];

 \draw[red, thick](3,-5) [partial ellipse=-103:248: 12.1 and 5]
            [postaction={on each segment={draw,-{stealth[blue,bend]}}}];

\draw[red, thick, densely dotted](3,-5) [partial ellipse=248: 270: 12.1 and 5]
           [postaction={on each segment={draw,-{stealth[blue,bend]}}}];

\node at (0.1, -9.9) {\tiny \textcolor{red}{$\bullet$}};

\node at (0, -10.5) {\footnotesize
 $p$};

\node at (0, -14) {$\mathcal{N}(p) \approx D^n$};


\node at (9.5, -10) {\textcolor{red}{$\sigma_{\Gamma}$}};

\end{tikzpicture}

\caption{The total space $X^{n + 1}$ is diffeomorphic to $S^1 \times D^n$. In the illustration, the red circle represents the core geodesic loop $\sigma_{\Gamma}$; the larger gray disc represents the normal fiber $\cN(p)$; the smaller blue disc illustrates the normal disc $\cN(p,r)$, whose  boundary $\cS(p,r)$ is shown as the blue circle; the red arrows stand for the $\SO(n)$-action; the blue arrows stand for the $\dR$-action.}

\end{figure}
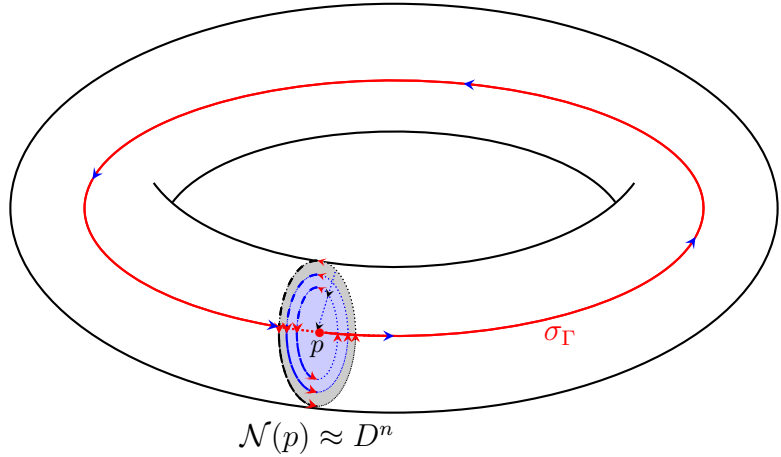

To state the main result of this subsection, let us define  
\begin{align}
	\ft_b \equiv \isom(S^1, \wg_{\lambda}),\quad \fq_b \equiv \isom(S^{n - 1}, \wg_{\lambda}),
\end{align}
 and their extensions $\fq$ and $\ft$ in $X^{n + 1}$, as defined in \eqref{e:q-t-extensions-to-X}. 	
It has been proven in Lemma \ref{l:isomorphic-factors} that $\fq \cong \fq_b$ and $\ft \cong \ft_b$, where $\fq_b \cong \mathfrak{s}\mathfrak{o}(n)$ and $\ft_b \cong \dR$. In particular, $[\fq, \ft] = 0$.
Then the global equivariance properties of the topological splitting of $(X^{n + 1}, g)$
can now be stated in the following theorem.

 \begin{theorem}[Equivariant Splitting Theorem] \label{t:compatible-splitting}
Let $(X^{n + 1}, g)$ be a Poincar\'e-Einstein manifold with non-positive sectional curvature that satisfies Assumption \ref{a:product-conformal-infinity}. Let $\sG$ be the core geodesic loop of $X^{n + 1}$. 
Then the following properties hold:
  	\begin{enumerate}
  		\item For any element $\xi \in \fq$ and $T\in \ft$,  we have that
  		\begin{align}
  			g(\xi ,T) = g(\xi , \p_r) = g(T, \p_r) \equiv 0\quad \text{on}\ X^{n + 1},
  		\end{align}
  		where $r(\cdot) \equiv d_{\sG}(\cdot)$ is the normal distance function.
  		\item For every $p\in \sG$, the isometry group $\sQ$ preserves the normal fiber $\ND(p,r)$. Furthermore,
  		any element $\varphi \in \sQ$ acts on $\sG$ as the identity, and the group $\sQ$ acts transitively on the normal geodesic sphere $\NS(p, r)$ for any $r > 0$. 
  		\item  Any element $\tau \in \sT$ acts on $\sG$ as a translation. Moreover, for any $p = \sG(t_0) \in \sG$ and for any $r > 0$, the restriction 
  		$\tau|_{\NS(p,r)}$ provides an isometry 
  		\begin{align}
  		\tau|_{\NS(p, r)}: \NS(\sG(t_0), r)	 \to  \NS(\sG(t_0^*), r),	
  		\end{align}
  		where $\sG(t_0^*) = \tau\cdot \sG(t_0)$ and $|t_0^* - t_0| = |\tau|$.
  	\end{enumerate}
\end{theorem}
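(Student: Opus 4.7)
My overall plan is to derive item (1) from the invariance of the normal distance function combined with a representation-theoretic analysis of each horizontal slice, and to handle items (2) and (3) via the normal exponential map together with elementary isometry arguments.

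For the two easy orthogonality relations in item (1), Corollary~\ref{c:invariance-of-r-x} asserts that $r$ is invariant under $\sQ$ and $\sT$, so any $\xi \in \fq$ and $T \in \ft$ satisfy $\xi(r) \equiv 0$ and $T(r) \equiv 0$. Since $\nabla_g r = \p_r$ is a unit vector, this gives at once $g(\xi, \p_r) = g(T, \p_r) = 0$. The nontrivial claim $g(\xi, T) \equiv 0$ will come from the product structure of each horizontal slice. By Proposition~\ref{p:r-x-relation} one has the global representation $g = x^{-2}(dx^2 + \hbar)$ on $\bX$, and by Lemma~\ref{l:Killing-field-x-independence} the Killing fields in $\fq$ and $\ft$ are $x$-independent and have no $\p_x$-component. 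Consequently, on each horizontal slice $\{x = x_0\} \cong S^1 \times S^{n-1}$ with $0 < x_0 < \varpi_0$, the induced metric $\hbar(x_0)$ is invariant under $\sQ \times \sT$, acting as the standard $\SO(n)$-action on the $S^{n-1}$-factor and as translations on the $S^1$-factor. At any point of the slice the isotropy group is $\SO(n-1)$, whose action on the tangent space decomposes as the standard $(n-1)$-dimensional representation on $TS^{n-1}$ plus the trivial representation on $TS^1$. Since $n \gesl 3$, the standard representation of $\SO(n-1)$ is irreducible and inequivalent to the trivial one, so Schur's lemma forces $\hbar(x_0) = \alpha(x_0) g_c + \beta(x_0) \lambda^2 \tau^2$. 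It follows that $g(\xi, T) = x_0^{-2}\hbar(\xi, T) = 0$ on the entire slice, and hence on all of $X^{n+1}$ by varying $x_0$.

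For item (2), fix $\varphi \in \sQ$ and $p \in \sG$. Lemma~\ref{l:invariance-of-core-geodesic} gives $\varphi(p) = p$, and then the isometry $\varphi$ sends the unique minimizing normal geodesic from any $q \in \mathcal{N}(p)$ to a minimizing normal geodesic from $\varphi(q)$ to $\sG$ ending at $p$, so $\varphi(q) \in \mathcal{N}(p)$. For transitivity on $\NS(p, r)$, consider the differential $d\varphi_p$, which preserves $V_p \equiv (T_p\sG)^{\perp}$ and acts orthogonally on it. The resulting homomorphism $\sQ \to \mathrm{O}(V_p)$ is injective, since an isometry of a connected Riemannian manifold is determined by its value and its differential at a single point. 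Because the source is a connected Lie group of dimension $n(n-1)/2 = \dim \SO(V_p)$, the image must equal $\SO(V_p)$, which is transitive on every sphere in $V_p$. Since the normal exponential map at $p$ is a $\sQ$-equivariant diffeomorphism from $V_p$ onto $\mathcal{N}(p)$ carrying $\{|v| = r\}$ onto $\NS(p, r)$, the transitivity descends.

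Item (3) is essentially formal: Lemma~\ref{l:invariance-of-core-geodesic} gives that $\tau \in \sT$ translates $\sG$ by $|\tau|$, and being a global isometry preserving $\sG$ setwise with $\tau(\sG(t_0)) = \sG(t_0^*)$, it maps minimizing normal geodesics from $\sG(t_0)$ onto minimizing normal geodesics from $\sG(t_0^*)$, producing an isometry $\NS(\sG(t_0), r) \to \NS(\sG(t_0^*), r)$ for every $r > 0$. I expect the main obstacle to be the representation-theoretic step in item~(1): this is where the dimension restriction $n \gesl 3$ enters essentially (to secure irreducibility of the standard $\SO(n-1)$-representation), and one must carefully combine Proposition~\ref{p:r-x-relation}, Lemma~\ref{l:Killing-field-x-independence}, and Lemma~\ref{l:invariance-of-core-geodesic} to justify the slice-by-slice block-diagonal decomposition.
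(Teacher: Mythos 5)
Your proposal is correct, and for the substantive part of item (1) it takes a genuinely different route from the paper. The paper derives the vanishing of $g(\xi,T)$ from Lemma \ref{l:translation-perp-to-spherical-symmetries}: one counts $\dim\Isom(\NS(r))\gesl \tfrac{n(n-1)}{2}+1$ and then invokes Obata's classification of homogeneous manifolds with large isometry groups to conclude that each normal level set is a Riemannian product $F^2(r)g_{S^{n-1}}+G^2(r)\tau^2$. You instead work slice by slice with the isotropy representation: using Proposition \ref{p:r-x-relation} and Lemma \ref{l:Killing-field-x-independence} to identify the $\sQ\times\sT$-action on each level set of $x$ with the standard boundary action, and then applying Schur's lemma to the $\SO(n-1)$-isotropy representation (standard $\oplus$ trivial, inequivalent for $n\gesl 3$) to force the block-diagonal form of the slice metric. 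This is more self-contained than the citation of Obata and isolates exactly where $n\gesl 3$ is used; the paper's route buys slightly more (the explicit identification of the $S^{n-1}$-block as a round metric, recorded for later use in the dimension reduction). Your treatment of the orthogonality to $\p_r$ via Corollary \ref{c:invariance-of-r-x} is equivalent to the paper's Lemma \ref{l:symmetries-perp-r}, and items (2) and (3) match Proposition \ref{p:foliation-structure}; in fact your linear-isotropy argument for the transitivity of $\sQ$ on $\NS(p,r)$ (injectivity of $\varphi\mapsto d\varphi_p|_{V_p}$ plus the dimension count forcing the image to be all of $\SO(V_p)$, transported by the $\sQ$-equivariant normal exponential map) supplies a detail that the paper's proof of that proposition asserts but does not spell out, since there the transitivity is implicitly inherited from the homogeneity of the slices.
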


We now establish the equivariant splitting theorem via a series of preliminary results. 

First, we introduce the following {\it equivariant coordinates}.  
For the conformal infinity $S^1 \times S^{n-1}$, let $(I\times U, (t,\bm{w}))$ be a local coordinate system,  where $I$ is an open interval and $\bm{w} = (w_1, \ldots, w_{n-1})$ are local coordinates on $U \subset S^{n-1}$. Let $x$ be the global geodesic defining function provided by Proposition \ref{p:r-x-relation}.
Then $(t,\bm{w})$ extends to a coordinate system $(x,t,\bm{w})$ on an open set $\mathring{\mathcal{U}} \subset X^{n+1}$ that is diffeomorphic to the topological product
 $[0,\varpi_0) \times I\times U$ for $0\lesl x < \varpi_0$.
 Clearly, $\mathring{\mathcal{U}}$ is contained in 
 $\overline{X^{n+1}}\setminus \sG$, and the level set $\{x = 0\}\cap \mathring{\mathcal{U}}$ corresponds to $I \times U \subset M^n$. 
 We also note that the level set $\{x = \varpi_0\}$ corresponds to the geodesic line $\sG$. 
 Let $\mathcal{U}$ be the completion of 
 $\mathring{\mathcal{U}}$ along $\{x = \varpi_0\}$. Then, one can represent 
 the Poincar\'e-Einstein metric $g$ on $\mathcal{U}$ as follows,
 \begin{align}
	g = dr^2 + g_r = \frac{dx^2 + h_x(t, \bm{w})}{x^2}, \ 0\lesl x\lesl \varpi_0,
\end{align}
where for any fixed value $x = x_0 \in[0, \varpi_1)$, $h_x(t, \bm{w})$ is a homogeneous metric on a level set    $\{x = x_0\}$ ($\approx S^1 \times S^{n - 1}$).

 	The lemma below shows that any extended Killing field on $X^{n+1}$ is orthogonal to the normal vector field $\p_r$.
\begin{lemma}\label{l:symmetries-perp-r}
Let  $X \in \fq \oplus \ft$ be a Killing field. Then $g( X, \p_r ) \equiv 0$ on $X^{n + 1}$.	
\end{lemma}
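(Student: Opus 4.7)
The plan is to deduce the orthogonality directly from the invariance of the normal distance function $r(\cdot)=d_{\sG}(\cdot)$ under the continuous symmetries extended from the boundary, which is exactly the content of Corollary~\ref{c:invariance-of-r-x}. First I would decompose $X = X_1+X_2$ with $X_1\in\fq$ and $X_2\in\ft$. Since $[\fq,\ft]=0$ by Lemma~\ref{l:isomorphic-factors}, the flow $\{\phi_s\}$ of $X$ is the commuting composition of a one-parameter subgroup of $\sQ$ and one of $\sT$. By Corollary~\ref{c:invariance-of-r-x}, each of these subgroups preserves $r$, hence so does $\phi_s$. Differentiating the identity $r\circ\phi_s = r$ at $s=0$ yields $X(r)\equiv 0$ on the smooth locus $X^{n+1}\setminus\sG$.

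On this smooth locus, $r$ is the Riemannian distance to the closed geodesic $\sG$ in the complete non-positively curved manifold $X^{n+1}$, so $\nabla_g r = \p_r$ with $|\p_r|_g\equiv 1$; consequently
\begin{align*}
g(X,\p_r) \;=\; g(X,\nabla_g r) \;=\; dr(X) \;=\; X(r) \;=\; 0 \quad \text{on} \ X^{n+1}\setminus\sG.
\end{align*}
To extend the identity to all of $X^{n+1}$, I would note that along $\sG$ itself the $\fq$-component $X_1$ vanishes (Lemma~\ref{l:invariance-of-core-geodesic} asserts that every element of $\wsQ$ fixes $\ellG$ pointwise, so its infinitesimal generator vanishes on $\sG$) while $X_2\in\ft$ is tangent to $\sG$ (the flow of $\ft$ acts on $\sG$ by translation). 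Hence $X$ is tangent to $\sG$ along $\sG$, so $g(X,\nu)=0$ for every unit normal direction $\nu$ at a point of $\sG$, which is the correct interpretation of $g(X,\p_r)=0$ along the core geodesic.

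I do not expect any substantive obstacle here; the nontrivial geometric input has already been assembled in Proposition~\ref{p:r-x-relation} and Corollary~\ref{c:invariance-of-r-x}, where the boundary homogeneity of $\wg_{\lambda}$ is propagated to the global identification $x = \varpi_0 e^{-r}$ and to the invariance of $r$ under $\sQ$ and $\sT$. The lemma is essentially the one-line consequence $X(r)=0 \Rightarrow g(X,\p_r)=0$, with only a short remark needed to handle the locus where $r$ fails to be smooth.
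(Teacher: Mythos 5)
Your proof is correct and follows essentially the same route as the paper: the paper's own proof invokes the Claim inside Proposition \ref{p:r-x-relation}, which establishes $T\perp\p_r$ and $X_i\perp\p_r$ precisely by showing that the flows of $\ft$ and $\fq$ preserve $r$ (via their translation/identity action on the core geodesic), and then uses the global normal coordinates to extend this to all of $X^{n+1}$ — which is the same invariance-of-$r$ argument you package through Corollary \ref{c:invariance-of-r-x} and the identity $X(r)=dr(X)=g(X,\nabla_g r)$. Your extra remark handling the non-smooth locus $\sG$ is a small point of care that the paper leaves implicit, but it does not change the substance of the argument.
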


\begin{proof} The arguments are the same as those used in the proof of the claim within the proof of Proposition \ref{p:r-x-relation}. In the statement of the claim, the orthogonality relation holds only near the boundary, where the geodesic defining function $x$ exists.  Since 
Proposition \ref{p:r-x-relation} gives the global existence of the geodesic defining function,  
the desired orthogonality condition $g( X, \p_r ) \equiv 0$ extends to the entire $X^{n + 1}$.	
\end{proof}

To further reduce the complexity of the Einstein equation $\Ric_g = - n g$ on $X^{n + 1}$, the next lemma provides the isometric splitting structure of the normal level set $\NS(r)$.

\begin{lemma}\label{l:translation-perp-to-spherical-symmetries} For each $r > 0$, the normal level set $\NS(r)\equiv \{x\in X^{n + 1}: r(x) = r\}$ is isometric to a Riemannian product $(S^{n - 1} \times S^1, F^2(r) g_{S^{n - 1}} + G^2(r)\tau^2)$, where $F(r)$ and $G(r)$ are smooth functions in $r$,
$g_{S^{n - 1}}$ is the round metric on $S^{n - 1}$ with constant sectional curvature  $+1$, and $\tau^2$ is the standard metric on the unit circle $S^1$. 
\end{lemma}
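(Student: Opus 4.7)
The plan is to exploit the global geodesic defining function from Proposition \ref{p:r-x-relation} to reduce the statement to a representation-theoretic classification of invariant metrics on $S^1 \times S^{n-1}$. By that proposition, $x = \varpi_0 e^{-r}$ is a smooth defining function on all of $\overline{X^{n+1}}$, and $\NS(r)$ is exactly the level set $\{x = \varpi_0 e^{-r}\}$. In local coordinates $(x, t, \bm{w})$ adapted to the product structure on $M^n = S^1 \times S^{n-1}$, the Einstein metric takes the block form
\begin{align*}
g = \frac{dx^2 + h_x(t, \bm{w})}{x^2},
\end{align*}
where $h_x$ is a smooth family of Riemannian metrics on the slices $\{x = \text{const}\} \cong S^1 \times S^{n-1}$. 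The induced metric on $\NS(r)$ is therefore $h_{\varpi_0 e^{-r}}/(\varpi_0 e^{-r})^2$, so it suffices to analyze the family $h_x$.

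Next, I would show that each $h_x$ is invariant under the standard product action of $SO(n) \times S^1$ on $S^1 \times S^{n-1}$. By Corollary \ref{c:invariance-of-r-x}, the isometry groups $\sQ$ and $\sT$ preserve every level set $\{x = \text{const}\}$, acting there by isometries of $h_x$. By Lemma \ref{l:Killing-field-x-independence}, each extended Killing field in $\fq \oplus \ft$ is $x$-independent in these coordinates, so its restriction to any slice agrees with its boundary value on $M^n$. Since the boundary Killing fields generate the standard infinitesimal action of $\mathfrak{so}(n) \oplus \dR$ on $S^1 \times S^{n-1}$, the same action appears on every slice, and $h_x$ is therefore a smoothly varying family of $SO(n) \times S^1$-invariant Riemannian metrics on $S^1 \times S^{n-1}$.

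Finally, I would invoke a classification of such invariant tensors. At a basepoint $(t_0, w_0)$, the isotropy of the $SO(n) \times S^1$-action is $SO(n-1) \subset SO(n)$, which acts trivially on $T_{t_0} S^1$ and via the standard irreducible representation on $T_{w_0} S^{n-1}$ (irreducible since $n - 1 \gesl 2$). By Schur's lemma, any $SO(n-1)$-invariant inner product on $T_{(t_0, w_0)}(S^1 \times S^{n-1})$ is a sum of the form $a \cdot dt^2 + b \cdot (g_c)_{w_0}$ with no cross terms, and homogeneity forces $a, b$ to be constants on each slice. Hence $h_x = F_0(x)^2 g_c + G_0(x)^2 \tau^2$ for smooth positive functions $F_0, G_0$ of $x$, and setting $F(r) = F_0(\varpi_0 e^{-r})/(\varpi_0 e^{-r})$ and $G(r) = G_0(\varpi_0 e^{-r})/(\varpi_0 e^{-r})$ produces the claimed expression for the induced metric on $\NS(r)$.

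The main subtlety is the global validity of the $x$-independence used in the second paragraph: Lemma \ref{l:Killing-field-x-independence} is stated in a tubular collar $\{0 \lesl x < \epsilon_0\}$, yet we require its conclusion throughout $\{0 \lesl x \lesl \varpi_0\}$. This extension can be justified either by the analyticity of the Einstein metric combined with the uniqueness of Killing field extensions (the corollary following Lemma \ref{l:Killing-field-x-independence}), or by observing that the ODE computation in that lemma's proof --- which forces $A \equiv 0$ from the totally geodesic condition at $\{x = 0\}$ and then $\partial_x B_j \equiv 0$ on subsequent slices --- goes through verbatim in the global geodesic coordinates supplied by Proposition \ref{p:r-x-relation}.
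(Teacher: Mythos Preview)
Your argument is correct, and it takes a genuinely different route from the paper at the decisive step. Both approaches first use Proposition \ref{p:r-x-relation} and the extended Killing fields to exhibit each slice $\NS(r)$ as a Riemannian homogeneous space under the standard $SO(n)\times S^1$-action (your caution about the global validity of the $x$-independence is exactly what the paper handles in the proof of Corollary \ref{c:invariance-of-r-x}). The divergence is in deducing the product form. The paper appeals to a classical result of Obata on closed $n$-manifolds whose isometry group has dimension at least $1+\tfrac{n(n-1)}{2}$: such a manifold either has constant curvature or is a Riemannian product of a circle and a round sphere, and constant curvature is ruled out topologically. You instead classify invariant metrics directly via the isotropy representation: at a basepoint the isotropy $SO(n-1)$ acts trivially on the $S^1$-direction and by the standard irreducible representation on $T_{w_0}S^{n-1}$ (irreducible since $n-1\gesl 2$), so Schur's lemma kills the cross terms and forces each block to be a scalar multiple of the model metric. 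Your route is more elementary and self-contained; the paper's route situates the result within a known structure theorem and makes the role of the dimension count explicit.
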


\begin{proof}
First, recall that each normal level set $\NS(r)$ 
 is diffeomorphic to the cylinder $S^{n - 1} \times  S^1$. 
 It has been proven in Section \ref{ss:invariant-normal-dist} that,  for each $r > 0$, the level set
$(\NS(r), g|_{\NS(r)})$ 
 is a Riemannian homogeneous space 
 whose Lie algebra contains $\fq \oplus \ft$. We also proved in Lemma \ref{l:isomorphic-factors} that $\fq \oplus \ft \cong \fq_b \oplus \ft_b$. In particular, 
 \begin{align}
 \dim ( \Isom(\NS(r)) ) \gesl \frac{n(n - 1)}{2} + 1. 	\label{e:large-isometry-group}
 \end{align}

From this dimension lower bound, it follows that 
 for each $r > 0$, the normal level set $\NS(r)$, viewed as a Riemannian homogeneous space, must be isometric to a round cylinder. 
 More precisely, the metric $g|_{\NS(r)}$ inherited from $(X^{n + 1}, g)$ isometrically splits into a round circle and a round metric on $S^{n - 1}$. Specifically, one can write
 \begin{align}
  g|_{\NS(r)} = F^2(r) g_{S^{n - 1}} + G^2(r)\tau^2 	 \label{e:splitting-of-NS}
 \end{align}
 for smooth functions $F=F(r)$ and $G = G(r)$ in $r$,
 where $g_{S^{n - 1}}$
is the round metric on $S^{n - 1}$ with constant sectional curvature $+1$
 and $(S^1,\tau^2)$ is the unit circle.
The above splitting structure can be obtained from some general facts of the geometric structure  of homogeneous spaces that admit large isometry groups, which are by now well-understood. We list some relevant references of this topic below.

Given the dimension estimate \eqref{e:large-isometry-group},   
    it was shown in \cite[theorem 10]{Obata-homogeneous} that $\dim(\Isom(\NS(r)))$ must be equal to either $1 + \frac{n (n - 1)}{2}$ or $\frac{n (n + 1)}{2}$. Notice that the latter case is impossible, since if this were true, $\NS(r)$ would have constant curvature. Therefore, 
    \begin{align}
   \dim(\Isom(\NS(r))) = 1 + \frac{n (n - 1)}{2}.  	
    \end{align}
 The same result \cite[theorem 10]{Obata-homogeneous} states that $(\NS(r), g|_{\NS(r)})$ 
must be a Riemannian direct product as in \eqref{e:splitting-of-NS}.
We thus complete the proof of the lemma. 
\end{proof}

\begin{remark}
For a proof of the isometric splitting \eqref{e:splitting-of-NS}, see also 
\cite[Chapter II.3]{Kobayashi} for $n\gesl 5$ and \cite{Ishihara} for $n = 4$. The classification of homogeneous $3$-manifolds is well known.
\end{remark}

Based on the previous lemmata, 
one can conclude that each normal slice $\NS(r)$ shares the same Lie algebra information with the conformal infinity $S^1 \times S^{n-1}$. 
We summarize this picture as follows.

\begin{proposition}\label{p:foliation-structure}
For every point $p  \in \sG$, the following invariance properties hold:
\begin{enumerate}
\item For any $r > 0$ and for any isometry $\varphi \in \sQ$,   the restriction $\varphi|_{\NS(p, r)}$ gives an isometry 
\begin{align}
\varphi|_{\NS(p, r)}: 	\NS(p, r) \to \NS(p, r)
\end{align}
of the normal geodesic sphere $\NS(p, r)$.
Furthermore, $\sQ$ acts transitively on each normal geodesic sphere $\NS(p, r)$.

\item For any $p\in \sG$, each element $\tau\in \sT$ acts on $\sG$ by translation and sends $\mathcal{N}(p)$ to 
$\mathcal{N}(\tau\cdot p)$. Moreover, 
for any $r > 0$, the restriction $\tau|_{\NS(p, r)}$ gives an isometry 
\begin{align}
\varphi|_{\NS(p, r)}: 	\NS(p, r) \to \NS(\tau\cdot p, r).
\end{align}

\end{enumerate}
\end{proposition}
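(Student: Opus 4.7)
The plan is to derive both invariance statements by combining three ingredients already established: (i) the fixed-point/translation behavior of $\sQ$ and $\sT$ on $\sG$ (Lemma \ref{l:invariance-of-core-geodesic}); (ii) the $\sQ$- and $\sT$-invariance of the normal distance function $r(\cdot) = d_{\sG}(\cdot)$ (Corollary \ref{c:invariance-of-r-x}); and (iii) the homogeneous product structure of each level set $\NS(r) \cong S^{n-1}\times S^1$ from Lemma \ref{l:translation-perp-to-spherical-symmetries}. The isometry-plus-fixed-set argument, via the normal exponential map $\nExpX$, will reduce each claim about $\NS(p,r)$ or $\cN(p)$ to a statement about how an isometry moves normal vectors along $\sG$.

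For item (1), fix $p\in\sG$, $\varphi\in\sQ$ and $r>0$. By Lemma \ref{l:invariance-of-core-geodesic}, $\varphi$ fixes $\sG$ pointwise, so $\varphi(p)=p$ and $d\varphi_p$ is the identity on $T_p\sG$; consequently $d\varphi_p$ preserves the orthogonal complement $T_p\sG^{\perp}$, which is exactly the initial-velocity space of the normal geodesics generating $\cN(p)$. Hence $\varphi(\cN(p))=\cN(p)$, and because $r$ is $\sQ$-invariant, $\varphi(\NS(p,r))=\NS(p,r)$. For transitivity, use Lemma \ref{l:translation-perp-to-spherical-symmetries}: on $\NS(r)$ the $\sQ$-action, which commutes with $\sT$ by Lemma \ref{l:isomorphic-factors} and fixes $\sG$, must preserve the splitting $S^{n-1}\times S^1$ and act trivially on the $S^1$ factor (identified with the $\sT$-orbit direction that meets $\sG$). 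Since $\fq\cong\mathfrak{so}(n)$, the induced action on each $S^{n-1}$ fiber has full-dimensional orbits, hence is transitive; identifying $\NS(p,r)$ with one such $S^{n-1}$ fiber (it is an $(n-1)$-sphere sitting inside the $n$-manifold $\NS(r)$ and intersecting the $S^1$ factor in a single point $p$) completes the proof.

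For item (2), let $\tau\in\sT$ and $p=\sG(t_0)$. By Lemma \ref{l:invariance-of-core-geodesic}, $\tau(p)=\sG(t_0^{\ast})$ with $|t_0^{\ast}-t_0|=|\tau|$, and $d\tau$ sends $T_p\sG$ isometrically onto $T_{\tau(p)}\sG$, hence also $T_p\sG^{\perp}$ onto $T_{\tau(p)}\sG^{\perp}$. Thus for any normal geodesic $\gamma$ realizing $q\in\cN(p)$, the image $\tau\circ\gamma$ is a normal geodesic at $\tau(p)$, giving $\tau(\cN(p))=\cN(\tau(p))$. Combined with $\tau$-invariance of $r$ (Corollary \ref{c:invariance-of-r-x}), this yields $\tau(\NS(p,r))=\NS(\tau(p),r)$, and the restriction is an isometry because $\tau$ itself is.

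The main technical point, and the only non-automatic step, is identifying $\NS(p,r)$ with a single $S^{n-1}$-fiber in the splitting of Lemma \ref{l:translation-perp-to-spherical-symmetries} so that transitivity of $\sQ$ on $\NS(p,r)$ can be read off from the transitivity of $\SO(n)$ on $S^{n-1}$. To do this cleanly, I will show that the $S^1$-factor of $\NS(r)$ is characterized as the unique closed integral curve (through each point) of the nowhere-vanishing Killing field $T_\lambda\in\ft$ restricted to $\NS(r)$, then use that $\NS(p,r)$ meets exactly one such $S^1$-orbit, at $p$ (via $p$ lying on $\sG$ and $\sG$ being the orbit of $T_\lambda$ passing through $p$ by Lemma \ref{l:geometric-representation-of-Gamma}). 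Once this identification is in place, both the preservation of $\NS(p,r)$ by $\sQ$ and the transitivity follow directly.
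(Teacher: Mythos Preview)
Your preservation arguments for both items are essentially identical to the paper's: an isometry in $\sQ$ or $\sT$ fixes or translates $\sG$, hence maps a normal geodesic from $p$ to a normal geodesic from $\varphi(p)$ of the same length, so $\NS(p,r)$ is sent to $\NS(\varphi(p),r)$. The paper carries this out with the same minimizing-geodesic argument you describe.

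For the transitivity claim in (1), you actually go further than the paper, which simply asserts ``it suffices to show that any isometry $\varphi\in\sQ$ preserves $\NS(p,r)$'' and then proves only preservation. Your plan to identify $\NS(p,r)$ with an $S^{n-1}$-fiber of the product structure on $\NS(r)$ from Lemma~\ref{l:translation-perp-to-spherical-symmetries}, and then invoke $\fq\cong\mathfrak{so}(n)$, is the right idea. However, your execution of this identification is muddled: you write that $\NS(p,r)$ ``intersects the $S^1$ factor in a single point $p$'' and ``meets exactly one such $S^1$-orbit, at $p$,'' but $p$ lies on $\sG$, not on $\NS(r)$, so these statements are meaningless as written. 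What you should say is that each $\sT$-orbit in $\NS(r)$ meets $\NS(p,r)$ in exactly one point (this follows from item (2), since $\tau\in\sT$ with $\tau\cdot p\neq p$ sends $\NS(p,r)$ to the disjoint fiber $\NS(\tau\cdot p,r)$). Once that is established, $\NS(p,r)$ is a section of the $S^1$-fibration $\NS(r)\to S^{n-1}$, hence (being $(n-1)$-dimensional and $\sQ$-invariant with $\sQ$-orbits contained in $S^{n-1}$-fibers) must equal a single $S^{n-1}$-fiber, on which $\sQ$ acts transitively. Alternatively, after preservation, one can argue directly that an effective action of a group with Lie algebra $\mathfrak{so}(n)$ on a connected $(n-1)$-manifold is transitive; effectiveness follows because the extended Killing fields are $x$-independent (Corollary~\ref{c:invariance-of-r-x} and Lemma~\ref{l:Killing-field-x-independence}), so a Killing field vanishing on one $\NS(p,r)$ vanishes identically.
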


\begin{proof}To prove (1),
it suffices to show that any isometry $\varphi\in\mathscr{Q}$ preserves the normal geodesic sphere $\NS(p, r)$
	for any $p \in \sG$ and $r > 0$. Let $p = \sG(t_0)$ and $q\in \NS(p, r)$. We take    a length minimizing geodesic $\gamma:[0, r]\to X^{n + 1}$ with $\gamma(0) = p$ and $\gamma(r) = q$ such that $\gamma'(0) \perp \sG'(t_0)$. Immediately, $\varphi\cdot\gamma$ is also a length minimizing geodesic since $\varphi$ is an isometry. By Lemma
	\ref{l:invariance-of-core-geodesic}, we have that $\varphi\cdot p = p$, which implies that 
	\begin{align}
		d_g (\varphi \cdot q, p) = 		d_g (\varphi \cdot q, \varphi\cdot p) = 
d_g (q, p) = r. 
	\end{align}  
Notice that the geodesic $\varphi\cdot\gamma$ is orthogonal to $\sG$ at $p$. Therefore,  $\varphi\cdot q \in\NS(p,r)$ and the restriction $\varphi|_{\NS(p,r)}$	is an isometry of $\NS(p,r)$.

The proof of (2)   follows from the fact that any element $\tau \in  \sT$ acts on $\sG$ by translation; see Lemma \ref{l:invariance-of-core-geodesic}. In fact, taking any point $p = \sG(t_0)$ and any point $q \in \NS(p, r)$, let $\gamma:[0,r] \to X^{n + 1}$ be a length minimizing geodesic connecting $p = \gamma(0)$ and $q = \gamma(r)$ such that $\gamma'(0) \perp \sG'(t_0)$. Then  $\tau\cdot \gamma:[0,r] \to X^{n + 1}$ becomes a geodesic that connects $\tau\cdot p$ and $\tau\cdot q$. Since $\tau\cdot p \in \sG$, the end point $\tau\cdot q$ is contained in $\NS(\tau\cdot p, r)$. We thus complete the proof.
\end{proof}

By combining the technical results established above, we can now complete the proof of Theorem \ref{t:compatible-splitting}.

\begin{proof}
	[Proof of Theorem \ref{t:compatible-splitting}]
 	The orthogonality relations in item (1) are due to Lemma \ref{l:symmetries-perp-r} and Lemma \ref{l:translation-perp-to-spherical-symmetries}. 
	Items (2) and (3) are proved in Proposition \ref{p:foliation-structure}.
\end{proof}

\subsection{Dimension reduction and hyperbolic rigidity}
\label{ss:dimension-reduction}

In this section, we will carry out dimension reduction argument and complete the proof of the isometric rigidity part of Theorem \ref{t:product-top-geo-rigidity}.

The following is the statement of the main result of dimension reduction.

\begin{theorem}[Dimension reduction]\label{t:dimension-reduction}
For any $\tp \in \ell_{\Gamma}$ and $r > 0$, let $\cN(\tp)$ be the normal fiber given by the normal exponential map $\nExp$, and 
let $\NS(\tp, r)$ be a normal geodesic sphere. Then the following properties hold. 
\begin{enumerate}
 
 	\item  For any $\tilde{q}\in \cN(\tp)$, let $\wsT\cdot \tilde{q}$ be the $\wsT$-orbit at $\tilde{q}$. 
Then $\wsT\cdot \tilde{q}$ is orthogonal to $\cN(\tp)$ at $\tilde{q}$.

 \item For any $\tp\in \ellG$, the normal fiber $\cN(\tp)$ is totally geodesic in $(X^{n+1}, g)$.

	\item $\NS(\tp, r)$ is a round sphere whose sectional curvature is a constant depending only on the radius $r$.

\end{enumerate}
\end{theorem}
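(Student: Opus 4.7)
The three items will be proved in order, all computations carried out in the universal cover $\ucX$ where the splitting theorems of Section \ref{ss:extension-of-Killing-fields} and Section \ref{ss:invariant-normal-dist} apply (they descend to $X^{n+1}$ via the deck transformation group). The key inputs will be the orthogonality relations of Theorem \ref{t:compatible-splitting}(1), the transitivity of $\wsQ$ on $\NS(\tp, r)$ from Theorem \ref{t:compatible-splitting}(2), and the isometric product structure of the normal level sets from Lemma \ref{l:translation-perp-to-spherical-symmetries}.

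For item (1), I will first identify the tangent space of the normal fiber. For any $\tq \in \cN(\tp) \setminus \{\tp\}$ lying at distance $r$ from $\tp$ along a normal geodesic, the radial vector $\p_r$ is tangent to $\cN(\tp)$, and by Theorem \ref{t:compatible-splitting}(2) the $\wsQ$-action is transitive on the normal geodesic sphere $\NS(\tp,r) \subset \cN(\tp)$, so the evaluation $\{X_{\tq} : X \in \fq\}$ spans $T_{\tq} \NS(\tp, r)$. Dimension counting gives the decomposition
\begin{align*}
T_{\tq} \cN(\tp) = \RR\cdot \p_r \oplus T_{\tq} \NS(\tp, r).
\end{align*}
Now Theorem \ref{t:compatible-splitting}(1) says that $\widetilde{T}$ is orthogonal to both $\p_r$ and every Killing field in $\fq$, so $\widetilde{T}_{\tq} \perp T_{\tq} \cN(\tp)$, which is precisely item (1).

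For item (2), I will invoke the classical principle that if a hypersurface is everywhere orthogonal to a nowhere-vanishing Killing vector field, then it is totally geodesic. The proof is a two-line computation: for the symmetric part of $\nabla \widetilde{T}$ vanishes, $g(\nabla_X \widetilde{T}, X) = 0$ for all $X$, and combined with the fact that $g(\widetilde{T}, X) \equiv 0$ along the hypersurface yields $g(\widetilde{T}, \nabla_X X) = 0$ for any $X$ tangent to the hypersurface; as $\widetilde{T}$ spans the one-dimensional normal space, $\nabla_X X$ must be tangent. The main point to check is that $\widetilde{T}$ is nowhere vanishing on $\ucX$: if $\widetilde{T}_{\tq} = 0$ for some $\tq$, the $\wsT$-flow would fix $\tq$, but the equivariance of $\nExp$ (Theorem \ref{t:normal-exp}) combined with the fact that $\wsT$ translates $\ellG$ nontrivially (Lemma \ref{l:invariance-of-core-geodesic}) would send the normal fiber through $\tq$ to a distinct normal fiber, contradicting that the normal fibers are disjoint (again by Theorem \ref{t:normal-exp}). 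Item (1) then supplies the orthogonality hypothesis and the principle yields item (2).

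For item (3), I will use Lemma \ref{l:translation-perp-to-spherical-symmetries}, which gives an isometric splitting $(\NS(r), g|_{\NS(r)}) \cong (S^{n-1} \times S^1,\, F^2(r) g_c + G^2(r) \tau^2)$. The isometry group $\wsQ \cong SO(n)$ acts on this product preserving the Riemannian structure, commutes with $\wsT$, and fixes $\ellG$ pointwise (Lemma \ref{l:invariance-of-core-geodesic}), so its orbits are exactly the $S^{n-1}$-slices of the product. On the other hand, by Theorem \ref{t:compatible-splitting}(2) every $\wsQ$-orbit in $\NS(r)$ equals some $\NS(\tp, r)$ for $\tp \in \ellG$, and these slices exhaust $\NS(r)$ and are mutually disjoint since $\nExp$ is a global diffeomorphism. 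Consequently $(\NS(\tp, r), g|_{\NS(\tp,r)})$ is isometric to $(S^{n-1}, F^2(r) g_c)$, a round sphere of constant sectional curvature $F(r)^{-2}$, depending only on $r$.

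The main obstacle is the verification used in item (2) that $\widetilde{T}$ has no zero on $\ucX$; once this is in hand, the classical Killing-field-normal-to-hypersurface argument closes the totally-geodesic part cleanly, and items (1) and (3) are essentially bookkeeping applications of the equivariant splitting theorem and the product structure of the normal level sets.
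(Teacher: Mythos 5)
Your proof is correct, and items (1) and (2) land on essentially the same mechanism as the paper, just packaged differently. For item (2) the paper works in the global coordinates $(r,t,\bm{w})$ and computes $\p_t\bigl(g(\p_{w_i},\p_{w_j})\bigr)\equiv 0$, i.e.\ the metric block tangent to $\cN(\tp)$ is $t$-independent, from which the vanishing of the second fundamental form follows; your invariant version (a hypersurface everywhere orthogonal to a Killing field is totally geodesic, via $g(\nabla_X\widetilde T,X)=0$ plus differentiating $g(\widetilde T,X)\equiv 0$) is the same computation in coordinate-free form, and your explicit check that $\widetilde T$ is nowhere vanishing is a worthwhile addition that the paper leaves implicit in its use of the coordinate system. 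The real divergence is item (3): the paper re-runs the homogeneity argument on the sphere $\NS(\tp,r)$ itself, noting that $\wsQ$ acts transitively with $\isom(\NS(\tp,r))\cong\mathfrak{s}\mathfrak{o}(n)$ and invoking the classification of $\SO(n)$-homogeneous metrics on $S^{n-1}$ to get constant curvature; you instead read the roundness off the already-established product splitting $g|_{\NS(r)}=F^2(r)g_{S^{n-1}}+G^2(r)\tau^2$ of Lemma \ref{l:translation-perp-to-spherical-symmetries} by identifying each $\wsQ$-orbit with an $S^{n-1}$-slice. That identification is correct but deserves one more line than you give it: since the round $S^{n-1}$ and the flat $\dR$ (or $S^1$) factors share no common local de Rham factor, $\Isom_0(\NS(r))$ splits as a product, and the connected semisimple group $\wsQ$ must project trivially to $\Isom(\dR)$, forcing its orbits into the slices. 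Your route is slightly more economical since it reuses the lemma rather than reinvoking the Obata-type classification, but it is not logically independent of it, as that lemma was itself proved by the same classification.
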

\begin{proof}
  Item (1) follows from 
$g(\p_t, \p_r) = g(\p_t, \p_{w_i})  \equiv 0$ on $X^{n+1}$, and these
 orthogonality properties, in turn, follow from Lemma \ref{l:symmetries-perp-r} and Lemma \ref{l:translation-perp-to-spherical-symmetries}.

 We now prove item (2). Let $(r, t,\bm{w})$ be a local normal coordinate system of the normal fiber $\cN(\tp)$, where $\bm{w}$ is some local coordinate system induced from the spherical factor $S^{n-1}$ of the conformal infinity.
Notice that, in local coordinates, $\p_t$ is a Killing field. Then for any $1\lesl i,j\lesl n-1$, 
\begin{align}
	\p_t \left( g(\p_{w_i},\p_{w_j}) \right) = (\mathfrak{L}_{\p_t} g)(\p_{w_i},\p_{w_j}) + g\left([\p_t,\p_{w_i}],\p_{w_j}\right) +  g\left(\p_{w_i},[\p_t, \p_{w_j}]\right) \equiv 0.
\end{align}
Then the conclusion immediately follows.

To show item (3), one needs to analyze the Lie algebra $\isom(\NS(\tp,r))$ of the Killing fields on $\NS(\tp,r)$. For each $\tp\in \ell_{\Gamma}$ and $r > 0$, by Theorem \ref{t:compatible-splitting}(3), the isometry group $\wsQ$ acts transitively on each normal geodesic sphere $\NS(\tp,r)$ such that $\NS(\tp,r)$ is a Riemannian homogeneous space. By Lemma \ref{l:Killing-field-x-independence},  $\isom(\NS(\tp,r)) \cong \fq_b \cong \mathfrak{s}\mathfrak{o}(n)$. 
	It is well known that the round sphere $S^{n - 1}$, as a Riemannian homogeneous space, can be represented as $\SO(n)/\SO(n - 1)$. Correspondingly, the Lie algebra $\isom(S^{n  - 1})$ of the Killing fields on $S^{n - 1}$ is $\mathfrak{s}\mathfrak{o}(n)$.  
This immediately tells us that each slice $\NS(\tp, r)$ has positive constant sectional curvature. 	
\end{proof}

The above structure result provides an effective dimension reduction for the Einstein metric $g$. By the normal exponential map of the line $\ell_{\Gamma}$, one can write the Einstein metric $g$ in terms of the following ansatz.

\begin{corollary}Let $r$ be the distance to the canonical circle $\sigma_{\Gamma}\subset X^{n+1}$. Then there exist analytic functions $F=F(r)$ and $G = G(r)$ such that the Einstein metric $g$ on $X^{n+1}$ can be expressed as
\begin{align}
g = \kappa + G^2(r) \tau^2  = dr^2 + F^2(r) g_{S^{n - 1}}+ G^2(r)\Theta^2,
\end{align}
where $g_{S^{n - 1}}$ is the round metric on $S^{n-1}$ with constant curvature $+1$ and $\Theta^2$ is a Riemannian metric on the circle $S^1$.
\end{corollary}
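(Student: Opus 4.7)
The plan is to read off the desired warped product form directly from the structural results already established: the global diffeomorphism provided by the normal exponential map, the orthogonal decomposition of the tangent spaces from Theorem \ref{t:compatible-splitting}, the isometric splitting of each normal level set from Lemma \ref{l:translation-perp-to-spherical-symmetries}, and the round-sphere fiber from Theorem \ref{t:dimension-reduction}(3). The corollary is essentially a consolidation of these pieces into a single metric ansatz.

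First, I would use the fact that $\nExpX: \mathcal{N}(\sG) \to X^{n+1}$ is a diffeomorphism (descending from Theorem \ref{t:normal-exp}), so the distance function $r \equiv d_{\sG}$ is smooth on $X^{n+1}\setminus \sG$ with $|\nabla r|_g \equiv 1$. This gives the standard orthogonal decomposition $g = dr^2 + g_r$, where $g_r$ is the induced metric on the level set $\NS(r)$. The vector field $\p_r$ is the outward unit normal to $\NS(r)$, and the orthogonality $g(\p_r, \xi) = 0$ for every $\xi \in \fq \oplus \ft$ has been verified in Theorem \ref{t:compatible-splitting}(1) via Lemma \ref{l:symmetries-perp-r}.

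Next, I would invoke Lemma \ref{l:translation-perp-to-spherical-symmetries}, which shows that for every $r > 0$ the level set $(\NS(r), g_r)$ is isometric to a Riemannian product of the form $F^2(r)\, g_{S^{n-1}} + G^2(r)\, \tau^2$, where $F(r)$ and $G(r)$ are smooth positive functions. This splitting is canonical: the $S^{n-1}$-factor is identified with the orbits of $\wsQ$ on $\NS(r)$ (round by Theorem \ref{t:dimension-reduction}(3)), and the $S^1$-factor is identified with the orbits of $\wsT$, which are orthogonal to the $\wsQ$-orbits by Theorem \ref{t:dimension-reduction}(1). The fact that $F$ and $G$ depend only on $r$, and not on position within a level set, is forced by the transitivity of the isometry group $\sQ \times \sT$ acting on each $\NS(r)$.

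Putting these together yields the warped product expression
\[
g = dr^2 + F^2(r)\, g_{S^{n-1}} + G^2(r)\, \tau^2.
\]
Analyticity of $F$ and $G$ in $r$ follows from the classical real-analyticity of Einstein metrics in geodesic normal coordinates (see DeTurck--Kazdan, and the reference to \cite{DK} already invoked in the proof of Proposition \ref{p:core-geodesic}): under the above ansatz the Einstein equation $\Ric_g = -ng$ becomes a real-analytic nondegenerate ODE system in $F$ and $G$, so their analytic dependence on $r$ follows from the analyticity of the Einstein metric itself. I do not expect any genuine obstacle here, since the essential geometric work has been carried out in the preceding sections; the only point requiring a line or two of justification is the global (as opposed to local) consistency of the decomposition, which is guaranteed by the equivariance of $\nExpX$ together with the $\sQ$- and $\sT$-invariance of $r$ established in Corollary \ref{c:invariance-of-r-x}.
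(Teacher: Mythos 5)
Your proposal is correct and follows essentially the same route as the paper: the paper simply declares the corollary an immediate consequence of Theorem \ref{t:dimension-reduction}, which rests on exactly the ingredients you assemble (the orthogonal splitting from Lemma \ref{l:symmetries-perp-r}, the product structure of each level set from Lemma \ref{l:translation-perp-to-spherical-symmetries}, and analyticity of Einstein metrics). You have merely written out the one-line argument the authors leave implicit.
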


\begin{proof}
This is an immediate corollary of Theorem \ref{t:dimension-reduction}.
\end{proof}

Next, we will reduce the Einstein equation $\Ric_g = - n g$ to a system of ODEs in $r$ based on the dimension reduction as obtained above.

For any $p\in X^{n+1}$, let us choose a local orthonormal frame $\{\p_r, X_1,\ldots, X_{n - 1}, T\}$, 
where $\p_r$ is given by the normal distance function, $X_i$ is tangential to the normal slice $\NS(\tp, r)$ and $T$ is the unit tangent vector of the $\mathscr{T}$-orbit at $p$. 
By straightforward computations, we have  
\begin{align}
\Ric_g(X_i,X_i)	& = \Ric_{\kappa}(X_i,X_i) - \frac{1}{G}\nabla^2 G(X_i,X_i),
\\
\Ric_g(\p_r,\p_r)	& = \Ric_{\kappa}(\p_r,\p_r) - \frac{1}{G}\nabla^2 G(\p_r,\p_r),
\\
\Ric_g (T, T) & =  -\frac{\Delta_{\kappa} G}{G}.
\end{align}
Let us compute the Ricci curvature of $\kappa$ at any point $p\in \NS(p,r)$: 
\begin{align}
\begin{split}
\Ric_{\kappa}(X_i) & = \frac{1}{F^2(r)}  - 
\frac{(F'(r))^2}{F(r)^2} - (n-2)\cdot \frac{F''(r)}{F(r)} ,
\\
 \Ric_{\kappa}(\p_r) & = - (n-2) \cdot \frac{F''(r)}{F}.
 \end{split}\end{align}
Next, we compute the Hessian $\nabla^2G(v,v)$ with $v\in T_p \NS(p,r)$: 
\begin{align}
\nabla^2 G(X_i,X_i) = G'(r) \cdot \frac{F'(r)}{F(r)} \quad \text{and} \quad \nabla^2 G(\p_r, \p_r) = G''(r).	
\end{align}
Finally, applying the equation $\Ric_g = - n g$, we have the following system:
\begin{align}
\begin{split}
(n-2)\frac{F''(r)}{F(r)}+ \frac{(F'(r))^2}{F(r)^2} - \frac{1}{F(r)^2} + \frac{G'(r)}{G(r)} \cdot \frac{F'(r)}{F(r)}
= n,
\\
(n-1) \cdot \frac{F''(r)}{F(r)} + \frac{G''(r)}{G(r)} = n,
\\
\frac{G''(r)}{G(r)} + (n-1)\cdot \frac{G'(r)}{G(r)} \cdot \frac{F'(r)}{F(r)}
 = n.
 \end{split}\label{e:Einstein-ODE}
\end{align}
The above can be further reduced to the following:  
\begin{align}
& (n-1) \cdot F(r) F''(r) + (F'(r))^2 - 1 - n \cdot F(r)^2 = 0,
\\
& \frac{G''(r)}{G(r)} = n - (n - 1)\cdot\frac{F''(r)}{F(r)}.
\end{align}

Since $g$ is smooth one the circle $\sigma_{\Gamma}$ which corresponds to the level set $\{r=0\}$, this in particular implies that 
\begin{align}
F(0) = 0 ,\quad F'(0) = 1.	
\end{align}

Then we are led to prove the following uniqueness problem. 
\begin{proposition}\label{p:ODE-uniqueness}
The initial value problem 
\begin{align}\label{e:ODE-initial-value-problem}
\begin{cases}
2F(r) F''(r) + (F'(r))^2 - 1 - 3F(r)^2  = 0,
\\
F(0) = 0 , \quad F'(0) = 1,
\end{cases}
\end{align}
has a unique solution $F(t)\equiv \sinh(t)$.
\end{proposition}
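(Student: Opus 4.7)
The plan is to reduce the second-order ODE to a first-order autonomous equation by finding an explicit first integral, and then to apply classical uniqueness for separable ODEs.

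First I would verify directly that $F(r)=\sinh(r)$ satisfies \eqref{e:ODE-initial-value-problem}: since $F'=\cosh r$ and $F''=\sinh r$, one checks $2\sinh^2 r+\cosh^2 r-1-3\sinh^2 r=\cosh^2 r-\sinh^2 r-1=0$.

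The main step is to produce a conservation law. Multiplying the ODE by $F'$ and rearranging, one has $2FF'F''+(F')^3-F'-3F^2F'=0$. Using the identity
\begin{align*}
\frac{d}{dr}\bigl(F(F')^2\bigr)=(F')^3+2FF'F'',
\end{align*}
this becomes $\frac{d}{dr}\bigl(F(F')^2\bigr)-F'-(F^3)'=0$, i.e.
\begin{align*}
\frac{d}{dr}\bigl(F(F')^2-F-F^3\bigr)=0.
\end{align*}
Hence $F(F')^2-F-F^3\equiv C$ along any solution. Evaluating at $r=0$ with $F(0)=0$, $F'(0)=1$ forces $C=0$, so every solution of \eqref{e:ODE-initial-value-problem} satisfies
\begin{align*}
F(r)\bigl((F'(r))^2-1-F(r)^2\bigr)=0.
\end{align*}

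Next I would use this first integral to pin down $F$ uniquely. By continuity, $F(r)>0$ and $F'(r)>0$ in a right neighborhood of $0$, so dividing by $F$ gives $(F')^2=1+F^2$ on this interval, and since $F'(0)=1>0$ one selects the positive root $F'=\sqrt{1+F^2}$. Separation of variables gives $\operatorname{arcsinh}F(r)=r$, whence $F(r)=\sinh r$ locally. Because $(F')^2=1+F^2\ge 1$, the solution $F$ is strictly increasing for all $r\ge 0$, so $F$ never returns to $0$, the identity $(F')^2=1+F^2$ persists for all $r\ge 0$, and the separable ODE has the unique global solution $F(r)=\sinh r$.

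The only delicate point is that the initial condition $F(0)=0$ sits at the singular locus $F=0$ of the original equation $F''=(1+3F^2-(F')^2)/(2F)$, so standard Picard--Lindel\"of cannot be invoked directly at $r=0$. The first-integral argument above bypasses this by producing an equation that remains regular across $r=0$; alternatively one could observe that any solution is forced to be analytic by a power series argument (the coefficients are determined recursively, and matching against $\sinh r=r+r^3/6+\cdots$ yields uniqueness), but the conservation-law derivation is shorter and more transparent.
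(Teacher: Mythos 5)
Your proof is correct, and it takes a genuinely different route from the paper's. You produce the first integral $F\bigl((F')^2-1-F^2\bigr)\equiv 0$ by multiplying the equation by $F'$ and recognizing an exact derivative, then reduce to the regular first-order equation $F'=\sqrt{1+F^2}$, to which Picard--Lindel\"of applies since the right-hand side is globally Lipschitz in $F$; your sign and positivity bookkeeping ($F>0$ for $r>0$, $(F')^2\geq 1$ so $F'$ never changes sign) correctly propagates the local conclusion to all $r\geq 0$. The paper instead invokes the real-analyticity of Einstein metrics in geodesic normal coordinates, expands $F$ in a Taylor series at $r=0$, and proves by induction --- using the combinatorial identity $\sum_{k=2}^{2Q-1}(-1)^k\frac{1}{k!\,(2Q-k)!}=\frac{2Q-2}{(2Q)!}$ extracted from $e^re^{-r}=1$ --- that all odd coefficients equal $1$ and all even ones vanish, then concludes by unique continuation of analytic functions. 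Your argument is shorter, requires only $C^1$ regularity up to $r=0$ rather than analyticity, and handles the degeneracy at $F=0$ transparently; the paper's computation is set up for the general coefficient $(n-1)FF''+(F')^2-1-nF^2=0$ in one pass (your conservation law also generalizes, via the integrating factor $F^{2/(n-1)-1}F'$, though you only treat the stated $n=3$ case, which is all the proposition requires).
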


\begin{proof}
It is well known that any Einstein metric is analytic in geodesic normal coordinates;   see \cite[theorem 5.1]{DK}. So the function $F = F(r)$ is real analytic in $r$.

We will show that  for any $k\in\dZ_+$, 
 \begin{align}F^{(2k)}(0)=0, \quad F^{(2k+1)}(0)=1.\end{align}
 Once we obtain the above, the uniqueness will follow from the standard unique continuation of analytic functions.

Since the metric $g$ is smooth at $r=0$ and $F(0)=0$, we have that $F^{(2k)}(0) = 0$
for any $k\in\dZ_+$. To complete the proof, it suffices to prove that $F^{(2k+1)}(0)=1$
for any $k\in\dZ_+$. Since $F(r)$ is analytic, let us take the Taylor series of $F$ at $r=0$,
\begin{align}
F(r) = \sum\limits_{m=1}^{\infty} \frac{a_m}{m!}\cdot r^m,	
\end{align}
 where $a_m=0$ for any even integer $m$.
Then 
\begin{align}
F'(r) = \sum\limits_{m=1}^{\infty}\frac{a_m}{(m-1)!} r^{m-1}, \quad 
F''(r) = \sum\limits_{m=1}^{\infty}\frac{a_{m+1}}{(m-1)!} r^{m-1}. 		
\end{align}
Plugging the above into \eqref{e:ODE-initial-value-problem}, 
\begin{align}
\begin{split}
(n - 1) \cdot \left( \sum\limits_{m=1}^{\infty} \frac{a_m}{m!}\cdot r^m \right)	 \left( \sum\limits_{m=1}^{\infty}\frac{a_{m+1}}{(m-1)!} r^{m-1} \right)	& \ +  \left( \sum\limits_{m=1}^{\infty}\frac{a_m}{(m-1)!} r^{m-1} \right)^2 \\
&\  - 1 - n \cdot \left( \sum\limits_{m=1}^{\infty} \frac{a_m}{m!}\cdot r^m \right)^2 = 0.
\end{split}
\end{align}
For any $\nu \geq 4$, comparing the coefficients of the term of order $r^{\nu}$, 
\begin{align*}	
0= & \ (n - 1) \sum\limits_{k=1}^{\nu} \frac{a_k}{k!} \cdot \frac{a_{\nu + 2 - k}}{(\nu - k)!}	
+ \sum\limits_{k=1}^{\nu + 1} \frac{a_k}{(k-1)!} \cdot \frac{a_{\nu + 2 - k}}{(\nu + 1 - k)!} - n \sum\limits_{k = 1}^{\nu - 1} \frac{a_k}{k!} \cdot \frac{a_{\nu - k}}{(\nu - k)!} 
\\
=  &\  (n - 1)\cdot \left( \frac{a_{\nu + 1}}{(\nu - 1)!} + \sum\limits_{k=2}^{\nu} \frac{a_k}{k!} \cdot \frac{a_{\nu + 2 - k}}{(\nu - k)!}\right) + \left( \frac{2 a_{\nu + 1}}{\nu!} + \sum\limits_{k=2}^{\nu} \frac{a_k}{(k - 1)!} \cdot \frac{a_{\nu + 2 - k}}{(\nu + 1- k)!}\right)
\\ 
   &\ - n \cdot  \sum\limits_{k=1}^{\nu-1} \frac{a_k}{k!} \cdot \frac{a_{\nu - k}}{(\nu - k)!} 
      \\
  = &\ \frac{(n-1)\nu + 2}{\nu!} a_{\nu + 1} + (n - 1) \sum\limits_{k=2}^{\nu} \frac{a_k}{k!} \cdot \frac{a_{\nu + 2 - k}}{(\nu - k)!} + \sum\limits_{k=2}^{\nu} \frac{a_k}{(k - 1)!} \cdot \frac{a_{\nu + 2 - k}}{(\nu + 1 - k)!} \nonumber\\ 
  &\ - n  \cdot  \sum\limits_{k=1}^{\nu - 1} \frac{a_k}{k!} \cdot \frac{a_{\nu - k}}{(\nu - k)!}. 
\end{align*}
 Since $a_{2m}=0$ for any integer $m\in\dN_0$, we take $\nu = 2Q$ for some positive integer $Q\geq 2$.
Then the above equation becomes
\begin{equation}
\begin{split}	
	0 = & \ \frac{2Q(n - 1) + 2}{(2Q)!} \cdot a_{2Q + 1} \\
	& \ + (n - 1)\cdot \sum\limits_{m=1}^{Q-1} \frac{a_{2m + 1}}{(2m + 1)!}	\cdot \frac{a_{2Q + 1 -2m}}{(2Q - 1 - 2m)!} + \sum\limits_{m = 1}^{Q - 1}
\frac{a_{2m + 1}}{(2m)!} \cdot \frac{a_{2Q - 2m + 1}}{(2Q - 2m)!}   \\
&\  - n\cdot\left( a_1 \cdot \frac{a_{2Q - 1}}{(2Q - 1)!} + \sum\limits_{m=1}^{Q-1} \frac{a_{2m+1}}{(2m + 1)!} \cdot \frac{a_{2Q - 2m - 1}}{(2Q - 2m - 1)!} \right).
\label{e: Q-sequence}
\end{split}
\end{equation}

In the following, we will prove by induction that $a_{2Q + 1} = 1$ for any positive integer $Q \geq 2$.
First, by straightforward computations $a_3 = 1$. Given any $Q \geq 2$, assuming that $a_{2m+1} = 1$ for any positive integer $2\lesl  m \lesl  Q - 1$, we will show that $a_{2Q + 1} = 1$.  
	
Applying the induction hypothesis, \eqref{e: Q-sequence} becomes  	
	\begin{align}
 0 = &\  \frac{2Q(n - 1) + 2}{(2Q)!} a_{2Q + 1} + \left( \sum\limits_{m = 1}^{Q - 1}
\frac{1}{(2m)!} \cdot \frac{1}{(2Q - 2m)!} - \sum\limits_{m=1}^{Q-1} \frac{1}{(2m + 1)!}	\cdot \frac{1}{(2Q - 1 - 2m)!}   \right)	
\nonumber \\
  &\ -  \frac{n}{(2Q - 1)!}	\nonumber \\
 = & \ \frac{2Q(n - 1) + 2}{(2Q)!} a_{2Q + 1} - \frac{n}{(2Q - 1)!} + \sum\limits_{k = 2}^{2Q - 1} (-1)^k \frac{1}{k!} \cdot \frac{1}{(2Q - k)!}. \label{e:Q-sequence-representation}
	\end{align}
We claim that 
\begin{align}
\sum\limits_{k = 2}^{2Q - 1} (-1)^k \frac{1}{k!} \cdot \frac{1}{(2Q - k)!}  = \frac{2Q -2}{(2Q)!}
\end{align}
Indeed, let us consider the power series expansions of $e^r$ and $e^{-r}$,
\begin{align}
e^r = \sum\limits_{k=0}^{\infty} \frac{1}{k!} r^k,\quad e^{-r} = \sum\limits_{k=0}^{\infty} (-1)^k \frac{1}{k!} r^k.\end{align}
So it follows that 
\begin{align}
1 = e^r \cdot e^{-r} = \sum\limits_{n=0}^{\infty} \left( \sum\limits_{k=0}^n (-1)^k \frac{1}{k!} \cdot \frac{1}{(n - k)!} \right) r^n.	
\end{align}
Taking $n = 2Q$ for $Q\geq 2$, we have 
\begin{align}
 \sum\limits_{k=0}^{2Q} (-1)^k \frac{1}{k!} \cdot \frac{1}{(2Q - k)!} = 0.
\end{align}
  Then  
  \begin{align}
\frac{1}{(2Q)!} - \frac{1}{(2Q - 1)!} + \sum\limits_{k=2}^{2Q -1} (-1)^k \frac{1}{k!} \cdot \frac{1}{(2Q - k)!} + \frac{1}{(2Q)!} = 0,
  \end{align}
which immediately implies 
\begin{align}
	\sum\limits_{k=2}^{2Q -1} (-1)^k \frac{1}{k!} \cdot \frac{1}{(2Q - k)!}  = 
	\frac{2Q - 2}{(2Q)!}. \label{e:closed-form-finite-sum}
\end{align}
This completes the proof of the claim.

Now we are ready to finish the proof. Plugging \eqref{e:closed-form-finite-sum} into 
\eqref{e:Q-sequence-representation}, 
\begin{align}
\frac{2Q(n - 1) + 2}{(2Q)!} a_{2Q + 1}  - \frac{n}{(2Q - 1)!} + \frac{2Q - 2}{(2Q)!} = 0.	
\end{align}
Therefore, $a_{2Q + 1} = 1$, which completes the proof.
\end{proof}

\begin{theorem} Let $g$ be the Poincar\'e-Einstein metric 
\begin{align}
	g = dr^2 + F^2(r) g_{S^{n - 1}} + G^2(r) \Theta^2
	\end{align} on $X^{n+1}\approx S^1 \times D^n$
such that $\Ric_g \equiv  - n g$.
 Then $g$ is a hyperbolic metric with sectional curvature constantly equal to $-1$.	
\end{theorem}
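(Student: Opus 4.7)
The plan is to leverage the ODE analysis already performed above. The Einstein equation $\Ric_g=-ng$ has been reduced in \eqref{e:Einstein-ODE} to two essentially uncoupled ODEs: a nonlinear equation for $F$ alone, and a linear equation that determines $G$ in terms of $F$. By Proposition \ref{p:ODE-uniqueness}, together with the smoothness conditions $F(0)=0$ and $F'(0)=1$ at the core geodesic $\sigma_{\Gamma}$, the first ODE admits the unique solution $F(r)=\sinh(r)$. So the first step is simply to invoke that proposition.

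Substituting $F(r)=\sinh(r)$ into the second equation of \eqref{e:Einstein-ODE} collapses it to $G''(r)/G(r)=n-(n-1)=1$, i.e.\ the linear equation $G''(r)=G(r)$, whose general solution is $G(r)=A\cosh(r)+B\sinh(r)$. To pin down the constants, I would use the smoothness of $g$ in a tubular neighborhood of $\sigma_{\Gamma}$. In Fermi coordinates $(x^1,\ldots,x^n,\theta)$ around $\sigma_{\Gamma}$, the squared length $G^2(r)$ of the Killing vector $\partial_\theta$ depends only on $r=|x|$ by the normal $\SO(n)$-symmetry established in Theorem \ref{t:dimension-reduction}; smoothness of $G^2$ as a function of $(x^1,\ldots,x^n)$ at the origin then forces $G^2$ to be an even function of $r$, so $G'(0)=0$ and hence $B=0$. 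Writing $G_0\equiv G(0)>0$, we obtain $G(r)=G_0\cosh(r)$.

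With $F$ and $G$ both explicit, the metric reads
\begin{align}
g=dr^2+\sinh^2(r)\,g_{S^{n-1}}+G_0^2\cosh^2(r)\,\Theta^2.
\end{align}
A direct curvature computation on this doubly warped product then yields $\sec_g\equiv-1$: one gets $-F''/F=-1$ on planes spanned by $\partial_r$ and a tangent vector to $S^{n-1}$, $-G''/G=-1$ on planes spanned by $\partial_r$ and a tangent vector to $S^1$, $(1-(F')^2)/F^2=-1$ on planes tangent to $S^{n-1}$, and $-F'G'/(FG)=-1$ on mixed spherical-circular planes. Matching the induced boundary metric with the prescribed conformal class $[\wg_{\lambda}]$ identifies $G_0^2\Theta^2$ with $\lambda^2\tau^2$, and hence $(X^{n+1},g)$ coincides with the hyperbolic model $(S^1\times D^n,g_{\lambda})$ of Section \ref{ss:model-space}. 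The only serious analytic step has already been absorbed into Proposition \ref{p:ODE-uniqueness}; once $F$ is known, the derivation of $G$ and the curvature verification are essentially mechanical, so no substantial new obstacle is expected here.
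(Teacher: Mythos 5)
Your proposal is correct and follows essentially the same route as the paper: invoke Proposition \ref{p:ODE-uniqueness} to get $F(r)=\sinh(r)$, determine $G$, and verify that all sectional curvatures of the doubly warped product equal $-1$. The only (harmless) variation is how $G$ is pinned down: you solve the linear ODE $G''=G$ and fix the integration constant via smoothness of $G^2$ at the core geodesic, whereas the paper reads $G'/G=\tanh(r)$ directly off the third equation of the reduced system \eqref{e:Einstein-ODE}; both yield $G=G_0\cosh(r)$ and the same curvature identities.
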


\begin{proof}
Let $X_1, X_2$ be orthonormal frames tangential to the fiber $S^2$ and let $T$ be the unit vector field such that $\Theta(T) = 1$. 
Recall that the Einstein equation $\Ric_g \equiv - n g$ is reduced to the ODE system \eqref{e:Einstein-ODE}. Since Proposition \ref{p:ODE-uniqueness} gives $F(r) \equiv \sinh(r)$,  the system \eqref{e:Einstein-ODE} is further reduced to  
\begin{align}
\frac{G''(r)}{G(r)} = 1,
\\
\frac{G''(r)}{G(r)} + (n - 1) \cdot \frac{F'(r)}{F(r)} \cdot \frac{G'(r)}{G(r)} = n.
\end{align}

Next, by elementary computations, all the sectional curvatures are given by
\begin{align}
&\sec(\p_r \wedge X_i) = - \frac{F''(r)}{F(r)} = - 1, \quad \sec(\p_r \wedge T) = - \frac{G''}{G} = -1,\\
& \sec(X_i \wedge X_j) = \frac{1 - (F'(r))^2}{F(r)^2} = -1,
\\
&\sec(X_i, T) = -\frac{F'(r)G'(r)}{F(r)G(r)} = -1.
\end{align}
Finally we can conclude that $\sec_g \equiv - 1$.
\end{proof}

\section{Rigidity and Quantitative Rigidity}

\label{s:more-results}

In this section, we will prove Theorems \ref{t:large-circle}, \ref{t:uniqueness-large-circle-factor}, and \ref{t:rigidity-for-small-weyl-energy}, as stated in Section \ref{s:introduction}. Each of these results is a consequence of the main hyperbolic rigidity result,  Theorem \ref{t:uniqueness-nonpositively-curved}. 
In Section \ref{ss:pinching}, we will establish two hyperbolic rigidity results under pinching assumptions: one where the circle factor in the product $S^1\times S^{n-1}$ is sufficiently large (Theorem \ref{t:large-circle}), and one where the Weyl energy is sufficiently small (Theorem \ref{t:rigidity-for-small-weyl-energy}). In Section \ref{ss:quantitative-rigidity}, we
 prove a quantitative rigidity result (Theorem \ref{t:uniqueness-large-circle-factor}) under the assumption that the conformal infinity is close to the standard product on $S^1 \times S^{n-1}$.
 We will also formulate and prove a compactness result (Proposition \ref{p:compactness-small-weyl}) that will be used in the proof of the quantitative rigidity result.

To formulate the quantitative rigidity and compactness arguments, we will require the notions of {\it harmonic coordinates} and {\it regularity scale}, introduced as follows:
\begin{definition}
[$C^1$-harmonic coordinates]	
Let $(X^d, g)$ be a Riemannian manifold. Given $p\in X^d$, a $C^1$-harmonic coordinate system $\Phi$ at $p$ with  $\|\Phi\|_{C^1}<\epsilon$ is a diffeomorphism 
\begin{align}
	\Phi = (u_1,\ldots, u_n): B_r(p) \to \dR^d
\end{align}
that satisfies: 
\begin{enumerate}
	\item $\Delta_g u_j = 0$ on $B_r(p)$ for any $1\lesl j \lesl d$.
	\item 
the metric coefficients $g_{ij} \equiv g(\nabla u_i, \nabla u_j)$ in harmonic coordinates satisfy 
\begin{align}
	|g_{ij} - \delta_{ij}|_{C^0(B_r(p))}
	+ |\nabla g_{ij}|_{C^0(B_r(p))} < \epsilon.\label{e:C^1-regularity}
\end{align}
 \end{enumerate}
\end{definition}
In harmonic coordinates, the Ricci tensor $\Ric_g$ can be expressed as 
\begin{align}
	\Ric_{ij} = - \frac{1}{2}\Delta_g g_{ij} + Q(\p g),
\end{align}
where $Q(\p g)$ denotes the terms that are quadratic in the first derivatives of the metric. Standard elliptic estimates imply that the $C^1$-regularity in \eqref{e:C^1-regularity} in harmonic coordinates can be automatically improved to a uniform $C^{1,\alpha}$-bound for some $\alpha\in(0,1)$, provided a uniform Ricci curvature bound $|\Ric_g|\lesl \lambda$.

\begin{definition}
[Harmonic radius and regularity scale] Let $(X^d, g)$ be a Riemannian manifold, and let $p\in X^d$.
\begin{enumerate}
	\item The harmonic radius $r_h(p)$ at $p\in X^d$
 is the supremum of $r\in(0,1)$ for which there exists a $C^1$-harmonic coordinate system $\Phi$ on $B_r(p)$ with  $\|\Phi\|_{C^1} < 10^{-6}$.

	\item Given $k\in\dZ_+$, the $C^k$-regularity scale $\fs_k(p)$ at $p$ is the supremum of $r\in(0,1)$ for which there exists a $C^k$-harmonic coordinate system $\Phi$ on $B_r(p)$   that satisfies $\|\Phi\|_{C^k} < 10^{-6}$ in the sense that
	 \begin{align}
		|g_{ij} - \delta_{ij}|_{C^0(B_r(p))}
	+ \sum\limits_{m = 1}^k |\nabla^m g_{ij}|_{C^0(B_r(p))} < 10^{-6}.	\end{align}

\end{enumerate}
\end{definition}

By standard elliptic regularity theory, if $g$ is an Einstein metric with a uniform bound on the Ricci curvature $|\Ric_g| \lesl \Lambda$, 
then a uniform $C^1$-bound on the metric in harmonic coordinates implies a uniform $C^k$-bound for any fixed $k\in\dZ_+$.

\subsection{Rigidity results under pinching assumptions}
\label{ss:pinching}

We begin by proving Theorem \ref{t:large-circle}, as stated in Section \ref{s:introduction}. This theorem provides a rigidity result for a complete Poincar\'e-Einstein manifold $(X^{n+1}, g)$ whose conformal infinity is $(S^1 \times S^n, [\wg_{\lambda}])$, under the assumption that the normalized product metric $\wg_{\lambda}$    has sufficiently large diameter. For convenience, we restate the result in Theorem \ref{t:large-circle} here as follows.
\begin{theorem} \label{t:rigidity-lambda_0}
 Given $n\gesl 3$, there exists a dimensional constant $\lambda_0 = \lambda_0(n) \gesl 1$ such that the following property holds.
Let $(X^{n + 1}, g)$ be a complete Poincar\'e-Einstein manifold with conformal infinity $(S^1 \times S^{n-1}, [\wg_{\lambda}])$. If the normalized product metric $\wg_{\lambda}$ satisfies $\lambda \gesl \lambda_0$, then 
  $(X^{n + 1}, g)$ is isometric to the hyperbolic manifold $(S^1\times D^n, g_{\lambda})$ with constant sectional curvature $-1$. 
\end{theorem}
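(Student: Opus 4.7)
The strategy is to reduce Theorem~\ref{t:rigidity-lambda_0} to the non-positively-curved hyperbolic rigidity of Theorem~\ref{t:uniqueness-nonpositively-curved}. Concretely, the plan is to show that when $\lambda$ is sufficiently large, every Poincar\'e-Einstein filling $(X^{n+1}, g)$ of $(S^1 \times S^{n-1}, [\wg_{\lambda}])$ must have sectional curvature pinched close to $-1$, and hence in particular non-positive. The bridge from ``$\lambda$ large'' to ``$\sec_g\lesl 0$'' will be the Yamabe pinching statement of Theorem~\ref{t:curvature-pinching}, so the main analytic input is a Yamabe convergence lemma for the conformal infinity.

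The central claim is the identity
\[
\lim_{\lambda \to \infty} \mathcal{Y}(S^1 \times S^{n-1}, [\wg_{\lambda}]) = \mathcal{Y}(S^n, [g_c]).
\]
The upper bound $\mathcal{Y}(M^n, [\wg]) \lesl \mathcal{Y}(S^n, [g_c])$ is Aubin's classical inequality and is valid for every closed manifold. For the matching lower bound, I would exploit the conformal identification of the universal cover $(\dR \times S^{n-1}, dt^2 + g_c)$ with flat $\dR^n \setminus \{\bo\}$ via $(t, \theta) \mapsto e^t \theta$; under this identification the deck transformation $t \mapsto t + \lambda$ becomes the Euclidean dilation $x \mapsto e^{\lambda} x$, so a conformal fundamental domain for $(S^1 \times S^{n-1}, [\wg_{\lambda}])$ is the annulus $\{1 \lesl |x| \lesl e^{\lambda}\}$. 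Given $\eta > 0$, fix a compactly supported test function $\phi$ on $\dR^n$ whose Euclidean Sobolev quotient is within $\eta$ of $\mathcal{Y}(S^n, [g_c])$; for $\lambda$ large enough that the support of $\phi$ lies in the interior of this annulus, $\phi$ descends to a test function on the quotient, and conformal invariance of the Yamabe functional ensures that the Yamabe quotient on $(S^1 \times S^{n-1}, [\wg_{\lambda}])$ is no larger than the Euclidean Sobolev quotient of $\phi$. Letting $\eta \to 0$ establishes the limit.

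With this convergence in hand, fix $\epsilon = 1/2$ and let $\delta = \delta(n, 1/2) > 0$ be the pinching constant produced by Theorem~\ref{t:curvature-pinching}. The Yamabe limit yields some $\lambda_0 = \lambda_0(n) \gesl 1$ such that for every $\lambda \gesl \lambda_0$,
\[
\frac{\mathcal{Y}(S^1 \times S^{n-1}, [\wg_{\lambda}])}{\mathcal{Y}(S^n, [g_c])} \gesl 1 - \delta.
\]
Theorem~\ref{t:curvature-pinching} then forces $|\sec_g + 1| \lesl 1/2$ uniformly on $X^{n+1}$, so $\sec_g \lesl -1/2 < 0$ everywhere. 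The hypotheses of Theorem~\ref{t:uniqueness-nonpositively-curved} are now satisfied, and it concludes that $(X^{n+1}, g)$ is isometric to the hyperbolic model $(S^1 \times D^n, g_{\lambda})$.

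The main obstacle is the Yamabe convergence; it is a fairly standard concentration-of-test-functions argument, but it requires careful bookkeeping to ensure that the transplanted Euclidean bubble sits safely inside the fundamental annulus, so that conformal invariance of the Yamabe functional applies cleanly to the descended function on the quotient. Everything after that step is a direct appeal to previously established results in the paper.
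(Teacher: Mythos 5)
Your overall strategy coincides with the paper's proof: establish that $\mathcal{Y}(S^1\times S^{n-1},[\wg_{\lambda}])/\mathcal{Y}(S^n,[g_c])\to 1$ as $\lambda\to\infty$, feed this into the curvature pinching of Theorem \ref{t:curvature-pinching} to get $\sec_g\lesl 0$, and conclude via Theorem \ref{t:uniqueness-nonpositively-curved}. The paper simply cites \cite[Section 2]{Schoen} for the Yamabe convergence, so that step is a known fact; the final two steps of your argument are exactly the paper's.

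However, your sketched proof of the Yamabe convergence has the inequality running the wrong way. The Yamabe constant is an \emph{infimum} of the conformally invariant quotient, so transplanting a near-optimal Euclidean bubble $\phi$ into the fundamental annulus $\{1\lesl |x|\lesl e^{\lambda}\}$ and descending it to the quotient produces one admissible test function, and hence only shows
\begin{align}
\mathcal{Y}(S^1\times S^{n-1},[\wg_{\lambda}]) \lesl Q_{\mathrm{eucl}}(\phi) \lesl \mathcal{Y}(S^n,[g_c]) + \eta,
\end{align}
which is an \emph{upper} bound -- i.e.\ a weaker form of Aubin's inequality, not the lower bound you need. The required estimate $\mathcal{Y}(S^1\times S^{n-1},[\wg_{\lambda}])\gesl \mathcal{Y}(S^n,[g_c])-\eta$ demands control over \emph{every} test function on the quotient. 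The standard route is the reverse transplantation: given an arbitrary test function $u$ on the quotient, lift it periodically to the cylinder $\dR\times S^{n-1}\cong \dR^n\setminus\{\bo\}$, restrict to one fundamental domain, and cut off near a well-chosen slice (selected by averaging so that the cutoff contributes energy of order $\lambda^{-1}$ times the total); the resulting compactly supported function competes for $\mathcal{Y}(\dR^n\setminus\{\bo\})=\mathcal{Y}(S^n,[g_c])$, which yields the lower bound as $\lambda\to\infty$. (Schoen's own argument proceeds instead through the ODE analysis of the rotationally symmetric minimizers.) Either repair this step or, as the paper does, cite the result directly; everything downstream of it in your write-up is fine.
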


\begin{remark}  The family of normalized product metrics $\{\wg_{\lambda}\}_{\lambda \gesl \lambda_0}$ on $S^1 \times S^{n - 1}$ is non-compact even in the Gromov-Hausdorff topology   since $\diam_{\wg_{\lambda}}(S^1 \times S^{n - 1}) \to \infty$ as $\lambda \to \infty$.	
\end{remark}

\begin{proof}
First, let us recall a known estimate on the Yamabe constant of $(S^1\times S^{n-1}, [\wg_{\lambda}])$. As observed in \cite[Section 2]{Schoen}, for any $\eta > 0$, there exists $\lambda_0  = \lambda_0(n,\eta) > 0$ such that 
for any $\lambda \gesl \lambda_0$,
\begin{align}
	\frac{\mathcal{Y}(S^1\times S^{n-1}, [\wg_{\lambda}])}{\mathcal{Y}(S^n, [g_c])} \gesl 1 - \eta. \label{e:eta-pinched-Yamabe-on-product}
\end{align}
By Theorem \ref{t:curvature-pinching}, there exists a constant $\eta_0 = \eta_0(n) > 0$ such that if \eqref{e:eta-pinched-Yamabe-on-product} holds for $\eta \lesl \eta_0$, 	
	then $-2\lesl \sec_g \lesl 0$. Now one can choose $\lambda_0 \equiv \lambda_0(n, \eta_0) > 0$. Applying Theorem \ref{t:uniqueness-nonpositively-curved}, $(X^{n + 1}, g)$ must be isometric to a  hyperbolic manifold $(S^1\times D^n, g_{\lambda})$ when the conformal infinity $(S^1\times S^{n-1}, [\wg_{\lambda}])$ satisfies $\lambda \gesl \lambda_0$.
\end{proof}

The next theorem (cf. Theorem \ref{t:rigidity-for-small-weyl-energy} in Section \ref{s:introduction}) provides a rigidity result for Poincar\'e-Einstein manifolds whose (scale-invariant) Weyl energy is sufficiently small and whose conformal infinity is given by the conformally flat product $S^1 \times S^{n - 1}$.

 \begin{theorem} \label{t:small-Weyl-energy}  For any $n \gesl 3$, there exists a dimensional constant $\epsilon = \epsilon (n) > 0$ such that the following property holds. Let $\lambda > 0$, and let $(X^{n+1}, g)$ be a complete Poincar\'e-Einstein manifold 
  whose conformal infinity is $(S^1 \times S^{n-1}, [\wg_{\lambda}])$. If \begin{align}
	\int_{X^{n+1}}|W_g|^{\frac{n + 1}{2}} \dvol_g   \lesl \epsilon \cdot \left(\mathcal{Y}(S^1 \times S^{n-1}, [\wg_{\lambda}])\right)^{\frac{n}{2}}, \label{e:small-Weyl-energy-2}
\end{align} 	
 then $(X^{n + 1}, g)$ must be isometric to the hyperbolic manifold $(S^1 \times D^n, g_{\lambda})$.
 \end{theorem}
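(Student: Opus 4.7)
The plan is to argue by contradiction, combining the new $\epsilon$-regularity for the Weyl tensor (announced in the abstract as a main technical contribution) with the hyperbolic rigidity result Theorem \ref{t:uniqueness-nonpositively-curved}. Suppose the conclusion fails; then there exist $\epsilon_i \to 0^+$, numbers $\lambda_i > 0$, and Poincar\'e-Einstein manifolds $(X_i^{n+1}, g_i)$ with conformal infinity $(S^1\times S^{n-1}, [\wg_{\lambda_i}])$ satisfying \eqref{e:small-Weyl-energy-2} with $\epsilon = \epsilon_i$, yet with none of the $(X_i, g_i)$ isometric to $(S^1\times D^n, g_{\lambda_i})$.

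The first step is to invoke the paper's $\epsilon$-regularity for $W_g$ on Poincar\'e-Einstein manifolds with positive-Yamabe conformal infinity. Properly normalized so as to be invariant under conformal change of the boundary representative, this regularity should assert the existence of dimensional constants $\delta_0, C > 0$ such that, on any metric ball $B_r(p)\subset X^{n+1}$, the smallness
\begin{align*}
\int_{B_r(p)}|W_g|^{(n+1)/2}\,\dvol_g \lesl \delta_0 \cdot \mathcal{Y}(M^n,[\wg])^{n/2}
\end{align*}
implies $\sup_{B_{r/2}(p)} |W_g| \lesl C r^{-2}$. Applied at arbitrarily large $r$ at every point of $X_i^{n+1}$, together with the global Weyl-energy hypothesis $\epsilon_i \to 0$, this forces $\sup_{X_i}|W_{g_i}| \to 0$ as $i\to\infty$.

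Next, since each $g_i$ is Einstein with $\Ric_{g_i}=-n g_i$, its traceless Ricci vanishes and the curvature tensor splits as the Weyl part plus the algebraic constant-curvature $(-1)$ model. Hence $|\sec_{g_i}+1| \lesl C_n |W_{g_i}|$ pointwise on $X_i^{n+1}$, so $\sup_{X_i}|\sec_{g_i}+1|\to 0$; in particular $\sec_{g_i}\lesl 0$ for all $i$ sufficiently large. Since the conformal infinity remains $(S^1\times S^{n-1}, [\wg_{\lambda_i}])$, Theorem \ref{t:uniqueness-nonpositively-curved} identifies $(X_i, g_i)$ with the hyperbolic filling $(S^1\times D^n, g_{\lambda_i})$, contradicting the choice of the sequence.

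The principal obstacle is proving the $\epsilon$-regularity in the form stated above, especially in the volume-collapsed regime $\lambda_i\to 0$, where $\mathcal{Y}(S^1\times S^{n-1},[\wg_{\lambda_i}])\to 0$. Standard Moser-iteration proofs of such $\epsilon$-regularity for Einstein manifolds hinge on a Sobolev inequality that degenerates under volume collapse; the paper's innovation, previewed in the abstract, is to exploit the volume-comparison Theorem \ref{t:volume-comparison} together with the Yamabe-constant normalization on the right-hand side of the hypothesis, producing a uniform Sobolev inequality valid uniformly across all such collapsing families. Once this $\epsilon$-regularity is in place, the passage from small Weyl energy to the hyperbolic rigidity conclusion via Theorem \ref{t:uniqueness-nonpositively-curved} is essentially immediate.
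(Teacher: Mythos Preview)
Your overall strategy is exactly the paper's: use the new $\epsilon$-regularity (Proposition~\ref{p:eps-regularity}) to pass from the integral hypothesis \eqref{e:small-Weyl-energy-2} to a pointwise bound $\sup_{X^{n+1}}|W_g|\lesl\eta_0(n)$, deduce $\sec_g\lesl 0$ from the Einstein decomposition, and invoke Theorem~\ref{t:uniqueness-nonpositively-curved}. The contradiction framing is unnecessary---the paper argues directly, choosing $\epsilon=\epsilon(n,\eta_0)$ once and for all---but it does no harm.

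Two points of divergence deserve comment. First, your formulation of the $\epsilon$-regularity as a local estimate $\sup_{B_{r/2}(p)}|W_g|\lesl Cr^{-2}$ valid for arbitrary $r$ is not what the paper proves; Proposition~\ref{p:eps-regularity} is a \emph{global} statement producing $\sup_{X^{n+1}}|W_g|\lesl\eta$ from a threshold $\epsilon(n,\eta)$ that depends on $\eta$. (Your version, taken at face value and sent $r\to\infty$, would give $W_g\equiv 0$ outright under a single threshold $\delta_0$, which would bypass Theorem~\ref{t:uniqueness-nonpositively-curved} altogether---you do not seem to notice this.)

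Second, and more substantively, your speculation about the \emph{mechanism} behind the $\epsilon$-regularity is off. The paper does \emph{not} manufacture a uniform Sobolev inequality from the volume comparison and then run Moser iteration on $(X^{n+1},g)$. Instead it proceeds in three steps: (i) combine Theorem~\ref{t:volume-comparison} with the hypothesis to bound the \emph{normalized full curvature energy} $\mathcal{E}_{|\Rm_g|}(p,r)$ on small balls, and apply Anderson's standard $\epsilon$-regularity (Lemma~\ref{l:standard-eps-reg}) to obtain a uniform bound $\sup_{X^{n+1}}|\Rm_g|\lesl s^{-2}$; (ii) with curvature now bounded, invoke Cheeger--Fukaya--Gromov to pass to \emph{local universal covers}, which are non-collapsed with uniform injectivity radius and hence carry uniform $C^k$ bounded geometry; (iii) lift the $L^{(n+1)/2}$ Weyl average to the covers---where it remains small by a fundamental-domain counting argument---and read off the pointwise bound from elliptic theory on the non-collapsed cover. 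The collapse is handled by unwrapping it, not by a collapse-uniform Sobolev inequality.
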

 The basic strategy in proving Theorem \ref{t:small-Weyl-energy} is to obtain a pointwise pinching estimate for the Weyl curvature under the integral pinching condition, as in Proposition \ref{p:eps-regularity}. 
 The key point in the theorem is that the pinching constant $\epsilon = \epsilon (n)$ is a {\bf dimensional constant}. In particular, $\epsilon$ is independent of the volume collapsing rate of the cylinder $(S^1 \times S^{n - 1}, \wg_{\lambda})$. This is significant because, as $\lambda \to 0$, the Yamabe constant $\mathcal{Y}(S^1\times S^{n - 1}, [\wg_{\lambda}])$ tends to zero, implying that there exists no uniform Sobolev constant on $S^1 \times S^{n - 1}$, and consequently,    one would not expect the existence of a uniform Sobolev constant 
 on the Poincar\'e-Einstein filling $(X^{n + 1}, g)$. Thus, the usual Moser iteration scheme cannot be directly applied to obtain a pointwise estimate of the Weyl curvature of the Einstein metric $g$, as in \eqref{e:Weyl-arbitrarily-small}, under the smallness assumption of its $L^p$ bound. Nevertheless, in Proposition \ref{p:eps-regularity}, we can establish the estimate \eqref{e:Weyl-arbitrarily-small} using arguments from collapsing geometry, which leads to the conclusion of Theorem \ref{t:small-Weyl-energy}.

 In the special case when $\lambda$ tends to zero, the Yamabe constant $\mathcal{Y}(S^1 \times S^{n-1}, {\hat g}_{\lambda})$ tends to zero also; in this case one would expect the Sobolev constant 
 of the Poincar\'e Einstein metric filling metrics $g$ tends to zero also.


 
  Theorem \ref{t:small-Weyl-energy} 
follows from a combination of Theorem \ref{t:uniqueness-nonpositively-curved}
and the following $\epsilon$-regularity result for the Weyl curvature, which is new and of independent interest. 
 
 \begin{proposition}
 	\label{p:eps-regularity}
Given any $n \gesl 2$ and $\eta > 0$, there exists some $\epsilon = \epsilon(n, \eta) > 0$ such that the following holds. 	If $(M^n , [\wg])$ is a conformal manifold with positive Yamabe constant and it bounds a complete Poincar\'e-Einstein manifold $(X^{n + 1}, g)$ that satisfies 
\begin{align}
	\int_{X^{n+1}}|W_g|^{\frac{n + 1}{2}} \dvol_g \lesl \epsilon \cdot \left(\mathcal{Y}(M^n, [\wg])\right)^{\frac{n}{2}}, \label{e:epsilon-weyl-energy}
\end{align}
 	then 
 	\begin{align}
 		\sup\limits_{X^{n+1}}|W_g|\lesl \eta. \label{e:Weyl-arbitrarily-small}
 	\end{align}
 \end{proposition}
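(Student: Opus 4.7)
The plan is to argue by contradiction using a blow-up/rescaling strategy together with the volume comparison of Theorem \ref{t:volume-comparison}, and to handle potential collapse via collapsing geometry. Suppose, toward a contradiction, that there exist $\eta_0 > 0$, a sequence $\epsilon_i \to 0$, and Poincar\'e-Einstein manifolds $(X_i^{n+1}, g_i)$ with conformal infinity $(M_i^n, [\wg_i])$ of positive Yamabe constant $Y_i := \mathcal{Y}(M_i^n, [\wg_i])$, such that
$$\int_{X_i^{n+1}} |W_{g_i}|^{(n+1)/2} \dvol_{g_i} \le \epsilon_i\, Y_i^{n/2}, \qquad \sup_{X_i^{n+1}} |W_{g_i}| \ge \eta_0.$$
I would pick points $p_i \in X_i^{n+1}$ at which $|W_{g_i}(p_i)|$ is comparable to $\sup|W_{g_i}|$, using an Anderson-type point-selection if the supremum is not achieved, and rescale $\hat g_i := Q_i^2 g_i$ with $Q_i^2 := |W_{g_i}(p_i)|$. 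After rescaling, $|W_{\hat g_i}(p_i)|$ is of order one, $|W_{\hat g_i}|\lesssim 1$ globally, and $\Ric_{\hat g_i} = -n Q_i^{-2}\hat g_i$ is uniformly bounded (small, when $Q_i\to\infty$). Because both the Weyl integrand $|W|^{(n+1)/2}\dvol$ and the conformal invariant $Y_i$ are invariant under rescaling of the bulk metric, the hypothesis is preserved.

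The next step is to establish a scale-invariant volume lower bound. Applying Theorem \ref{t:volume-comparison} to $(X_i^{n+1}, g_i)$ and then transferring to $\hat g_i$ gives, for $s$ up to a definite scale,
$$\Vol_{\hat g_i}\bigl(B_s^{\hat g_i}(p_i)\bigr) \;\ge\; c(n)\,(Y_i/\mathcal{Y}_0)^{n/2}\, s^{n+1}.$$
If $Y_i$ remains bounded away from zero along the sequence, the rescaled balls are uniformly non-collapsed with bounded Ricci curvature. Standard harmonic-radius compactness then yields a smooth pointed limit $(X_\infty^{n+1}, \hat g_\infty, p_\infty)$ in the $C^{1,\alpha}$-topology which is Einstein (either with $\Ric=-ng_\infty$ or Ricci-flat, depending on whether $Q_i$ stays bounded or diverges). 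Lower semicontinuity of the $L^{(n+1)/2}$-norm forces $\int_{X_\infty}|W_{\hat g_\infty}|^{(n+1)/2}\dvol = 0$, so $W_{\hat g_\infty}\equiv 0$; on the other hand $|W_{\hat g_\infty}(p_\infty)|\gtrsim 1$ by $C^0$-convergence of the Weyl tensor, contradicting $W_{\hat g_\infty}\equiv 0$.

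The main obstacle is the case $Y_i\to 0$: the volume bound above degenerates, the local geometry of $(X_i, \hat g_i)$ collapses, and the Sobolev constant of $\hat g_i$ cannot be uniformly controlled, so classical Moser iteration on the Weyl equation is unavailable. This is precisely the mechanism that forces the use of collapsing geometry. My plan is to invoke the theory of Cheeger-Fukaya-Gromov $N$-structures for metrics with bounded Ricci curvature: at each $p_i$ one constructs, on a definite scale, a local covering $\pi_i:(\widetilde U_i,\tilde p_i)\to (B^{\hat g_i}_{r_0}(p_i), p_i)$ whose total space is non-collapsed with volume ratio of order one. The Einstein condition and the pointwise Weyl norm lift isometrically to the cover, while the Weyl energy on $\widetilde U_i$ equals the base energy multiplied by the covering degree $d_i$. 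The key quantitative bookkeeping is that the volume-comparison inequality forces $d_i$ to be controlled by $\mathcal{Y}_0^{n/2}/Y_i^{n/2}$, so the lifted Weyl integral is bounded by $\epsilon_i\, d_i\, Y_i^{n/2}\lesssim \epsilon_i\to 0$. One can then run the non-collapsed compactness argument of the previous paragraph upstairs on the covers, extract a smooth Einstein limit on which $W\equiv 0$ while $|W(\tilde p_\infty)|\gtrsim 1$, and reach the same contradiction. Verifying this lift-and-descend step rigorously, in particular obtaining the sharp comparison between the covering degree and the Yamabe-normalization factor $Y_i^{n/2}$ uniformly along the sequence, is the technical crux of the argument.
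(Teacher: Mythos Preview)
Your approach is correct and shares the same core insight as the paper's proof: the volume comparison of Theorem~\ref{t:volume-comparison} forces the covering degree of a CFG local cover to be bounded by a dimensional constant times $(\mathcal{Y}_0/Y_i)^{n/2}$, so that the Yamabe factor $Y_i^{n/2}$ in the Weyl-energy hypothesis cancels exactly when you lift to the non-collapsed cover. The bookkeeping you flag as the ``technical crux'' is precisely the paper's computation~\eqref{e:average-estimate}.

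The organization, however, is genuinely different. The paper does \emph{not} rescale or argue by contradiction. Instead it first applies Anderson's volume-normalized $\epsilon$-regularity (Lemma~\ref{l:standard-eps-reg}) directly to the original metric $g$: volume comparison shows the normalized energy $\mathcal{E}_{|\Rm_g|}(p,r)$ is $\le C(n)(\epsilon\,\mathcal{Y}_0^{n/2}+r^{n+1})$, yielding a \emph{global} bound $|\Rm_g|\le s^{-2}$ at a fixed dimensional scale $s$. Only then does it invoke CFG on $(X,g)$ itself, and on the non-collapsed cover it reads off that the \emph{average} Weyl energy is $\le\epsilon\cdot\mathcal{Y}_0^{n/2}\cdot(\text{dimensional constant})$, hence the pointwise Weyl is small by bounded geometry. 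Your blow-up bypasses Anderson's lemma entirely: after rescaling so that $|W|=1$ at $p_i$, the full curvature is bounded for free (bounded Weyl plus bounded Ricci), and CFG applies immediately. What the paper's route buys is an effective, quantitative dependence of $\epsilon$ on $\eta$; your contradiction argument is softer but conceptually more streamlined. One small correction: you write ``CFG $N$-structures for metrics with bounded Ricci curvature,'' but CFG requires bounded \emph{sectional} curvature, which you do have after rescaling---and you should note that on the Einstein cover the convergence is $C^\infty$ in harmonic coordinates, so the Weyl tensor indeed converges in $C^0$, not merely the metric in $C^{1,\alpha}$.
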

 
 \begin{remark}Proposition \ref{p:eps-regularity} holds for general Poincar\'e-Einstein manifolds whose conformal infinity is of positive Yamabe type, making it widely applicable and of independent interest. A notable feature of the proposition is that the pinching constant $\epsilon > 0$ depends only on the dimension and the chosen parameter. In particular, the pinching constant $\epsilon$ is independent of the   collapse of the space, since the Yamabe constant $\mathcal{Y}(M^n, [\wg])$ of the conformal infinity is allowed to become arbitrarily small.   
  \end{remark}
 
 The proof of the proposition relies on a preliminary $\epsilon$-regularity result below. Let $(X^d, g)$
 be any complete Einstein manifold with $|\Ric_g|\lesl d - 1$.
 For any $p\in X^d$ and $r > 0$, we define 
 \begin{align}
 		\mathcal{E}_{|\Rm_g|}(p,r) \equiv \frac{\Vol_{-1}(B_r)}{\Vol_g(B_r(p))}	\int_{B_r(p)} |\Rm_g|^{\frac{d}{2}} \dvol_g,
 \end{align}
 where $\Vol_{-1}(\cdot)$ denotes the volume measure on the hyperbolic space $(\dH^d, h)$ with constant curvature $-1$.
 By volume comparison, for any $p \in X^{n + 1}$, the energy $\mathcal{E}_{|\Rm_g|}(p,r)$ is non-decreasing in $r$.

  \begin{lemma}{\rm(\cite[theorem 4.4]{Anderson-L2})}
 	\label{l:standard-eps-reg}
 	Given $d \gesl 3$, there exists a dimensional constant $\tau_0 = \tau_0(d) > 0$ such that the following property holds. If $(X^d, g, p)$ is a complete Einstein manifold with $|\Ric_g|\lesl d - 1$ and satisfies \begin{align}
 		\mathcal{E}_{|\Rm_g|}(p,2r) < \tau_0
 		\end{align}
 		for some $r > 0$, then 
 		\begin{align}
 			r^2 \sup\limits_{B_r(p)}|\Rm_g| \lesl 1.
 		\end{align}
 \end{lemma}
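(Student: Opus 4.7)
The plan is to establish Lemma~\ref{l:standard-eps-reg} by a blowup/contradiction scheme, reducing it to a rigidity statement for complete Ricci-flat manifolds with vanishing scale-invariant $L^{d/2}$ curvature energy. Suppose the conclusion fails for some dimension $d\gesl 3$. Then one extracts a sequence of pointed complete Einstein manifolds $(X_i^d, g_i, p_i)$ with $|\Ric_{g_i}|\lesl d-1$ and radii $r_i>0$ such that
\begin{align*}
\mathcal{E}_{|\Rm_{g_i}|}(p_i, 2r_i) \longrightarrow 0, \qquad r_i^2 \sup_{B_{r_i}(p_i)}|\Rm_{g_i}| \longrightarrow \infty.
\end{align*}

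The first step I would carry out is an Anderson--Cheeger point-picking applied to $F_i(x) \equiv (r_i - \dist_{g_i}(x,p_i))^2 |\Rm_{g_i}|(x)$ on $B_{r_i}(p_i)$, producing $q_i\in B_{r_i}(p_i)$ and $s_i > 0$ with $Q_i \equiv |\Rm_{g_i}|(q_i) \to \infty$, $s_i^2 Q_i \to \infty$, and $\sup_{B_{s_i}(q_i)}|\Rm_{g_i}| \lesl 4 Q_i$. Rescaling by $\tilde g_i \equiv Q_i\cdot g_i$ then yields a sequence of Einstein metrics with $|\Rm_{\tilde g_i}|(q_i) = 1$, curvature uniformly bounded by $4$ on balls $B_{\sqrt{Q_i}\,s_i}(q_i)$ of radius tending to infinity, $|\Ric_{\tilde g_i}| \lesl (d-1)/Q_i \to 0$, and, by scale invariance together with monotonicity in $r$, $\mathcal{E}_{|\Rm_{\tilde g_i}|}(q_i, R) \to 0$ for every fixed $R > 0$.

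Cheeger--Gromov compactness in its equivariant form (to accommodate possible collapse, by passing to local orthonormal frame bundles) then delivers a pointed smooth limit $(X_\infty, g_\infty, q_\infty)$ that is complete Ricci-flat with $|\Rm_{g_\infty}|(q_\infty)=1$ and $|\Rm_{g_\infty}| \lesl 4$ globally. Combining the vanishing of $\mathcal{E}$ with the Bishop--Gromov relative volume comparison (applied to $\Ric_{\tilde g_i}\gesl -(d-1)/Q_i$) shows that $\int_{B_R(q_\infty)}|\Rm_{g_\infty}|^{d/2}\dvol_{g_\infty} = 0$ for every $R > 0$, so $\Rm_{g_\infty}\equiv 0$, directly contradicting $|\Rm_{g_\infty}|(q_\infty)=1$.

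The principal obstacle is the passage of the $L^{d/2}$ curvature integral to the limit in a possibly collapsing Cheeger--Gromov setting, where the normalizing ratio $\Vol_{-1}(B_r)/\Vol_g(B_r(p))$ built into $\mathcal{E}_{|\Rm_g|}$ must be tracked carefully under lifts to frame bundles. An alternative and arguably cleaner route, which bypasses the collapse issue entirely, is to argue directly via Moser iteration on the Bochner-type inequality $\Delta|\Rm|\gesl -C(d)(|\Rm|^2+|\Rm|)$ valid for Einstein metrics, combined with the local Neumann--Sobolev inequality whose constant is controlled by precisely the same ratio $\Vol_{-1}(B_r)/\Vol_g(B_r(p))$ via Croke's estimate. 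This yields a pointwise bound of the form $r^2 |\Rm_g|(p)\lesl C(d)\cdot \mathcal{E}_{|\Rm_g|}(p,2r)^{2/d}$, from which the lemma follows by choosing $\tau_0 = \tau_0(d)$ small enough.
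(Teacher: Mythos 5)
First, a point of comparison: the paper does not prove this lemma at all --- it is imported verbatim as \cite[theorem 4.4]{Anderson-L2} and used as a black box in the proof of Proposition \ref{p:eps-regularity} --- so there is no internal argument to measure you against, only Anderson's. Your second route (Moser iteration on $\Delta|\Rm_g|\gesl -C(d)(|\Rm_g|^2+|\Rm_g|)$, with the local Sobolev constant of $B_{2r}(p)$ controlled via Croke's isoperimetric estimate by exactly the ratio $\Vol_{-1}(B_{2r})/\Vol_g(B_{2r}(p))$ that normalizes $\mathcal{E}_{|\Rm_g|}$) is in substance Anderson's actual proof, and it is the right one here precisely because it is insensitive to collapse; the only routine step you omit is the recentering needed to pass from a bound at $p$ to the supremum over $B_r(p)$, namely applying the estimate at each $q\in B_r(p)$ on $B_{r/2}(q)\subset B_{2r}(p)$ and using Bishop--Gromov to compare $\Vol_g(B_{r/2}(q))$ with $\Vol_g(B_{2r}(p))$ (harmless for $r\lesl 1$, which is the regime in which the paper invokes the lemma).

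Your primary (blowup) route has two soft spots worth naming. First, $\mathcal{E}_{|\Rm_g|}$ is \emph{not} scale invariant --- $\Vol_{-1}(B_r)$ is not homogeneous in $r$ --- so the claim $\mathcal{E}_{|\Rm_{\tilde g_i}|}(q_i,R)\to 0$ is not a one-line consequence of ``scale invariance and monotonicity''; it does hold, but only after (i) noting that $Q_i^{d/2}\Vol_{-1}(B_{R/\sqrt{Q_i}})\to \omega_d R^d$ so the normalizers agree up to a constant as the unrescaled radii shrink to zero, and (ii) recentering the energy from $p_i$ to $q_i$, which costs a Bishop--Gromov doubling factor. Second, the passage of the $L^{d/2}$ integral to a possibly collapsed limit, which you correctly flag as the principal obstacle, is left unresolved; as written the first route is therefore incomplete. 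It can, however, be closed without ever forming the limit space: once $|\Rm_{\tilde g_i}|\lesl 4$ on balls of radius tending to infinity, the Einstein equation plus elliptic regularity on local universal covers (where the injectivity radius is bounded below, as the paper itself exploits in Proposition \ref{p:eps-regularity}) gives $|\nabla\Rm_{\tilde g_i}|\lesl C(d)$ near $q_i$; then $|\Rm_{\tilde g_i}|(q_i)=1$ forces $|\Rm_{\tilde g_i}|\gesl 1/2$ on a ball of definite radius about $q_i$, so the volume-normalized average of $|\Rm_{\tilde g_i}|^{d/2}$ over $B_1(q_i)$ is bounded below by a dimensional constant by Bishop--Gromov doubling, contradicting $\mathcal{E}_{|\Rm_{\tilde g_i}|}(q_i,1)\to 0$ directly at finite $i$. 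With either repair the statement is proved; as submitted, only your Moser-iteration alternative is a complete argument in outline.
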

  In our actual applications, we take $d \equiv n + 1$.
With the above curvature estimate, we will now prove Proposition \ref{p:eps-regularity}. 
\begin{proof}[Proof of Proposition \ref{p:eps-regularity}]
    Let $\mathcal{Y}_0$ be the dimensional constant $\mathcal{Y}_0\equiv \mathcal{Y}(S^n, [g_c])$. By Theorem \ref{t:volume-comparison}, for any $p\in X^{n+1}$ and $r > 0$, 
\begin{align}
  	\left(\frac{\mathcal{Y}(M^n, \wg)}{\mathcal{Y}_0}\right)^{\frac{n}{2}}\lesl 	\frac{\Vol_g(B_r(p))}{\Vol_{-1}(B_r)}\lesl 1. \label{e:volume-comparison-yamabe}
\end{align}
Let $\epsilon > 0$ be the number in 
 \eqref{e:epsilon-weyl-energy} which will be determined later. 
Then the assumption \eqref{e:epsilon-weyl-energy} and the volume comparison \eqref{e:volume-comparison-yamabe} imply that for any $p\in X^{n + 1}$ and $r \in (0,1)$:
\begin{align}
\frac{\Vol_{-1}(B_r)}{\Vol_g(B_r(p))}	\int_{B_r(p)} |W_g|^{\frac{n + 1}{2}} \dvol_g \lesl \frac{\Vol_{-1}(B_r)}{\Vol_g(B_r(p))}	\int_{X^{n+1}}|W_g|^{\frac{n + 1}{2}} \dvol_g \lesl \epsilon \cdot \mathcal{Y}_0^{\frac{n}{2}}.\label{e:weyl-energy-r-ball}\end{align}
 Therefore,  by the definition of the curvature energy $\mathcal{E}_{|\Rm_g|}$, for any $p\in X^{n + 1}$ and $r \in (0,1)$,
\begin{align}
\mathcal{E}_{|\Rm_g|}(p,r)  \lesl C(n)\cdot (\epsilon \cdot \mathcal{Y}_0^{\frac{n}{2}} + r^{n + 1}).
\end{align}
In the above estimate, if we choose any $\epsilon > 0$ and $s > 0$ that satisfy
\begin{align}
	\epsilon < \frac{\tau_0}{10 C(n) \cdot \mathcal{Y}_0^{\frac{n}{2}}}\quad \text{and}\quad s <  \frac{1}{10}\left(\frac{\tau_0}{C(n)}\right)^{\frac{1}{n + 1}}, \label{e:small-epsilon}
\end{align}
then 
$\mathcal{E}_{|\Rm_g|}(p, 2s) < \tau_0$.
Applying Lemma \ref{l:standard-eps-reg}, one obtains the pointwise curvature estimate in a ball of definite size of radius,
\begin{align}
	\sup\limits_{B_s(p)}|\Rm_g| \lesl s^{-2}.
\end{align}
Since $p$ is an arbitrary point in $X^{n + 1}$, we have that the uniform curvature bound
\begin{align}
		\sup\limits_{X^{n + 1}}|\Rm_g| \lesl s^{-2}. \label{e:bounded-curvature}
\end{align}

We are now in a position to establish the desired pointwise pinching estimate \eqref{e:Weyl-arbitrarily-small} for Weyl curvature.  
Notice that the Poincar\'e-Einstein manifold $(X^{n + 1}, g)$ may still be collapsing even if its full Riemannian curvature $\Rm_g$ is uniformly bounded, as shown in \eqref{e:bounded-curvature}. A typical example is the hyperbolic manifold $(S^1 \times D^n, g_{\lambda})$, which is collapsing as $\lambda \to 0$. As noted in the remark following Theorem \ref{t:small-Weyl-energy}, this type of volume collapse presents an obstruction to upgrading the integral bound on the Weyl curvature in \eqref{e:epsilon-weyl-energy} to a pointwise estimate as in \eqref{e:Weyl-arbitrarily-small}. To overcome this difficulty, we observe that the uniform curvature estimate \eqref{e:bounded-curvature} allows us to lift the metric to a non-collapsing local covering space, on which the integral pinching condition on the Weyl curvature can be upgraded to a pointwise one. We will implement this construction in two steps.

First, we will invoke the fact that local universal covers of $X^{n + 1}$ is {\it non-collapsing} provided that the pointwise curvature estimate \eqref{e:bounded-curvature} holds. Indeed, since $(X^{n + 1}, g)$ has uniformly bounded curvature as obtained in \eqref{e:bounded-curvature}, by a result in \cite{CFG}, there exist two dimensional constants 
\begin{align}
 r_0 = r_0(n) \in (0, s)\quad \text{and}\quad i_0 = i_0(n) > 0,	
 \end{align}
 such that for any $p\in X^{n + 1}$, 
on the Riemannian universal covering space,   
\begin{align}
\pi: \left(\widetilde{B_{10r_0}(p)}, \tilde{g}, \tilde{p}\right) \to (B_{10r_0}(p), g), \quad \pi(\tilde{p}) = p,
\end{align} one has the uniform injectivity radius estimate 
\begin{align}
	\Injrad_{\tilde{g}}(\tilde{q}) \gesl i_0, \quad \forall \ \tilde{q}\in B_{8r_0}(\tilde{p}).
\end{align}
Then by the harmonic radius estimate in 
\cite[lemma 2.2]{Anderson-Ricci}, there exists some dimensional constant $\mu = \mu(n) > 0$ such that for any $\tilde{q}\in B_{8r_0}(\tilde{p})$, 
\begin{align}
	|\tilde{g}_{ij} - \delta_{ij}|_{C^0(B_{\rho_0}(\tilde{q}))}
	+ |\nabla \tilde{g}_{ij}|_{C^0(B_{\rho_0}(\tilde{q}))} < 10^{-6},
\end{align} 
where $\rho_0 \equiv \mu \cdot r_0 > 0$, $\tilde{g}_{ij}\equiv \tilde{g}(\nabla u_i, \nabla u_j)$, and $\Phi\equiv (u_1,\ldots, u_{n+1})$ are harmonic coordinates on $B_{\rho_0}(\tilde{q})$.
Since $\tilde{g}$ is an Einstein metric on the non-collapsing cover $(\widetilde{B_{10r_0}(p)}, \tilde{p})$, the standard bootstrapping argument for non-collapsing Einstein metrics with bounded geometry, for any $k \in \dN_0$, there exists a uniform constant $Q_k = Q_k(n) > 0$ such that 
$\|\Rm_{\tilde{g}}\|_{C^k(B_{6r_0}(\tilde{p}))} \lesl Q_k$. Therefore, $\tilde{g}$ has $C^k$-uniformly bounded geometry on $B_{6r_0}(\tilde{p})$.

Next, we will obtain an $L^{\frac{n + 1}{2}}$-average estimate for the Weyl curvature on the non-collapsing cover $(\widetilde{B_{10r_0}(p)}, \tilde{p})$.
Let us denote by $\Omega(\tilde{p})$ the fundamental domain of $(\widetilde{B_{10r_0}(p)}, \tilde{p})$
that contains $\tilde{p}$. For any $t < 10r_0$, we define 
\begin{align}
	U(t, \tp) \equiv \pi^{-1}(B_t(p))\cap \Omega(\tp),\label{e:def-U_t}
\end{align}
which is the liftings of $B_t(p)$ that intersect with the fundamental domain $\Omega(\tp)$.
We denote $G \equiv \pi_1(B_{10r_0}(p))$ and we choose $t\equiv 2r_0 > 0$ in \eqref{e:def-U_t}. One can observe that the closed ball $\overline{B_{r_0}(\tp)}$ can be covered by a finite set 
\begin{align}
	\left\{U_m \equiv \gamma_m \cdot U(2r_0, \tp)| \gamma_m \in G,\ U_m\cap B_{r_0}(\tp) \neq \emptyset \right\} ,\quad 1\lesl m \lesl N.
\end{align}
Moreover, by triangle inequality, 
\begin{align}
	\bigcup\limits_{m = 1}^N U_m \subset B_{6r_0}(\tp). \label{e:inclusion-Um}
\end{align}
Now we are ready to establish the desired $L^{\frac{n + 1}{2}}$-average estimate for the Weyl curvature on the covering space. By \eqref{e:weyl-energy-r-ball}, for any $1\lesl m \lesl N$,
\begin{align}
	\frac{\Vol_{-1}(B_{2r_0})}{\Vol_{\tg}(U_m)}	\int_{U_m} |W_{\tg}|^{\frac{n + 1}{2}} \dvol_{\tg} \lesl  \epsilon \cdot \mathcal{Y}_0^{\frac{n}{2}}. \label{e:integral-weyl-Um}
\end{align}
Applying Bishop-Gromov volume comparison, we have that
\begin{align}
	\frac{\Vol_{-1}(B_{r_0})}{\Vol_{\tilde{g}}(B_{r_0}(\tilde{p}))}\int_{B_{r_0}(\tilde{p})}|W_{\tilde{g}}|^{\frac{n + 1}{2}}\dvol_{\tilde{g}} \lesl 	\frac{\Vol_{-1}(B_{6r_0})}{\Vol_{\tilde{g}}(B_{6r_0}(\tilde{p}))}\int_{B_{r_0}(\tilde{p})}|W_{\tilde{g}}|^{\frac{n + 1}{2}}\dvol_{\tilde{g}}.
\end{align}
Since $\{U_m\}_{m = 1}^N$ covers $B_{r_0}(\tp)$ and 
the inclusion \eqref{e:inclusion-Um} holds, the above inequality becomes 
\begin{align}
	\frac{\Vol_{-1}(B_{r_0})}{\Vol_{\tilde{g}}(B_{r_0}(\tilde{p}))}\int_{B_{r_0}(\tilde{p})}|W_{\tilde{g}}|^{\frac{n + 1}{2}}\dvol_{\tilde{g}} & \ 	  \lesl \frac{\Vol_{-1}(B_{6r_0})}{\sum\limits_{m = 1}^N \Vol_{\tilde{g}}(U_m)}\sum\limits_{m = 1}^N \int_{U_m}|W_{\tilde{g}}|^{\frac{n + 1}{2}}\dvol_{\tilde{g}}
	\nonumber\\
	&\ \lesl  \epsilon\cdot \mathcal{Y}_0^{\frac{n}{2}} \cdot \frac{\Vol_{-1}(B_{6r_0})}{\Vol_{-1}(B_{2r_0})},	\label{e:average-estimate}
\end{align}
where the second inequality follows from \eqref{e:integral-weyl-Um}. We notice that, in the above estimate, 
both the spherical Yamabe constant $\mathcal{Y}_0$ and the hyperbolic volume quotient $\frac{\Vol_{-1}(B_{6r_0})}{\Vol_{-1}(B_{2r_0})}$
are dimensional constants.

Finally, combining the property that $B_{6r_0}(\tp)$
 has $C^k$-bounded geometry (for any sufficiently large $k$)
 and the $L^{\frac{n + 1}{2}}$-average estimate \eqref{e:average-estimate}, for any given $\eta > 0$, one can choose a sufficiently small constant $\epsilon = \epsilon(n, \eta) > 0$ such that 
 \begin{align}
 	\sup\limits_{B_{r_0}(\tp)} |W_{\tg}| \lesl \eta.
 \end{align}
Here $\epsilon > 0$ is also required to satisfy \eqref{e:small-epsilon}. Since the covering map $\pi:\widetilde{B_{10r_0}(p)} \to B_{10r_0}(p)$ is locally isometric and $p = \pi(\tilde{p})$ is an arbitrary point in $X^{n + 1}$, we conclude that 
\begin{align}
	\sup\limits_{X^{n + 1}} |W_g| \lesl \eta,
\end{align}
which completes the proof.
\end{proof}

We now prove Theorem \ref{t:small-Weyl-energy}.
\begin{proof}[Proof of Theorem \ref{t:small-Weyl-energy}]
Notice that for any $n\gesl 2$, one can find $\eta_0 = \eta_0(n) > 0$ such that if 
\begin{align}
	\sup\limits_{X^{n + 1}}|W_g|\lesl \eta_0
\end{align} on a Poincar\'e-Einstein manifold $(X^{n + 1}, g)$ with $\Ric_g \equiv - n g$, then $\sec_g\lesl 0$ on $X^{n+1}$.

In the context of the theorem, for any $\lambda > 0$,
let $(X^{n+1}, g)$ be a Poincar\'e-Einstein manifold with conformal infinity $(S^1\times S^{n - 1}, [\wg_{\lambda}])$.
 Let $\eta_0 =\eta_0(n) > 0$ be the  dimensional constant as above and we set $\epsilon = \epsilon(n, \eta_0) > 0$ as the constant 
in Proposition \ref{p:eps-regularity}. If $(X^{n+1}, g)$ satisfies  
\begin{align}
	\int_{X^{n + 1}} |W_g|^{\frac{n + 1}{2}} \dvol_g \lesl \epsilon\cdot \left(\mathcal{Y}(S^1\times S^{n - 1}, [\wg_{\lambda}])\right)^{\frac{n}{2}},
\end{align}
then by Proposition \ref{p:eps-regularity} and the above computations, we obtain that 
$\sec_g \lesl 0$ on $X^{n + 1}$. 
Applying Theorem \ref{t:uniqueness-nonpositively-curved}, we obtain the desired rigidity result.
 \end{proof}

 Theorem \ref{t:small-Weyl-energy} provides hyperbolic rigidity 
 when the Weyl energy is sufficiently small relative to the Yamabe constant. We pose a question regarding rigidity under the smallness of the Weyl energy, independent of the Yamabe constant; see Question \ref{q:4D-pinching} in Section \ref{s:discussions}.
 
We remark that it is natural to pose the question on the relationship between the pinching constants $\epsilon$ and $\delta$: the pinching of the Weyl energy 
 \begin{align}
 	\int_{X^{n+1}} |W_g|^{\frac{n + 1}{2}}\dvol_g < \epsilon,\label{e:Weyl-energy-pinching}
 \end{align}
 and the pinching of the Yamabe constants 
 \begin{align}
 	\frac{\mathcal{Y}(M^n, [\wg])}{\mathcal{Y}(S^n, [g_c])} > 1 - \delta.\label{e:Yamabe-pinching}
 \end{align} 

 In our context, we consider the case $(M^n , [\wg]) \equiv (S^1 \times S^{n - 1}, [\wg_{\lambda}])$ for some $\lambda > 0$. If \eqref{e:Yamabe-pinching} holds for some sufficiently small $\delta$ (or equivalently, for some sufficiently large $\lambda$), then by Theorem \ref{t:large-circle} (or Theorem \ref{t:rigidity-lambda_0}), any Poincar\'e-Einstein filling $(X^{n + 1}, g)$ must be hyperbolic, and thus $W_g \equiv 0$ on 
 $X^{n + 1}$. In particular, \eqref{e:Weyl-energy-pinching} holds.
We note again that, given $\lambda_0 > 0$,  the family of conformal structures
\begin{align}
 	\{(S^1\times S^{n - 1}, [\wg_{\lambda}]): \lambda \gesl \lambda_0\}
 \end{align}
is {\it non-compact} since the diameter $\diam_{\wg_{\lambda}}(S^1 \times S^{n - 1})$ tends to infinity as $\lambda \to \infty$. The direction of the implication from \eqref{e:Weyl-energy-pinching} to \eqref{e:Yamabe-pinching} is not known in general.

We now consider a {\it compact family} of conformal structures $\{(M^n, [\wg])\}$, where the representatives $\wg$ have uniformly bounded geometry.
In this situation, if we consider the case when the pinching constant $\delta > 0$ in \eqref{e:Yamabe-pinching} is sufficiently small 
with respect to some {\it geometric boundedness} of this family of conformal structures, then by a compactness result in \cite[theorem 1.1]{CGJQ}, the Fefferman-Graham metrics on the compactified Poincar\'e-Einstein fillings constitute a compact family in the Cheeger-Gromov topology. Therefore, one obtains \eqref{e:Weyl-energy-pinching}.
On the other hand, if \eqref{e:Weyl-energy-pinching} holds and one assumes that the Yamabe constant of the conformal infinity has a uniform lower bound (which amounts to a scale-free volume {\it non-collapse} assumption), 
then, by the same result \cite[theorem 1.1]{CGJQ}, one obtains Cheeger-Gromov compactness of both the Poincar\'e-Einstein fillings and their Fefferman-Graham compactified metrics, which in turn implies \eqref{e:Yamabe-pinching}. Again, in this situation, the pinching constant depends on the uniform geometric boundedness of the conformal infinity.

In summary, a particularly interesting aspect of Theorem \ref{t:small-Weyl-energy}  (cf. Theorem \ref{t:rigidity-for-small-weyl-energy}) is that, similar to Theorem \ref{t:rigidity-lambda_0} (cf. Theorem \ref{t:large-circle}),   
the validity of the rigidity result does not rely on the uniform geometric boundedness of the conformal infinity, since the $\epsilon$-regularity result (Proposition \ref{p:eps-regularity}) holds in a very general setting, including {\it collapsed} (and thus non-compact families of) Poincar\'e-Einstein spaces.

 \subsection{Quantitative rigidity and compactness}
\label{ss:quantitative-rigidity}

We next prove a quantitative rigidity result for the Poincar\'e-Einstein filling $(X^{n + 1}, g)$ of $S^1 \times S^{n - 1}$ which provides a quantitative description of $g$ when the conformal infinity is close to the standard conformal class of $S^1 \times S^{n - 1}$.

To formulate our quantitative rigidity result, we introduce the following weighted H\"older norm, defined for tensor fields on a Poincar\'e-Einstein manifold. Let $(X^{n + 1}, g)$ be a Poincar\'e-Einstein manifold whose conformal infinity $(M^n, [\wg])$ has a positive Yamabe constant. Let $\bg^* \equiv \varrho^2 g$ denote the Fefferman-Graham compactification of $g$. 

We consider the space $\mathscr{S}^2(X^{n + 1})$ of symmetric $(0,2)$-tensor fields on $X^{n + 1}$. For any $k\in\dN$ and $\alpha\in (0,1)$, 
let $C^{k,\alpha}(X^{n + 1}, \mathscr{S}^2 (X^{n + 1}))$
denote the space of symmetric $(0,2)$-tensor fields $h$ such that  $\|h\|_{C^{k,\alpha}(X^{n + 1})} < \infty$. Furthermore, for any $\mu\in \dR$,
we define 
the \emph{weighted H\"older space $C_{\mu}^{k,\alpha}
	(X^{n + 1}, \mathscr{S}^2(X^{n + 1}))$}:
\begin{align}
\begin{split}
		C_{\mu}^{k,\alpha}
	(X^{n + 1}, \mathscr{S}^2(X^{n + 1}))
	& \ \equiv \varrho^{\mu} \cdot 
		C^{k,\alpha}
	(X^{n + 1}, \mathscr{S}^2(X^{n + 1}))
\nonumber \\ & \ = \left\{\varrho^{\mu}   h: h \in 		C^{k,\alpha}
	(X^{n + 1}, \mathscr{S}^2(X^{n + 1})) \right\}.
	\end{split}
\end{align}
 For any $h\in C_{\mu}^{k,\alpha}
	(X^{n + 1}, \mathscr{S}^2(X^{n + 1}))$
we define the \emph{weighted H\"older norm} by 
\begin{align}
	\|h\|_{C_{\mu}^{k,\alpha}
	(X^{n + 1})} 
	\equiv \|\varrho^{-\mu}\cdot h\|_{C^{k,\alpha}(X^{n + 1})}.
\end{align}

The following theorem is the main result of this subsection. 
 \begin{theorem}\label{t:quantitative-rigidity} Given $n \gesl 3$, there exists a positive number $\lambda_0 = \lambda_0 (n) > 1$ such that for any $\lambda \gesl \lambda_0$ there exists some $\delta_0 = \delta_0 (\lambda, n) > 0$ that satisfies the following property.
  If a smooth metric $\wg$ on $S^1 \times S^{n - 1}$   satisfies 
  \begin{align}
  	\|\wg - \wg_{\lambda}\|_{C^6(S^1 \times S^{n - 1})} < \delta \lesl \delta_0,
  \end{align}
  then the conformal manifold
   $(S^1 \times S^{n - 1}, [\wg])$ has a unique Poincar\'e-Einstein filling $(X^{n + 1}, g)$.  Furthermore, there exists a diffeomorphism $\Phi: S^1\times D^n \to X^{n + 1}$ such that
 \begin{align}
\|\Phi^* g - g_{\lambda}\|_{C_{\mu}^{2,\alpha}(X^{n+1})} \lesl \tau(\delta|n,\lambda,\alpha, \mu), 	
 \end{align}
for any $\mu\in(0,1)$, where $\tau$ is a function of $\delta$, $n$, $\lambda$, $\alpha$, $\mu$, and satisfies $\lim\limits_{\delta \to 0} \tau = 0$.

\end{theorem}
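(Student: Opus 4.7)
The plan is to combine three ingredients: the hyperbolic rigidity result of Theorem \ref{t:rigidity-lambda_0}, Lee's perturbation existence (Theorem \ref{t:Lee-IFT}), and a compactness argument for Poincar\'e-Einstein fillings whose conformal infinity is nearly standard. I would choose $\lambda_0(n)$ as in Theorem \ref{t:rigidity-lambda_0}. For any $\lambda \gesl \lambda_0$ the only Poincar\'e-Einstein filling of $(S^1\times S^{n-1}, [\wg_\lambda])$ is the hyperbolic metric $g_\lambda$ on $S^1\times D^n$, which is non-positively curved. Consequently, Theorem \ref{t:Lee-IFT} applies with base $(S^1\times D^n, g_\lambda)$: for $\wg$ sufficiently $C^{2,\alpha}$-close to $\wg_\lambda$ it produces a Poincar\'e-Einstein filling $g_{\mathrm{Lee}}(\wg)$ on $S^1\times D^n$ with conformal infinity $[\wg]$. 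The implicit function theorem simultaneously delivers the weighted H\"older estimate
\begin{equation*}
\|\Phi^* g_{\mathrm{Lee}}(\wg) - g_\lambda\|_{C^{2,\alpha}_\mu(X^{n+1})} \lesl C(\lambda, n, \alpha, \mu)\,\|\wg - \wg_\lambda\|_{C^{2,\alpha}(S^1\times S^{n-1})},
\end{equation*}
which, once global uniqueness is known, implies the $\tau$-bound stated in the conclusion. The remaining task is therefore the global uniqueness.

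I would prove global uniqueness by contradiction. Suppose there exists $\lambda\gesl \lambda_0$, a sequence of smooth metrics $\wg_i$ on $S^1\times S^{n-1}$ with $\|\wg_i - \wg_\lambda\|_{C^6}\to 0$, and Poincar\'e-Einstein fillings $(X_i^{n+1}, g_i)$ of $(S^1\times S^{n-1}, [\wg_i])$ not isometric to $g_{\mathrm{Lee}}(\wg_i)$. Since $\mathcal{Y}(S^1\times S^{n-1}, [\wg_i])\to \mathcal{Y}(S^1\times S^{n-1}, [\wg_\lambda])$ and the latter is arbitrarily close to $\mathcal{Y}(S^n, [g_c])$ for large $\lambda$ (cf.~\cite[Section 2]{Schoen}), Theorem \ref{t:curvature-pinching} yields the uniform interior pinching $\sup_{X_i^{n+1}}|\sec_{g_i}+1|\to 0$. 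Combined with the volume comparison (Theorem \ref{t:volume-comparison}) and the Fefferman-Graham construction (Lemma \ref{l:FG-metric}), this supplies the bounded geometry needed to extract a subsequence whose Fefferman-Graham compactifications $\bg_i^{*} = \varrho_i^2 g_i$ converge smoothly, in the Cheeger-Gromov sense, to a limit compactification on some $\bX_\infty$. The interior limit $(X_\infty^{n+1}, g_\infty)$ is then a complete Poincar\'e-Einstein manifold with $\sec_{g_\infty}\equiv -1$ and conformal infinity $[\wg_\lambda]$; by Theorem \ref{t:rigidity-lambda_0} it must be isometric to $(S^1\times D^n, g_\lambda)$. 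Smooth convergence of compactifications then forces $X_i^{n+1}$ to be diffeomorphic to $S^1\times D^n$ for $i$ large, with $g_i$ lying in an arbitrarily small $C^{2,\alpha}_\mu$-neighborhood of $g_\lambda$. Lee's local uniqueness in Theorem \ref{t:Lee-IFT} then identifies $g_i$ with $g_{\mathrm{Lee}}(\wg_i)$, contradicting the choice of the sequence. Running the same compactness-and-contradiction scheme against a hypothetical failure of continuity upgrades the existence estimate above to the required uniform bound $\tau(\delta|n,\lambda,\alpha,\mu)\to 0$ as $\delta\to 0$.

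The principal obstacle is the compactness step: propagating the interior curvature pinching $|\sec_{g_i}+1|\to 0$ to smooth convergence of the compactifications up to the boundary, which amounts to controlling the Fefferman-Graham factor $\varrho_i$ solving $-\Delta_{g_i}\log\varrho_i = n$ with boundary data prescribed by $\wg_i$. This requires sharp boundary regularity estimates for the Fefferman-Graham equation together with higher-order control of the boundary data, which is the source of the $C^6$ hypothesis on $\wg$; the arguments are parallel to those of \cite[theorem 1.1]{CGJQ}, but must be adapted to the cylindrical base $S^1\times S^{n-1}$ in place of the round sphere. Once the limit is identified with the hyperbolic model via Theorem \ref{t:rigidity-lambda_0}, the contradiction with Lee's local uniqueness closes the argument and the quantitative bound follows from the IFT estimate above.
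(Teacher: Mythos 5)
Your proposal is correct and follows essentially the same route as the paper: contradiction plus compactness of Fefferman--Graham compactifications (the paper's Proposition \ref{p:compactness-small-weyl}), identification of the limit with $g_\lambda$ via the rigidity theorem, and closure via Lee's Fredholm theory. The one step you state too loosely is the final identification "Lee's local uniqueness then identifies $g_i$ with $g_{\mathrm{Lee}}(\wg_i)$": the Cheeger--Gromov diffeomorphisms do not automatically place $g_i$ in the gauge where Lee's implicit function theorem applies, and the paper fills this in by constructing a boundary-fixing diffeomorphism $\Phi_j$ solving the gauged equation $\sF(\Phi_j^*\chg_j, g_j)=0$ and then invoking the invertibility of the linearized operator $\tfrac12(\Delta+2n)$ on the weighted H\"older spaces to conclude the two fillings coincide.
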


For a conformal manifold $(S^n, [\wg])$, if $\wg$ is sufficiently close to a round metric on $S^n$, then a similar result of the quantitative rigidity of the Poincar\'e-Einstein filling has been established earlier in \cite{CGQ, CGJQ}. Their proof essentially relies on compactness results developed for that setting. 
The proof of Theorem \ref{t:quantitative-rigidity} is based on both the rigidity result given by Theorem \ref{t:uniqueness-large-circle-factor} and a compactness result adapted in our context, as stated in Proposition \ref{p:compactness-small-weyl} below.

We start with a perturbation existence result which is due to Lee and was established via the implicit function theorem and a version of the Fredholm theory in the context of Poincar\'e-Einstein manifolds; see \cite[theorem A]{Lee-fredholm}.   
\begin{theorem} \label{t:Lee-existence}
Given $n \gesl 3$, there exists a positive number $\Lambda_0 = \Lambda_0 (n) > 1$ such that for any $\lambda \gesl \Lambda_0$ there exists a uniform constant $\delta_0 = \delta_0 (\lambda, n) > 0$ that satisfies the following property.
  If a smooth metric $\wg$ on $S^1 \times S^{n - 1}$ satisfies 
  \begin{align}
  	\|\wg - \wg_{\lambda}\|_{C^6(S^1 \times S^{n - 1})} < \delta \lesl \delta_0,
  \end{align}
  then the conformal manifold
   $(S^1 \times S^{n - 1}, [\wg])$ admits a Poincar\'e-Einstein filling $(S^1 \times D^n, g)$ that satisfies $\|g - g_{\lambda}\|_{C_{\mu}^{2,\alpha}(X^{n+1})} \lesl \tau(\delta|n,\lambda)$.
 \end{theorem}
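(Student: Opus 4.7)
The plan is to adapt the implicit function theorem framework of Lee \cite{Lee-fredholm} with reference metric $g_\lambda$ on $S^1\times D^n$. The three main stages are: setting up a DeTurck-gauged elliptic operator in the correct weighted function spaces; proving that the linearization at $g_\lambda$ is an isomorphism between these spaces; and invoking the IFT. In the first stage I would form the Bianchi-DeTurck operator relative to the background,
\begin{align}
\Phi(g) \equiv \Ric_g + n g - \tfrac{1}{2} \mathcal{L}_{V(g, g_\lambda)} g,
\end{align}
where $V(g, g_\lambda)$ is the vector field constructed from the difference of the Christoffel symbols of $g$ and $g_\lambda$. Then $\Phi$ is quasilinear elliptic, and any sufficiently regular zero of $\Phi$ with prescribed conformal infinity is gauge-equivalent to a Poincar\'e-Einstein filling. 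I would work in the weighted H\"older spaces $C^{k,\alpha}_\mu(X^{n+1}, \mathscr{S}^2)$ defined with respect to the Fefferman-Graham defining function of $g_\lambda$, choosing the weight $\mu \in (0,1)$ so as to avoid all indicial roots of the linearization.

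The core step is the second stage: showing that $L \equiv D\Phi|_{g_\lambda}: C^{2,\alpha}_\mu \to C^{0,\alpha}_\mu$ is an isomorphism. Up to a zeroth-order shift, $L$ is the Einstein-Lichnerowicz operator $\tfrac{1}{2}\nabla^*\nabla - \mathring{R}$ coupled with a gauge term. By Lee's Fredholm theorem, $L$ is Fredholm of index zero, and so it suffices to prove triviality of the kernel. Since $(S^1\times D^n, g_\lambda)$ has constant sectional curvature $-1$, the Weitzenb\"ock endomorphism is strictly positive on symmetric two-tensors, and a weighted integration by parts shows that any element of $\ker L$ is transverse-traceless. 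Lifting to the universal cover $(\mathbb{H}^{n+1}, h)$ and Fourier-decomposing along the hyperbolic translation of length $\lambda$ generating the deck group, the zero Fourier mode of the lifted kernel is trivial by Lee's original analysis on hyperbolic space, while the nonzero Fourier modes receive a spectral shift of order $(k/\lambda)^2$; for $\lambda$ sufficiently large this shift keeps all nonzero modes inside the strictly positive part of the spectrum of the shifted Lichnerowicz operator, forcing $\ker L = 0$.

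With $L$ an isomorphism, the third stage applies the IFT to the map that sends $g$ to the pair $\bigl(\Phi(g), (x^2 g)|_{x=0}\bigr)$ on a $C^{2,\alpha}_\mu$-neighborhood of $g_\lambda$; the $C^6$-regularity on the boundary is the threshold at which the Fefferman-Graham expansion is controlled to sufficient order for the weighted elliptic estimates to close. The inverse of this map provides, for every $\wg$ within $\delta_0(\lambda, n)$ of $\wg_\lambda$ in $C^6$, a unique nearby Einstein filling $g$ satisfying $\|g - g_\lambda\|_{C^{2,\alpha}_\mu} \lesl \tau(\delta|n,\lambda)$. The main obstacle in this program is the triviality of $\ker L$: as $\lambda$ decreases, the spectral shifts $(k/\lambda)^2$ become small, and Fourier modes with small $|k|$ can enter the relevant spectral window and contribute nontrivial kernel elements. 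On the nonlinear side, this is precisely the emergence of competing Poincar\'e-Einstein fillings such as the AdS-Schwarzschild metrics discussed in Section \ref{ss:AdS-S}, so that the lower bound $\lambda \gesl \Lambda_0(n)$ records exactly the spectral gap needed to rule out such bifurcations.
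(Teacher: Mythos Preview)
Your proposal essentially reconstructs the proof of Lee's theorem A \cite{Lee-fredholm} from scratch, whereas the paper treats Theorem \ref{t:Lee-existence} as an immediate corollary: since $g_\lambda$ has constant sectional curvature $-1$, it satisfies $\sec_{g_\lambda} \lesl 0$, and Lee's result (restated in the paper as Theorem \ref{t:Lee-IFT}) applies directly. No further analysis is performed or needed.

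Your second stage contains a genuine misconception. You argue that $\ker L = 0$ only for $\lambda \gesl \Lambda_0(n)$, via a Fourier decomposition along the deck translation, and you attribute the failure at small $\lambda$ to the emergence of AdS-Schwarzschild metrics. Both claims are incorrect. First, Lee's invertibility criterion requires only that the reference metric be nonpositively curved, which $g_\lambda$ satisfies for \emph{every} $\lambda > 0$; the Weitzenb\"ock/Bochner argument on a space of constant curvature $-1$ is uniform in $\lambda$ and does not degenerate as $\lambda \to 0$. The linearized operator at $g_\lambda$ is an isomorphism for all $\lambda$. Second, the AdS-Schwarzschild metrics live on $D^2 \times S^{n-1}$, which is not even diffeomorphic to $S^1 \times D^n$, so they cannot arise as bifurcations from $g_\lambda$ through the kernel of its linearization; the non-uniqueness of fillings for small $\lambda$ is a global topological phenomenon, not a local spectral one on the moduli space near $g_\lambda$. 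The constant $\Lambda_0$ in the statement is carried over from the surrounding quantitative rigidity framework (Theorem \ref{t:quantitative-rigidity}) and plays no role in the existence argument itself.
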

 
In Theorem \ref{t:Lee-existence}, 
 the model metric $g_{\lambda}$ is hyperbolic and in particular negatively curved. Therefore, 
\cite[theorem A]{Lee-fredholm} naturally applies here.

We denote by $\mathfrak{B}$ the moduli space of all smooth Riemannian metrics on $S^1\times S^{n - 1}$, equipped with the H\"older norm $\|\cdot\|_{C^{2,\alpha}(S^1\times S^{n - 1})}$, we 
also denote by $\mathfrak{E}$ the moduli space of Poincar\'e-Einstein metrics on $S^1 \times D^n$, equipped with the weighted H\"older norm $\|\cdot \|_{C_{\mu}^{2,\alpha}(S^1 \times D^n)}$ such that for any $g\in \mathfrak{E}$,
the conformal infinity of $(S^1 \times D^n, g)$ is $(S^1 \times S^{n - 1}, [\wg])$
for some smooth conformal metric $\wg\in \mathfrak{B}$.  Then both $\mathfrak{B}$ and $\mathfrak{E}$ are Banach spaces.  
 Lee's Fredholm theory established in \cite[theorem A]{Lee-fredholm} essentially states that the boundary evaluation map $\Psi: \mathfrak{E} \to \mathfrak{B}$
 is a local diffeomorphism at the model hyperbolic metric $g_{\lambda}$.


 We will apply the following compactness result to prove that for any metric that is $C^{5,\alpha}$-close to $\wg_{\lambda}$, the mapping $\Psi$ has a unique pre-image in $\mathfrak{E}$.
The proposition below provides a compactness result for the Fefferman-Graham metrics (as given in Lemma \ref{l:FG-metric}) by assuming the geometric boundedness of the conformal infinity and a small $L^{\infty}$-bound of the Poincar\'e-Einstein metric $g$. 
\begin{proposition}
	\label{p:compactness-small-weyl}  
Given $n \gesl 3$, $k \gesl 6$, $\alpha \in (0,1)$, $\tau > 0$ and $D > 0$, there exist positive numbers $\epsilon_0 = \epsilon_0(\tau, D, k, \alpha, n) < 1$,  $s_0 = s_0(\tau, D, k, \alpha, n) < 1$ and $D_0 = D_0(\tau, D, k, \alpha, n) < \infty$  such that the following property holds. 
Let $(X^{n + 1}, g)$ be a complete Poincar\'e-Einstein manifold with $\mathscr{C}(X^{n + 1}, g) = (M^n, [\wg])$. Let us denote by $\bg^*$ the Fefferman-Graham metric on $\overline{X^{n+1}}$ associated to a representative $\wg \in [\wg]$ with positive scalar curvature. If the conformal infinity $(M^n,\wg)$ has a uniform diameter bound $\diam_{\wg}(M^n)\lesl D$ and has uniformly bounded $C^6$-geometry $\fs_6(x) \gesl \tau > 0$ for any $x\in M^n$, and the Poincar\'e-Einstein manifold $(X^{n+1},g)$ satisfies 
\begin{align}\sup\limits_{X^{n+1}}|W_g|\lesl \epsilon_0, \label{e:Weyl-pointwise-small}\end{align} then  $\diam_{\bg^*}(\overline{X^{n+1}}) \lesl D_0$ and for any $p\in \overline{X^{n+1}}$, the $C^{2,\alpha}$-regularity scale $	\fs_{2,\alpha}(p)$ at $p$ has a uniform lower bound,
\begin{align}
	\fs_{2,\alpha}(p) \gesl s_0 > 0.
\end{align}
In particular,
\begin{align}\sup\limits_{\overline{X^{n + 1}}}|\Rm_{\bg^*}| \lesl C_0 \cdot s_0^{-2},\end{align}for some dimensional constant $C_0 = C_0(n) > 0$.
\end{proposition}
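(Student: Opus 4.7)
The plan is to establish this compactness result by a contradiction/blow-up argument: we leverage the smallness of the Weyl curvature together with the Einstein condition to recover an almost-hyperbolic interior geometry, then use the uniform boundary data to control the Fefferman-Graham compactification, and finally extract a limit which must be hyperbolic. The upper diameter bound and the lower regularity-scale bound both follow from smooth Cheeger-Gromov convergence to this limit.

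First I would make a key algebraic observation. Since $\Ric_g = -ng$, the orthogonal decomposition of the curvature tensor yields
\begin{align*}
\Rm_g = -\tfrac{1}{2}(g \owedge g) + W_g,
\end{align*}
where the first term is the curvature operator of the hyperbolic space of curvature $-1$. Hence the pointwise hypothesis $\sup_X|W_g|\leq \epsilon_0$ immediately gives $|\sec_g+1|\leq C(n)\epsilon_0$ everywhere on $X^{n+1}$. Combined with the uniform $C^6$-geometry of the boundary, one deduces a uniform positive lower bound $\iota=\iota(\tau,D,n)>0$ on the Yamabe constant $\mathcal{Y}(M^n,[\wg])$, so that Theorem~\ref{t:volume-comparison} provides uniform volume non-collapse of $(X^{n+1},g)$ in the interior. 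With bounded curvature and non-collapse, standard harmonic radius theory (e.g.\ \cite{Anderson-Ricci}) yields uniform interior $C^{k,\alpha}$-control for the Einstein metric $g$ in harmonic coordinates.

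Next I would set up a compactness argument for the Fefferman-Graham metric $\bg^*=\varrho^2 g=e^{2w}g$ from Lemma~\ref{l:FG-metric}, where $-\Delta_g w=n$ with $w=\log x+O(x^\mu)$ near $M^n$. The uniform boundary data (bounded diameter and $C^6$-geometry) and the almost-hyperbolic interior geometry guarantee uniform weighted $C^{k,\alpha}_\mu$-estimates on $w$ up to the boundary via the Graham-Lee Fredholm theory on conformally compact Einstein manifolds, noting that the indicial roots of the linearized operator are stable under the almost-hyperbolic deformation. In particular, $\bg^*$ attains uniform $C^{2,\alpha}$-regularity estimates up to $M^n$ and uniformly bounded diameter, provided $\epsilon_0$ is small enough.

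Now suppose the proposition fails. Then there exist sequences $\epsilon_i\downarrow 0$ and Poincar\'e-Einstein manifolds $(X_i^{n+1},g_i)$ satisfying the hypotheses with $\sup|W_{g_i}|\leq\epsilon_i$, but for which at least one of the three conclusions (diameter, regularity scale, curvature of $\bg_i^*$) fails. By the bounded $C^6$-geometry and diameter assumption, the conformal infinities converge, after passing to a subsequence, $(M_i^n,\wg_i)\to(M_\infty^n,\wg_\infty)$ in $C^{5,\alpha}$ Cheeger-Gromov topology. Combining this with the uniform interior and near-boundary estimates established above, and invoking the standard pointed Cheeger-Gromov compactness, we extract a limit $(\overline{X_\infty^{n+1}},\bg_\infty^*,\wg_\infty)$ in the $C^{2,\alpha}$ topology. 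In this limit, $W_{g_\infty}\equiv 0$, and together with $\Ric_{g_\infty}=-n g_\infty$ this forces $\sec_{g_\infty}\equiv -1$. Hence $g_\infty$ is a hyperbolic Poincar\'e-Einstein metric with smooth (and bounded) Fefferman-Graham compactification, contradicting the failure of the quantitative bounds for large $i$.

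The main obstacle is the uniform control on the Fefferman-Graham defining function $w_i$ up to the boundary. On one hand, one needs a non-trivial interior Schauder estimate for the degenerate Laplacian $\Delta_{g_i}$ which is uniform in $i$ despite the fact that $g_i$ is not exactly hyperbolic; on the other hand, the weighted Hölder estimates near $\p\bX$ require uniform separation of the indicial roots, which is where the pointwise smallness of $W_{g_i}$ (rather than just integral smallness) plays a crucial role. Once this uniformity is in place, the remainder of the argument is a routine Cheeger-Gromov compactness plus smooth-limit identification.
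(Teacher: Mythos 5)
You should first note that the paper does not actually prove Proposition \ref{p:compactness-small-weyl}: it states the result and defers entirely to \cite[theorems 1.1 and 1.2]{CGJQ}, remarking only that the hypothesis $M^n\approx S^n$ in that reference is not used in its proof. So the substance you would need to supply is the (long and technical) compactness theory of \cite{CGQ,CGJQ}, and your sketch does not close the two places where that theory does real work. Your opening step is fine: for an Einstein metric with $\Ric_g=-ng$ the curvature decomposition reduces to $\Rm_g = W_g + (\text{constant-curvature part})$, so $\sup_{X^{n+1}}|W_g|\lesl\epsilon_0$ yields $|\sec_g+1|\lesl C(n)\,\epsilon_0$; this is indeed how the pointwise Weyl hypothesis enters. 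The first genuine gap is your claim that bounded $C^6$-geometry, bounded diameter, and positive scalar curvature of the representative $\wg$ force a uniform positive lower bound on $\mathcal{Y}(M^n,[\wg])$. They do not: metrics of positive scalar curvature with uniformly bounded geometry and diameter can converge to a scalar-flat metric whose conformal class has vanishing Yamabe invariant (e.g.\ on a torus), so $\mathcal{Y}(M^n,[\wg_i])\to 0$ is compatible with your hypotheses. Consequently the interior non-collapse you want to extract from Theorem \ref{t:volume-comparison} is not justified, and everything downstream (uniform harmonic radius for $g$, hence for $\bg^*$) is unsupported.

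The second gap is a circularity in the boundary analysis and in the limit extraction. Uniform weighted H\"older control of the Fefferman--Graham factor $w$ ``via the Graham--Lee Fredholm theory'' cannot simply be invoked: that theory gives estimates for the linearized operator at a background whose conformal compactification is already controlled in $C^{2,\alpha}$ up to the boundary, which is precisely the conclusion of the proposition. Likewise, your contradiction argument extracts a $C^{2,\alpha}$ Cheeger--Gromov limit of the compactified metrics $\bg_i^*$, but such a limit exists only if the $C^{2,\alpha}$-regularity scales of $\bg_i^*$ are uniformly bounded below --- again exactly the statement to be proved. The standard way out is to rescale at a point where the regularity scale degenerates, classify the blow-up limit, and rule it out using the structure of the Fefferman--Graham expansion and related identities (in dimension four, Gauss--Bonnet/$\sigma_2$ arguments); this is the core of \cite{CGQ,CGJQ} and is absent from your sketch. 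As written, your proposal establishes the easy implication (small Weyl curvature implies pinched sectional curvature) but not the asserted compactness.
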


In this proposition, there is no topological restriction on the boundary $M^n$. The same compactness result for $M^n$ diffeomorphic to $S^n$ 
 was obtained in \cite[theorems 1.1 and 1.2]{CGJQ}; however, the topological assumption was not needed in their proof, so we simply state the compactness result as above.

\begin{proof}
	[Proof of Theorem \ref{t:quantitative-rigidity}]	
 	
We start with some simple facts. Let $\lambda > 1$ be any fixed number. For any $\eta > 0$, there exists $\delta = \delta(\eta|\lambda, n) > 0$ with $\lim\limits_{\eta \to 0}\delta = 0$ such that if  
$\|\wg  - \wg_{\lambda}\|_{C^{k,\alpha}(S^1 \times S^{n - 1})} < \delta$, then 
\begin{align}
	 \frac{\mathcal{Y}(S^1 \times S^{n - 1}, [\wg])}{\mathcal{Y}(S^1 \times S^{n - 1}, [\wg_{\lambda}])} \gesl  1 - \eta.
\end{align}	
Let $\epsilon_0  = \epsilon_0(n) > 0$ be the constant in Proposition \ref{p:compactness-small-weyl}. By the curvature estimate in Theorem \ref{t:curvature-pinching} and the above comparison, one can find a dimensional constant $\ul_0 = \ul_0(n) > 0$ and for every $\lambda \gesl \ul_0 >0$, there exists a number
 $\delta = \delta(\lambda, n) > 0$ such that
 if $(S^1\times S^{n-1}, [\wg_{\lambda}])$ satisfies $\lambda \gesl \ul_0$ and conformal manifold $(S^1 \times S^{n - 1}, [\wg])$ satisfies $\|\wg  - \wg_{\lambda}\|_{C^{k,\alpha}(S^1 \times S^{n - 1})} < \delta$, then any Poincar\'e-Einstein filling
 of $(X^{n+1}, g)$ of $(S^1 \times S^{n - 1}, [\wg])$ 
  satisfies 
	\begin{enumerate}
		\item $X^{n+1}$ is diffeomorphic to $S^1 \times D^n$.
		\item $\sup\limits_{S^1\times D^n}|W_g| \lesl \epsilon_0$ and $g$ is negatively curved.
	\end{enumerate}

	We will show that the dimensional constant 
	\begin{align}\Lambda_0 \equiv \max\{\lambda_0 , \ul_0\}\end{align} is the one satisfying the property in the statement of the theorem, 
	where $\lambda_0$ is the dimensional constant in the statement of  Theorem \ref{t:rigidity-lambda_0}.
	 We will prove this by contradiction. Suppose that for some large positive number $\lambda \gesl \Lambda_0$, no such $\delta = \delta(n, \lambda)$ as in the statement of theorem exists. That is, one can find a sequence of positive numbers $\delta_j \to 0$, and a sequence of smooth metrics $\wg_j$ on $S^1 \times S^{n - 1}$, such that for any $j\in\dZ_+$, the following holds:	
	\begin{enumerate}
	\item $\|\wg_j - \wg_{\lambda}\|_{C^{2,\alpha}(S^1 \times S^{n - 1})} < \delta_j \to 0$, and $\wg_j$ is not isometric to $\wg_{\lambda_j}$; 	
	\item $(S^1\times S^{n-1}, [\wg_j])$ admits two non-isometric Poincar\'e-Einstein fillings $(X_j^{n+1}, g_j)$ and $(Y_j^{n + 1}, \check{g}_j)$. 	

	\end{enumerate}
Under the contradiction assumption, by Theorem \ref{t:product-top-geo-rigidity} and the arguments in the first paragraph,  one obtains the following properties: 
\begin{enumerate}
\item 	The metrics $g_j$ and $\chg_j$ are both negatively curved.  
\item  Both $X_j^{n+1}$ and $Y_j^{n + 1}$ are diffeomorphic to $S^1 \times D^n$,  which we denote by $Q$.
\end{enumerate}

We now denote by $\bg_j^*$ and $\bar{\chg}_j^*$ the Fefferman-Graham compactifications of $g_j$ and $\chg_j$, respectively, as defined in Lemma \ref{l:FG-metric}. 
 By Proposition \ref{p:compactness-small-weyl}, both
 $\{(\overline{Q}, \bg_j^*)\}$ and $\{(\overline{Q}, \bar{\chg}_j^*)\}$ are $C^{2,\alpha}$-compact in the Cheeger-Gromov sense. Consequently, there  there exist two diffeomorphisms $\varphi_j,\check{\varphi}_j \in \Diff(\overline{Q})$ such that passing to subsequences, 
 \begin{align}
 \|\varphi_j^* \bg_j^*  - \bg_{\infty}^* \|_{C^{2,\alpha}(\overline{Q})} \to 0    \quad \text{and} \quad    \|\check{\varphi}_j^* \bar{\chg}_j  - \bar{\chg}_{\infty} \|_{C^{2,\alpha}(\overline{Q})} \to 0, 	
 \end{align}
 	where $\bg_{\infty}^*$ and  $\bar{\chg}_{\infty}^*$ 
 are the Fefferman-Graham compactifications 	
 	of two Poincar\'e-Einstein metrics $g_{\infty}$ and $\chg_{\infty}$ on $Q$ with conformal infinity    $(S^1 \times S^{n - 1}, [\wg_{\lambda}])$ for some $\lambda \gesl \Lambda_0$ in the contradiction assumption. 

Next, by the rigidity result of Theorem \ref{t:rigidity-lambda_0}, one concludes that the two Poincar\'e-Einstein metrics  $g_{\infty}$ and $\chg_{\infty}$  are both isometric to the hyperbolic metric $g_{\lambda}$ on $Q$. Furthermore, applying the same arguments as in \cite[theorem 4.4]{CGQ}, we have the improved weighted convergence of the two sequences of Poincar\'e-Einstein metrics:
\begin{align}
	\Dw (g_j, g_{\lambda}) \to 0    \quad \text{and} \quad \Dw (\chg_j, g_{\lambda}) \to 0,
\end{align}
for some $\mu \in (0,1)$,
where, for two smooth metrics $h$ and $\check{h}$ on $Q$, their weighted distance $\Dw(h, \check{h})$ is defined by
\begin{align}
\Dw(h, \check{h}) \equiv	\|\varphi^* h - \check{h}\|_{C_{\mu}^{2,\alpha}(Q)},
\end{align}
for some diffeomorphism $\varphi\in \Diff(Q)$. Therefore, by the triangle inequality, we have that \begin{align}
 	\Dw(g_j, \chg_j) \to 0. 
 \end{align}

 The above weighted estimate enables us to proceed with deformation arguments and apply the implicit function theorem to ultimately conclude that $g_j$ 
and $\chg_j$ are isometric to each other. 
This part follows the same arguments as in the proofs of \cite[theorem 1.9]{CGQ} and \cite[theorem 1.3]{CGJQ}; therefore we only sketch main steps here. 

Let $\mathcal{M}et(Q)$ be the space of smooth Riemannian metrics on $Q$ and let $\mathscr{S}^2(Q)$ be the space of symmetric $(0,2)$-tensors on $Q$.
We consider the nonlinear functional 
\begin{align}
	\sF(\cdot, \cdot): \mathcal{M}et(Q) \times \mathcal{M}et(Q) \to \mathscr{S}^2(Q), \quad 	\sF(g, h) \equiv \Ric_g + n\cdot  g - \delta_g (B_h(g)),
\end{align}
 where $B_h(g)\equiv \delta_h g + \frac{1}{2} d\Tr_h(g)$. In our context, since $\Ric_{g_j} = - n  \cdot g_j$, obviously
\begin{align}\sF(g_j, g_j) = 0 \label{e:origin}.\end{align} Using the same deformation arguments as in Step 1 of the proof of  \cite[theorem 1.9]{CGQ}, for any large $j$, one can find a diffeomorphism $\Phi_j: \overline{Q} \to \overline{Q}$ with $\Phi_j\left.\right|_{S^1\times S^{n - 1}} = \Id_{S^1 \times S^{n - 1}}$ that solves the equation 
\begin{align}
\sF(\Phi_j^* \chg_j, g_j) = 0   \label{e:deform-chg}	
 \end{align}
and satisfies the following estimates:
\begin{align}
\|\Phi_j - \Id_{\overline{Q}}\|_{C^{3,\alpha}(\overline{Q})}\to 0\quad \text{and}\quad 	\|\Phi_j^* g_j - \chg_j \|_{C_{\mu}^{2,\alpha}(\overline{Q})} \to 0  \quad \text{as} \ j \to \infty,
\end{align}
for some $\mu \in (0,1)$.
On the other hand, one can show that $\sF(\cdot, g_j):\mathcal{M}et(Q) \to \mathscr{S}^2(Q)$ is a local diffeomorphism with respect to the weighted norm defined above. Indeed,
for any Poincar\'e-Einstein metric $h\in \mathcal{M}et(Q)$ , let $\mathscr{L}_h^{(1)}$ be the linearization of $\sF(\cdot, h)$ with respect to the first component. By straightforward computations, we have that 
$\sL\left.\right|_{(h , h)} = \frac{1}{2}(\Delta_h + 2n)$. By Under the contradiction assumption, 
the operator 
\begin{align}\sL_{g_j}\left.\right|_{(g_j , g_j)}:C_{\mu}^{2,\alpha}(Q, \mathscr{S}^2(Q)) \longrightarrow C_{\mu}^{0,\alpha}(Q, \mathscr{S}^2(Q))\end{align}
is an isomorphism; see \cite[theorems A and C]{Lee-fredholm}. 
Finally, by combining \eqref{e:origin} and \eqref{e:deform-chg} and applying the implicit function theorem, we conclude that $\Phi_j^*\chg_j = g_j$. This yields the desired contradiction, and thus completes the proof of the theorem.\end{proof}

\section{Examples}
\label{s:examples}

In this section, we will discuss more examples of Poincar\'e-Einstein manifolds whose conformal infinity is a cylinder $S^1 \times S^{n - 1}$ or a torus $\dT^n$.

\subsection{Hyperbolic isometries and hyperbolic fillings}\label{ss:parabolic}
In this subsection, we will review some relevant examples of Kleinian groups and the corresponding geometry of Poincar\'e-Einstein spaces.

Let us recall that,  in all the theorems of this paper, we have always assumed that 
the conformal infinity is $(S^1 \times S^{n - 1}, [\wg_{\lambda}])$ has dimension at least $3$, where $\wg_{\lambda}$ is the standard product metric on $S^1 \times S^{n - 1}$, defined in
\eqref{e:product-metric-on-cylinder}.
It is also clear that, for $n\gesl 3$, the moduli space of the standard product metrics on $S^1 \times S^{n - 1}$, modulo metric rescalings, can be naturally realized as a one-parameter family $\wg_{\lambda}$ with $\lambda\in(0,\infty)$. 
In Theorem \ref{t:uniqueness-nonpositively-curved}, we have proved that any conformal manifold $(S^1 \times S^{n - 1}, [\wg_{\lambda}])$ bounds a unique Poincar\'e-Einstein filling $(S^1\times D^n, g_{\lambda})$, provided that the filling is non-positively curved, where
\begin{align}
	g_{\lambda} = dr^2 + \sinh^2(r) g_c + \cosh^2(r) (\lambda \tau)^2
\end{align} is the hyperbolic metric on $S^1 \times D^n$,  uniquely determined by the parameter $\lambda$, and $(S^1, \tau^2)$ denotes the unit circle. 

In this case, the fundamental group 
$\Gamma\equiv\pi_1(S^1 \times S^{n - 1})\cong \dZ$ can be realized by the Kleinian group in $
\Mob(S^n)$, the group of conformal transformations on $S^n$,  which shares the same identity component as $\Isom(\mathbb{H}^{n+1})$. 
As a discrete subgroup of $\Isom(\mathbb{H}^{n+1})$, $\Gamma$ is generated by a {\it hyperbolic element}.

There is another type of geometry in which the Kleinian group $\Gamma$ is generated by {\it parabolic elements} in $\Isom(\mathbb{H}^{n + 1})$. Let $\Gamma \cong \dZ^n$ be 
 a rank-$n$ free abelian group 
 generated by $n$ parabolic elements. Geometrically, $\Gamma$ is the fundamental group of a flat torus $(\dT^n, g_{\omega})$.
In the special case of $n = 2$,  $(\dT^2, g_{\omega})$ represents a fixed flat torus which is determined by the lattice $\Gamma\equiv \LB  1,\omega \RB\cong \dZ \oplus \dZ$, where $\omega$ is a fixed complex number in the upper half-plane $\dR \times \dR_+$.

  Since the hyperbolic metric $h$ on $\dH^{n + 1}$ can be written explicitly as:
\begin{align}
	h = dt^2 + e^{2t} g_{\dR^n},\quad t\in(-\infty, +\infty),
\end{align}
 the quotient $(\dH^{n + 1}, h)/\Gamma$ is isometric to the hyperbolic manifold $(\dR \times \dT^n , h_{\omega})$, where 
 \begin{align}
 	h_{\omega} = dt^2 + e^{2t} g_{\omega},\quad t\in(-\infty, +\infty),
 \end{align}
and $\dR\times \dT^n$ has a cusp end as $t\to -\infty$. However,  this does not appear in standard Poincar\'e-Einstein geometry. 

We now describe  hyperbolic metrics on $S^1\times D^2$ whose conformal infinities are given by the flat tori $(\dT^2, [g_{\omega}])$.
    To begin with, we will consider a simple example.
The hyperbolic metric $h$ on $\dH^3$ can be written explicitly as:
\begin{align}
	h = dr^2 + \sinh^2(r) d\theta^2 + \cosh^2(r) dt^2,\quad \theta\in[0,2\pi], t\in \dR, 
	\label{e:hyperbolic-metric-3D}
\end{align}
where $(S^1,d\theta^2)$ is the unit circle. 
Given any $\lambda > 0$, one can choose a free isometric $\dZ$-action on $(\dH^3, h)$ which is generated by 
\begin{align}
\alpha 	: (r, \theta, t) \mapsto (r, \theta, t + 2\pi\lambda).
\end{align}
 Then the quotient metric becomes 
\begin{align}
	h_{\lambda} = dr^2 + \sinh^2(r) d\theta^2 + \cosh^2(r) (\lambda \tau)^2,	
\end{align} which lives on the hyperbolic manifold $D^2 \times S^1$. Moreover, in this case, it is easy to check that the conformal infinity of $(D^2 \times S^1, h_{\lambda})$ is given by $(\dT^2, [d\theta^2 + (\lambda \tau)^2])$.

In general, it was shown in \cite{Anderson-cusp} that there exist infinitely many hyperbolic fillings of a fixed conformal infinity given by a flat torus $(\dT^2, [g_{\omega}])$. We now briefly describe the geometric picture of this case.
Let $\sigma_j$ be a sequence of simple closed geodesics such that $\sigma_j$ is not dense in $(\dT^2, g_{\omega})$ and $|\sigma_j|\to \infty$.  
Then there exists a sequence of hyperbolic metrics $h_j$ on $S^1 \times D^2$ whose conformal infinity is given by  $(\dT^2, [g_{\omega}])$. However, $\Injrad_{h_j}(S^1 \times D^2) \to 0$. Therefore, $\{(S^1 \times D^2), h_j\}_{j\in \dZ_+}$ is a non-isometric 
family of hyperbolic manifolds. By the construction, one also observes that the sequence 
$\{(S^1 \times D^2, h_j, \bm{x}_j)\}_{j\in \dZ_+}$
 converges to the hyperbolic manifold $(\dR \times \dT^2, h_{\omega}, \bm{x}_{\infty})$ in the pointed Cheeger-Gromov sense as $j\to \infty$, where $\bx_j$ is chosen such that $\Injrad_{h_j}(\bx_j) = 1$.
Notice that the limiting hyperbolic manifold has a cusp end since $\Injrad_{h_j}(S^1 \times D^2) \to 0$ as $j\to \infty$.

  \subsection{AdS-Schwarzschild metric}
 \label{ss:AdS-S}

The AdS-Schwarzschild space is a well-known example of Poincar\'e-Einstein manifold. In \cite{Hawking-Page}, the authors provided the first known example of non-uniqueness of  Poincar\'e-Einstein filling for a fixed conformal infinity.     In four dimensions, the conformal infinity of the AdS-Schwarzschild space is $S^1 \times S^2$, equipped with the standard product metric. In this case, the underlying topological manifold of the AdS-Schwarzschild space is diffeomorphic to $D^2 \times S^2$. In general, 
for a fixed integer $n\gesl 3$ and a fixed number $m > 0$, the $(n + 1)$-dimensional AdS-Schwarzschild metric is defined by
\begin{align}
	g_m = (V_m)^{-1} ds^2 + V_m dt^2 + s^2 g_{S^{n - 1}}, \quad s \in [s_h , +\infty),\ t \in [0, 2\pi\lambda], 
\end{align}
where 
\begin{align}
V_m = 1 + s^2 - \frac{\varpi_n \cdot m}{s^{n - 2}},	
\end{align}
and $s_h > 0$ is the positive root of $V_m$, referred to as the horizon. The parameter $m$ here is precisely the mass of $g_m$. For fixed $\lambda > 0$, as $s\to +\infty$, the Schwarzschild metric $g_m$ is asymptotic to the hyperbolic metric $g_{\lambda}$, so that $(S^1 \times D^n, g_{\lambda})$ can regarded as the asymptotic model of $(D^2 \times S^{n - 1}, g_m)$. 
In the special case of $n = 3$, $V_m$ is reduced to 
\begin{align}
V_m = 1 + s^2 - \frac{2m}{s^{n - 2}}.
\end{align}
See \cite{Hawking-Page, Witten-AdS} for details.

In the following, we will review some basic geometric quantities and their behavior on the $4$-dimensional AdS-Schwarzschild space $D^2 \times S^2$, and explain how these relate to the main results of the current paper. 

\medskip

\noindent {\bf The length parameter $\lambda$ and  non-unique filling.} First, note that the parameter $\lambda$ is uniquely determined if $g_m$ is smooth 
at $s = s_h$. In fact, the smoothness of the metric $g_m$ at $s = s_h$ implies that the tangent space at $s = s_h$ is isometric to $\dR^4$. Let $dr = (V_m)^{-\frac{1}{2}}ds$ such that $r = 0$ when $s = s_h$. It is necessary that  
\begin{align}
	2\pi\lambda \cdot \left.\frac{d V_m}{dr}\right|_{r = 0} = 2\pi,
\end{align}
which implies that 
\begin{align}
\lambda^{-1} = \frac{1}{2} V_m'(s_h) = \frac{1}{2}\left(3s_h + \frac{1}{s_h} \right). \label{e:lambda-horizon}
\end{align}
It then follows that the length parameter $\lambda$ is contained in the range $(0, \frac{1}{\sqrt{3}}]$. Moreover,  any  $\lambda \in (0, \frac{1}{\sqrt{3}})$
corresponds to two distinct values of 
$s_h$, and hence two distinct values of the mass. Therefore, for any $\lambda\in(0, \frac{1}{\sqrt{3}})$, including the hyperbolic manifold $(S^1 \times D^3, g_{\lambda})$, there exist {\it at least three} distinct (non-isometric) Poincar\'e-Einstein fillings which are not isometric to each other. 

\medskip

\noindent {\bf Renormalized volume and Weyl energy.}
Next, we consider the
renormalized volume and the Weyl energy on the AdS-Scharzschild space. 
It is known that the renormalized volume has an explicit formula in terms of $s_h$:
\begin{align}
	\mathcal{V}(D^2\times S^2, g_m) = \frac{8\pi^2}{3} \cdot \frac{s_h^2 (1 - s_h^2)}{3 s_h^2 + 1}.\label{e:renormalized-volumed-AdS-S}
\end{align}
For example, one can see \cite{CQY}
for detailed computations. In \eqref{e:lambda-horizon}, for any parameter   $\lambda \in (0,\frac{1}{\sqrt{3}})$, there are two positive numbers $s_+$ and $s_-$ that satisfy \eqref{e:lambda-horizon}, where 
\begin{align}
\lim\limits_{\lambda \to 0}s_+(\lambda) = 0
	\quad \text{and} \quad \lim\limits_{\lambda \to 0}s_-(\lambda) = +\infty.
\end{align}
We denote by $m_+$ and $m_-$ the corresponding masses.
Plugging this asymptotics into \eqref{e:renormalized-volumed-AdS-S} and letting $\lambda \to 0$, we find:
\begin{align}
	\mathcal{V}(D^2\times S^2, g_{m_+, \lambda}) \to 0 \quad \text{and}\quad 	\mathcal{V}(D^2\times S^2, g_{m_-, \lambda}) \to -\infty. \end{align}
Furthermore, the renormalized volume $\mathcal{V}(D^2\times S^2, g_m) $ achieves its maximum $\frac{8\pi^2}{27}$ 
	when $s_h =  \frac{1}{\sqrt{3}}$.
By the Chern-Gau{\ss}-Bonnet theorem (Theorem \ref{t:Chern-Gauss-Bonnet}),  the $L^2$-energy of the Weyl curvature satisfies 
\begin{align}
	\int_{D^2\times S^2}|W_{g_{m_{\pm},\lambda}}|^2 \dvol_{g_{m_{\pm},\lambda}} = 16\pi^2 - 6\mathcal{V}(D^2\times S^2, g_{m_{\pm},\lambda}) \gesl \left(16 - \frac{8}{27}\right)\pi^2, \label{e:lower-bound-of-L2-Weyl}
\end{align}
which provides a {\it uniform lower bound}. 

\medskip

The key implications are as follows. For $\lambda \in (0, \frac{1}{\sqrt{3}})$,
we obtain two families of AdS-Schwarzschild metrics $\{g_{m_+,\lambda}\}$ and 
$\{g_{m_-,\lambda}\}$. Here for the family $\{g_{m_+,\lambda}\}$,
the $L^2$-norm of the Weyl curvature  is {\it uniformly bounded} as $\lambda \to 0$, 
and for the family $\{g_{m_-,\lambda}\}$,
the $L^2$-norm of the Weyl curvature is {\it unbounded}. In both of the two cases, the $L^2$-norm of the Weyl curvature is uniformly bounded from below. This uniform lower bound shows that the pinching condition of the Weyl energy in Theorem \ref{t:rigidity-for-small-weyl-energy} will be never satisfied for Ads-Schwarzschild metrics, but is essential for the hyperbolic rigidity.

\section{Discussions and Questions}
\label{s:discussions}

This section will discuss some open questions and several future directions related to the results of the paper. 

\subsection{Rigidity for fixed topology and sharp pinching}
The main results of this paper study the rigidity or quantitative rigidity of the Poincar\'e-Einstein filling of $S^1 \times S^{n-1}$. The example of the AdS-Schwarzschild metric in Section \ref{ss:AdS-S} 
gives complete Poincar\'e-Einstein metrics on $X^4 \approx S^2 \times D^2$ 
whose conformal infinities are the standard product metrics on $S^2 \times S^1$. Obviously, the AdS-Schwarzschild space has a distinct topology compared to the hyperbolic manifold $S^1 \times D^3$. Therefore, it is natural to ask the following question. 
\begin{question}\label{q:topology-fixes-geometry}
Let $(S^1 \times S^{n - 1}, \wg_{\lambda})$ be a standard Riemannian product. If the conformal manifold $(S^1 \times S^{n - 1}, [\wg_{\lambda}])$ bounds a Poincar\'e-Einstein space $(X^n, g)$ with $X^n$ diffeomorphic to $S^1 \times D^n$, is it necessary that $g$ is isometric to the hyperbolic metric $g_{\lambda}$? 
\end{question}

Note that under the assumption $\sec_g \lesl 0$, as observed in Section \ref{ss:deck-transformation}, it is obvious that $X^{n + 1}$ is diffeomorphic to $S^1 \times D^n$. 
So the key point in Question \ref{q:topology-fixes-geometry} is how refined geometric information can be read out from the topological inputs.

The next question is more intriguing and challenging. 
Recall that, for any AdS-Scharzschild metric on $S^2 \times D^2$,  the corresponding length parameter $\lambda$ of the conformal infinity $(S^1 \times S^2, g_{\lambda})$ satisfies 
\begin{align}
	\lambda\in \left(0,\frac{1}{\sqrt{3}}\right].
\end{align}
On the other hand, in Theorem \ref{t:large-circle}, the Poincar\'e-Einstein filling must be hyperbolic if the length parameter $\lambda$ is sufficiently large. 
\begin{question}
Let $(S^1 \times S^{n - 1}, \wg_{\lambda})$ be a standard Riemannian product. What is the best lower bound for $\lambda$ such that the Poincar\'e-Einstein filling is hyperbolic? Is it equal to $\frac{1}{\sqrt{3}}$?
\end{question}

\subsection{Weak energy pinching in four dimensions}

Recall that Theorem \ref{t:rigidity-for-small-weyl-energy} gives the hyperbolic rigidity under the {\it strong} Weyl energy pinching, where the $L^2$-Weyl curvature is sufficiently small relative to the Yamabe constant of the conformal infinity. So the smallness of the $L^2$-Weyl curvature depends on the {\it collapse} of the space. It is natural to ask whether one can find a uniform pinching energy that is independent of collapse.   
\begin{question}
	\label{q:4D-pinching}
	Let $(S^1 \times S^2, \wg_{\lambda})$ be the standard Riemannian product for some given $\lambda  > 0$. Does there exist an absolute constant $\epsilon > 0$ (independent of $\lambda$) such that the pinching of the $L^2$-Weyl curvature 
	\begin{align}
		\int_{X^4}|W_g|^2 \dvol_g < \epsilon 
	\end{align}
	implies that the Poincar\'e-Einstein filling $(X^4,g)$ of $(S^1 \times S^2, [\wg_{\lambda}])$ is isometric to the hyperbolic manifold $(S^1 \times D^3, g_{\lambda})$?

Recall that in the AdS-Schwarzschild metrics on $S^2\times D^2$
has a uniform lower bound of the Weyl energy.
\end{question}

Technically, the proof of Theorem \ref{t:rigidity-for-small-weyl-energy} is based on the $\epsilon$-regularity for the normalized curvature energy (Lemma \ref{l:standard-eps-reg}), where the pinching energy depends on the collapse. The pioneering result that eliminates such a constraint is  
Cheeger-Tian's $\epsilon$-regularity for Einstein $4$-manifolds \cite[theorem 0.8]{CT}, which we now state here.

\begin{theorem}
	Let $(M^4 , g)$ be an Einstein manifold with $|\Ric_g| \lesl 3$ such that $\overline{B_2(p)}$ is compact in $B_4(p)$. Then there exist {\it absolute constants} $\epsilon > 0$ and $C > 0$ such that if for some $r\in(0,1]$,
\begin{align}
	\int_{B_{2r}(p)} |\Rm_g|^2 < \epsilon,
\end{align}
then 
\begin{align}
	\sup\limits_{B_r(p)}|\Rm_g|\lesl Cr^{-2}.
\end{align}
\end{theorem}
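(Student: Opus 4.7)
\medskip

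\noindent\textbf{Proof proposal.} The plan is to argue by contradiction via point-picking, rescaling, and pointed Cheeger--Gromov limits, with the handling of the collapsed case being the decisive technical step. Suppose no absolute $(\epsilon, C)$ exists; then one extracts a sequence of pointed Einstein $4$-manifolds $(M_i^4, g_i, p_i)$ with $|\Ric_{g_i}|\lesl 3$ and radii $r_i \in (0,1]$ satisfying $\int_{B_{2r_i}(p_i)}|\Rm_{g_i}|^2\,\dvol_{g_i} \to 0$ but $r_i^2 \cdot \sup_{B_{r_i}(p_i)}|\Rm_{g_i}| \to \infty$. A standard point-selection applied to the scale-invariant function $d_{g_i}(\cdot, \p B_{r_i}(p_i))^2 \cdot |\Rm_{g_i}|(\cdot)$ produces new basepoints $q_i$ and scales $s_i > 0$ with $|\Rm_{g_i}|(q_i) = s_i^{-2}$ and $|\Rm_{g_i}| \lesl 4 s_i^{-2}$ on balls $B_{L_i s_i}(q_i)$ with $L_i \to \infty$. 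Rescaling $\tilde g_i \equiv s_i^{-2} g_i$ yields Einstein manifolds with Einstein constant $-3 s_i^2 \to 0$, curvature bounded by $4$ on $B_{L_i}(q_i, \tilde g_i)$, and $|\Rm_{\tilde g_i}|(q_i) = 1$. Because the functional $\int |\Rm|^2 \dvol$ is scale-invariant in dimension $4$, the pinching hypothesis transfers losslessly: $\int_{B_{L_i}(q_i, \tilde g_i)}|\Rm_{\tilde g_i}|^2 \dvol_{\tilde g_i} \to 0$.

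Next I would take a pointed Cheeger--Gromov subsequential limit. In the \emph{non-collapsed} case, where $\liminf_i \Vol_{\tilde g_i}(B_1(q_i)) > 0$, Anderson's $\epsilon$-regularity (Lemma~\ref{l:standard-eps-reg}) applied after covering $B_1(q_i)$ by balls of definite size gives local harmonic-coordinate control, and standard elliptic bootstrapping for Einstein metrics upgrades this to smooth convergence. The limit is then a smooth Ricci-flat $4$-manifold $(Z^4, g_\infty, q_\infty)$ with $|\Rm_{g_\infty}|(q_\infty) = 1$, yet $\int_{B_1(q_\infty)}|\Rm_{g_\infty}|^2 = 0$, which is an immediate contradiction.

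The main obstacle is the \emph{collapsed} case, where $\Vol_{\tilde g_i}(B_1(q_i)) \to 0$ in spite of the uniform curvature bound. Here one invokes the Cheeger--Fukaya--Gromov theory of collapse under bounded sectional curvature: the local universal covers
\begin{align*}
\pi_i : (\widehat{B}_i, \hat g_i, \hat q_i) \longrightarrow (B_{L_i/2}(q_i), \tilde g_i)
\end{align*}
enjoy uniform injectivity radius lower bounds, and consequently subconverge smoothly to a non-flat Ricci-flat $4$-manifold $(Y^4, g_\infty^\sharp, y_\infty)$ carrying an isometric action by a nilpotent Lie group whose orbits encode the infinitesimal collapsing directions. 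The technical heart of Cheeger--Tian's argument is to verify that the $L^2$ curvature energy is not amplified on the cover, i.e. $\int_{B_1(\hat q_i, \hat g_i)}|\Rm_{\hat g_i}|^2 \to 0$, despite the covering multiplicity of $\pi_i$ being potentially unbounded. Their method is to transgress the Chern--Gauss--Bonnet integrand and exploit the nilpotent symmetry on each cover to write the upstairs curvature integral as the downstairs one plus boundary contributions that are controlled by the Einstein constant $s_i^2 \to 0$ and the collapsing fiber diameter, and thus vanish in the limit.

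Once upstairs $L^2$-smallness of $\Rm$ is established, the non-collapsed argument of the previous paragraph applies to the sequence of covers and produces the same contradiction, since $|\Rm_{\hat g_i}|(\hat q_i) = |\Rm_{\tilde g_i}|(q_i) = 1$ is preserved by the local covering isometry. The hardest step is the transgression/lift comparison in the collapsed regime, as this is where the absolute (collapse-independent) nature of the pinching constant $\epsilon$ is truly used; everything else is a routine application of Anderson-type regularity and compactness.
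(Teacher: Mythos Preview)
The paper does not prove this theorem; it is quoted verbatim from Cheeger--Tian \cite[theorem 0.8]{CT} as a known result, used only to motivate Question~\ref{q:4D-pinching}. So there is no ``paper's own proof'' to compare against.

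That said, your outline is a fair high-level summary of how the Cheeger--Tian argument actually proceeds: contradiction via point-picking and rescaling, the non-collapsed limit handled by Anderson's $\epsilon$-regularity, and the collapsed case treated by passing to non-collapsed local covers furnished by the Cheeger--Fukaya--Gromov theory. Your identification of the ``hardest step'' is correct in spirit: the collapse-independent control of the $L^2$ curvature on the cover is where the real work lies. One caveat on the details: the actual Cheeger--Tian mechanism is not quite a direct comparison of upstairs and downstairs $L^2$ integrals via transgression alone. It passes through the F-structure theory to show that sufficiently collapsed regions with bounded curvature have vanishing Euler characteristic, then combines the Chern--Gauss--Bonnet formula \emph{with boundary} and an equivariant ``good chopping'' argument to bound the curvature integral by controllable boundary terms. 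Your phrase ``write the upstairs curvature integral as the downstairs one plus boundary contributions'' compresses several nontrivial steps (good chopping, control of the second fundamental form on the chopped boundary, the role of the $N$-structure) into one sentence; if you were writing this out in full you would need to supply those.
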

Under the pinching assumption of Question \ref{q:4D-pinching}, applying Cheeger-Tian's $\epsilon$-regularity, immediately one obtains the global uniform $C^0$-estimate of the full curvature $\Rm_g$. The key point in Question \ref{q:4D-pinching} whether the Weyl curvature is sufficiently small everywhere on $X^4$.

\subsection{Finiteness of Poincar\'e-Einstein fillings}

The next question is concerned with the topological finiteness of Poincar\'e-Einstein fillings.
\begin{question}
Let  $(S^1 \times S^{n - 1}, \wg_{\lambda})$ be a Riemannian product with $\lambda \gesl \lambda_0$. For any 
$\lambda_0 > 0$, does there exist a number $N = N(n, \lambda_0) > 0$ such that all the Poincar\'e-Einstein fillings of $(S^1 \times S^{n - 1}, [\wg_{\lambda}])$ have at most $N$ diffeomorphism types? 
\end{question}

\begin{question}\label{q:diffeomorphism-finiteness}
Let $Q > 0$, $D > 0$, and $\Upsilon > 0$  be fixed positive constants. If $(M^3, \wg)$ is a closed manifold that has uniformly bounded geometry in the sense that 
\begin{align}\|\Rm_{\wg}\|_{C^0(M^3)}\lesl Q, \quad \diam_{\wg}(M^3)\lesl D,\quad \text{and} \ \mathcal{Y}(M^3, [\wg])\gesl \Upsilon > 0,
\end{align}	
  does there exist a uniform constant $N = N(Q, D,\Upsilon) > 0$
  such that all the Poincar\'e-Einstein fillings of $(M^3, [\wg])$ have at most $N$ diffeomorphism types? 
 \end{question}
 
This question is motivated by a diffeomorphism finiteness result for $4$-dimensional complete Ricci-flat manifolds with Euclidean volume growth. The theorem below is essentially due to Cheeger-Naber (\cite{Cheeger-Naber}). 
 \begin{theorem}\label{t:ALE-diffeomorphism-finiteness} Given $v > 0$, there exists some uniform constant $N = N(v) > 0$ such that the following holds.
 If $(X^4, g)$ be a complete Ricci-flat manifold with $p\in X^4$ that satisfies 
 \begin{align}
 	\frac{\Vol_g(B_R(p))}{R^4}\gesl v > 0,\quad \text{for all}\  v >0, \label{e:Euclidean-volume-growth}
 \end{align} 
 then $X^4$ is ALE (asymptotically locally Euclidean) of order $4$ and diffeomorphic to one of at most $N$ complete manifolds.
  \end{theorem}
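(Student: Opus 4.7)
The plan is to combine the asymptotically locally Euclidean (ALE) structure theorem for complete Ricci-flat $4$-manifolds of Euclidean volume growth (due to Bando--Kasue--Nakajima and Tian) with a bubble-tree compactness argument based on the Cheeger--Naber codimension-$4$ theorem and Cheeger--Tian $\epsilon$-regularity. First, for any $(X^4,g)$ as in the statement, Cheeger--Colding theory applied to the non-collapsed setting \eqref{e:Euclidean-volume-growth} gives that every tangent cone at infinity exists and, by Ricci-flatness, is a flat metric cone $\mathbb{R}^4/\Gamma_\infty$ with $\Gamma_\infty\subset O(4)$ a finite group whose order is bounded by $\omega_4/v$. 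Standard harmonic-coordinate decay arguments then upgrade this to the ALE condition of order $4$. In particular each $X^4$ is diffeomorphic to the complement of a compact set glued to the trivial disk bundle over $S^3/\Gamma_\infty$, and $|\Gamma_\infty|$ ranges over a finite set depending only on $v$.

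Next I would prove the diffeomorphism finiteness by contradiction. Suppose there exists a sequence $(X_i^4,g_i,p_i)$ of pairwise non-diffeomorphic Ricci-flat $4$-manifolds all satisfying \eqref{e:Euclidean-volume-growth} with the same $v$. After choosing base points appropriately (e.g.\ on the $\epsilon$-regularity scale), pointed Gromov--Hausdorff compactness, together with the non-collapsing bound from \eqref{e:Euclidean-volume-growth}, produces a subsequential limit $(X_\infty,d_\infty,p_\infty)$. By the Cheeger--Naber codimension-$4$ theorem the singular set $\mathcal{S}\subset X_\infty$ has Hausdorff dimension zero, and by $\epsilon$-regularity (Lemma \ref{l:standard-eps-reg} or its sharp Cheeger--Tian version in dimension four) the convergence is smooth on compact subsets of $X_\infty\setminus\mathcal{S}$.

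The third step is bubble extraction at each $x\in\mathcal{S}$. Rescaling $g_i$ by the reciprocal of the $C^0$ curvature scale at approximate singularities yields, in the limit, a non-flat complete Ricci-flat $4$-manifold with Euclidean volume growth, hence an ALE space by Step 1. Iterating this rescaling procedure produces a finite bubble tree over $x$. The key quantitative input, which also caps the total bubble count, is the Chern--Gauss--Bonnet identity for complete Ricci-flat ALE $4$-manifolds, namely
\begin{equation}
8\pi^2\chi(X^4)=\int_{X^4}|\Rm_g|^2\,\dvol_g+\frac{8\pi^2}{|\Gamma_\infty|},
\end{equation}
combined with a uniform energy gap $\int_{X^4}|\Rm_g|^2\dvol_g\geq \epsilon_0(v)>0$ for any non-flat bubble. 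Since the Euler characteristic of each $X_i^4$ is bounded in terms of $v$ (again by Chern--Gauss--Bonnet and the Euclidean volume growth), the number of bubbles and the depth of the tree are uniformly bounded. Each node in the tree is an ALE space whose diffeomorphism type lies in a finite list (depending only on $v$), and the gluing combinatorics are finite, yielding a contradiction with the assumed non-diffeomorphism.

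The main obstacle is controlling the bubble tree: one needs simultaneously a uniform lower energy bound for every non-trivial ALE bubble (to bound the number of rescalings) and a uniform diffeomorphism classification of ALE Ricci-flat $4$-manifolds at each fixed energy and volume-growth scale. The first is supplied by a Bishop-type gap theorem for Ricci-flat ALE $4$-manifolds (no non-flat ALE Ricci-flat $4$-manifold has curvature energy below a $v$-dependent constant), and the second follows from the compactness of the moduli space of ALE Ricci-flat $4$-manifolds with bounded energy, which is precisely where Cheeger--Naber's sharp $\epsilon$-regularity and neck structure theorems are essential.
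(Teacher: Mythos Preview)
Your approach is genuinely different from the paper's, and there is a circularity issue that you need to fix.

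The paper's argument is much more direct. It first invokes Cheeger--Naber's a priori $L^2$-curvature estimate (their theorem~1.13): under the Euclidean volume growth hypothesis \eqref{e:Euclidean-volume-growth}, one has
\[
\int_{X^4}|\Rm_g|^2\,\dvol_g \le \Lambda(v)
\]
for a uniform constant $\Lambda(v)$. With this bound in hand, Bando--Kasue--Nakajima gives the ALE-of-order-$4$ conclusion, so the exterior region $X^4\setminus B_{R_\#}(p)$ is diffeomorphic to $(S^3/\Gamma)\times[R_\#,\infty)$ with $|\Gamma|\le \omega(v)$, hence finitely many types. For the interior, the paper simply cites Cheeger--Naber's diffeomorphism finiteness for balls (their theorem~1.12): for any $R>0$, $B_R(p)$ lies in at most $N_0(v)$ diffeomorphism classes. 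Gluing the two pieces finishes the proof. No bubble-tree is run.

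Your bubble-tree contradiction argument is a reasonable alternative strategy, but the step ``the Euler characteristic of each $X_i^4$ is bounded in terms of $v$ (again by Chern--Gauss--Bonnet and the Euclidean volume growth)'' is circular as written. Chern--Gauss--Bonnet reads $8\pi^2\chi(X^4)=\int_{X^4}|\Rm_g|^2+8\pi^2/|\Gamma_\infty|$; Euclidean volume growth bounds $|\Gamma_\infty|$ from above, but gives no bound on $\int|\Rm_g|^2$ by itself. You need the Cheeger--Naber $L^2$-estimate as an \emph{input}, not as a consequence of the bubble count. Once you insert that estimate explicitly, your bubble-tree argument becomes a legitimate (though considerably longer) reproof of Cheeger--Naber's theorem~1.12 in this special setting; but then it is simpler to cite that theorem directly, as the paper does. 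Your final paragraph also leans on ``compactness of the moduli space of ALE Ricci-flat $4$-manifolds with bounded energy,'' which is essentially the statement you are trying to prove; the induction on bubble-tree depth has to be made precise to avoid this second circularity.
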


  To better understand the setting of Question \ref{q:diffeomorphism-finiteness}, let us sketch the proof of Theorem \ref{t:ALE-diffeomorphism-finiteness}, which follows from analyzing the diffeomorphism types of the unbounded region and the bounded region.

The Euclidean volume growth condition \eqref{e:Euclidean-volume-growth} implies that the Ricci-flat manifold $(X^4,g)$ satisfies the $L^2$-curvature estimate 
\begin{align}
	\int_{X^4}|\Rm_g|^2 \dvol_g \lesl \Lambda(v)	
\end{align}
for some uniform constant $\Lambda = \Lambda(v) > 0$. In fact, this estimate follows 
 from \cite[theorem 1.13]{Cheeger-Naber} and a simple rescaling argument.
 Then applying the main result of \cite{BKN},
one can conclude that such a Ricci-flat manifold must be an ALE space of order $4$. Therefore, there exist $R_{\#} > 0$ (depending on $X^4$), $\omega =\omega(v) > 0$, and a diffeomorphism 
\begin{align}
	\Phi: A_{R_{\#}, \infty} (\bo_*) \longrightarrow X_{R_{\#}}^4,
\end{align}
 where $X_{R_{\#}}^4\equiv X^4 \setminus B_{R_{\#}}(p)$, $A_{R_{\#}, \infty} (\bo_*)$ is annulus in the flat cone $\dR^4/\Gamma$ for some finite group $\Gamma\lesl O(4)$ with $|\Gamma|\lesl \omega$. Immediately, $A_{R_{\#}, \infty} (\bo_*)$ is diffeomorphism to $(S^3/\Gamma) \times [R_{\#}, \infty)$. It is well known that $A_{R_{\#}, \infty} (\bo_*)$ belongs to bounded number $N_{\#} = N_{\#}(v)$ of diffeomorphism types, so does $X_{R_{\#}}^4$.
 
For the bounded region of $X^4$, under the Euclidean growth assumption \eqref{e:Euclidean-volume-growth}, one can apply the diffeomorphism finiteness result  \cite[theorem 1.12]{Cheeger-Naber} so that there exists a uniform constant 
$N_0 = N_0 (v) > 0$ such that for any $R > 0$, $B_R(p)$
 is diffeomorphic to at most one of $N_0$ manifolds.
Finally, Theorem \ref{t:ALE-diffeomorphism-finiteness} follows when we choose $R \gesl 2R_{\#}$.

A plausible strategy in proving the diffeomorphism finiteness in Question \ref{q:diffeomorphism-finiteness}  
is to establish some $L^2$-estimate for the Weyl curvature and construct some model geometry on a large annulus of $X^4$.

\subsection{Poincar\'e-Einstein fillings of Kleinian manifolds}

There are a large variety of geometric models of Poincar\'e-Einstein manifolds  whose conformal infinities are Kleinian manifolds.
In this context, it would be also interesting to consider case when the conformal infinity has a {\it large} limit set. Let $(M^n, \wg)$ be a compact Kleinian manifold with $\Gamma \equiv \pi_1(M^n)$ such that the universal cover $\widetilde{M^n}$ is conformally embedded into $S^n$ and the limit set $\Lambda(\Gamma)$ of $\Gamma$ in $S^n$ has a positive Hausdorff dimension. It is natural to study the Poincar\'e-Einstein fillings in this case, in which more model geometries will be included.

\bibliographystyle{amsalpha}

\bibliography{CYZ}

 \end{document}